\newenvironment{chapquote}[2][2em]
  {\setlength{\@tempdima}{#1}%
  \def\chapquote@author{#2}%
   \parshape 1 \@tempdima \dimexpr\textwidth-1\@tempdima\relax%
   \itshape}
  {\par\normalfont \hspace{8cm}--- \chapquote@author\hspace*{\@tempdima}\par\bigskip}
\numberwithin{equation}{section}
\newtheoremstyle{myexample} 
    {\topsep}                    
    {\topsep}                    
    {\rm }                   
    {}                           
    {\bf }                   
    {.}                          
    {.5em}                       
    {}  
\newtheoremstyle{myremark} 
    {\topsep}                    
    {\topsep}                    
    {\rm}                        
    {}                           
    {\bf}                        
    {.}                          
    {.5em}                       
    {}  
\newtheorem{claim}{Claim}[section]
\newtheorem{lemma}[claim]{Lemma}
\newtheorem{conjecture}[claim]{Conjecture}
\newtheorem{theorem}{Theorem}
\newtheorem{proposition}[claim]{Proposition}
\newtheorem{corollary}[claim]{Corollary}
\newtheorem{definition}[claim]{Definition}
\theoremstyle{myremark}
\newtheorem{remark}{Remark}[section]
\theoremstyle{myremark}
\theoremstyle{myexample}
\newtheorem{exercise}{Exercise}[chapter]
\def\<{\langle}
\def\>{\rangle}
\def\atanh{{\rm atanh}}
\def\argmin{{\rm argmin}}
\def\det{{\rm det}}
\def\oh{\overline{h}}
\def\Entro{{\rm Ent}}
\def\sGauss{\mbox{\tiny\rm Gauss}}
\def\sBayes{\mbox{\tiny\rm Bayes}}
\def\sML{\mbox{\tiny\rm ML}}
\def\snew{\mbox{\tiny\rm new}}
\def\sRS{\mbox{\scriptsize\rm RS}}
\newcommand\sRSB[1]{\mbox{\scriptsize\rm {$#1$}RSB}}
\def\sF{{\sf F}}
\def\Info{{\rm I}}
\def\eps{{\varepsilon}}
\def\id{{\boldsymbol I}}
\def\sS{{\sf S}}
\def\sT{{\sf T}}
\def\proj{{\boldsymbol P}}
\def\bLambda{{\boldsymbol \Lambda}}
\def\bM{{\boldsymbol M}}
\def\bQ{{\boldsymbol Q}}
\def\bm{{\boldsymbol m}}
\def\bq{{\boldsymbol q}}
\def\tbQ{\tilde{\boldsymbol Q}}
\def\det{{\rm det}}
\def\hQ{\widehat{Q}}
\def\hbQ{\widehat{\boldsymbol Q}}
\def\hbM{\widehat{\boldsymbol M}}
\def\tu{\tilde{u}}
\def\cQ{\mathcal{Q}}
\def\Zrep{\mathcal{Z}}
\def\tZrep{\widetilde{\mathcal{Z}}}
\def\ob{\overline{b}}
\def\oq{\overline{q}}
\def\onu{\overline{\nu}}
\def\Symm{{\mathfrak S}}
\def\normal{{\sf N}}
\def\GOE{{\sf GOE}}
\def\Tr{{\rm {Tr}}}
\def\de{{\rm d}}
\def\cR{\mathcal{R}}
\def\cG{\mathcal{G}}
\def\d{{\mathrm{d}}}
\def\cF{{\mathcal F}}
\def\cZ{{\mathcal Z}}
\def\bfone{{\bf 1}}
\def\bfzero{{\bf 0}}
\def\hd{{\sf h}}
\def\cX{{\cal X}}
\def\cF{{\cal F}}
\def\bG{{\bf G}}
\def\ed{\stackrel{{\rm d}}{=}}
\def\sd{{\sf d}}
\def\cuP{\mathscr{P}}
\def\reals{\mathbb{R}}
\def\naturals{\mathbb{N}}
\def\integers{\mathbb{Z}}
\def\cE{{\mathcal{E}}}
\def\bA{{\boldsymbol A}}
\def\bD{{\boldsymbol D}}
\def\bG{{\boldsymbol G}}
\def\bI{{\boldsymbol I}}
\def\bW{{\boldsymbol W}}
\def\bJ{{\boldsymbol J}}
\def\bP{{\boldsymbol P}}
\def\bS{{\boldsymbol S}}
\def\bT{{\boldsymbol T}}
\def\bX{{\boldsymbol X}}
\def\bY{{\boldsymbol Y}}
\def\bu{{\boldsymbol u}}
\def\hbx{\hat{\boldsymbol x}}
\def\bx{{\boldsymbol x}}
\def\by{{\boldsymbol y}}
\def\bz{{\boldsymbol z}}
\def\blambda{{\boldsymbol \lambda}}
\def\bg{{\boldsymbol g}}
\def\bv{{\boldsymbol v}}
\def\bvz{{\boldsymbol v_0}}
\def\hbv{\widehat{\boldsymbol v}}
\def\bw{{\boldsymbol w}}
\def\bXz{{\boldsymbol X_0}}
\def\hbX{\widehat{\boldsymbol X}}
\def\bsigma{{\boldsymbol \sigma}}
\def\bh{{\boldsymbol h}}
\def\Poisson{{\rm Poisson}}
\def\prob{{\mathbb P}}
\def\E{{\mathbb E}}
\def\rP{{\rm P}}
\def\root{o}
\def\Ball{{\sf B}}
\def\Treg[#1]{T^{{\rm reg},#1}}
\def\GW[#1]{{\rm GW}(#1)}
\def\MGW[#1]{{\rm MGW}(#1)}
\def\sP{{\rm P}}
\def\sSG{{\rm SG}}
\def\sR{{\rm R}}
\def\P{{\mathbb P}}
\def\MSE{{\sf MSE}}
\def\mse{{\sf mse}}
\def\supp{{\rm supp}}
\def\ER{Erd\H{o}s-R\'{e}nyi\,} 
\title{A Friendly Tutorial on Mean-Field Spin Glass\\
Techniques for Non-Physicists}
\author{Andrea~Montanari\thanks{Department of Electrical 
Engineering and Department of Statistics, Stanford University} and Subhabrata Sen\thanks{Department of Statistics, Harvard University}} 
\begin{document}

\frontmatter

\maketitle
\tableofcontents

\mainmatter

\chapter*{Introduction}
\addcontentsline{toc}{chapter}{\protect\numberline{}Introduction}
\label{ch:Intro}
\begin{chapquote}[2.5cm]{\textup{Archimedes}, On the method}
I thought fit to write out for you and explain in detail in the same book the 
peculiarity of a certain method, by which it will be possible for you to get a start 
to enable you to investigate some of the problems in mathematics by means of mechanics.
[\dots] it is of course easier, when we have previously acquired, by the method, some 
knowledge of the questions, to supply the proof than it is to find it without any previous knowledge.
\end{chapquote}

\phantom{A}

This tutorial is based on lecture notes written for a
class taught in the Statistics Department at Stanford in the Winter Quarter of 2017
and then again in Fall 2021. The class was called `Methods From Statistical Physics (Stats 369)'
and was addressed to students from Statistics, Mathematics, and 
Engineering Departments with a solid background in probability theory, but no previous 
knowledge of physics.
The objective  was to provide a working knowledge of some of the techniques developed over the last
40 years by theoretical physicists and mathematicians to study mean field spin glasses
and their applications to high-dimenensional statistics and statistical learning.

\section*{Why mean field spin glasses?}

The history of the subject in truly remarkable\footnote{The reader interested in early historical overviews 
might wish to  consult the sequence of seven expository articles written by Phil Anderson, between 1988 \cite{anderson1988spin} 
and 1990 \cite{anderson1990reference}, or of course the unsurpassed collection of articles 
(and accompanying introductions) in \cite{SpinGlass}. For a personal
account, see \cite{VirasoroInterview}.}. 
Spin glass models were introduced by physicists in the 1970s 
to model the statistical properties of certain magnetic materials. Over the last 
half century, these models have motivated a blossoming line of mathematical work
with applications to multiple fields, at first sight distant from physics. 

From a mathematical point of view, spin glasses are high-dimensional probability
distributions, i.e.\ 
probability distributions over $\reals^n$, $n\gg 1$, which 
are usually written in the Gibbs-Boltzmann form 
\begin{align}
\mu(\de\bsigma) \propto e^{\beta H(\bsigma)} \, \nu_0(\de\bsigma)\, .\label{eq:FirstBoltzmann}
\end{align}
Here $\nu_0$ is a `simple' reference measure (for instance the uniform distribution over 
$\{+1,-1\}^n$ or the uniform distribution over the sphere $\sS^{n-1}$),
the exponential weight $H:\reals^n\to \reals$  is known as the Hamiltonian 
(the standard convention in physics is to refer to $-H$ as the Hamiltonian),
and $\beta>0$ is known as the inverse temperature.

Of course, any probability measure in $\reals^n$ can be written in the form 
\eqref{eq:FirstBoltzmann}. 
However in spin glass models, $H(\bsigma)$ is typically a sum of  polynomially many (in $n$)
 `simple' terms (e.g. low degree monomials in the coordinates $\sigma_i$),
and hence the form \eqref{eq:FirstBoltzmann} is meaningful.
It is worth mentioning that, while we focus for simplicity on probability measures over $\reals^n$, 
spin glass models have been studied in other product spaces $\cX^n$ as well.

In spin glass models, the Hamiltonian $H(\,\cdot\,)$ itself is random or, to be precise,
 $\{H(\bsigma)\}_{\bsigma\in\reals^n}$ is a stochastic process indexed by $\bsigma\in \reals^n$
(or $\bsigma\in \Sigma$, where $\Sigma$ denotes the support of $\nu_0$).
 Therefore the measure $\mu(\de\bsigma)$ is a random probability measure.
 A specific spin glass model is defined by specifying the distribution of the process 
 $H$ (alongside the reference measure $\nu_0$). 
 
 At first sight, studying random probability measures might seem a somewhat exotic
 endeavour. However, a little thought reveals that random probability measures
 are both ubiquitous and useful:
 \begin{enumerate}
 \item Consider a statistical estimation problem: we want to estimate an unknown 
 vector $\bx\in\reals^n$ from some data $\bY$. We will see several examples of this problem in 
 this tutorial. A Bayesian approach postulates a prior distribution over $\bx$,
 and then forms a posterior, namely the conditional distribution of $\bx$ given $\bY$,
 under that prior. The posterior is a random probability distribution over $\reals^n$ 
 (because $\bY$ is random).
 \item Consider an optimization problem of the form $\max_{\bsigma\in\Sigma} H(\bsigma)$.
 In many circumstances, we have a probabilistic model for $H$. For instance, this is the case in 
 empirical risk minimization, which is the standard approach to statistical learning.
 Of course, in order to understand the properties of this optimization problem,
 it is important to understand the geometry of the (random) superlevel sets 
 $\Sigma_E:=\{\bsigma\in\Sigma:\; H(\bsigma)\ge E\}$. It turns out that---in many cases---the distribution \eqref{eq:FirstBoltzmann} is closely related to the uniform distribution 
 over $\Sigma_{E(\beta)}$ for a certain $E(\beta)$.
 Therefore, the Gibbs measure \eqref{eq:FirstBoltzmann} is a powerful tool to explore 
 the random geometry of superlevel sets.
  \item In physics, the Hamiltonian $H$ is random because, for instance, the spins
 $\sigma_i$ are magnetic moments associated to impurities at random locations
 in an otherwise non-magnetic material. More generally, $H$ can be random because it is the 
 Hamiltonian of a disordered system, whose `disorder' degrees of freedom are 
 not thermalized.
 \end{enumerate}

 \emph{Mean field} spin glass models are a special subfamily of spin glass models.
 Roughly speaking, they are characterized by the fact that the coordinates of $\bsigma$ 
 (the `spins' in physics language) are `indistinguishable' from the point of view 
 of the process\footnote{Formally, for any fixed number of vectors
 $\bsigma_1,\dots,\bsigma_k$, the joint distribution of 
 $(H(\bsigma_1),\dots,H(\bsigma_k))$ depends on
 $\bsigma_1,\dots,\bsigma_k$ only through the joint empirical distribution of their  
 coordinates $n^{-1}\sum_{i=1}^n\delta_{\sigma_{1,i},\dots,\sigma_{k,i}}$.}
 $H(\,\cdot\,)$. 
 The first model of this type, which we will 
 study in Chapter \ref{ch:SK}, was first introduced by David Sherrington and Scott Kirkpatrick in 1975 
  \cite{sherrington1975solvable} to provide an idealized model
  that could be amenable to mathematical analysis. 
  
  In the near half-century since then, 
  and starting with the invention of `replica symmetry breaking' by Giorgio Parisi
  \cite{parisi1979infinite}, physicists have developed a number of sophisticated 
  non-rigorous techniques to analyze mean field spin glasses and characterize their high-dimensional 
  behavior.
 While several of the physicists' results and technique are outstanding challenges for mathematicians, 
 since the early 2000's, there has been  increasing success in rigorizing some 
 of these ideas. These developments have given birth to a rich and rapidly evolving 
 area of probability theory.
 
 In parallel with these developments, it has become increasingly clear that
  understanding the behavior of random high-dimensional
 probability distributions $\mu(\de\bsigma)$ and of random high-dimensional objective 
 functions $H(\bsigma)$ is crucial in a number of mathematical disciplines beyond 
 theoretical physics. 
We mentioned above high-dimensional statistics and statistical learning: tools 
and intuitions from physics have found countless applications in these areas in the last 10-15 years. 
In the opposite direction, high-dimensional statistics and statistical learning have brought new 
questions and stimulated new developments in spin glass theory.

This tutorial is mainly aimed at researchers in statistics, mathematics, computer science, 
who want to learn some of the important tools and ideas in this area.
 
\section*{The style of this tutorial}

This tutorial is deliberately written in a somewhat non-standard style, 
from several viewpoints:
\begin{description}
\item[Concrete problems.] Rather than developing the theory in the most general setting, 
we focus on two concrete problems that are motivated by questions in statistical estimation.
Each of the next two chapters is dedicated to one such problem.

We use each of these examples as a pretext for presenting a number of mathematical techniques.
We believe it is best to learn these technique on concrete applications.
\item[Non-exhaustive.] Our treatment is far from exhaustive, even for each of the specific 
problems that we treat. On the other hand, while we use these examples as motivation,
we do not hesitate in pursuing detours that are interesting, but indirectly related to the 
original questions posed by those examples.
\item[Rigorous vs.\ non-rigorous techniques.] We present a mixture of non-rigorous
and rigorous techniques. Whenever something is proven (or a proof in the literature is indicated,
or sketched) we emphasize this by using the labels `theorem', `lemma', and so on.
 On the other hand,  we explain non-rigorous techniques on examples for which rigorous alternatives (yielding 
the same conclusions) are available.

There are countless reasons for learning non-rigorous techniques
in parallel with rigorous ones. Among others: $(i)$~They have driven this research area;
$(ii)$~Properly used, they provide the correct answer with significantly less work; 
$(iii)$~They apply more broadly; $(iv)$~They can provide invaluable insights/conjectures
 for rigorous research.
 
 As shown by the quote above, reason $(iv)$ was already acknowledged by
  Archimedes more than two millennia ago.
\item[Exercises.] As explained above, this tutorial is based on a class taught at Stanford.
We include the exercises developed for that class (often generalizations of the models treated
in the main text). The importance of hands-on practice in mathematics cannot be overstated.
Even more so when learning a completely different point of view (e.g.\ learning non-rigorous techniques
for a mathematically minded person, or vice versa).
\end{description}

\section*{Acknowledgements}

Teaching a class is always a dialogue between the teacher(s) and the student(s). Even more
so for a small specialized class like this. Therefore, we are primarily indebted to 
the students who took Stats 369 at Stanford:
Jie Jun Ang, Mona
Azadkia,
Erik
Bates,
Zhou
Fan,
Henry
Friedlander,
Yanjun
Han,
Sifan
Liu,
Song
Mei,
Aran
Nayebi,
Phan Minh
Nguyen,
Allan
Raventos Knohr,
David
Ritzwoller,
Christian
Serio,
Pragya
Sur,
Matteo
Sesia,
Javan
Tahir,
Andy
Tsao,
Atsushi
Yamamura,
Guanyang
Wang,
Jun
Yan,
Zhenyuan
Zhang,
Kangjie
Zhou,
Zhengqing
Zhou.

We also acknowledge funding from the NSF grant CCF-2006489, the ONR grant N00014-18-1-2729
(AM) and a Harvard Dean's Competitive Fund  Award(SS).

\chapter{The $p$-spin model and tensor PCA}
\label{ch:Pspin}

\section{Introduction}

In many modern data analysis problems, data take the form of a multi-dimensional array
\cite{morup2011applications}. Examples include collaborative filtering
\cite{barak2016noisy},
 applications of the moment method \cite{hsu2012spectral},
 image inpainting \cite{liu2013tensor}, hyperspectral imaging 
   \cite{li2010tensor,signoretto2011tensor}, and geophysical imaging \cite{kreimer2013tensor}.

 Information is extracted from such data by positing a latent low-rank structure.
Tensor principal component analysis (PCA) provides a simple formalization of this problem,
which has attracted significant interest over the last few years. 

It also provides an excellent sandbox for learning spin glass techniques, because the 
corresponding spin glass model (known as the $p$-spin spherical model) has been 
studied in physics for over 20 years, as discussed in the bibliography notes at the end of this 
chapter.

\subsection{Statistical model and Gibbs measure}
\label{sec:Pspin}

Consider the following signal-plus-noise model for a tensor $\bY\in(\reals^{n})^{\otimes k}$:
\begin{align}
\bY = \lambda\, \bvz^{\otimes k} + \bW. \, \label{eq:SpikedTensor}
\end{align}
Here  $\bvz\in \reals^n$, $\|\bvz\|_2=1$ is an unknown vector,
$\lambda\in\reals_{\ge 0}$ is the signal-to-noise ratio, and $\bW\in (\reals^{n})^{\otimes k}$ is a noise tensor.
We want to estimate $\bvz$ given an observation of the tensor
$\bY$. We will assume $\bW$ to be a Gaussian symmetric tensor with
independent entries up to symmetries. More precisely, let $\bG = (G_{i_1,\dots,i_k})_{i_1,\dots,i_k\in [n]}$ be a tensor with i.i.d.
entries $G_{i_1,\dots,i_k}\sim\normal(0,1)$. For a permutation over $k$ elements, $\pi\in\Symm_k$, we
let $\bG^{\pi}$ be the tensor obtained by permuting the indices of $\bG$, namely $G^\pi_{i_1,\dots,i_k} = G_{i_{\pi(1)},\dots,i_{\pi(k)}}$.
We then let
\begin{align}
\bW = \frac{1}{\sqrt{k! \, n}}\sum_{\pi \in \Symm_k} \bG^{\pi}\, .\label{eq:Wconstruction}
\end{align}
Note in particular that $\bW= \bW^{\pi}$ and, for $i_1<i_2<\cdots<i_k$
we have $W_{i_1,\dots,i_k}\sim_{i.i.d.}\normal(0,1/n)$.
Our analysis is fairly insensitive to the distribution of entries with coinciding indices, but the present choice is particularly
convenient because the resulting distribution of $\bW$ is invariant under 
rotations\footnote{Formally this means that, for any $n\times n$ orthogonal 
matrix $\bQ$, defining the tensor $\bW'$ via $W'_{i_1\dots i_k}:=
\sum_{j_1\dots j_k\le n}W_{j_1\dots j_k}Q_{j_1i_1}\cdots Q_{j_ki_k}$,
we have $\bW'\ed \bW$ ($\bW'$ is distributed as $\bW$).} in $\reals^n$.

The maximum likelihood estimator for $\bvz$ minimizes the negative log-likelihood
\begin{align}
-\log p(\bY|\bv_0=\bsigma) = {\rm const.}+\frac{n}{2k!}\|\bY-\lambda\bsigma^{\otimes k}\|_F^2
\end{align}
over unit norm vectors $\bsigma$
(where ${\rm const.}$ is a constant independent of $\bsigma$).
Here $\|\bT\|_F$ is the Frobenius norm of a tensor $\bT$,  
namely $\|\bT\|_F^2:= \sum_{i_1,\dots,i_k}T_{i_1\dots i_k}^2$.

We will denote by $\<\,\cdot\, ,\,\cdot\,\>$ the standard scalar product between tensors.
Namely, given two tensors $\bS,\bT\in (\reals^n)^{\otimes k}$, we let
\begin{align}
\<\bS,\bT\> = \sum_{i_1,\dots,i_k=1}^nS_{i_1,\dots,i_k}T_{i_1,\dots,i_k}\, .
\end{align}
In particular, $\|\bT\|_F^2=\<\bT,\bT\>$.
 With this notation, minimizing the negative likelihood
 $-\log p(\bY|\bv_0=\bsigma)$ is equivalent to solving the following maximization problem
\begin{align}
\mbox{\rm maximize}&\;\;\;\;\;\<\bY,\bsigma^{\otimes k}\>\, ,\\
\mbox{\rm subject to}&\;\;\;\;\;\bsigma\in \sS^{n-1}\, ,
\end{align}
where $\sS^{n-1}\equiv \{ \bx\in\reals^n: \; \|\bx\|_2=1\}$ is the unit sphere in $n$ dimensions.
Explicitly, the objective that is being maximized is
$\<\bY,\bsigma^{\otimes k}\> = \sum_{i_1,\dots,i_k=1}^nY_{i_1,\dots,i_k}\sigma_{i_1}\cdots\sigma_{i_k}$.

An alternative estimation procedure is obtained by introducing a prior distribution on $\bvz$.
 Note that the problem is invariant under 
orthogonal rotations of $\bvz$. Hence it is natural to use a prior that is  also invariant under rotations.
The only such prior is the uniform 
probability distribution over the unit sphere $\sS^{n-1}$, to be denoted by $\nu_0(\,\cdot\,)$
(We alert he reader that this is `nu-zero', not `v-zero' which we are using to denote the signal.).
 Applying Bayes rule, we obtain the following posterior
\begin{align}
\mu_{\sBayes}(\de\bsigma)&= \frac{1}{\widetilde{Z}_{\sBayes}(\lambda)} \;
\exp\Big\{-\frac{n}{2(k!)}\big\|\bY-\lambda\bsigma^{\otimes k}\big\|_F^2\Big\}\, \nu_0(\de\bsigma)\\
& = \frac{1}{Z_{\sBayes}(\lambda)} \; \exp\Big\{\frac{n\lambda}{k!}\<\bY,\bsigma^{\otimes k}\>\Big\} \, \nu_0(\de\bsigma)\, . \label{eq:gibbs_measure}
\end{align}
The constant $Z_{\sBayes}$ is completely determined by the normalization condition 
$\int \mu_{\sBayes}(\de\bsigma) =1$.
It depends on $\lambda$ and $\bY$ but we will often omit these dependencies whenever 
clear from the context. 

It is worth opening a parenthesis on why it makes sense to study the uniform prior:
 the choice of a specific prior can lead to estimation methods that might
perform poorly in general.
In the present case (and considering a rotation invariant loss),
 the uniform prior is `least favorable' in the sense of 
statistical minimax theory. This is a consequence of the invariance of the problem and 
is formalized by the Hunt-Stein theorem \cite{lehmann2006theory}. 
The least favorable property means two things:
\begin{enumerate}
\item The Bayes error \footnote{The Bayes error represents the lowest expected loss that can be attained by an estimator. In turn, any estimator attaining the Bayes error is called a Bayes estimator.}
for any other prior is not larger than the Bayes error for the 
uniform prior.
\item The Bayes estimator with respect to the uniform prior is minimax optimal,
in the sense of performing at least as well on any distribution of $\bv_0$ as
on the uniform prior.
\end{enumerate}
(In fact in the present case proving these properties on $\nu_0$ is a straightforward
exercise.)

Motivated by the above two approaches to estimation (maximum likelihood and Bayes), 
we consider the following family of
probability measures indexed by $\beta, h\in \reals$:
\begin{align}
\mu_{n,\lambda,\beta, h}(\de\bsigma)& = \frac{1}{Z_{n}(\beta,\lambda,h)} \; \exp\Big\{\frac{n\beta}{\sqrt{2(k!)}}\,
\<\bY,\bsigma^{\otimes k}\> +  n h \<\bvz,\bsigma\>\Big\} \, \nu_0(\de\bsigma)\, . \label{eq:PspinDef}
\end{align}
In the following we will omit some of the arguments  $n,\lambda,\beta, h$ unless necessary.
We will refer to $\mu_{n,\lambda,\beta, h}(\de\bsigma)$ as the `Gibbs (or Boltzmann)
measure'. We will also refer to the quantity $H(\bsigma)\equiv (n\beta/\sqrt{2(k!)})\,
\<\bY,\bsigma^{\otimes k}\> +  n h \<\bvz,\bsigma\>$ appearing in the exponent 
as  the `Hamiltonian' (the standard definition in statistical physics differs
from this by a constant factor).

A few remarks:
\begin{enumerate}
\item The measure $\mu_{n,\lambda,\beta, h}(\de\bsigma)$ is in fact a random probability measure because it depends on the random
tensor $\bY$.
\item The maximum likelihood estimator is recovered as
  $\beta\to\infty$, with $h=0$. Indeed, in this limit the measure $\mu_{n,\lambda,\beta, 0}$
  concentrates on the (almost surely unique up to symmetry $\bsigma\to -\bsigma$) 
  maximizer of $\<\bY,\bsigma^{\otimes k}\>$
  over the sphere. 
\item The Bayes posterior corresponds to $\beta = \lambda\sqrt{2/k!}$, $h=0$:
$\mu_{n,\lambda,\beta= \lambda\sqrt{2/k!}, 0}(\de\bsigma) = \mu_{\sBayes}(\de\bsigma)$.

The line $\beta = \lambda\sqrt{2/k!}$ in the $\beta,\lambda$ plane is known in physics as the 
`Nishimori line,' and some consequences of the tower property of conditional expectation 
are known as Nishimori identities. In fact, these identities are quite natural from a Bayes perspective.
\item For any  $\beta\in\reals_{\ge 0}$, $h=0$ the above measure can be used to construct
 interesting estimators.

The term $h\<\bv_0,\bsigma\>$  is instead not accessible to a statistician, and
is introduced as a device to analyze the properties of $\mu$. Eventually it needs to be set to 
zero.

\item To develop techniques to analyze the measure $\mu_{n,\lambda,\beta, h}(\cdot)$,
 we will sometimes restrict to the special case $\lambda=0$ throughout this tutorial.
  The $\lambda=0$ case corresponds to the pure noise model, and is not interesting 
  from a statistical perspective. However, from a mathematical standpoint, it presents the same 
  challenges as the general $\lambda \neq 0$ case. 
  In fact, extensions from $\lambda=0$ to $\lambda\neq 0$ are typically straightforward
   (e.g. see Section \ref{sec:rep_symmetry_lambda} for an analysis of the general $\lambda$
    case within the framework of replica theory). We will use the $\lambda=0$ case 
    in some of our discussions to simplify the notational overhead. 

\end{enumerate}

\noindent
\textbf{Organization:} Before diving into the details, we provide a roadmap for this chapter. 
We introduce the relevant statistical formalism in Section \ref{sec:TensorEstimator}. Section
 \ref{sec:free_energy} delineates the connections between the free energy, a key 
 quantity in statistical physics and the mutual information (and other central notions)
  from information theory. The rest of the chapter is dedicated to the analysis of the
   probability distribution $\mu(\,\cdot\,)$ using ideas from statistical physics. As a 
   first attempt in this direction, we introduce the replica symmetric approximation 
   in Sections \ref{sec:replica_symmetry} and \ref{sec:symmetric_phase}. Although
    insightful and sometimes asymptotically exact, the replica symmetric approximation can 
    predict the wrong asymptotics in certain parameter regimes. In Section \ref{sec:pSpin1RSB}, 
    we elaborate on this shortcoming and introduce the replica symmetry breaking
     approximation scheme. In a different direction, one can extract substantial 
     information regarding the distribution $\mu(\,\cdot\,)$ by analyzing the 
     distribution of critical points of the random Hamiltonian $H$. Section 
     \ref{sec:critical_points} derives some rigorous results regarding the 
     expected number of critical points using the Kac-Rice formula. In Section
      \ref{sec:replica_critical} we turn back to replica theory, and discuss 
      connections between the replica symmetry breaking predictions and the 
      distribution of critical points derived in Section \ref{sec:critical_points}. 
      Finally, Section \ref{sec:omitted_calc} collects some technical proofs.

\subsection{Estimators}
\label{sec:TensorEstimator}

As mentioned above, we are interested in estimating the unknown vector $\bv_0$.
An estimator is a map $\hbv:\bY\mapsto \hbv(\bY)\in\reals^n$. We will, in general, not
require $\hbv(\bY)$ to have unit norm. 
We can evaluate the quality of an estimator through a risk function, which takes the form
\begin{align}
R_n(\hbv) := \E\,\ell(\hbv(\bY),\bv_0)\, .
\end{align}
Note that for $k$ even, it is only possible to estimate $\bv_0$ up to an overall sign.
We will therefore consider losses: $\ell:\reals^n\times\reals^n\to\reals$
that are invariant to this sign change.
Three well known examples are
\begin{itemize}
\item \emph{The overlap:}
\begin{align}
Q_n(\hbv)\equiv\E\Big\{
\frac{|\<\hbv(\bY),\bvz\>|}{\|\hbv(\bY)\|_2\|\bvz\|_2}\Big\}\, .\label{eq:OverlapDef}
\end{align}
Note that good estimation corresponds to large overlap. Also notice that for this 
metric we can restrict without loss of generality to normalized estimators, e.g.
estimators satisfying $\|\hbv(\bY)\|_2=1$ almost surely.
\item \emph{The mean square error:}
\begin{align}
\mse_n(\hbv)\equiv\E\big\{\min_{s\in\{+1,-1\}}
\big\|\hbv(\bY)-s\bvz\|_2\Big\}\, .
\end{align}
The inner minimization over the global sign $s$ is introduced to 
make the loss invariant under $\bvz\to -\bvz$.
\item \emph{The matrix mean square error.} In this case we attempt to estimate
$\bvz\bvz^{\sT}$, and it is therefore natural to consider a general matrix
estimator $\hbM:\bY\mapsto\hbM(\bY)$:
\begin{align}
\label{eq:MatrixMSE}
\MSE_n(\hbM)\equiv\E\big\{
\big\|\hbM(\bY)-\bvz\bvz^{\sT}\|_F^2\Big\}\, .
\end{align}
Of course, given a vector estimator $\hbv$, we can construct a matrix estimator via
$\hbM(\bY) \equiv \hbv(\bY)\hbv(\bY)^{\sT}$.
\end{itemize}

We can use the measure $\mu_{n,\lambda,\beta,h}$ to construct estimators in a number of ways.
The simplest one is by taking expectation
\begin{align}\label{eq:VectorEstimate}
\hbv_{\beta,\lambda}(\bY) := \int_{\sS^{n-1}}\!\bsigma\; \mu_{n,\lambda,\beta,0}(\de\bsigma)\, .
\end{align}
It is an elementary fact that (for $\beta = \lambda\sqrt{2/k!}$), $\hbv_{\beta,\lambda}(\bY)$ maximizes
a version of the overlap $Q^0_n(\hbv)$ in which the absolute value is dropped.
Also it minimizes a version of the mean square error $\mse^0_n(\hbv)$ in which 
the minimization over $s$ is dropped. In formulas
:%
\begin{align}
Q^0_n(\hbv)\equiv\E\Big\{
\frac{\<\hbv(\bY),\bvz\>}{\|\hbv_{\beta}(\bY)\|_2\|\bvz\|_2}\Big\}\, ,\;\;\;
\mse^0_n(\hbv)\equiv\E\big\{ 
\big\|\hbv(\bY)-\bvz\|_2\Big\}\, .
\end{align}

A natural matrix estimator is 
\begin{align}
\label{ea:MatrixExpect}
\hbM_{\beta,\lambda}(\bY) := \int_{\sS^{n-1}}\bsigma\bsigma^{\sT}\, \mu_{n,\lambda,\beta,0}(\de\bsigma)\, .
\end{align}
This minimizes the matrix mean square error of Eq.~\eqref{eq:MatrixMSE}.

Note that the conditional expectation estimator $\hbv_{\beta,\lambda}(\bY)$
of Eq.~\eqref{eq:VectorEstimate}
becomes identically zero for $k$ even, because the Gibbs measure $\mu_{n,\lambda,\beta,0}$
is invariant under reflections ($\mu_{n,\lambda,\beta,0}(A) = \mu_{n,\lambda,\beta,0}(-A)$
for any $A \subseteq \sS^{n-1}$).
Indeed, no non-trivial estimation is possible under the risk metrics $Q^0_n(\hbv)$ and
$\mse^0_n(\hbv)$ in this case, because $\bY$ is left unchanged upon change of $\bvz$
into $-\bvz$.

Nevertheless the Gibbs measure $\mu_{n,\lambda,\beta,0}$ contains useful information 
despite the reflection symmetry. Namely, it is expected that $\mu_{n,\lambda,\beta,0}$
decomposes into a combination 
of components that are symmetric to each other:
\begin{align}
\mu_{n,\lambda,\beta,0} &= \frac{1}{2}\mu^+_{n,\lambda,\beta,0}+\frac{1}{2}
\mu^-_{n,\lambda,\beta,0}\, ,\label{eq:MuDecompose}\\
& \mu^-_{n,\lambda,\beta,0}(A) = \mu^+_{n,\lambda,\beta,0}(-A)\;\;\;\; \forall A\subseteq\sS^{n-1}\, .
\end{align}
Further each of the two components $\mu^{\pm}_{n,\lambda,\beta,0}$ should be 
significantly more concentrated than the mixture, and $\mu^{+}_{n,\lambda,\beta,0}$
is mostly supported on $\<\bsigma,\bvz\> >0$.
Setting $h>0$ and letting $h\to 0$ after $n\to\infty$ is expected to be equivalent to 
computing expectation with respect to $\mu^+_{n,\lambda,\beta,0}$.

This picture suggests several estimators that are non-trivial, even 
if the conditional expectation \eqref{eq:VectorEstimate} is $0$. A simple idea would be
to sample $\bsigma \sim \mu_{n,\lambda,\beta,0}$, and use this vector as an estimate.
This however introduces an additional sampling variance term in the estimation error.

An elegant alternative is to compute the matrix expectation $\hbM_{\beta,\lambda}(\bY)$, and evaluate its top eigenvector $\bv_1(\hbM_{\beta,\lambda}(\bY))$ and the
corresponding eigenvalue $\lambda_1(\hbM_{\beta,\lambda}(\bY))$. We
then estimate $\bvz$ using 
\begin{align}
\hbv^+_{\beta,\lambda}(\bY) \equiv \sqrt{\lambda_1(\hbM_{\beta,\lambda}(\bY))}\cdot \bv_1(\hbM_{\beta,\lambda}(\bY))\, .
\end{align}
The reason for this notation is that $\hbv^+(\bY)$ should be asymptotically equivalent 
to the expectation of $\mu^+_{n,\lambda,\beta,0}$.

In statistical mechanics analysis, the technical difficulties related to the fact that $\mu_{n,\lambda,\beta,0}$
is symmetric under $\bsigma \mapsto -\bsigma$ are addressed by introducing the 
symmetry breaking term $+nh\<\bv_0,\bsigma\>$ with $h >0$ 
in the Gibbs measure. The behavior of various estimators is analyzed by taking $n\to\infty$ first and $h\to0$
thereafter. Because of the decomposition \eqref{eq:MuDecompose} it is assumed that this is
asymptotically equivalent to computing expectations with respect to 
$\mu^+_{n,\lambda,\beta,0}$.

\subsection{Free energy, and its connections with estimation and information theory}
\label{sec:free_energy} 

In the statistical mechanics approach, the first crucial step to study the properties of the 
Gibbs measure $\mu_{n,\lambda,\beta,h}(\de\bsigma)$ is to
compute the asymptotic free  energy density:
\begin{align}
\label{eq:free_energy_density} 
\phi(\beta,\lambda,h) &= \lim_{n\to\infty}\frac{1}{n}\log Z_n(\beta,\lambda,h)\, ,\\
Z_n(\beta,\lambda,h) &=\int_{\sS^{n-1}}\exp\Big\{\frac{n\beta}{\sqrt{2(k!)}}\,
\<\bY,\bsigma^{\otimes k}\> +  n\,h \<\bvz,\bsigma\>\Big\} \, \nu_0(\de\bsigma)\, ,
\end{align}
where the limit is assumed to exist almost surely and be non-random. In fact
 it is not hard to show that $\log Z_n$ is a Lipschitz continuous function of the 
standard Gaussian vector $\bG$ (remember the definition of $\bW$ 
in Eq.~\eqref{eq:Wconstruction}) and use  Gaussian concentration 
(see  Theorem \ref{thm:GaussianConcentration} in Appendix \ref{app:Probability}) to prove that $\log Z_n$ concentrates exponentially 
around its expectation. This argument shows that the 
asymptotic free energy density is also equal to  
\begin{align}
\label{eq:free_energy_density_concentration} 
\phi(\beta,\lambda,h) = \lim_{n\to\infty}\Phi_n(\beta,\lambda,h)\, ,\;\;\;\;\;
\Phi_n(\beta,\lambda,h)\equiv \frac{1}{n}\E\log Z_n(\beta,\lambda,h)
\end{align}
assuming that the limit on the right-hand side exists. Proving the existence of the limit 
for the expectation is  a more difficult mathematical question, which 
can sometimes be addressed by showing that the sequence $\Phi_n(\beta,\lambda,h)$
is superadditive. We refer to the bibliographic notes at the end of this chapter. 

A clarification on the terminology. The quantity $\phi$ defined above is a `density' 
because of the normalization by $n$: it is the free energy `per particle'.
Also, the standard physics convention is to refer to the quantity $-\phi/\beta$
as the free energy. The quantity $\phi$ is sometimes referred to as `pressure' or
`free entropy'. Here we will neglect the factor $(-1/\beta)$ for simplicity.

The free energy density $\phi(\beta,\lambda,h)$ has interesting general properties and 
can be used in a number of ways to characterize the estimation problem. 
These properties are  most easily stated in terms of  the non-asymptotic quantity
\begin{align}
\Phi_n(\beta,\lambda,h)\equiv \frac{1}{n}\E\log Z_n(\beta,\lambda,h)\, .
\end{align}
We also define the noiseless tensor $\bXz$, and the expectations 
$\hbv_{\beta,\lambda,h}(\bY)$, $\hbX_{\beta,\lambda,h}(\bY)$:
\begin{align}
\bXz  & \equiv \bvz^{\otimes k}\, ,\\
\hbv_{\beta,\lambda,h}(\bY) & \equiv \int_{\sS^{n-1}} \bsigma\;  \mu_{n,\beta,\lambda,h}(\de\bsigma)\, ,\\
\hbX_{\beta,\lambda,h}(\bY)& \equiv \int_{\sS^{n-1}} \bsigma^{\otimes k}\;  \mu_{n,\beta,\lambda,h}(\de\bsigma)\, .
\end{align}
Note that $\hbv_{\beta,\lambda,h}(\bY)$, $\hbX_{\beta,\lambda,h}(\bY)$ generalize the 
definitions in the previous sections to $h\neq 0$. However, these are not statistical 
estimators unless $h=0$.

The following identities show that various expectations of interest can be estimated
by computing derivatives of $\Phi_n(\beta,\lambda,h)$ with respect to its arguments.
\begin{lemma}\label{lemma:DerivativesFE}
With the above definitions:
\begin{align}
\frac{\partial \Phi_n}{\partial h}(\beta,\lambda,h) & =\E\Big\{\int_{\sS^{n-1}}\<\bvz,\bsigma\> \, \mu_{n,\beta,\lambda,h}(\de\bsigma)\Big\}  
= \E\big\{\<\bvz,\hbv_{\beta,\lambda,h}(\bY)\>\big\}\, ,\label{eq:PartialPhiH}\\
\frac{\partial \Phi_n}{\partial \lambda}(\beta,\lambda,h) & =\frac{\beta}{\sqrt{2(k!)}}\E\Big\{\int_{\sS^{n-1}} \<\bvz,\bsigma\>^k \, \mu_{n,\beta,\lambda,h}(\de\bsigma)\Big\}  
= \frac{\beta}{\sqrt{2(k!)}}\E\big\{\<\bX_0,\hbX_{\beta,\lambda,h}(\bY)\>\big\}\, ,\label{eq:PartialPhiLambda}\\
\frac{\partial \Phi_n}{\partial \beta}(\beta,\lambda,h) & = \frac{1}{\sqrt{2(k!)}}\E\Big\{\int_{\sS^{n-1}} \<\bY,\bsigma^{\otimes k}\> \, \mu_{n,\beta,\lambda,h}(\de\bsigma)\Big\}  = \frac{1}{\sqrt{2(k!)}}\E\big\{\<\bY,\hbX_{\beta,\lambda,h}(\bY)\>\big\}\, .
\end{align}
\end{lemma}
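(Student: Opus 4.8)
The plan is to establish all three identities by differentiating $\Phi_n(\beta,\lambda,h) = \tfrac1n\E\log Z_n(\beta,\lambda,h)$ under the integral signs. Write the Hamiltonian as $H(\bsigma) = \tfrac{n\beta}{\sqrt{2(k!)}}\<\bY,\bsigma^{\otimes k}\> + nh\<\bvz,\bsigma\>$ and recall that $\bY = \lambda\bvz^{\otimes k}+\bW$ with $\bW$ not depending on $\lambda$; since $\<\bvz^{\otimes k},\bsigma^{\otimes k}\> = \<\bvz,\bsigma\>^k$, the exponent is affine in each of $h$, $\beta$, $\lambda$, with
\begin{align}
\frac{\partial H}{\partial h} = n\<\bvz,\bsigma\>,\qquad
\frac{\partial H}{\partial\beta} = \frac{n}{\sqrt{2(k!)}}\<\bY,\bsigma^{\otimes k}\>,\qquad
\frac{\partial H}{\partial\lambda} = \frac{n\beta}{\sqrt{2(k!)}}\<\bvz,\bsigma\>^k.
\end{align}
Granting the exchange of derivative and integral, $\partial_h\log Z_n = Z_n^{-1}\int_{\sS^{n-1}}(\partial_h H)\,e^{H}\,\nu_0(\de\bsigma) = n\int_{\sS^{n-1}}\<\bvz,\bsigma\>\,\mu_{n,\beta,\lambda,h}(\de\bsigma)$; dividing by $n$, taking $\E$, and pulling the $\bsigma$-integral inside the (linear) scalar product gives $\E\<\bvz,\hbv_{\beta,\lambda,h}(\bY)\>$, the first identity. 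The $\beta$- and $\lambda$-derivatives are identical, using $\<\bvz,\bsigma\>^k = \<\bXz,\bsigma^{\otimes k}\>$ so that $\int_{\sS^{n-1}}\<\bvz,\bsigma\>^k\,\mu_{n,\beta,\lambda,h}(\de\bsigma) = \<\bXz,\hbX_{\beta,\lambda,h}(\bY)\>$ and $\int_{\sS^{n-1}}\<\bY,\bsigma^{\otimes k}\>\,\mu_{n,\beta,\lambda,h}(\de\bsigma) = \<\bY,\hbX_{\beta,\lambda,h}(\bY)\>$.

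It then remains to justify the two exchanges of limit. Swapping $\partial$ with $\int_{\sS^{n-1}}$ is immediate: $\sS^{n-1}$ is compact, $\nu_0$ is a probability measure, and $e^{H(\bsigma)}$ together with its partial derivatives in $(\beta,\lambda,h)$ is jointly continuous in $(\bsigma;\beta,\lambda,h)$, hence bounded uniformly over $\sS^{n-1}$ times a compact neighborhood of any parameter point, so dominated convergence applies. Swapping $\partial$ with $\E$ (i.e.\ differentiating $\E\log Z_n$) requires a $\bW$-integrable function dominating the derivatives of $\log Z_n$ over a neighborhood of the point in question. This is the only place any care is needed: since $\mu_{n,\beta,\lambda,h}$ is supported on the unit sphere, $\|\hbX_{\beta,\lambda,h}(\bY)\|_F\le\int\|\bsigma^{\otimes k}\|_F\,\mu_{n,\beta,\lambda,h}(\de\bsigma) = 1$ and likewise $\|\hbv_{\beta,\lambda,h}(\bY)\|_2\le 1$, whence by Cauchy--Schwarz $|\partial_\beta\log Z_n|\le\tfrac{n}{\sqrt{2(k!)}}\|\bY\|_F$, $|\partial_\lambda\log Z_n|\le\tfrac{n\beta}{\sqrt{2(k!)}}$ and $|\partial_h\log Z_n|\le n$, uniformly over parameters in a bounded set; as $\bW$ is Gaussian, $\E\|\bY\|_F<\infty$, and the same bounds also show $\E|\log Z_n|<\infty$, so $\Phi_n$ is well defined and the differentiation under $\E$ is licensed by the standard theorem.

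The main obstacle is thus purely bookkeeping rather than conceptual: one must keep track of the fact that it is $\bY$, and not $\bW$, that carries the $\lambda$-dependence, and assemble the uniform-in-parameters bounds above in order to differentiate through $\E$. No spin-glass input is needed, and the identities hold for all $(\beta,\lambda,h)$, in particular on the Nishimori line.
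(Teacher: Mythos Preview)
Your proof is correct and follows essentially the same approach as the paper: differentiate under the integral and expectation, then justify the interchange via dominated convergence using the boundedness of Gibbs averages on the sphere. You are in fact more thorough than the paper, which only writes out the $h$-case explicitly (bounding $|\partial_h\log Z_n|\le n$ via $|\<\bvz,\bsigma\>|\le1$) and declares the other two ``similar''; your observation that the $\beta$-derivative requires the integrable but random dominating function $\|\bY\|_F$ is exactly the extra care the paper leaves implicit.
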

\begin{proof}
Differentiating $\Phi_n$ in $h$, we obtain, 
\begin{align}
\frac{\partial \Phi_n}{\partial h}(\beta,\lambda,h) &= \E\Big[ \frac{1}{Z_n(\beta,\lambda,h)} \int_{\sS^{n-1}}\<\bvz,\bsigma\> \exp\Big\{\frac{n\beta}{\sqrt{2(k!)}}\,
\<\bY,\bsigma^{\otimes k}\> +  n\,h \<\bvz,\bsigma\>\Big\} \, \nu_0(\de\bsigma)\Big] \nonumber \\
&=\E\Big[\int_{\sS^{n-1}}\<\bvz,\bsigma\> \, \mu_{n,\beta,\lambda,h}(\de\bsigma)\Big] 
= \E\big[\<\bvz,\hbv_{\beta,\lambda,h}(\bY)\>\big]\,  \nonumber 
\end{align}
where the final equality follows by the linearity of expectation under $\mu_{n,\beta,\lambda,h}(\cdot)$.  
In the above computation, we have interchanged the derivative and expectation. This can be justified using the Dominated Convergence Theorem, and the observation that 
\begin{align}
&\Big| \frac{\partial}{\partial h} \Big(\frac{1}{n}\E\log Z_n(\beta,\lambda,h) \Big) \Big|  \nonumber \\
=& \Big| \frac{1}{Z_n(\beta,\lambda,h)} \int_{\sS^{n-1}}\<\bvz,\bsigma\> \exp\Big\{\frac{n\beta}{\sqrt{2(k!)}}\,
\<\bY,\bsigma^{\otimes k}\> +  n\,h \<\bvz,\bsigma\>\Big\} \, \nu_0(\de\bsigma) \Big|  \nonumber \\
=&\Big|  \int_{\sS^{n-1}}\<\bvz,\bsigma\> \, \mu_{n,\beta,\lambda,h}(\de\bsigma) \Big| \leq 1. \nonumber 
\end{align}

\noindent
The other computations are similar. 
\end{proof}

Our objective is to determine the asymptotics of the estimation error under any
of the metrics defined in the previous section, cf. Eqs.~\eqref{eq:OverlapDef} to
\eqref{eq:MatrixMSE}, for the estimator $\hbv^+_{\beta,\lambda}(\bY)$.
As mentioned in the previous section, the effect of a strictly positive $h>0$ 
should be to break the symmetry and select the $\mu^+_{\beta,\lambda}$ component
of the Gibbs measure, cf. Eq.~\eqref{eq:MuDecompose}. In particular, this suggests
\begin{align}
\lim_{n\to\infty}\E\big\{\<\hbv^+_{\beta,\lambda}(\bY),\bvz\>\big\} &= 
\lim_{h\to 0+}\lim_{n\to\infty}\E\big\{\<\hbv_{\beta,\lambda,h}(\bY),\bvz\>\big\}
=\lim_{h\to 0+} \frac{\partial \phi}{\partial h}(\beta,\lambda,h)\, ,\label{eq:LimCorr}\\
\lim_{n\to\infty}\E\big\{\|\hbv^+_{\beta,\lambda}(\bY)\|_2^2\big\} &= 
\lim_{h\to 0+}\lim_{n\to\infty}\E\big\{\|\hbv_{\beta,\lambda,h}(\bY)\|_2^2\big\}\,.\label{eq:LimNorm}
\end{align}
Note that in order to determine the estimation error, we need to compute (the asymptotics of)
$\|\hbv^+_{\beta,\lambda,h}(\bY)\|_2^2$. While Lemma \ref{lemma:DerivativesFE}
does not describe how to do this, it will result as a byproduct 
the calculations in the next pages.
The quantities in expectations on the right-hand side of Eqs.~\eqref{eq:LimCorr},
\eqref{eq:LimNorm} should concentrate around their mean, 
and therefore the above imply
\begin{align}
\lim_{n\to\infty}Q_n(\hbv^+_{\beta,\lambda}) &= \lim_{h\to 0}\lim_{n\to\infty}Q_n(\hbv_{\beta,\lambda,h})\, ,\\
\lim_{n\to\infty}\mse_n(\hbv^+_{\beta,\lambda}) &= \lim_{h\to 0}\lim_{n\to\infty}\mse_n(\hbv_{\beta,\lambda,h})\, .
\end{align}
In other words, by computing the free energy, and taking derivatives of the result,
we can access the estimation error.
In the following, we will often omit mentioning explicitly the limit $h\to 0+$.

The connection between free energy, information theory and estimation theory is particularly
 elegant in the case of Bayes estimation, which corresponds to taking 
$h=0$, $\beta =\lambda\sqrt{2/k!}$. We use the notation
\begin{align}
\Phi_{n,\sBayes}(\lambda) = \Phi_n\Big(\beta =\lambda\sqrt{\frac{2}{k!}},\lambda,h=0\Big) \, . \label{eq:phi_bayes} 
\end{align}

The next lemma connects free energy with mutual information and estimation error. 
Recall that the relative entropy (Kullback-Leibler divergence) between 
two probability distributions $\mu$, $\nu$ with $\mu$ absolutely continuous with respect to 
$\nu$ is defined as
\begin{align}
D(\mu\|\nu) &\equiv \E_{\mu}\Big\{\log \frac{\de \mu}{\de\nu}\Big\}\\
& =\E_{\nu}\Big\{\frac{\de \mu}{\de\nu}\log \frac{\de \mu}{\de\nu}\Big\}\, ,
\end{align}
where $\frac{\de \mu}{\de\nu}$ is the Radon-Nikodym derivative of $\mu$ with respect to $\nu$. 
Given two random variables $X$, $Y$ on the same probability space with laws $\mu_X$, $\mu_Y$ and joint law $\mu_{X,Y}$, their mutual information
is defined as 
\begin{align}
\Info(X;Y) &\equiv D(\mu_{X,Y}\|\mu_X\times \mu_Y) = \E\Big\{\log \frac{\de \mu_{X,Y}}{\de(\mu_X\times\mu_Y)}(X,Y)\Big\}\\
& = \E\Big\{\log \frac{\de \mu_{X|Y}}{\de\mu_X}(X|Y)\Big\}\, ,
\end{align}
where in the last expression $\mu_{X|Y}$ is the conditional distribution of $X$ given $Y$.
For the reader who might not be familiar with information theory notation,
we emphasize  that $\Info(X;Y)$ is not a function of the random variables $X$, $Y$ or
of the value they take, but of their joint law.

Since $\mu_{\sBayes}$ is the conditional distribution of $\bvz$ given $\bY$, the mutual information is also given by
\begin{align}
\Info(\bvz ;\bY) =\E\left\{\log \Big(\frac{\de\mu_{\sBayes}}{\de \nu_0}(\bvz)\Big)\right\}\, .
\end{align}
Note that in the above formula, the Radon-Nikodym derivative $\frac{\de\mu_{\sBayes}}{\de \nu_0}(\bvz)$ is evaluated at $\bvz$ and depends implicitly 
on $\bY$ because $\mu_{\sBayes}$ depends on $\bY$.
\begin{lemma}\label{lemma:I-MMSE}
With the above definitions, and letting  $\hbX_{\sBayes}(\bY)\equiv \int \bsigma^{\otimes k} \mu_{\sBayes}(\de\bsigma)$,
we have the identities
\begin{align}
\frac{1}{n}\, \Info(\bvz ;\bY)&= \frac{\lambda^2}{k!}-\Phi_{n,\sBayes}(\lambda) \, ,\\
\frac{2}{n\lambda} \frac{\partial\phantom{\lambda}}{\partial\lambda} \Info(\bvz ;\bY) & = \frac{1}{k!}\, 
\E\big\{\|\hbX_{\sBayes}(\bY)-\bXz\|_F^2\big\}\, .
\end{align}
\end{lemma}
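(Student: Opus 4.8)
The plan is to compute the two identities separately, the first by a direct manipulation of the definition of mutual information and the partition function, and the second by differentiating the first in $\lambda$ and invoking a Nishimori identity.

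For the first identity, I would start from $\Info(\bvz;\bY) = \E\{\log(\de\mu_{\sBayes}/\de\nu_0)(\bvz)\}$, as recorded just before the statement. By the formula \eqref{eq:gibbs_measure} for the Bayes posterior, $(\de\mu_{\sBayes}/\de\nu_0)(\bsigma) = Z_{\sBayes}(\lambda)^{-1}\exp\{(n\lambda/k!)\<\bY,\bsigma^{\otimes k}\>\}$, so evaluating at $\bsigma = \bvz$ gives
\begin{align}
\Info(\bvz;\bY) = \frac{n\lambda}{k!}\,\E\big\{\<\bY,\bvz^{\otimes k}\>\big\} - \E\log Z_{\sBayes}(\lambda)\, . \nonumber
\end{align}
Next I would relate $Z_{\sBayes}$ to $Z_n(\beta,\lambda,0)$ at $\beta = \lambda\sqrt{2/k!}$: with this choice of $\beta$ the exponent $(n\beta/\sqrt{2(k!)})\<\bY,\bsigma^{\otimes k}\>$ in \eqref{eq:PspinDef} equals $(n\lambda/k!)\<\bY,\bsigma^{\otimes k}\>$, so $Z_n(\lambda\sqrt{2/k!},\lambda,0) = Z_{\sBayes}(\lambda)$ and hence $\E\log Z_{\sBayes}(\lambda) = n\,\Phi_{n,\sBayes}(\lambda)$ by \eqref{eq:phi_bayes}. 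For the remaining term I would substitute $\bY = \lambda\bvz^{\otimes k} + \bW$ from \eqref{eq:SpikedTensor}, giving $\E\<\bY,\bvz^{\otimes k}\> = \lambda\|\bvz^{\otimes k}\|_F^2 + \E\<\bW,\bvz^{\otimes k}\> = \lambda$, since $\|\bvz\|_2 = 1$ implies $\|\bvz^{\otimes k}\|_F^2 = 1$ and $\E\bW = 0$. Collecting terms yields $\Info(\bvz;\bY) = n\lambda^2/k! - n\Phi_{n,\sBayes}(\lambda)$, which is the first claim after dividing by $n$.

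For the second identity I would differentiate the first in $\lambda$, being slightly careful because $\Phi_{n,\sBayes}(\lambda) = \Phi_n(\lambda\sqrt{2/k!},\lambda,0)$ depends on $\lambda$ through \emph{both} the first and second arguments. By the chain rule and Lemma~\ref{lemma:DerivativesFE} (the $\beta$- and $\lambda$-derivative formulas, at $h=0$),
\begin{align}
\frac{\de}{\de\lambda}\Phi_{n,\sBayes}(\lambda) = \sqrt{\tfrac{2}{k!}}\cdot\frac{1}{\sqrt{2(k!)}}\E\<\bY,\hbX\> + \frac{\beta}{\sqrt{2(k!)}}\E\<\bX_0,\hbX\> = \frac{1}{k!}\E\<\bY,\hbX\> + \frac{\lambda}{k!}\E\<\bX_0,\hbX\>\, , \nonumber
\end{align}
where $\hbX = \hbX_{\sBayes}(\bY)$ and I used $\beta\sqrt{2(k!)}^{-1} = \lambda/k!$ on the Nishimori line. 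Now substitute $\bY = \lambda\bX_0 + \bW$ to get $\E\<\bY,\hbX\> = \lambda\E\<\bX_0,\hbX\> + \E\<\bW,\hbX\>$, and use Gaussian integration by parts (Stein's identity) on the term $\E\<\bW,\hbX\>$: differentiating $\hbX = \hbX(\bY) = \hbX(\lambda\bX_0+\bW)$ in the Gaussian noise $\bW$ produces exactly the covariance of $\bsigma^{\otimes k}$ under $\mu_{\sBayes}$, which gives $\E\<\bW,\hbX\> = \E\|\hbX\|_F^2 - \E\<\bX_0,\hbX\>$ after using a Nishimori identity to replace the posterior second moment of $\bsigma^{\otimes k}$ by a cross term with the truth. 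Assembling these, $\frac{\de}{\de\lambda}\Phi_{n,\sBayes} = (\lambda/k!)(2\E\<\bX_0,\hbX\> - \E\|\hbX\|_F^2)$; then
\begin{align}
\frac{2}{n\lambda}\frac{\partial}{\partial\lambda}\Info(\bvz;\bY) = \frac{2}{n\lambda}\Big(\frac{2n\lambda}{k!} - n\frac{\de}{\de\lambda}\Phi_{n,\sBayes}\Big) = \frac{1}{k!}\big(2\|\bX_0\|_F^2 - 2\E\<\bX_0,\hbX\> + \E\|\hbX\|_F^2 - \cdots\big)\, , \nonumber
\end{align}
and recognizing $\|\bX_0\|_F^2 = 1$ together with a final Nishimori identity $\E\<\bX_0,\hbX\> = \E\|\hbX\|_F^2$ on the left-hand pieces collapses the right-hand side to $\frac{1}{k!}\E\|\hbX - \bX_0\|_F^2 = \frac{1}{k!}\E\{\|\bX_0\|_F^2 - 2\<\bX_0,\hbX\> + \|\hbX\|_F^2\}$, which is the second claim.

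The main obstacle is the careful bookkeeping in the second identity: one must track the two-fold $\lambda$-dependence of $\Phi_{n,\sBayes}$, apply Gaussian integration by parts correctly to the noise contraction $\E\<\bW,\hbX\>$ (the derivative of $\hbX$ with respect to $W_{i_1\dots i_k}$ brings down the posterior covariance, with the symmetrization factor from \eqref{eq:Wconstruction} matching the $1/k!$ normalization), and then use the Nishimori identity $\E_{\bY}\E_{\bsigma\sim\mu_{\sBayes}}\<\bsigma^{\otimes k},\bsigma'^{\otimes k}\> = \E_{\bY}\E_{\bsigma\sim\mu_{\sBayes}}\<\bvz^{\otimes k},\bsigma^{\otimes k}\>$ (valid since on the Nishimori line a posterior sample $\bsigma$ has the same joint law as the truth $\bvz$ given $\bY$) to convert the posterior second moment into the cross term. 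Once these are in place the algebra closes and both identities follow; interchanging $\partial_\lambda$ with the expectation is justified exactly as in the proof of Lemma~\ref{lemma:DerivativesFE}.
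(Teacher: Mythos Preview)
Your argument for the first identity is correct and matches the paper's proof exactly.

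For the second identity, your overall strategy---differentiate the first identity in $\lambda$, compute $\frac{\de}{\de\lambda}\Phi_{n,\sBayes}$, apply Gaussian integration by parts to the noise term, and close with a Nishimori identity---is precisely the paper's approach. The paper substitutes $\bY=\lambda\bvz^{\otimes k}+\bW$ into $\Phi_{n,\sBayes}$ \emph{before} differentiating, whereas you go through the chain rule and Lemma~\ref{lemma:DerivativesFE}; these are equivalent and lead to the same three ingredients.

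However, your execution of the integration-by-parts step contains a concrete error. You claim $\E\<\bW,\hbX\> = \E\|\hbX\|_F^2 - \E\<\bX_0,\hbX\>$, but this expression is identically zero by the very Nishimori identity you invoke later, which would make the noise contribution vanish---and it does not. The correct computation is as follows: writing the exponent of $\mu_{\sBayes}$ as $\frac{n\lambda}{k!}\<\bW,\bsigma^{\otimes k}\> + (\text{terms independent of }\bW)$ and passing to the i.i.d.\ Gaussian $\bG$ via \eqref{eq:Wconstruction}, Gaussian integration by parts gives
\[
\E\big\{\<\bW,\hbX\>\big\} \;=\; \lambda\,\E\Big\{\int\|\bsigma^{\otimes k}\|_F^2\,\mu_{\sBayes}(\de\bsigma) - \|\hbX\|_F^2\Big\} \;=\; \lambda\big(1-\E\|\hbX\|_F^2\big),
\]
since $\|\bsigma^{\otimes k}\|_F^2=\|\bsigma\|_2^{2k}=1$ on $\sS^{n-1}$. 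Two things to note: there is a prefactor $\lambda$ (from differentiating the exponent in the Gaussian variable), and the trace of the posterior covariance is $1-\|\hbX\|_F^2$, not the other way around. No Nishimori identity is needed at this step; the ``posterior second moment'' $\int\|\bsigma^{\otimes k}\|_F^2\mu_{\sBayes}$ is deterministically $1$.

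With this correction your chain-rule formula becomes
\[
\frac{\de}{\de\lambda}\Phi_{n,\sBayes}(\lambda) \;=\; \frac{\lambda}{k!}\Big(2\E\<\bX_0,\hbX\> + 1 - \E\|\hbX\|_F^2\Big) \;=\; \frac{\lambda}{k!}\big(1+\E\<\bX_0,\hbX\>\big),
\]
the last equality by Nishimori. Then $\frac{2}{n\lambda}\partial_\lambda\Info = \frac{2}{\lambda}\big(\tfrac{2\lambda}{k!} - \tfrac{\lambda}{k!}(1+\E\<\bX_0,\hbX\>)\big) = \tfrac{2}{k!}(1-\E\<\bX_0,\hbX\>)$, and one more application of $\E\|\hbX\|_F^2=\E\<\bX_0,\hbX\>$ together with $\|\bX_0\|_F^2=1$ identifies this with $\tfrac{1}{k!}\E\|\hbX-\bX_0\|_F^2$. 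The displayed ``$\cdots$'' in your final align and the vague ``collapses'' should be replaced by this explicit algebra.
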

A proof can be found in Section \ref{sec:appendix_proof_immse}, and is an example of a general relationship between information
and estimation.

\section{Replica symmetric asymptotics}
\label{sec:replica_symmetry} 

Recall the free energy density $\phi(\beta,\lambda)$ from Eq.~\eqref{eq:free_energy_density}. 
We next describe a first attempt at computing $\phi(\beta,\lambda)$ using the  replica method. 
As we will see, the result is correct at small enough $\beta$ but not in general\footnote{However, it is correct
on the important case of Bayes estimation, namely on the line $\beta = \lambda\sqrt{2/k!}$.}. 
In the next section we will see how to modify this prediction using
the idea of `replica symmetry breaking', and obtain the correct free energy density.

\subsection{The replica symmetric calculation}

Using Eq.~\eqref{eq:free_energy_density_concentration}, the computation of $\phi(\beta,\lambda)$ reduces to the computation of $\E\log Z_n$.
The replica method tries to deduce this  from
the moments of $Z_n$, using the identity
\begin{align}
\E\log Z_n= \lim_{r\to 0}\frac{1}{r}\log \E\{ Z_n^r\}\, .\label{eq:ReplicaTrick}
\end{align}
This identity is perfectly correct, e.g. it follows from dominated convergence
under very mild integrability conditions on $Z_n$ (which are valid in our case) at fixed $n$. 
However, a few strange things will happen next:
\begin{enumerate}
\item  We consider $r$ integer.
\item We obtain a formal expression for $\E\{Z_n^r\}$ that is only valid to leading exponential order,
i.e. we compute a formula for
\begin{align}
\lim_{n\to\infty} \frac{1}{nr}\log \E\{ Z_n^r\}. 
\end{align}
\item  We finally take the limit $r\to 0$.
\end{enumerate}
The success of this procedure entirely lies in choice of the `formula'  for 
$\lim_{n\to\infty} (1/nr)\log \E\{ Z_n^r\}$ 
and its dependence on $r$. A naive hope would be that the integer moments 
$\E\{ Z_n^r\}$ uniquely determine the distribution of $Z_n$. However:
$(i)$~In most cases of interest the integrability conditions for this to happen 
(e.g. as in Carleman's condition) are violated; 
$(ii)$~Most importantly, we do not plan to compute these moments exactly, but only 
their exponential growth rate
$\lim_{n\to\infty} (1/nr)\log \E\{ Z_n^r\}$. 

It is easy to show\footnote{For instance, let $Z_n= e^{nX_n}$ where the 
random variables $X_n$ satisfy a large deviation principle at rate $n$, with convex 
rate function $I(x)$. Evaluating the exponential growth rate of $\E\{Z_n^r\}$ 
only determines $I(x)$ on a set of points away from its minimum.}
 that evaluating this exponential growth rate at integer $r$ 
does not uniquely determine $\lim_{n\to\infty}\E\log Z_n$. Hence
the actual mathematical content of the replica method is not in the
elementary identity \eqref{eq:ReplicaTrick}, but rather in the specific construction 
of the formula for  $\lim_{n\to\infty} (1/nr)\log \E\{ Z_n^r\}$ which surprisingly captures  
 the probabilistic structure of the problem.

We now turn to the implementation of the replica symmetric approximation to this setting. Fix an integer $r \geq 1$. Let us start by computing  $\E\{ Z_n^r\}$. 
%
Recall from Eq.~(\ref{eq:free_energy_density}) that
\begin{align}
Z_n \equiv \int_{\sS^{n-1}}\; \exp\Big\{n h \<\bvz,\bsigma\>+\frac{n\beta\lambda}{\sqrt{2(k!)}}\,
\<\bvz,\bsigma\>^k+\frac{n\beta}{\sqrt{2(k!)}}\,
\<\bW,\bsigma^{\otimes k}\> \Big\} \, \nu_0(\de\bsigma) . 
\end{align}
Using this formula $r$ times, and writing the expectation over $\bvz$ explicitly as an integral over $\bsigma^0$, we get
\begin{align}
\E\{Z_n^r\} = \int_{(\sS^{n-1})^{r+1}}\! e^{n h \sum_{a=1}^r\<\bsigma^0,\bsigma^a\>+ \frac{n\beta\lambda}{\sqrt{2(k!)}}\,
\sum_{a=1}^r\<\bsigma^0,\bsigma^a\>^k}\, 
\E \exp\Big\{\frac{n\beta}{\sqrt{2(k!)}}\,\sum_{a=1}^r
\<\bW,(\bsigma^a)^{\otimes k}\>\Big\} \, \nu_{0,r+1}(\de\bsigma)\, ,\label{eq:EZ-pspin-first}
\end{align}
where the integral is over $\bsigma = (\bsigma^0,\bsigma^1,\dots,\bsigma^r)\in (\sS^{n-1})^{r+1}$, 
and $\nu_{0,r+1}\equiv \nu_0\times\cdots\times\nu_0$. We can now take the expectation over $\bW$.
Recalling the definition \ref{eq:Wconstruction}, we get
\begin{align}
\E \exp\Big\{\frac{n\beta}{\sqrt{2(k!)}}\,\sum_{a=1}^r \<\bW,(\bsigma^a)^{\otimes k}\>\Big\} &=
\E \exp\Big\{  \sqrt{\frac{n\beta^2}{2}}\,\sum_{a=1}^r \<\bG,(\bsigma^a)^{\otimes k}\>\Big\} \\
&= \exp\Big\{ \frac{n\beta^2}{4}\,\sum_{a,b=1}^r\<\bsigma^a,\bsigma^b\>^k\Big\}
\end{align}
We can substitute this in Eq.~(\ref{eq:EZ-pspin-first}) to get
\begin{align}
\E\{Z_n^r\} = \int_{(\sS^{n-1})^{r+1}}  \!\exp\Big\{n h \sum_{a=1}^r\<\bsigma^0,\bsigma^a\>+ \frac{n\beta\lambda}{\sqrt{2(k!)}}\,
\sum_{a=1}^r\<\bsigma^0,\bsigma^a\>^k+ \frac{n\beta^2}{4}\,\sum_{a,b=1}^r\<\bsigma^a,\bsigma^b\>^k\Big\}\nu_{0,r+1}(\de\bsigma)
\label{eq:replica_matrix_exp1} 
\end{align}
Consider the $(r+1)\times (r+1)$ matrix $\hbQ = (\hQ_{ab})_{0\le a,b\le r}$ defined by 
\begin{align}
\hQ_{ab} & = \<\bsigma^a,\bsigma^b\>\, \;\;\;\;\;\mbox{for $a,b\in\{0,1,\dots,r\}$.}
\end{align}
Note that the RHS of Eq.~(\ref{eq:replica_matrix_exp1}) is solely a function of $\hbQ$. To continue our computation, we will integrate the RHS against the marginal distribution of $\hbQ$.
Denote by $f_{n,r}(\bQ)$ the joint density of the random variables $(\hQ)_{0\le a<b\le r}$ when $\bsigma^0,\bsigma^1,\dots,\bsigma^r$
are distributed according to $\nu_{0,r+1}=\nu_0\times\cdots\times\nu_{0}$. We then have
\begin{align}
\E\{Z_n^r\} & = \int \exp\Big\{n h\sum_{a=1}^r Q_{0,a}+\frac{n\beta\lambda}{\sqrt{2(k!)}}\,
\sum_{a=1}^r Q_{0a}^k+ \frac{n\beta^2}{4}\,\sum_{a,b=1}^rQ_{ab}^k\Big\}\, f_{n,r}(\bQ)\, \de\bQ\, .\label{eq:Integral-Pspin-1}
\end{align}
We need to estimate $f_{n,r}(\bQ)$ to leading exponential order. 
For $\bLambda = (\Lambda_{a,b})_{0\le a, b\le r}$ a symmetric 
positive-definite matrix, consider the following Gaussian measure on $(\reals^n)^{r+1}$:
\begin{align}
\gamma_{\bLambda} (\de\bsigma) = \left(\frac{n}{2\pi}\right)^{n(r+1)/2} \det(\bLambda)^{n/2}\; 
\exp\Big\{-\frac{n}{2}\sum_{a,b=0}^r\Lambda_{a,b}\<\bsigma^a,\bsigma^b\>\Big\}\, \de\bsigma
\end{align}
We claim  that (see Section \ref{sec:ProofQdensity})
\begin{align}
f_{n,r}(\bQ) &= e^{-n(r+1)/2+n\<\bLambda,\bQ\>/2}\det(\bLambda)^{-n/2}\frac{G_{\bLambda}(\bQ)}{g_{n}(1)^{r+1}}\, ,\label{eq:Q-density}
\end{align}
where $G_{\bLambda}$ is the density of $(\hQ_{ab})_{0\le a\le b\le r}$, $\hQ_{a,b}=\<\bsigma^a,\bsigma^b\>$, 
when $\bsigma$ has distribution
$\gamma_{\bLambda}$ and $g_n(x)$ is the density of $\|\bz\|_2^2$ at $x$ when $\bz\sim\normal(0,\id_n/n)$.
By the local central limit theorem, we have $g_n(1) = \Theta(n^{1/2})$, and $G_{\bLambda}(\bQ) = \Theta(n^{r(r+1)/2})$,
if $\bLambda = \bQ^{-1}$.
Hence 
\begin{align}
f_{n,r}(\bQ) &\doteq \inf_{\bLambda}e^{n\{\<\bLambda,\bQ\>-\Tr(\id_{r+1})\}/2}\det(\bLambda)^{-n/2}\\
& = \det(\bQ)^{n/2}\, .
\end{align}
where the last line follows by substituting $\bLambda = \bQ^{-1}$.
(Here and below $\doteq$ denotes equality to the leading exponential order. Namely,
we write $f(n)\doteq g(n)$ if $\lim_{n\to\infty}n^{-1}\log[f(n)/g(n)]=0$.)

Substituting in Eq.~(\ref{eq:Integral-Pspin-1}) and computing the integral over $\bQ$ by Laplace method, we get
\begin{align}
\E\{Z_n^r\} & \doteq \sup_{\bQ} \exp\Big\{n h\sum_{a=1}^r Q_{0,a}+\frac{n\beta\lambda}{\sqrt{2(k!)}}\,
\sum_{a=1}^r Q_{0a}^k+ \frac{n\beta^2}{4}\,\sum_{a,b=1}^rQ_{ab}^k+\frac{n}{2}\Tr\log(\bQ)\Big\}\\
&= \exp\big\{n \sup_{\bQ} S(\bQ)\big\}\, ,\label{eq:Action-p-spin}
\end{align}
where we introduced the function
\begin{align}
S(\bQ)\equiv h\sum_{a=1}^r Q_{0,a}+\frac{\beta\lambda}{\sqrt{2(k!)}}\,
\sum_{a=1}^r Q_{0a}^k+ \frac{\beta^2}{4}\,\sum_{a,b=1}^rQ_{ab}^k+\frac{1}{2}\Tr\log(\bQ)\, . \label{eq:RS_Action}
\end{align}

Recalling (\ref{eq:ReplicaTrick}) we get the following formal expression
\begin{align}
\phi(\beta,\lambda,h) = \lim_{r\to 0} \frac{1}{r} \sup_{\bQ} S(\bQ) \, . \label{eq:ReplicaTrickS}
\end{align}
Before continuing our investigations into the RHS of Eq.~(\ref{eq:ReplicaTrickS}), we take a brief detour to interpret the optimizer $\bQ$ in the RHS of Eq.~(\ref{eq:ReplicaTrickS}).
What is the interpretation for the matrix $\bQ$? 
Recalling that $\hbv_{\beta}(\bY) = \int_{\sS^{n-1}} \bsigma \mu_n(\de\bsigma)$, we have
\begin{align}
\E\big\{\|\hbv_{\beta}(\bY)\|^2_2\big\} &= \E\left \< \int_{\sS^{n-1}} \bsigma \mu_n(\de\bsigma) , \int_{\sS^{n-1}} \bsigma \mu_n(\de\bsigma) \right \> \, \nonumber \\
&= \E \left\{ \int_{(\sS^{n-1})^{2}} \hQ_{1,2} \mu_{n}(\de\bsigma^1) \mu_{n}(\de \bsigma^2)  \right\}\,   \nonumber\\
&= \frac{2}{r(r-1)}\sum_{1\le a<b\le r}\E\left\{\int_{(\sS^{n-1})^{r}}\hQ_{a,b}\mu_{n}(\de\bsigma^1)\cdots \mu_{n}(\de\bsigma^r)\right\}\,  , 
\end{align}
where the last equality follows by linearity of expectation. Under the double limit $n\to \infty$ (evaluated by Laplace method) followed by $r \to 0$, one conjectures that the behavior of the system should be governed by the supremum $\bQ^*$ in Eq.~(\ref{eq:ReplicaTrickS}). In turn, this implies
%
\begin{align}
\lim_{n\to\infty}\E\big\{\|\hbv_{\beta}(\bY)\|^2_2\big\} = \lim_{r\to 0}\frac{2}{r(r-1)} \sum_{1\le a<b\le r}Q^*_{a,b}\, . \label{eq:Qinterpretation}
\end{align}
Proceeding analogously for $\bvz$ that is represented as $\bsigma^0$, we get
\begin{align}
\lim_{n\to\infty}\E\big\{\<\hbv_{\beta}(\bY),\bvz\>\big\} = \lim_{r\to 0}\frac{1}{r} \sum_{a=1}^r Q^*_{0,b}\, . \label{eq:Q0interpretation}
\end{align}

Let us emphasize that the expression (\ref{eq:ReplicaTrickS}) was the result of a formal 
manipulation, and giving mathematical sense to such formulas is still an open problem.
Indeed the 
asymptotic formula (\ref{eq:Action-p-spin}) only holds for $r\ge 1$ a positive integer.

We return to Eq.~(\ref{eq:ReplicaTrickS}), and continue the derivation of the limiting free energy density.
The way one makes sense of this formula  in the replica method is to proceed as follows:
\begin{itemize}
\item Make an ansatz for the saddle point $\bQ$,  such that the resulting expression depends 
 analytically on $r$.
\item Take the limit $r\to 0$.
\end{itemize}
The ansatz will be chosen  to be a stationary point of $S(\,\cdot\,)$. However, this
is not sufficient to fix the choice of $\bQ$. We will start from a particularly simple choice 
and proceed gradually towards the full form of the `replica symmetry breaking' ansatz.
This will be presented in the context of the Sherrington-Kirkpatrick model 
 in the next chapter. 

\subsection{Replica-symmetric ansatz}
\label{sec:rep_symmetry_lambda} 

Note that the function $S(\,\cdot\,)$ is invariant under permutation
of the indices $\{1,\dots, r\}$ of the columns/rows of $\bQ$. Therefore there must exist stationary points
that are invariant under such permutations. These must have the form (the first constraint comes from the fact that 
$\|\bsigma^a\|_2=1$):
\begin{align}
Q_{aa} & = 1\, ,\;\;\; \mbox{ for $a\in\{0,1,\dots, r\}$,}\nonumber\\
Q_{0a} & = b\, ,\;\;\; \mbox{ for $a\in\{1,\dots, r\}$,}\label{eq:FirstRS}\\
Q_{a_1a_2}& = q\, ,\;\;\; \mbox{ for $a_1\neq a_2$, $a_1,a_2\in\{1,\dots, r\}$.}\nonumber
\end{align}
This is known as a \emph{replica-symmetric ansatz}, or \emph{replica-symmetric saddle point}, and we will use the notation
$\bQ^{\sRS}$ whenever useful to specify that we are using this ansatz.

Recall the function $S(\bQ)$ from Eq.~\eqref{eq:RS_Action}.
We next evaluate $S(\bQ)$ for this ansatz. The tricky part is to compute $\Tr\log(\bQ) = \log\det(\bQ)$. 
Let $\tbQ\in\reals^{r\times r}$ be the principal submatrix of $\bQ$  which corresponds to rows/columns with indices $\{1,\dots,r\}$.
By a simple application of the matrix determinant lemma
\begin{align}
\Tr\log(\bQ) = \log\big(1-b^2\<\bfone,\tbQ^{-1}\bfone\>\big)+\Tr\log(\tbQ)\, .
\end{align}
Next note that the eigenvalues of $\tbQ$ are $\lambda_1(\tbQ) = 1+(r-1)q$ (with eigenvector equal to $\bfone$)
and $\lambda_2(\tbQ) = \dots = \lambda_r(\tbQ) = 1-q$ (with eigenspace given by the orthogonal complement of $\bfone$). 
This also implies $\<\bfone,\tbQ^{-1}\bfone\> = r/(1+(r-1)q)$.
It thus follows
\begin{align}
\Tr\log(\bQ) = \log \Big(1-\frac{b^2 r}{1+(r-1)q}\Big) +\log\big(1+(r-1)q\big)+(r-1)\log(1-q)\, .
\end{align}
We can then evaluate the exponential rate $S(\bQ)$, cf. Eq.~(\ref{eq:RS_Action}) on this ansatz
\begin{align}
S(\bQ^{\sRS}) &= hrb + \frac{\beta \lambda}{\sqrt{2 (k!)}} r b^k +\frac{\beta^2r}{4} +\frac{\beta^2 r (r-1)}{4} \, q^k\nonumber\\
& +\frac{1}{2}\log\Big(1-\frac{r b^2}{1+(r-1)q}\Big) + \frac{1}{2}\log \Big(1+\frac{rq}{1-q}\Big)+\frac{1}{2} r\, \log (1-q)\, .
\end{align}

While we derived this formula for $r$ an integer, the resulting expression makes sense for arbitrary $r$, and in particular is of order $\Theta(r)$
as $r\to 0$. We therefore take this limit and define $\Psi_{\sRS}(b,q;\beta,\lambda,h) = \lim_{r\to 0} r^{-1} \, S(\bQ^{\sRS})$. We get
\begin{align}
\Psi_{\sRS}(b,q;\beta,\lambda,h)& = h b +\frac{\beta\lambda}{\sqrt{2k!}} \,b^k +\frac{\beta^2}{4}(1-q^k) -\frac{1}{2}\frac{b^2}{1-q}
+\frac{q}{2(1-q)} +\frac{1}{2}\log(1-q)\, . \label{eq:PsiRS-pspin}
\end{align}
We will omit the argument $\beta,\lambda,h$ whenever clear from the context.
According to (\ref{eq:ReplicaTrickS}), we should look for a critical point of $\Psi_{\sRS}$. The partial derivatives
are
\begin{align}
\frac{\partial\Psi_{\sRS}}{\partial b}(b,q) & =h+\beta\lambda\sqrt{\frac{k}{2(k-1)!}} \, b^{k-1} - \frac{b}{1-q}\, ,\label{eq:GradPspinRS1}\\
\frac{\partial\Psi_{\sRS}}{\partial q}(b,q) & =-\frac{k\beta^2}{4} q^{k-1} -\frac{b^2}{2(1-q)^2} +\frac{q}{2(1-q)^2}\, . \label{eq:GradPspinRS2}
\end{align}

We are then lead to the following conjecture.
\begin{conjecture}\label{conj:PspinRS}
There exists a region $\cR_{\sRS}\subseteq \reals^3$ such that, for $(\beta, \lambda, h)\in \cR_{\sRS}$, we have
\begin{align}
\phi(\beta,\lambda,h) = \Psi_{\sRS}(b_*,q_*;\beta,\lambda,h)\, ,
\end{align}
where $(b_*(\beta,\lambda,h),q_*(\beta,\lambda,h))$ is a critical point of $\Psi_{\sRS}$, i.e. satisfies $\nabla\Psi_{\sRS} (b,q) = \bfzero$.

Further, the asymptotic estimation accuracy of the $\beta$ estimator $\bv_{\beta}(\bY)$, and its norm are  given by
\begin{align}
\lim_{n\to\infty}\E\big\{|\<\bvz,\hbv_{\beta}(\bY)\>\big|\big\} & =b_*(\beta,\lambda,h=0)\, , \label{eq:overlap1}\\
\lim_{n\to\infty}\E\big\{\|\hbv_{\beta}(\bY)\|^2_2\big\} & =q_*(\beta,\lambda,h=0)\,  \label{eq:overlap2}.
\end{align}
\end{conjecture}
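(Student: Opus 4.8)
\medskip
\noindent\textbf{A proof proposal.}
Since Conjecture~\ref{conj:PspinRS} is stated as a conjecture, what I describe is a strategy that is known to succeed on parts of parameter space (in particular on the Bayes/Nishimori line $\beta=\lambda\sqrt{2/k!}$, $h=0$, and in a high-temperature neighbourhood), and which, together with a still-missing ``no replica symmetry breaking'' input, would identify the region $\cR_{\sRS}$. The plan has three parts: (i) make \eqref{eq:ReplicaTrickS} into a rigorous \emph{one-sided} variational bound, $\phi(\beta,\lambda,h)\le \sup_{b,q}\Psi_{\sRS}(b,q;\beta,\lambda,h)$, valid for all parameters; (ii) prove a matching lower bound, and identify the optimizer with the critical point $(b_*,q_*)$, on the sub-region where the sup is a genuine maximum and the overlaps concentrate; (iii) deduce \eqref{eq:overlap1}--\eqref{eq:overlap2} from overlap concentration together with Lemma~\ref{lemma:DerivativesFE}.

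For the \textbf{upper bound} I would run a Guerra--Toninelli interpolation. Interpolate between the $p$-spin Gaussian field $\langle\bW,\bsigma^{\otimes k}\rangle$ at interpolation time $t=1$ and, at $t=0$, a decoupled (single-replica) Gaussian field with two-point function proportional to $q^{k-1}\langle\bsigma,\bsigma'\rangle$ plus an additive constant, carrying the spike term $\langle\bvz,\bsigma\rangle^k$ and the $h\langle\bvz,\bsigma\rangle$ term along unchanged (and, if one wants to also control the $\bsigma^0$-overlap, a conjugate linear field pinning $\langle\bvz,\bsigma\rangle$ near $b$). Gaussian integration by parts shows $\frac{\d}{\d t}$ of the interpolated free energy equals (a constant, coming from the choice of $q,b$) minus a Gibbs average of $\langle\bsigma^1,\bsigma^2\rangle^k-kq^{k-1}\langle\bsigma^1,\bsigma^2\rangle+(k-1)q^k$; convexity of $x\mapsto x^k$ makes this remainder sign-definite on the range of overlaps that matters, and optimizing over $(b,q)$ reproduces the stationarity equations \eqref{eq:GradPspinRS1}--\eqref{eq:GradPspinRS2}. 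This yields $\phi\le \sup_{b,q}\Psi_{\sRS}$ unconditionally; the place where the relevant $(b_*,q_*)$ is actually a maximizer of $\Psi_{\sRS}$ (rather than a saddle), together with the range-of-overlaps issue when $k$ is odd, is a first candidate description of $\cR_{\sRS}$.

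For the \textbf{matching lower bound}, on the Nishimori line I would combine the I-MMSE relation of Lemma~\ref{lemma:I-MMSE} (which ties $\Phi_{n,\sBayes}(\lambda)$ to the matrix MMSE) with the adaptive interpolation method of Barbier--Macris: interpolate between the tensor channel and a scalar denoising channel with a path-dependent effective SNR chosen so that the overlap stays pinned near the solution of the RS fixed-point equation. The decisive simplification here is the Nishimori identity, which forces $b$ and $q$ to coincide asymptotically, collapsing $\Psi_{\sRS}(b,q)$ to a one-parameter problem; overlap concentration --- the technical heart --- then follows from a small quadratic $h$- (or $\lambda$-) perturbation and the monotonicity/concavity the Bayes-optimal structure provides. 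Off the Nishimori line, at high temperature, I would instead establish overlap concentration directly: show by a contraction / second-moment argument that $\langle\bsigma^1,\bsigma^2\rangle$ and $\langle\bsigma^0,\bsigma^a\rangle$ concentrate on deterministic values for $\beta$ small, and feed this into an Aizenman--Sims--Starr cavity computation of $\E\log Z_{n+1}-\E\log Z_n$ to identify the limit with $\Psi_{\sRS}(b_*,q_*)$. Finally, \eqref{eq:overlap1}--\eqref{eq:overlap2} come out as follows: by Lemma~\ref{lemma:DerivativesFE}, $\partial_h\Phi_n=\E\langle\bvz,\hbv_{\beta,\lambda,h}\rangle$, and differentiating the RS formula in $h$ gives $b_*$ (the envelope theorem kills the derivatives through $b_*,q_*$ at a critical point, and $\partial_h\Psi_{\sRS}=b$); combined with the $h\to0^+$ symmetry-breaking picture of \eqref{eq:MuDecompose} this gives the overlap, while $\|\hbv_{\beta,\lambda,h}\|_2^2=\int\!\!\int\langle\bsigma^1,\bsigma^2\rangle\,\mu_{n,\lambda,\beta,h}(\de\bsigma^1)\mu_{n,\lambda,\beta,h}(\de\bsigma^2)$ and overlap concentration give $q_*$ for the squared norm.

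\textbf{The main obstacle} is step (ii) off the Nishimori line with an \emph{explicit} parameter range: proving overlap concentration there is exactly the point that fails when replica symmetry is broken (cf.\ Section~\ref{sec:pSpin1RSB}), so delineating $\cR_{\sRS}$ as a region with a clean description --- rather than ``wherever the argument happens to close'' --- is the genuinely hard part, and is why the statement is a conjecture. A secondary, purely analytic difficulty is to show that $\Psi_{\sRS}$ has a unique relevant critical point $(b_*,q_*)$ on the claimed region, that iterating \eqref{eq:GradPspinRS1}--\eqref{eq:GradPspinRS2} converges to it, and that it is the global maximizer.
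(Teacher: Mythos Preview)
The paper does not prove this statement --- it is labelled a Conjecture, and what precedes it is the non-rigorous replica derivation: compute $\E[Z_n^r]$ for integer $r$, Laplace-approximate over $\bQ$, impose the replica-symmetric ansatz \eqref{eq:FirstRS}, and take $r\to 0$ formally to obtain $\Psi_{\sRS}$; the interpretations \eqref{eq:overlap1}--\eqref{eq:overlap2} come from the equally formal identities \eqref{eq:Qinterpretation}--\eqref{eq:Q0interpretation}. The paper then points to Talagrand/Chen for the rigorous $\lambda=0$ free energy and to \cite{lesieur2017statistical} for the Bayes line, without reproducing those arguments.

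Your proposal is a genuinely different and more rigorous route --- Guerra interpolation for one side, adaptive interpolation/Nishimori for the matching bound on the Bayes line, second-moment/cavity at high temperature, and overlap concentration to pin down $(b_*,q_*)$ --- and it is essentially the programme carried out in the references the paper cites rather than in the paper itself. What the replica heuristic buys is a fast mechanical derivation valid across the whole phase diagram (including the saddle structure, rule~(A) in Section~\ref{sec:SpikedMatrix}); what your route buys is an actual proof on the sub-regions where overlap concentration can be established. One point to tighten: your ``$\phi\le\sup_{b,q}\Psi_{\sRS}$ unconditionally'' is not quite the shape Guerra gives --- as the Remarks after the conjecture stress, the relevant stationary point is a max in $b$ and a min in $q$, so the interpolation bound you want is $\phi\le\inf_q\sup_b\Psi_{\sRS}$ (and more generally the infimum over the full Parisi functional), not a joint supremum. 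Your identification of the main obstacle --- proving overlap concentration off the Nishimori line with an explicit region --- is exactly right and is why the paper leaves this as a conjecture.
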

\noindent
The last two limits are motivated by Eqs.~(\ref{eq:Qinterpretation}) and (\ref{eq:Q0interpretation}).

\begin{remark}
Note that we did not write the RS ansatz as a supremum anymore. Indeed one puzzling
feature of the replica trick is that the supremum is sometimes changed into an infimum or a saddle point.
This can---sometimes---be understood better using the variational approach, which we will consider in the next chapter.
\end{remark}

\begin{remark}
The above conjecture is somewhat vague in what concerns the region of validity of the replica symmetric ansatz $\cR_{\sRS}$.
Indeed our calculation gives no clue as to where the latter does hold. We will see in Section \ref{sec:PureNoisePspin} that 
the conjecture cannot possibly be correct for all values of the parameters.

The replica-symmetry breaking calculation of Section \ref{sec:pSpin1RSB} allows to determine the precise region of validity $\cR_{\sRS}$.
\end{remark}

\begin{remark}
The asymptotic free energy density of this model was characterized rigorously by 
Michel Talagrand (for even $k$) \cite{talagrand2006free} and  Wei-Kuo Chen (for general $k$) 
\cite{chen2013aizenman}, in the case $\lambda=0$. In fact these papers cover 
all values of $\beta$, beyond the regime within which the replica symmetric asymptotics
is correct and prove a more general, replica symmetry breaking, expression.
For pointers to the mathematical literature in the case $\lambda>0$ we refer to the 
bibliographical notes at the end of this chapter.
\end{remark}

In the next sections we will consider some special choices of the parameters 
$(\beta,\lambda,k)$ that lead to special simplifications. This will help to develop some 
intuition about the replica symmetric asymptotics and to 
understand some of its implications. Throughout we will set $h=0$, and omit the argument $h$.

\subsection{Special cases: I. The sequence model, $k =1$}
\label{eq:SphericalSequenceModel}

In the cases $k\in \{1,2\}$ the replica symmetric ansatz is expected to be correct. 
There are several heuristic reasons for this, which should be clear once we discuss 
replica-symmetry breaking. One important remark is that in these cases the Hamiltonian
$H(\bsigma)$ has a unique local maximum on $\sS^{n-1}$ and hence the Gibbs distribution
$\mu_{\beta,\lambda,n}$ is unimodal. As we will see, replica-symmetry breaking is related to 
a (quite dramatic) breakdown of unimodality.

Further, the model (\ref{eq:SpikedTensor}) can be treated by elementary
means in these cases. It correponds to the standard sequence model for $k=1$ and to the
 symmetric matrix spiked model for $k=2$ (for which 
classical tools from random marix theory can be used). 

For $k=1$, the model (\ref{eq:SpikedTensor}) reduces to 
\begin{align}
\by = \lambda \,\bvz +\bw\, .
\end{align}
In words: we are observing a uniformly random unit vector $\bvz\in\reals^n$, $\|\bvz\|_2 =1$ corrupted by i.i.d. Gaussian noise

Equation (\ref{eq:PsiRS-pspin}) yields
\begin{align}
\Psi_{\sRS}(b,q;\beta,\lambda)& = \frac{\beta\lambda}{\sqrt{2}} \,b +\frac{\beta^2}{4}(1-q) -\frac{1}{2}\frac{b^2}{1-q}
+\frac{q}{2(1-q)} +\frac{1}{2}\log(1-q)\, . \label{eq:PsiRS_k1}
\end{align}
Solving the equations $\nabla\Psi_{\sRS}(b,q) = 0$
yields\footnote{There are in fact two solutions. We select the branch
  that has $q_*(\beta,\lambda) \to 0$ as $\beta\to 0$.}
\begin{align}
q_*(\beta,\lambda) & = 1+x_*(\beta,\lambda)-\sqrt{2x_*(\beta,\lambda)+x_*(\beta,\lambda)^2}\, ,\label{eq:QstarK1}\\
b_*(\beta,\lambda) & = \frac{\beta\lambda}{\sqrt{2}}\Big\{-x_*(\beta,\lambda)+\sqrt{2x_*(\beta,\lambda)+x_*(\beta,\lambda)^2}\Big\}\, \\
x_*(\beta,\lambda)& \equiv \frac{1}{\beta^2(1+\lambda^2)}\, .\label{eq:Xstar}
\end{align}
Substituting in Eq.~(\ref{eq:PsiRS_k1}) we obtain the RS prediction for $\phi(\beta,\lambda)$. 
It is interesting to further specialize this result.

Consider Bayes estimation corresponding to $\beta = \lambda\sqrt{2}$. Using Eqs.~(\ref{eq:QstarK1}) to (\ref{eq:Xstar}) it is immediate
that 
\begin{align}
b_{\sBayes}(\lambda) = q_{\sBayes}(\lambda) = \frac{\lambda^2}{1+\lambda^2}\, ,
\end{align}
where we used the notation $b_{\sBayes}(\lambda) = b_*(\beta=\lambda\sqrt{2/k!},\lambda)$, 
$q_{\sBayes}(\lambda) = q_*(\beta=\lambda\sqrt{2/k!},\lambda)$.
Hence, Conjecture \ref{conj:PspinRS} implies
\begin{align}
\lim_{n\to\infty}\E\big\{|\<\bvz,\hbv_{\sBayes}(\by)\>\big|\big\} = b_{\sBayes}(\lambda)  = \frac{\lambda^2}{1+\lambda^2}\,, \label{eq:SequenceMSE}\\
\lim_{n\to\infty}\E\big\{\|\hbv_{\sBayes}(\by)-\bvz\big\|^2_2\big\} = 1-2b_{\sBayes}(\lambda)+q_{\sBayes}(\lambda)  = \frac{1}{1+\lambda^2}\,. \label{eq:SequenceMSE2}
\end{align}

It is easy to rederive Eq.~(\ref{eq:SequenceMSE}) using classical tools. Here is a sketch. Recall that the signal $\bvz$ is uniformly distributed on the unit sphere $\sS^{n-1}$. For large $n$,
this distrinution is close to the isotropic Gaussian. We will therefore assume, without further justification, the simper model $\bvz\sim\normal(0,\id_n/n)$, and 
leave it as an exercise for the reader to deal with the uniform distribution over $\sS^{n-1}$. In this case, the Bayes posterior is also Gaussian. A simple calculation yields
\begin{align}
\mu_{\sBayes,\sGauss}(\de\bsigma) = \frac{1}{Z}\;\exp\Big\{-\frac{n}{2}(1+\lambda^2)\,\|\bsigma\|_2^2+n\lambda\<\by,\bsigma\>\Big\}\;\de\bsigma\, .
\end{align}
We therefore have
\begin{align}
\hbv_{\sBayes}(\by) = \frac{\lambda}{1+\lambda^2}\, \by\, ,
\end{align}
whence Eqs.~(\ref{eq:SequenceMSE}),  (\ref{eq:SequenceMSE2})  follow.

\subsection{Special cases: II. The spiked matrix model, $k =2$}
\label{sec:SpikedMatrix}

For $k=2$, the model (\ref{eq:SpikedTensor}) yields
\begin{align}
\bY = \lambda \,\bvz\,\bvz^{\sT} +\bW\, ,
\end{align}
where $\bW\in\reals^{n\times n}$, $\bW = \bW^{\sT}$ is a matrix sampled with the  $\GOE(n)$ (Gaussian Orthogonal Ensemble) distribution. Namely
$(W_{ij})_{i\le j}$ are independent random variables with $W_{ii} \sim \normal(0,2/n)$ for $1\le i\le n$, and $W_{ij}\sim\normal(0,2/n)$ for $1\le i<j\le n$.
\begin{align}
\Psi_{\sRS}(b,q;\beta,\lambda)& =\frac{\beta\lambda}{2} \,b^2 +\frac{\beta^2}{4}(1-q^2) -\frac{1}{2}\frac{b^2}{1-q}
+\frac{q}{2(1-q)} +\frac{1}{2}\log(1-q)\, .
\end{align}
The stationarity condition $\nabla\Psi_{\sRS} (b,q)= 0$ has three solutions with $q_*,b_*\ge 0$, corresponding to different `phases' of the model.
\begin{enumerate}
\item \emph{Uninformative/paramagnetic phase:}
\begin{align}
q_{{\rm P}}(\beta,\lambda) = b_{{\rm P}}(\beta,\lambda) = 0\, .
\end{align}
Remembering the interpretation of $q_*(\beta,\lambda)$, $b_*(\beta,\lambda)$ in Conjecture \ref{conj:PspinRS}, we obtain 
\begin{align}
\lim_{n\to\infty}\E\big\{\|\hbv_{\beta}(\bY)\|_2^2\big\} = 
\lim_{n\to\infty}\E\big\{\big|\<\bvz,\hbv_{\beta}(\bY)\>\big|\big\} = 0\, .
\end{align}
In other words, under $\mu_{n}(\de\bsigma)$, $\bsigma$ has zero mean. 
Obviously the estimator $\hbv_{\beta}(\bY)$ is not useful in this case.
Further analysis reveals that $\mu_{n}(\de\bsigma)$ is roughly uniform
in the sense that the distribution of $(\sqrt{n}\sigma_{i_1},\dots,\sqrt{n}\sigma_{i_{\ell}})$
converges to $\normal(0,\id_\ell)$ for any fixed $\ell$ as $n\to\infty$. On the other
hand correlations are strong enough for $\bsigma\bsigma^{\sT}$ to be positively correlated with $\bY$.
Namely, using Lemma \ref{lemma:DerivativesFE}, the replica symmetric formula implies
\begin{align}
\lim_{n\to\infty}\E\Big\{\int_{\sS^{n-1}} \<\bY,\bsigma\bsigma^{\sT}\> \, \mu_{n}(\de\bsigma)\Big\} 
=2\frac{\partial \Psi_{\sRS}}{\partial \beta}(0,0;\beta,\lambda)\, .
\end{align}

We expect the paramagnetic solution to  give the correct asymptotics 
for $\beta,\lambda$ small. Expanding around this stationary point, we get
\begin{align}
\Psi_{\sRS} (b,q) = \frac{\beta^2}{4}-\frac{1}{2}(1-\beta\lambda) b^2 +\frac{1}{4}(1-\beta^2)q^2+ o(b^2,q^2)\, .
\end{align}
Note that for $\beta<1$ and $\lambda<1/\beta$, $(b_{{\rm P}}, q_{{\rm P}})$ is a maximum with respect to  $b$ and a minimum with
respect to $q$. At these boundaries, new solutions appear.
\item \emph{Symmetric/spin glass phase.} For $\beta>1$ a distinct solution appears:
\begin{align}
q_{{\rm SG}}(\beta,\lambda) & = 1-\frac{1}{\beta}\, ,\\
b_{{\rm SG}}(\beta,\lambda) &= 0\, .
\end{align}
In this case the measure $\mu_n(\de\bsigma)$ has non-zero mean $\hbv_{\beta}(\bY)$, with
\begin{align}
\lim_{n\to\infty}\E\big\{\|\hbv_{\beta}(\bY)\|_2^2\big\} = 1-\frac{1}{\beta}\, .
\end{align}
However this mean is random and asymptotically uncorrelated with $\bvz$:
$\lim_{n\to\infty}\E\{\<\hbv_{\beta}(\bY),\bvz\>\} = 0$.
 Again, the estimator $\hbv_{\beta}(\bY)$ is not useful in this case:
 the Gibbs measure is concentrated around a random direction, which is however uncorrelated with the signal.
 Notice that this is particularly dangerous from an inference perspective, since 
 it might lead to overconfidence in a completely noisy estimate.
 
The value of the free energy is
\begin{align}
\Psi_{\sRS} (b_{{\rm SG}},q_{{\rm SG}}) = \beta-\frac{3}{4}-\frac{1}{2}\log\beta\, .
\end{align}
Expanding around this point, we get
\begin{align}
\Psi_{\sRS} (b,q) = &\Psi_{\sRS} (b_{{\rm SG}},q_{{\rm SG}}) +\frac{1}{2}\beta^2(\beta-1) \big(q-q_{{\rm SG}}\big)^2-\frac{1}{2}\beta(1-\lambda)\, \big(b-b_{{\rm SG}}\big)^2\\
&+o\big(\big(b-b_{{\rm SG}}\big)^2+\big(q-q_{{\rm SG}}\big)^2\big)\, .
\end{align}
Hence  for $\beta>1$ (which is required for this to be a solution)
and $\lambda<1$, $(b_{{\rm SG}}, q_{{\rm SG}})$ is a maximum with respect to  $b$ and a minimum with
respect to $q$. At the boundary $\lambda=1$, a new solution appears.
\item \emph{A recovery/ferromagnetic phase.} This solution exists for $\beta<1$ and $\lambda>1/\beta$ or $\beta\ge 1$ and $\lambda>1$:
\begin{align}
q_{{\rm R}}(\beta,\lambda) & = 1-\frac{1}{\beta\lambda}\, ,\label{eq:RecoveryK2a}\\
b_{{\rm R}}(\beta,\lambda) &= \sqrt{\left(1-\frac{1}{\lambda^2}\right)\left(1-\frac{1}{\beta\lambda}\right)}\, . \label{eq:RecoveryK2b}
\end{align}
\end{enumerate}

Which of the three solutions above should we pick? Of course for large $\lambda$ the recovery solution  
$(q_{{\rm R}},b_{{\rm R}})$ is the one that makes sense, while for small $\lambda$
we should switch to either $(q_{{\rm P}},b_{{\rm P}})$ or $(q_{{\rm SG}},b_{{\rm SG}})$. 
In particular for $\lambda$ small and $\beta$ small, we expect $\mu_n(\de\bsigma)$ to be 
approximately uniform over the sphere and hence $(q_{{\rm P}},b_{{\rm P}})$ should make sense.
At first sight, the derivation in the previous section, and in particular
Eq.~(\ref{eq:ReplicaTrickS}) seems to suggest that we have to maximize $\Psi_{\sRS}$ and
hence select the stationary point that corresponds to a largest value of $\Psi_{\sRS}(q,b)$. 
Somewhat surprisingly, this turns out to be incorrect. For instance, for $\lambda=0$ and 
$\beta$ small $(q_{{\rm P}},b_{{\rm P}})$ is the only stationary point, and it seems to make intuitive sense.
However, it is a maximum with respect to $b$ and a minimum with respect to $q$. 

Indeed, the following two heuristic rules are normally used:
\begin{itemize}
\item[(A)] The correct solution $(q_*,b_*)$ should be a maximum with respect to $b$ and a minimum with respect to $q$.
Namely, $\Psi_{\sRS}(q_*,b_*) = \max_b \Psi_{\sRS}(q_*,b)$ and $\Psi_{\sRS}(q_*,b_*) = \min_q \Psi_{\sRS}(q,b_*)$.
\item[(B)] At a phase transition line between solutions $(q_1,b_1)$ and $(q_2,b_2)$ we must have $\Psi_{\sRS}(q_1,b_1) = \Psi_{\sRS}(q_2,b_2)$.
\end{itemize}
Rule (B) follows from the observation that $\Phi_n(\beta,\lambda)$ is uniformly continuous in the parameters $(\beta,\lambda)$.
We will provide justification of rule (A) when we study the interpolation method in 
the next chapter. 

Some terminology is useful at this point.
The model parameters $(\beta,\lambda)$ take values in $\cR= \reals_{\ge 0}^2$.
In the present context, \emph{phase diagram} is a partition of the space of parameters $\cR$ into regions: $\cR = \cR_1\cup\cR_2\cup \dots$ such that
within region $\cR_i$, a specific solution $q_i(\beta,\lambda), b_i(\beta,\lambda)$, depending analytically on $\beta, \lambda$ yields the correct prediction
for the free energy. Each region typically corresponds to a qualitatively different 
behavior of the Gibbs measure $\mu_n$, as in the present example.

\begin{figure}[t]
\centering
\includegraphics[width=0.7\textwidth,keepaspectratio]{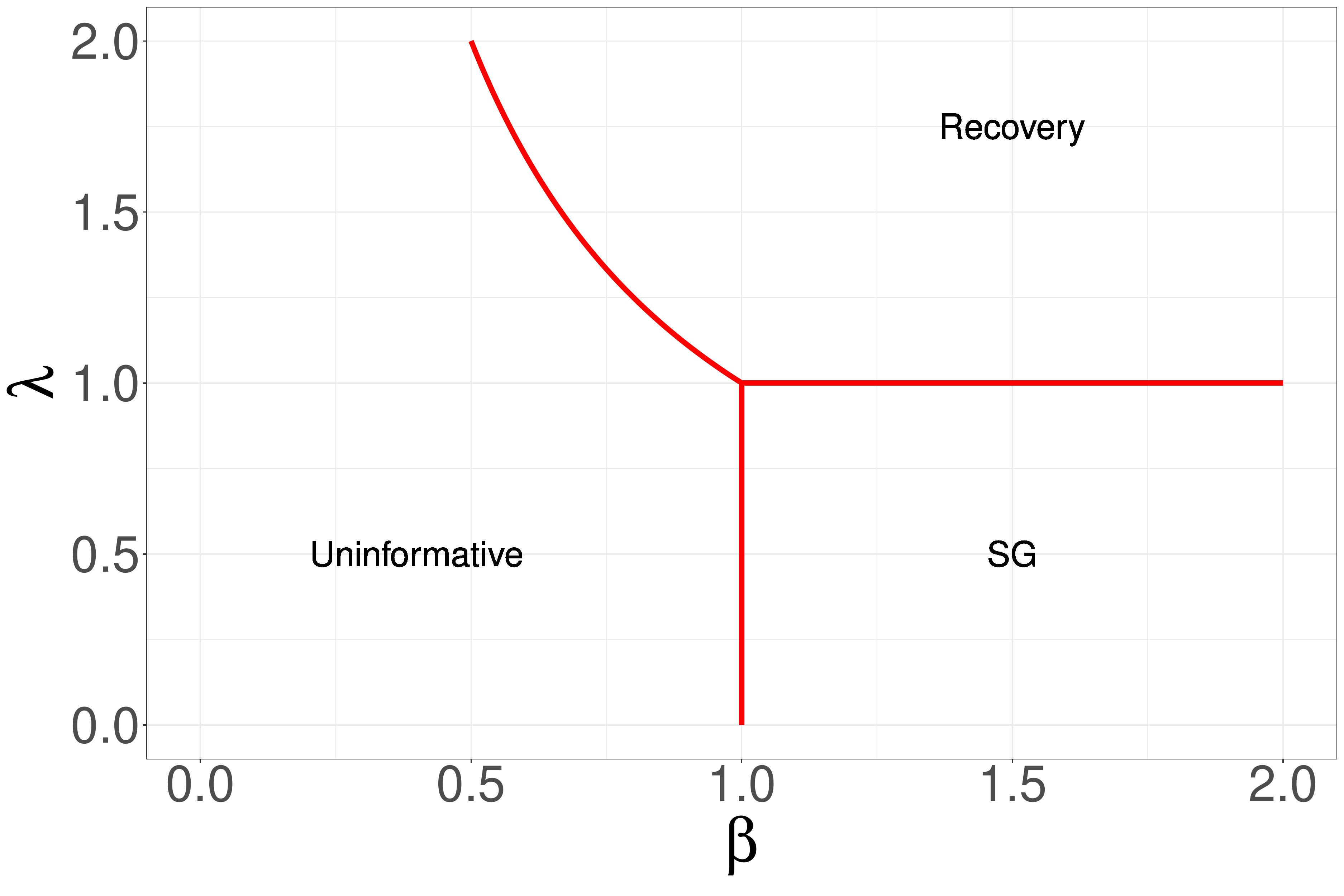}
\caption{The phase diagram of the spiked matrix model (case $k=2$ of the spiked tensor).}\label{fig:k2phase}
\end{figure}

Figure \ref{fig:k2phase} depicts the phase diagram for the $k=2$ model:
\begin{align}
\mbox{Uninformative}: & \;\;\;\;\; \cR_{{\rm P}} = \big\{(\beta,\lambda):\;\; \beta<1, \lambda<1/\beta\big\}\, ,\\
\mbox{Spin glass}: & \;\;\;\;\; \cR_{{\rm SG}} = \big\{(\beta,\lambda):\;\; \beta \ge 1, \lambda<1\big\}\, ,\\
\mbox{Recovery}: & \;\;\;\;\; \cR_{{\rm R}} = \big\{(\beta,\lambda):\;\; \lambda\ge \max(1,1/\beta) \big\}\, .
\end{align}
We can use Eqs.~(\ref{eq:RecoveryK2a}), (\ref{eq:RecoveryK2b}) to compute the mean square error in the recovery  phase:
\begin{align}
\MSE(\beta,\lambda) = \lim_{n\to\infty}\E\big\{\min\big(\|\hbv_{\beta}(\bY)-\bvz\|_2^2,\|\hbv_{\beta}(\bY)+\bvz\|_2^2\big)\big\}\, .
\end{align}
We get, for $(\beta,\lambda)\in \cR_{{\rm R}}$,
\begin{align}
\MSE(\beta,\lambda) = \frac{1}{\lambda^2} +\left[\sqrt{1-\frac{1}{\lambda^2}}-\sqrt{1-\frac{1}{\beta\lambda}}\right]^2\, .
\end{align}

It is interesting to consider two specific lines in the $(\beta,\lambda)$ plane, corresponding 
to Bayes optimal estimation (and therefore minimax optimal estimation, since the uniform prior
is least favorable), and maximum likelihood estimation.

On the Bayes line $\beta=\lambda$, the mean square error is minimized:
\begin{align}
\mse_{\sBayes}(\lambda) = \min\left(1,\frac{1}{\lambda^2}\right)\, .
\end{align}

The limit $\beta\to\infty$ corresponds to the maximum likelihood estimator $\hbv_{\infty}(\bY) = \bv_1(\bY)$ and 
\begin{align}
\mse_{\sML}(\lambda) = 2-2\sqrt{\Big(1-\frac{1}{\lambda^2}\Big)_+}\, .
\end{align}

As mentioned above, for $k=2$ the replica results can be proved easily by random matrix theory, cf. Exercise \ref{exer:SpikedMatrix}.

\subsection{Special cases: III. Bayes-optimal estimation}

Bayes optimal estimation is recovered for $\beta = \lambda\sqrt{2/k!}$,
and the fact that in this case the Gibbs measure $\mu_n$ coincides with the posterior
leads to additional simplifications. In this case (setting $h=0$), the replica symmetric 
stationarity conditions $\nabla_{(b,q)}\Psi_{\sRS}=0$, cf.  Eqs.~\eqref{eq:GradPspinRS1},
\eqref{eq:GradPspinRS2},  admit a solution with $q=b$. Indeed, this identity 
is expected to hold as a consequence of the interpretation of the 
`order parameters' $b,q$ in Eq.~(\ref{conj:PspinRS}):
\begin{align}
\E\big\{\<\bvz,\hbv_{\sBayes}(\bY)\>\big\}=\E\big\{\<\E\{\bvz| \bY\},\hbv_{\sBayes}(\bY)\>\big\} = \E\big\{\|\hbv_{\sBayes}(\bY)\|^2_2\big\}\, .
\end{align}

\begin{figure}[t]
\begin{subfigure}[t]{.5\textwidth}
\centering
\includegraphics[width=\textwidth]{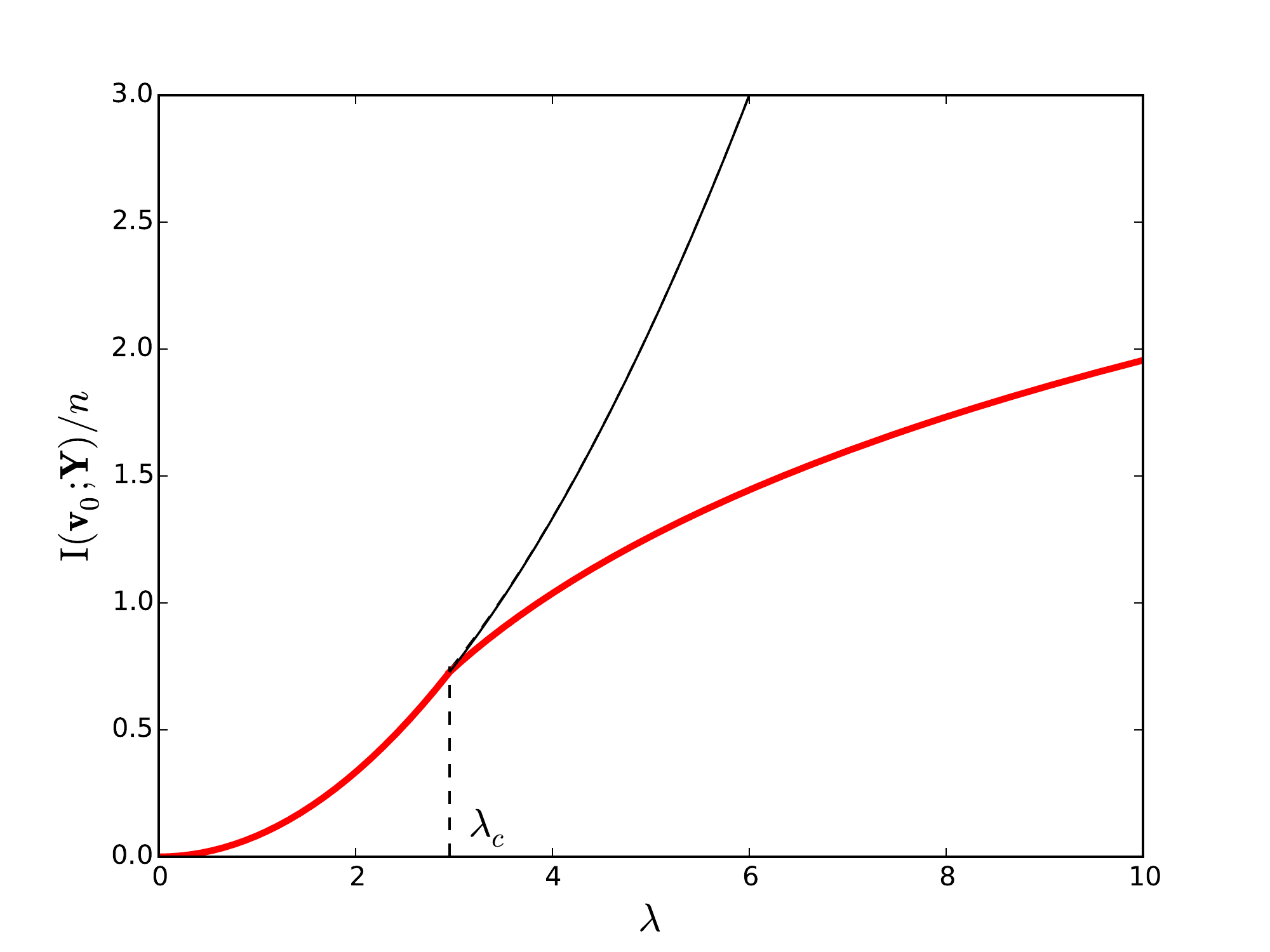}
\end{subfigure}
\begin{subfigure}[t]{.5\textwidth}
\centering
\includegraphics[width=\textwidth]{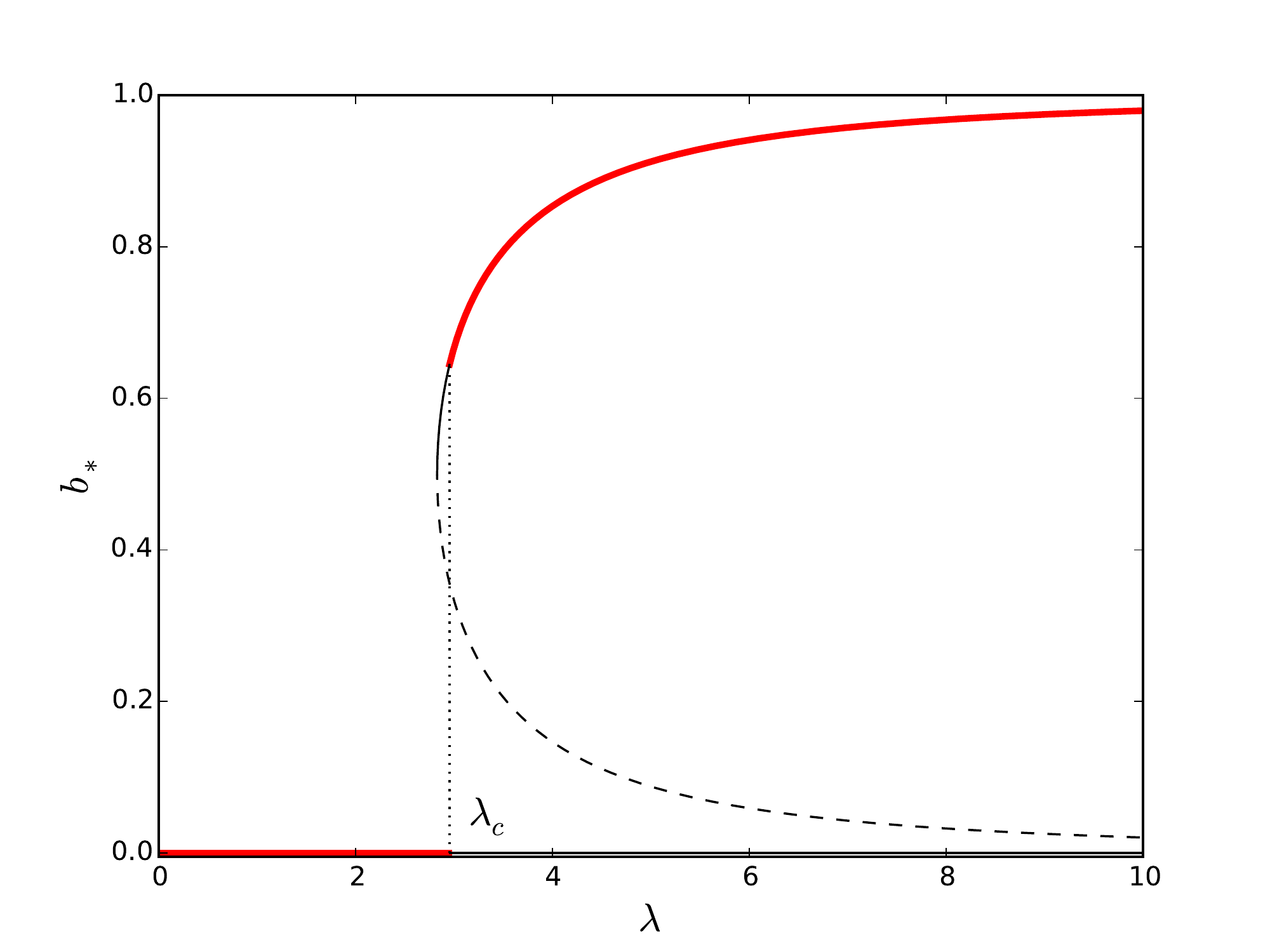}
\end{subfigure}
\caption{Spiked tensor model with $k=3$, on the Bayes line. Left:
  Mutual information. Right: Order parameter.}\label{fig:PspinBayes}
\end{figure}

Substituting this in the replica symmetric free energy (\ref{eq:PsiRS-pspin}), for $\beta = \lambda\sqrt{2/k!}$, we obtain 
$\Psi_{\sBayes}(b;\lambda)\equiv \Psi_{\sRS}(b,q=b;\beta=\lambda\sqrt{2/k!},\lambda,h=0)$, 
\begin{align}
\Psi_{\sBayes}(b;\lambda)=\frac{\lambda^2}{2k!}(1+b^k) +\frac{1}{2} b+\frac{1}{2} \log (1-b)\, .
\end{align}
Lemma \ref{lemma:I-MMSE} suggests that $\Psi_{\sBayes}(b;\lambda)$ should determine the asymptotic behavior of the 
mutual information. This fact can be proved rigorously (we refer to the bibliographic notes for further 
references.)
\begin{theorem}[\cite{lesieur2017statistical}]
With the above definitions
\begin{align}
\lim_{n\to\infty}\frac{1}{n} \Info(\bvz ;\bY) =\frac{\lambda^2}{k!}-\sup_{b\ge 0} \Psi_{\sBayes}(b;\lambda)\, .
\end{align}
\end{theorem}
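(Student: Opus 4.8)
The first identity of Lemma~\ref{lemma:I-MMSE} is an exact equality at every $n$, so the theorem is equivalent to the statement
\begin{align*}
\lim_{n\to\infty}\Phi_{n,\sBayes}(\lambda)=\sup_{b\ge 0}\Psi_{\sBayes}(b;\lambda)\,,
\end{align*}
where the supremum is over $b\in[0,1)$ (the range on which $\Psi_{\sBayes}$ is finite) and is attained in the interior since $\Psi_{\sBayes}(b;\lambda)\to-\infty$ as $b\to1^-$; the formula for $\Info(\bvz;\bY)$ then follows by subtracting from $\lambda^2/k!$. So I would work entirely with the Bayes free energy $\Phi_{n,\sBayes}(\lambda)=\tfrac1n\E\log Z_n(\beta=\lambda\sqrt{2/k!},\lambda,0)$. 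One preliminary remark streamlines everything: on the Nishimori line the joint law of $(\bvz,\bsigma)$ with $\bsigma\sim\mu_{\sBayes}$ is exchangeable, so the planted overlap $\tfrac1n\<\bvz,\bsigma\>$ and the two-replica overlap $\tfrac1n\<\bsigma^1,\bsigma^2\>$ have the same expectation (a Nishimori identity). This is exactly why a single scalar order parameter $b$, rather than the pair $(b,q)$ of the general replica-symmetric ansatz, governs the answer, and it is why the interpolation below needs only one auxiliary channel.

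\textbf{Lower bound via Guerra--Toninelli interpolation.} Fix $b\in[0,1)$ and let $r=r(b)$ be the signal-to-noise ratio of a decoupled scalar Gaussian channel $\tilde y_i=\sqrt{r}\,v_{0,i}+\tilde g_i$, $\tilde g_i\sim\normal(0,1)$, for which the scalar Bayes overlap equals $b$. For $t\in[0,1]$ consider the interpolating Bayesian model in which one observes the spiked tensor at reduced SNR $t\lambda$ together with the scalar channel at SNR $(1-t)r$, with all variances chosen so that the model at each $t$ is a bona fide posterior (hence the Nishimori identities hold all along the path). Let $\varphi_n(t)$ be the corresponding normalized expected log-partition function, so that $\varphi_n(1)=\Phi_{n,\sBayes}(\lambda)$ and $\varphi_n(0)$ is the explicit scalar-channel free energy. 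Differentiating in $t$ and integrating by parts in the Gaussian disorder (as in the interpolation method of the next chapter), every term that is a priori quartic in the replica overlap collapses, after using the Nishimori identity, into a replica-symmetric piece plus a remainder of Bregman type $-c\int_0^1\E\big\langle\,(R_{12})^{k}-b^{k}-k\,b^{k-1}(R_{12}-b)\,\big\rangle_t\,\de t$, with $R_{12}=\tfrac1n\<\bsigma^1,\bsigma^2\>\in[0,1]$ and $c>0$. Since $q\mapsto q^{k}$ is convex on $[0,1]$, the integrand is nonnegative, so $\varphi_n(1)\ge\varphi_n(0)+(\text{RS piece})$; a direct computation identifies the right-hand side as $\Psi_{\sBayes}(b;\lambda)-o_n(1)$. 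Letting $n\to\infty$ and taking the supremum over $b$ yields $\liminf_{n\to\infty}\Phi_{n,\sBayes}(\lambda)\ge\sup_{b\ge0}\Psi_{\sBayes}(b;\lambda)$.

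\textbf{Matching upper bound, and the main obstacle.} For $\limsup_n\Phi_{n,\sBayes}(\lambda)\le\sup_b\Psi_{\sBayes}(b;\lambda)$ one cannot merely plug in a trial $b$: the remainder now has the wrong sign and must be controlled. The two standard routes are (i)~the Aizenman--Sims--Starr cavity scheme, comparing free energies at sizes $n$ and $n+1$, and (ii)~the adaptive interpolation of Barbier--Macris, in which the auxiliary SNR is made an $n$-dependent function of $t$ chosen so that, at each $t$, the overlap $R_{12}$ sits near its matched value, turning the remainder into $o_n(1)$. Either way the crux---and the step I expect to be hardest---is \emph{concentration of the overlap}: one must show that $R_{12}$ (equivalently, via Nishimori, $\tfrac1n\<\bvz,\bsigma\>$) concentrates around a deterministic value, uniformly along the path. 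The usual device is to add a small side channel of vanishing SNR $\varepsilon$, differentiate the perturbed free energy twice to bound $\E\langle(R_{12}-\E\langle R_{12}\rangle)^2\rangle$ by $\varepsilon$-derivatives, integrate over $\varepsilon$, and use the Nishimori and Ghirlanda--Guerra-type identities to pass from thermal to disorder fluctuations. With overlap concentration in hand, the adaptive-interpolation remainder is $o_n(1)$ and the matching upper bound follows, establishing both the existence of the limit and its value. For the special case $k=2$ a shorter argument is available: the spiked-matrix free energy (and the corresponding estimation error) can be read off directly from the Baik--Ben Arous--P\'ech\'e transition for the leading eigenvalue and eigenvector of a deformed $\GOE$ matrix, cf.\ Exercise~\ref{exer:SpikedMatrix}, bypassing the interpolation machinery entirely.
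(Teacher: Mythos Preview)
The paper does not prove this theorem: it is stated with a citation to \cite{lesieur2017statistical} immediately after the sentence ``This fact can be proved rigorously (we refer to the bibliographic notes for further references),'' and no argument is supplied. So there is no proof in the paper to compare your proposal against.

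Your sketch is nonetheless in line with how the cited literature establishes the result: Guerra-type interpolation for one inequality, and adaptive interpolation (or a cavity/Aizenman--Sims--Starr scheme) together with overlap concentration for the matching one. Two technical points are worth flagging. First, your lower-bound paragraph has an internal sign inconsistency: a remainder $-c\int_0^1\E\langle\text{Bregman}\rangle_t\,\de t$ with $c>0$ and nonnegative integrand yields $\varphi_n(1)\le\varphi_n(0)+(\text{RS piece})$, not $\ge$. On the Nishimori line the remainder actually enters with the opposite sign compared to the unplanted SK interpolation of Section~\ref{sec:RS-Upper-Bound}, and this sign flip is precisely why Guerra interpolation produces a \emph{lower} bound on the Bayes free energy in planted models. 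Second, the convexity step ``$q\mapsto q^k$ is convex'' is used with the assertion $R_{12}\in[0,1]$, which is not automatic: for odd $k$ the overlap can a priori be negative and $q\mapsto q^k$ is not convex on $[-1,1]$. The standard fix is a small symmetry-breaking perturbation (or an additional Nishimori argument) restricting attention to nonnegative overlaps; this deserves at least a sentence.
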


\begin{figure}[t]
\begin{subfigure}[t]{.5\textwidth}
\centering
\includegraphics[width=\textwidth]{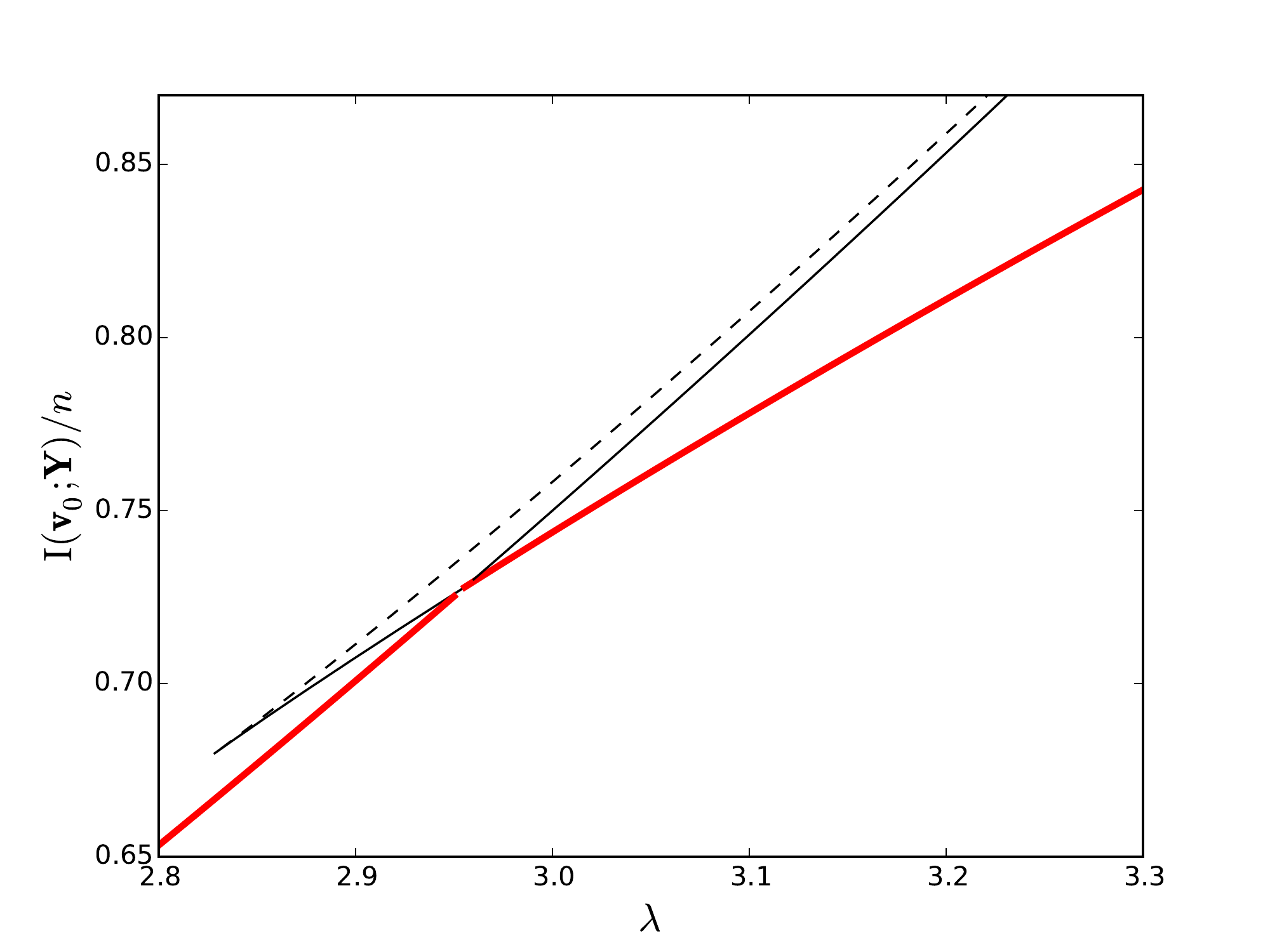}
\end{subfigure}
\begin{subfigure}[t]{.5\textwidth}
\centering
\includegraphics[width=\textwidth]{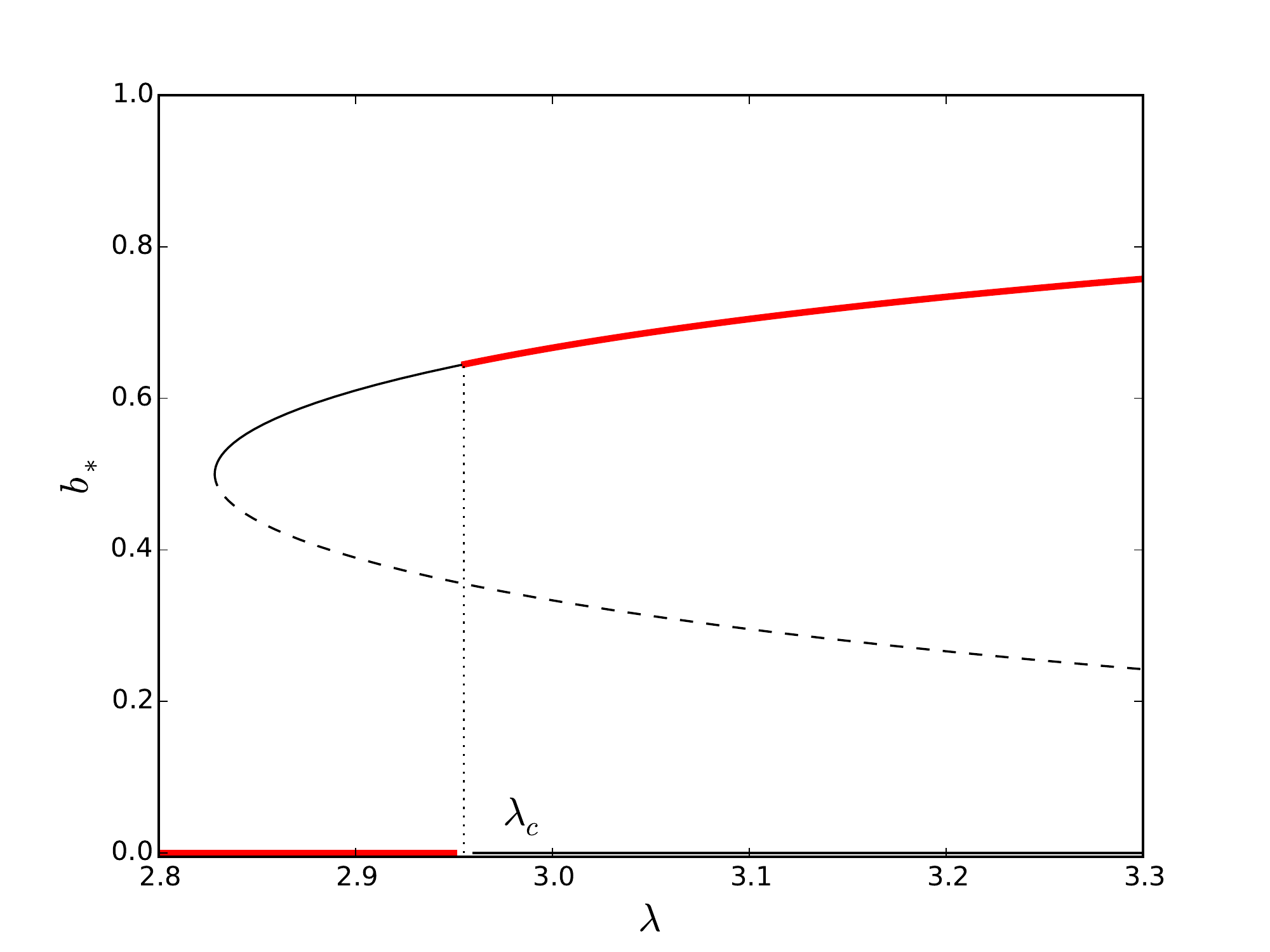}
\end{subfigure}
\caption{Same as in Figure \ref{fig:PspinBayes}: zoom around the critical point.}\label{fig:PspinBayesZoom}
\end{figure}
Setting to $0$ the derivative of $\Psi_{\sBayes}(b;\lambda)$ with respect to $b$
(or, equivalently, using Eqs.~(\ref{eq:GradPspinRS1}), (\ref{eq:GradPspinRS2})), we obtain that the stationary point $b_*$ must satisfy
\begin{align}
b &= \frac{ \xi\, b^{k-1} }{1+\xi\, b^{k-1}}\, ,\;\;\;\;\;\;\;\;\;\;
\xi \equiv \frac{\lambda^2}{(k-1)!}\, .\label{eq:PspinBayes-eqb}
\end{align}
The behavior of the solution is illustrated in Figures \ref{fig:PspinBayes}, \ref{fig:PspinBayesZoom}.
The uninformative point $b_{\rm P} (\lambda)=0$ is always a solution of Eq.~(\ref{eq:PspinBayes-eqb}), and a local maximum of the free energy
$b\mapsto \Psi_{\sBayes}(b;\lambda)$.
For $\lambda>\lambda_s(k)$, two new solutions appear: a second local maximum $b_{{\rm R}}(\lambda)>0$,  corresponding to non-trivial recovery,
and a local minimum $0<b_{{\rm unst}}<b_{{\rm R}}$. The point at which a pair of
stationary points of the free energy functional appear is referred to as
`spinodal point' in statistical physics. In this case, it is explicitly given by
\begin{align}
\frac{\lambda_s(k)^2}{(k-1)!} = \frac{(k-1)^{k-1}}{(k-2)^{k-2}}\, .
\end{align}
The boundary between the two phases, $\lambda_c(k)>\lambda_s(k)$ is determined by the condition
\begin{align}
\Psi_{\sBayes}(b_{{\rm P}}(\lambda);\lambda) =
\Psi_{\sBayes}(b_{{\rm R}}(\lambda);\lambda) \, .
\end{align}
Solving this equation numerically,  we get $\lambda_c(k=3) \approx 2.955$.
%
%
\subsection{Special cases: IV. Pure noise, $\lambda=0$}
\label{sec:PureNoisePspin}

For $\lambda=0$, the condition $\frac{\partial\Psi_{\sRS}}{\partial  b}=0$, cf. Eq.~(\ref{eq:GradPspinRS1}), implies $b=0$.
Substituting in Eq.~(\ref{eq:PsiRS-pspin}), and dropping the arguments $b=0$, $\lambda=0$, we get 
\begin{align}
\Psi_{\sRS}(q;\beta)& = \frac{\beta^2}{4}(1-q^k) +\frac{q}{2(1-q)} +\frac{1}{2}\log(1-q)\, . \label{eq:PsiRS-pspin-NoSignal}
\end{align}
The stationarity condition (\ref{eq:GradPspinRS1}) is satisfied provided
\begin{align}
\frac{q}{(1-q)^2} = \frac{k\beta^2}{2} \, q^{k-1}\, .
\end{align}
For every $\beta$, this admits a paramagnetic solution $q_{{\rm P}}=0$, which is a local minimum of $\Psi_{\sRS}$.
For $\beta>\beta_{s,\sRS}(k)$, two new solution appear $0<q_{\rm unst}<q_{{\rm SG}}$ which are the solutions of
\begin{align}
q^{k-2}(1-q)^2 = \frac{2}{k\beta^2}\, .\label{eq:QpspinRS}
\end{align}
The spinodal point is given explicitly by
\begin{align}
\beta_{s,\sRS}(k)^2 = \frac{(k-2)^{k-2}}{2k^{k-1}}\, .
\end{align}
The spin glass solution $q_{{\rm SG}}$ is a local minimum of $\Psi_{\sRS}$ and becomes a global minimum for $\beta>\beta_{c,\sRS}(k)$,
where the critical point $\beta_{c,\sRS}(k)>\beta_{s,\sRS}(k)$, is given by the condition $\Psi_{\sRS}(q_{{\rm P}};\beta)=\Psi_{\sRS}(q_{{\rm SG}};\beta)$.

We append the subscript RS to emphasize that --in this case-- the replica symmetric solution is only
a good approximation but ultimately incorrect (i.e. not asymptotically exact as $n\to\infty$). 
Within the replica method, this can be seen after we study replica-symmetry breaking.
%
%
%
%
%
%
%
%
%
%
%
\section{The replica symmetric phase diagram, $h=0$}
\label{sec:symmetric_phase} 

For $h=0$, and general $\lambda, \beta$, the stationarity condition $\nabla\Psi_{\sRS}(b,q) =0$ reduces to (cf. Eq.~(\ref{eq:GradPspinRS1}), (\ref{eq:GradPspinRS2})): 
\begin{align}
\beta\lambda\sqrt{\frac{k}{2(k-1)!}} \, b^{k-1} &= \frac{b}{1-q}\, ,\\
\frac{k\beta^2}{4} q^{k-1} +\frac{b^2}{2(1-q)^2} &=\frac{q}{2(1-q)^2}\, .
\end{align}

We note that $q_{\rm P} = b_{\rm P} =0$ is always a stationary point for this problem. This corresponds to the \textit{paramagnetic/ uninformative} fixed point for this problem. To obtain the \textit{spin-glass} stationary point, we set $b_{\rm SG} =0$ and solve for $q_{\rm SG}$ to obtain the following fixed point equation 
\begin{align}
q_{\rm SG}^{k-2} (1- q_{\rm SG})^2 = \frac{2}{k\beta^2}. 
\end{align}

Finally, the \textit{recovery} fixed point can be obtained by solving for $b_{\sRS}, q_{\sRS}$. As usual, we expect the recovery fixed point to be the correct one for $\lambda$ large, while the paramagnetic or the spin glass fixed point should be the correct one for small $\beta, \lambda$. 

%
%
%
\section{Replica-symmetry breaking}
\label{sec:pSpin1RSB}

\subsection{The need for replica-symmetry breaking}
\label{sec:NeedRSB}

Throughout the previous section, we have implicitly assumed that the RS
 formula \ref{eq:PsiRS-pspin} gives the correct formula for the limiting free energy. 
 However, it is known that for $\beta$ large enough and $\lambda$ sufficiently small, 
 $\Psi_{\sRS}$ no longer gives the correct free energy.
  Historically, this was determined first in the context of the Sherrington-Kirkpatrick (SK) model,
  which we will study in the next chapter.  Here we will give a short preview about this
  model, to motivate the need to go beyond replica symmetry.

The SK model is a Gibbs probability measure over $\bsigma \in \{ \pm 1 \}^n$.
Let $\bW \sim \GOE(n)$ be a random matrix from the Gaussian Orthogonal Ensemble
(this coincides with the $k=2$ case of the random tensor defined in the previous sections). 
At any inverse temperature $\beta >0$, we consider the Gibbs measure and free energy 
\begin{align}
\mu_{\beta}(\bsigma) &\equiv\frac{1}{Z_n(\beta)}  e^{H(\bsigma)} \,,\;\;\;\;
H(\bsigma) = \frac{\beta}{2} \sum_{i,j} W_{ij} \sigma_i \sigma_j\, ,\\
\Phi_n (\beta) &= \frac{1}{n} \E \log \Big\{\sum_{\bsigma \in \{ \pm 1 \}^n} e^{H(\bsigma)}  \Big\}\,. 
\end{align}

A direct computation reveals that 
\begin{align}
\Phi_n(\beta) - \beta \frac{\partial \Phi_n}{\partial \beta } = \frac{1}{n}\mathbb{E}[\Entro(\mu_\beta)] \geq 0.\nonumber 
\end{align}
Here $\Entro(\nu)$ is the Shannon entropy of the discrete probability distribution
$\nu$: $\Entro(\nu):=-\sum_x\nu(x)\log\nu(x)$.
If the RS ansatz is correct, then  $\lim_{n\to\infty}\Phi_n(\beta) = \Psi_{\sRS}(\beta)$
and therefore 
\begin{align}
0\le \lim_{n\to\infty}\frac{1}{n} \mathbb{E}[\Entro(\mu_\beta)]  =
\Psi_{\sRS}(\beta) - \beta \frac{\partial \Psi_{\sRS}}{\partial \beta}(\beta) \, . \label{eq:EntroIneq}
\end{align}
(Note that the limit $n\to\infty$ and the derivative with respect to $\beta$ can be exchanged
because $\beta\mapsto \Phi_n(\beta)$ is convex.)

In their  1975 paper,  Sherrington and Kirkpatrick 
\cite{sherrington1975solvable} computed the RS prediction  $\Psi_{\sRS}(\beta)$:
we will repeat the same calculation in the next chapter. 
Among other things they computed the prediction for the asymptotic entropy 
density \eqref{eq:EntroIneq}, and showed that the non-negativity constraint 
is violated for $\beta>\beta_{0}$, with $\beta_0$ a sufficiently large constant. 

This implies that the 
replica symmetric free energy fails to provide the correct answer at sufficiently low temperature. 
Surprisingly, the physicists' reaction was not that the replica method
(i.e. guessing the limit as the number of replicas $r\to 0$ from the 
computations for integer $r$) was incorrect, but that the the assumption of a replica-symmetric
stationary point $\bQ$, cf. Eq.~\eqref{eq:FirstRS} has to be replaced by a different one.
Even if the functional $S(\bQ)$ is invariant under permutations of the $r$ replicas, the
dominating stationary points had to be non-symmetric. In physics language, the 
group of symmetries $\Symm_r$ is `spontaneously broken.'

\subsection{One-step replica symmetry breaking (1RSB) free energy}

The correct form the matrix $\bQ$ (the correct way to break the replica symmetry)
was introduced by Parisi in 1979 \cite{parisi1979infinite}. 
The construction of the replica symmetry breaking (RSB) matrix $\bQ$  is hierarchical and 
proceeds in rounds. Here we will describe the first step of this construction
(one-step replica symmetry breaking, or 1RSB), which is sufficient for the model
studied here. 
The full construction will be described in the next chapter\footnote{An interesting 
question is the following: `Is there a simple criterion to decide how many steps of the RSB construction
need to be taken to get the exact asymptotics of the free energy?' While physicists have some intuition
about this, no good mathematical theory exists to provide guidance, and this question is addressed on a case-by-case basis.}.

In 1RSB, we divide the indices $[r]=\{1, \cdots, r\}$ into $r/m$ blocks, each block having 
$m$ elements. We note that the functional $S(\bQ)$ in \eqref{eq:RS_Action} is 
invariant to the permutation of the indices in each block, as well 
to the permutation of the distinct blocks. These form a subgroup of the group of permutations $\Symm_r$. 
 We look for a stationary point $\bQ$ that is invariant under this subgroup. 
 Namely, we fix a partition $[r]= \cup_{\ell =1}^{r/m} \mathcal{I}_{\ell}$,
 $|\mathcal{I}_{\ell}|=m$ and set
\begin{align}
Q_{aa} &= 1 \quad a \in \{0, 1, \cdots, r/m\}, \nonumber \\
Q_{0a} &= b \quad a \in \{ 1, \cdots , r/m\}, \nonumber \\
Q_{ab} &= q_1 \quad a, b \in \mathcal{I}_{\ell}, \ell \in \{1, \cdots, r/m\}, \nonumber  \\ 
Q_{ab} &= q_0 \quad \mbox{otherwise.} \nonumber 
\end{align}
It is not difficult to check that this is indeed the most general matrix that is invariant under the 
subgroup described above (under the additional condition $Q_{aa} =1$ for all $a$
 that follows from the interpretation of $\bQ$).

This is usually referred to as the $\rm{1RSB}$ ansatz and we will refer to the stationary point 
as $\bQ^{\sRSB{1}}$ when we use this ansatz. We next compute $S(\bQ^{\sRSB{1}})$. 
The tricky part again is to compute $\Tr\log (\bQ^{\sRSB{1}})$. To this end, 
we proceed similarly to the replica-symmetric analysis. We partition the matrix and denote by
 $\tilde{\bQ}$ as the $r \times r$ sub-matrix corresponding to the indices 
$\{1, \cdots, r\}$. Using the Schur-complement formula, we have,
\begin{align}
\Tr\log (\bQ^{\sRSB{1}}) = \log ( 1- b^2 \langle \mathbf{1}, \tilde{\bQ}^{-1} \mathbf{1} \rangle ) +
\Tr \log  (\tilde{\bQ}). \label{eq:schur}
\end{align}

To evaluate the expression above, we need a detailed understanding of the 
eigenvalues and eigenvectors of the matrix $\tilde{\bQ}$. It is not hard to see that 
$\lambda_1 = 1 + (m-1)q_1 + (r-m) q_0$ is an eigenvalue with eigenvector $\mathbf{1}$ 
with multiplicity one. The other distinct eigenvalues are $\lambda_2= 1- q_1 + m(q_1 - q_0)$
 with multiplicity $(r/m) -1$
 (the corresponding eigenvectors being vectors that are constant within blocks, and orthogonal 
 to the all-ones vector), and $\lambda_3= 1- q_1$ with multiplicity $(r/m)(m-1)$
 (for the orthogonal complement of the previous subspaces). 
  Now, we can compute
\begin{align}
\langle \mathbf{1}, \tilde{\bQ}^{-1} \mathbf{1} \rangle = \frac{r}{1 + (m-1)q_1 + (r-m) q_0}.
\end{align}
Therefore \eqref{eq:schur} immediately implies that 
\begin{align}
\rm{Tr} \log (\bQ^{\sRSB{1}}) &= \log \Big( 1 - \frac{b^2 r}{1+ (m-1) q_1 + (r-m)q_0} \Big) + \log ( 1 + (m-1) q_1 + (r-m) q_0 )
\nonumber  \\ 
&+ \Big(\frac{r}{m} - 1 \Big) \log ( 1 - q_1 + m (q_1 - q_0)) + \frac{r}{m} (m-1) \log(1- q_1). 
\end{align}
Recall the functional $S(\bQ)$ from \eqref{eq:RS_Action}:
\begin{align}
S(\bQ) = \frac{\beta \lambda}{\sqrt{2 (k!)}} \sum_{a=1}^{k} Q_{0a}^k + \frac{\beta^2}{4}\sum_{a,b =1}^r Q_{ab}^k + \frac{1}{2} \rm{Tr} \log (\bQ). 
\end{align}
To evaluate this functional under the $1$RSB ansatz, we note that:
\begin{align}
\sum_{a=1}^{k} Q_{0a}^k = r b^k , \quad \quad
\sum_{a,b =1}^k Q_{ab}^k = r + \frac{r}{m} m (m-1) q_1^k + (r^2 - \frac{r}{m} m^2 ) q_0^k.
\end{align}
We define the $1$RSB free energy functional
\begin{align}
\Psi_{\sRSB{1}}(b,q_0,q_1,m) = \lim_{r \to 0} \frac{1}{r} S(\bQ^{\sRSB{1}}). 
\end{align} 
Plugging in the expressions obtained above and taking the limit as $r \to 0$, we obtain 
\begin{align}
\Psi_{\sRSB{1}}(b, q_0, q_1, m) &= \frac{\beta \lambda}{\sqrt{2 (k!)}} b^k + \frac{\beta^2}{4} [1 - (1-m) q_1^k - m q_0^k] 
+ \frac{1}{2} \frac{q_0 - b^2}{1- (1-m) q_1 - m q_0}\nonumber \\
&+ \frac{1}{2m} \log(1- (1-m) q_1 - m q_0) - \frac{1-m}{2m} \log (1 - q_1). \label{eq:Pshi1RSB}
\end{align}
Notice that $\Psi_{\sRSB{1}}$ also depends on $\beta,\lambda$, but we will omit this dependency unless needed.

Before analyzing the $1$RSB free energy, we carry out  some sanity checks to ensure the
 consistency of this solution with the RS solution studied earlier.
 Notice that the space of replica symmetric matrices $\bQ$ is a subset of
 the set of 1RSB matrices, and can be recovered in at least three different ways:
\begin{enumerate}
\item $q_1 = q_0 = q$. In this case, it is easy to check that 
the free energy is independent of $m$ and 
\begin{align*}
\Psi_{\sRSB{1}} (b, q, q,m) = \Psi_{\sRS}(b,q)\, . 
\end{align*}

\item $m\to 0$. It is somewhat less obvious that we should recover the RS ansatz
in this case, but we recall that $r=0$, and therefore this limit coincides with $m\to r$.
 Indeed, we obtain
\begin{align*}
\lim_{m\to 0}\Psi_{\sRSB{1}}(b,q_0, q_1, m) = \Psi_{\sRS}(b,q_1)\, . 
\end{align*}

\item $m = 1$. In this case the 1RSB ansatz $\bQ^{\sRSB{1}}$ reduces
to the RS one with $q=q_0$.
It is easy to see that in this case $\Psi_{\sRSB{1}}(b,q_0, q_1, m) \to \Psi_{\sRS}(b,q_0)$.
\end{enumerate}

Now we analyze the stationary points of the 1RSB free energy functional $\Psi_{\sRSB{1}}$. 
To simplify the analysis, throughout this discussion, we set 
\begin{align*}
\lambda=0\, .
\end{align*}
We will leave it as an exercise for the reader to generalize the discussion below to $\lambda>0$.

Recall, from equation \eqref{eq:overlap1} that $b$ is the asymptotic 
 scalar product  between the signal $\bvz$ and the estimate $\hbv_{\beta}(\bY)$. 
 As a result, we expect $b=0$ in for $\lambda=h=0$, and indeed, this choice satisfies the
 stationarity condition $\partial_b \Psi_{\sRSB{1}}=0$.

 In order to determine $q_0,q_1$, we need to solve the stationarity conditions: 
\begin{align}
\frac{\partial \Psi_{\sRSB{1}}}{\partial q_0} = \frac{\partial \Psi_{\sRSB{1}}}{\partial q_1 } =0. \nonumber 
\end{align}
It is easy to check that there are always stationary points of the form $(q_0 =0, q_1)$. 
We will restrict ourselves to stationary points of this form. This assumption is in fact not
required, and we could study the larger space of solutions with $q_0\ge 0$, 
but eventually the correct solutions (maximizing $\Psi_{\sRSB{1}}$) are obtained by setting $q_0=0$.
Although the choice $q_0=0$ appears ad hoc 
at this point, its motivation will become clear when we discuss the pure states decomposition
of the state space under the $1$RSB heuristic in the rest of this section.

Setting $q_0=b=0$ (and dropping for simplicity the dependence upon these arguments), 
the free energy 
functional reduces to the following simple form:
\begin{align}
\Psi_{1\rm{RSB}}(q_1, m) &= \frac{\beta^2}{4} [1 - (1-m)q_1^k ] + \frac{1}{2m} \log (1 - (1-m) q_1) - \frac{1-m}{2m} \log (1- q_1). \label{eq:SimplePsi1RSB} \\
\frac{\partial \Psi_{1\rm{RSB}}}{\partial q_1} &= - \frac{k\beta^2}{4} (1-m) q_1^{k-1} + \frac{1-m}{2} \frac{q_1}{(1- (1-m)q_1)(1-q_1)}. \nonumber 
\end{align} 
For $k\geq 3$, $q_1=0$ is always a stationary point. This corresponds to the paramagnetic
replica symmetric stationary point studied in the previous sections. 
Thus we look for conditions under which other non-vanishing stationary points appear.
 If we set $q_1 \neq 0$, any stationary point must satisfy the equation 
\begin{align}
f_1(q_1) : = q_1^{k-2}(1-q_1) (1- (1-m)q_1) = \frac{2}{k\beta^2} = \frac{2}{k} T^2. \label{eq:q_fixedpt}
\end{align}
Here $T=1/\beta$ is the temperature.

\begin{figure}
\centering
\begin{subfigure}[b]{0.45\linewidth}
\centering
\includegraphics[width=0.82\linewidth,keepaspectratio]{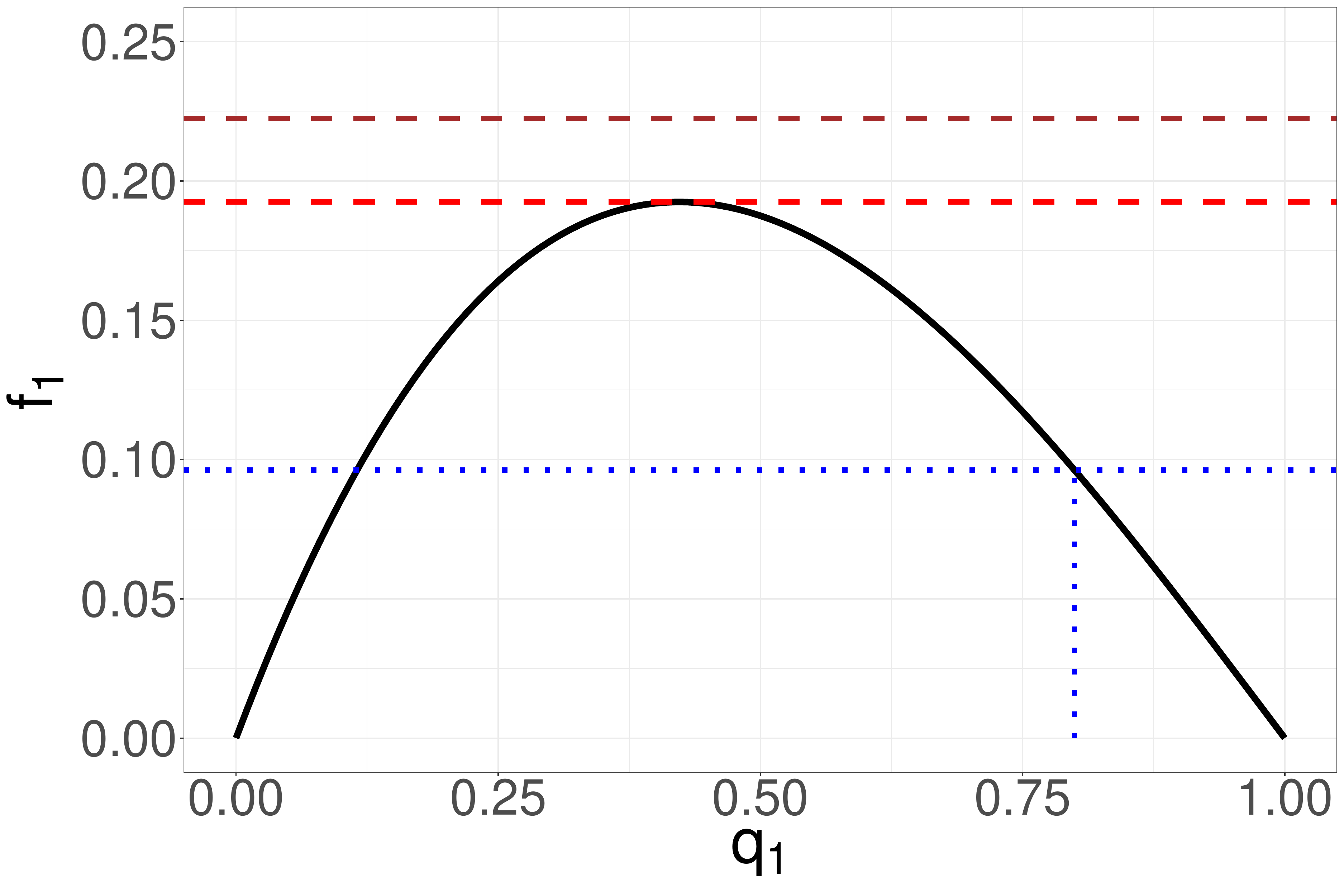}
\label{fig:pspin_rsb_stationary}
\caption{}
\end{subfigure}
\hspace{5pt}
\begin{subfigure}[b]{0.45\linewidth}
\centering
\includegraphics[width=0.82\linewidth,keepaspectratio]{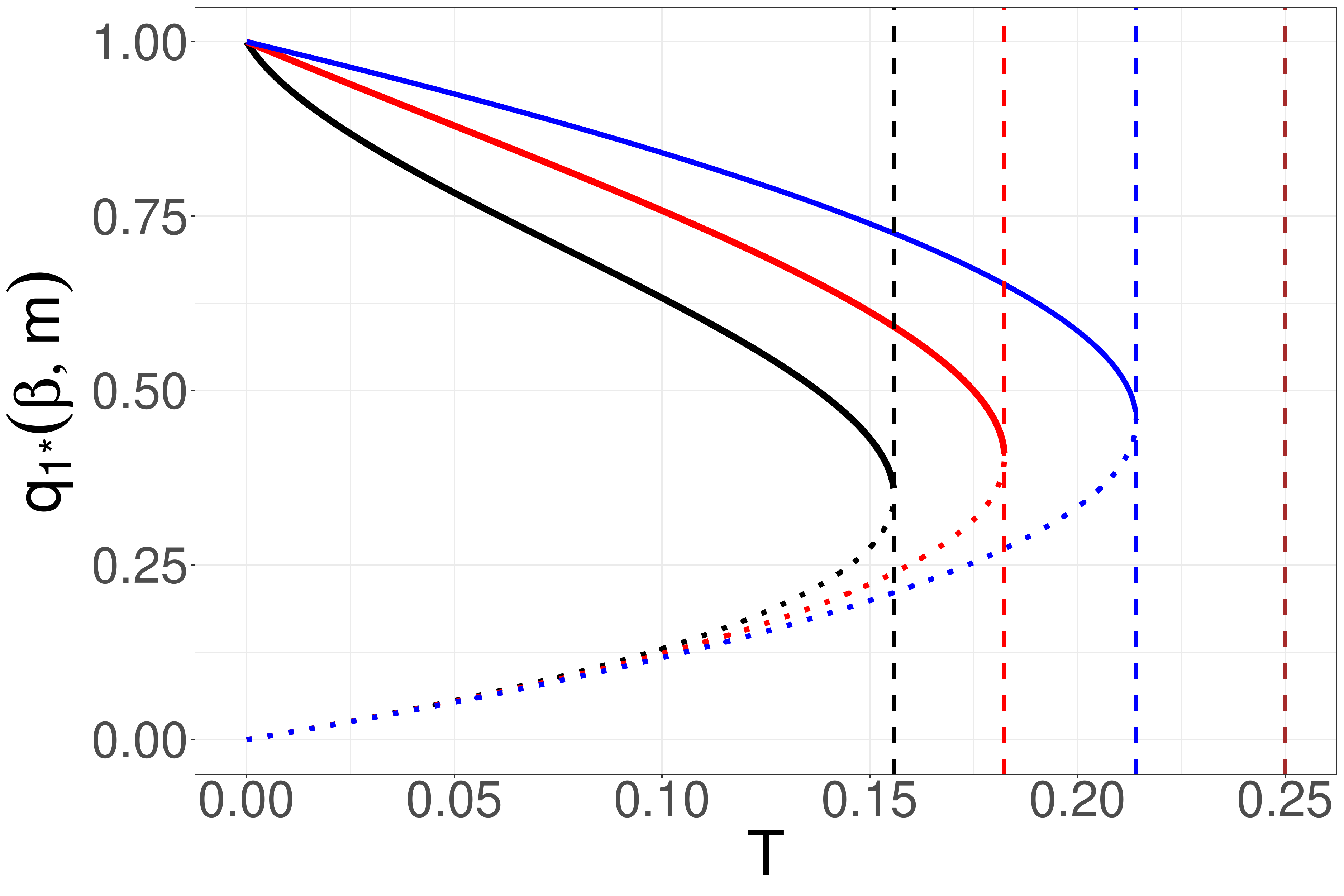}
\caption{}
\end{subfigure}
\caption{On the left, we plot $f_1$ as a function of $q_1$, $k=3$, $m=0.5$. In the high-temperature regime (shown in \small{\textsf{BROWN}}) \eqref{eq:q_fixedpt} does not have any solution. New solutions appear at $T= T_d(m)$ (shown in \textsf{RED}). For $T<T_d(m)$ (shown in \textsf{BLUE}), there are two roots---the larger root is physically relevant.  On the right, we plot the dependence of $q_{1*}(\beta,m)$ on $T$. The distinct curves represent different values of $m$.}
\label{figure_trial} 
\end{figure}

The behavior of the fixed points is shown in Fig \ref{figure_trial}.
 We note that for $T > T_d(m)$, the fixed point equation has no roots and therefore the paramagnetic fixed point is the only solution. Therefore, we expect that for $T$ sufficiently large, the replica symmetric free energy gives the right solution. Two new solutions appear for $T < T_d(m)$. We will choose the larger root $q_{1, *}(\beta, m)$ as the physically meaningful stationary point. To motivate this choice, recall the RS heuristic, where \eqref{eq:overlap2} motivates that $q_* \to 1$ as $\beta \to \infty$. Although we are working with the $1$RSB heuristic, this choice turns out to be the correct one. Further, analysis of the free energy functional reveals that for $T > T_d(m)$, $q_{1,*}(\beta)$ is a local minimizer of the free energy functional, and $\Psi_{1\rm{RSB}}(q_{1,*}(\beta, m) , m) < \Psi_{1\rm{RSB}}(0,m)$. The explicit dependence of $q_{1, *}(\beta,m)$ on $T$ and effect of varying $m$ are exhibited in Figure \ref{figure_trial}. 
 

 This clarifies the choice of $q_1$ for a fixed $m,\beta$. We next address the choice of $m$. 
 
 \subsection{Interpretation of the 1RSB ansatz and choice of $m$}
 
 Following the same heuristics as that for $q_0,q_1$, we will seek to minimize the functional with respect to $m$.
  However, we first need to specify the domain of the parameter $m$. 
  To this end, we postpone this analysis for the moment, and go back to the replica method 
  to develop an interpretation of the parameter $m$. 
  
  Recall the Gibbs measure $\mu_n$ of Eq.~\eqref{eq:PspinDef}, which we will write in compact form as 
  \begin{align}
  \mu_n(\de\bsigma) = \frac{1}{Z_n}\, e^{H_n(\bsigma)}\, \de\bsigma\, ,
  \end{align}
  with $\de\bsigma$ the uniform probability measure on the sphere $\sS^{n-1}$.
  Let $\bsigma_1, \bsigma_2\sim^{i.i.d.}\mu_n$
  (equivalently $(\bsigma_1,\bsigma_2)\sim\mu_n\otimes\mu_n$). 
  \begin{definition}[Replicas] 
  We will refer to i.i.d. samples from the Gibbs measure $\mu_n(\cdot)$ as \emph{replicas} following standard statistical physics terminology. 
  \end{definition} 
  We will compute the moments of 
  $\langle \bsigma_1, \bsigma_2 \rangle$, averaged with respect to the law of the Gaussian tensor $\bW$. 
  
  For any integer $\ell$, we have
 \begin{align}
 \E\Big[ \int_{(\sS^{n-1})^2} \langle \bsigma_1, \bsigma_2 \rangle^\ell \mu_n(\de \bsigma_1) \mu_n(\de \bsigma_2) \Big]
 &= \E\Big[ Z_n^{-2} \int_{(\sS^{n-1})^2} \langle \bsigma_1, \bsigma_2 \rangle^{\ell} e^{H_n(\bsigma_1) + H_n(\bsigma_2)}
  \de\bsigma_1 \de\bsigma_2  \Big] . \nonumber 
 \end{align}
 Using the replica trick, we have,
 \begin{align}
 \E\Big[ \int_{(\sS^{n-1})^2} \langle \bsigma_1, \bsigma_2 \rangle^{\ell} \mu_n(\de \bsigma_1) \mu_n(\de \bsigma_2) \Big] &=\
  \lim_{r \to 0} \frac{1}{\E[Z_n^r]}\E\Big[ Z_n^{r-2} \int_{(\sS^{n-1})^2} \langle \bsigma_1, \bsigma_2 \rangle^{\ell} e^{H_n(\bsigma_1) + H_n(\bsigma_2) } \de\bsigma_1 \de\bsigma_2  \Big].  \nonumber
 \end{align}
 We evaluate the right-hand side for $r$ a positive integer. Using the definition of the partition function $Z_n$, we obtain, 
 \begin{align}
 &\E\Big[ Z_n^{r-2} \int_{(\sS^{n-1})^2} \langle \bsigma_1, \bsigma_2 \rangle^{\ell} e^{H_n(\bsigma_1) + H_n(\bsigma_2)} \de\bsigma_1 \de\bsigma_2  \Big] \nonumber \\
 &= \E\Big[ \int_{(\sS^{n-1})^r} \langle \bsigma_1, \bsigma_2 \rangle^{\ell} e^{H_n(\bsigma_1) + \cdots + H_n(\bsigma_r)} \de\bsigma_1 \cdots \de \bsigma_r \Big] \nonumber \\
 &= \frac{2}{r(r-1)} \sum_{1\leq a < b \leq r} \E\Big[ \int \langle \bsigma_a , \bsigma_b \rangle^{\ell} e^{\sum_{a=1}^{r} H(\bsigma_a)}  \de\bsigma_1 \cdots \de \bsigma_r\Big] \nonumber \\
 &= \frac{2}{r(r-1)} \sum_{1\leq a < b \leq r} \int Q_{ab}^{\ell} \exp{(n g(\bQ))} f_{n,r}(\bQ) \de \bQ. 
 \end{align}
 Proceeding analogously for the factor  $\E[Z_n^r]$, and inverting as usual the limits $r\to 0$ and $n\to\infty$
 (and recalling the definition of $S(\bQ)$),
  we obtain
 \begin{align}
 \lim_{n\to\infty}\E\Big[ \int_{(\sS^{n-1})^2} \langle \bsigma_1, \bsigma_2 \rangle^\ell \mu_n(\de \bsigma_1) \mu_n(\de \bsigma_2) \Big]
 & =
  \lim_{r\to\infty}\frac{2}{r(r-1)} \sum_{1 \leq a < b  \leq r} \lim_{n\to \infty}\frac{\int Q_{ab}^\ell \exp{(n S(\bQ))} \de \bQ}
  {\int \exp{(n S(\bQ))} \de \bQ} . \label{eq:overlap3}
 \end{align}
 As $n \to \infty$, the integral is dominated by the overlap matrix $\bQ^{*}$ which maximizes the functional $S(\bQ)$. 
 We will evaluate the functional at the $1$RSB stationary point $\bQ^{\sRSB{1}}$:
 \begin{align}
  \lim_{n\to\infty}\E\Big[ \int_{(\sS^{n-1})^2} \langle \bsigma_1, \bsigma_2 \rangle^{\ell} \mu_n(\de \bsigma_1) \mu_n(\de \bsigma_2) \Big] 
  &= \lim_{r \to 0}  \frac{2}{r(r-1)} \sum_{1\leq a < b \leq r} (Q_{ab}^{\sRSB{1}})^{\ell}. \nonumber 
 \end{align}
 
 Next, we compute the right-hand side for the $1$RSB stationary point $\bQ^{\sRSB{1}}$ to obtain
 \begin{align}
 \sum_{1 \leq a < b \leq r} (Q_{ab}^{\sRSB{1}})^{\ell} = - \frac{r}{2} (1-m) q_1^{\ell} - \frac{r}{2} (1- (1-m)) q_0^\ell + O(r^2). \nonumber 
 \end{align}
 This implies 
 \begin{align}
   \lim_{n\to\infty}\E\Big[ \int \langle \bsigma_1, \bsigma_2 \rangle^\ell \mu_n(\de \bsigma_1) \mu_n(\de \bsigma_2) \Big]  = m q_0^\ell + (1-m) q_1^\ell. \nonumber 
 \end{align}
 If we set $\rho_n$ to be the law of the overlap $\langle \bsigma_1, \bsigma_2 \rangle$, 
 then the heuristic computation 
 above suggests that, as $n\to\infty$,
 \begin{align}
 \rho_n \stackrel{w}{\Rightarrow} \rho:= m \delta_{q_0} + (1-m) \delta_{q_1}. \label{eq:OverlapDistr1RSB}
 \end{align}
 (Here $\stackrel{w}{\Rightarrow}$ denotes weak convergence.)
 
Equation \eqref{eq:OverlapDistr1RSB} suggests  that we should minimize the free energy functional over $m \in [0,1]$. 
Further, it implies the following picture for the behavior of the random Gibbs distribution 
$\mu_{n}$ (We will focus for simplicity on the case $\beta=\lambda=h=0$).
 For $\beta$ sufficiently small, we expect the replica symmetric ansatz to be correct, 
and  $q_0 = q_1=0 $ or $m=1$. In this case, two iid samples $\bsigma_1, \bsigma_2$ are typically 
approximately orthogonal, i.e., $\langle \bsigma_1 , \bsigma_2 \rangle \approx 0$. 
This behavior is observed for iid samples from the uniform distribution on $\sS^{n-1}$.  
In this respect, for $\beta$ sufficiently small, the measure $\mu_{\beta}$ is qualitatively similar
 to the uniform distribution: it does not have strong correlation among its variables.
 
  When $\beta$ crosses a critical point,  a genuine the $1$RSB solution appears 
  (i.e. a solution with $q_0<q_1$ and $m\in (0,1)$).
  Hence, the behavior of the overlap distribution changes dramatically. In this case, for 
  $\bsigma_1, \bsigma_2\sim^{i.i.d.}\mu_{\beta}$, the overlap $\<\bsigma_1,\bsigma_2\>$ 
  concentrates around one of two values, either $q_0 =0$ or $q_1 > 0$. The interpretation
  is as follows.
  The measure $\mu_{n}$ concentrates around regions of sphere which correspond to high values 
  of the Hamiltonian $H_n(\,\cdot\,)$. For $\beta$ sufficiently large, there are 
  $O(1)$ such regions on the sphere, that are well-separated, i.e. separated a distance of order one. 
  These regions are often referred to as ``pure states" or ``clusters" in the physics parlance.
 The fact that $q_0=0$ indicates that these regions are 
   approximately orthogonal. For two iid samples drawn from the measure 
$\mu_{n}$, either they both belong to the same cluster, in which case their overlap concentrates around $q_1$, 
or they belong to different clusters, in which case their overlap concentrates 
around $0$. Intuitively, we expect $q_1 \to 1$ as $\beta\to \infty$, as the pure state concentrates 
very close to the configurations with the highest values of the Hamiltonian $H_n(\, \cdot\, )$. 

We can decompose the Gibbs measure into pure states as
\begin{align}
\mu(\de\bsigma) &= \sum_{\alpha=1}^Nw_{\alpha}\mu_{n,\alpha}(\de\bsigma) \, ,\\
\mu_{n,\alpha}(\de\bsigma)& = \frac{1}{\mu_n(\Omega_{\alpha})}\mu_n(\de\bsigma) \bfone_{\Omega_{\alpha}}(\bsigma)\,,\;\;\;\;
w_{\alpha} = \mu_n(\Omega_{\alpha})\, .
\end{align}
Under this description of the measure $\mu_{\beta}$, the parameter $m$ assumes added significance. 
Note that $w_{\alpha}$ is the probability that $\bsigma$ belongs to state $\alpha$, and therefore the
probability  that two replicas belong to the same state is given by $\sum_{\alpha} w_{\alpha}^2$.
Therefore we expect
\begin{align}
1-m = \lim_{n\to\infty}\E\Big[ \sum_{\alpha=1}^N w_a^2 \Big]. 
\end{align}
As we will see below, the weights $(w_{\alpha})$ are random and not equal, even in the limit $n\to\infty$.
If we nevertheless assumed there was a non random number of states $\overline{N}$, all
of equal size, the above would imply $\overline{N} = 1/(1-m)$.
While this last statement is ---as we said--- incorrect, and in fact we will correct it below,
the intuition is still valid. The number $m$ quantifies the number of 
pure states, with $m$ close to $0$ corresponding to one or a few pure states, while $m$ close to one
corresponding to a large number of pure states  (still order one in $n$). 

\subsection{Back to the free energy}

Next, we return to the study of the $1$RSB free energy functional $\Psi_{\sRSB{1}}(q_1, m)$. We wish to minimize
 this functional jointly with respect to $q_1, m$. We have already studied the minimization problem with respect to
  $q_1$, for any fixed value of $m \in [0,1]$ yielding  the minimizer $q_{1,*}(\beta, m)$. 
  
We now study the minimization of $\Psi_{1\rm{RSB}} (q_{1,*}(\beta,m), m)$ as a function of $m$. 
Recall that
\begin{align}
\Psi_{1\rm{RSB}}(0,m) = \frac{\beta^2}{4} = \Psi_{\sRS}(q_0 =0), \nonumber \\
\Psi_{1\rm{RSB}}(q_1, 1) = \frac{\beta^2}{4} = \Psi_{\sRS}(q_0 =0). \nonumber 
\end{align}
Thus if we set $q_1=0$, the value is independent of $m$. We further recall that $q_1=0$ is always a stationary point. 
Thus at sufficiently high temperature, $q_1=0$ is the only stationary point, and we recover the replica symmetric solution. 

At lower temperatures, we set 
\begin{align}
g(m) \equiv \Psi_{1\rm{RSB}}(q_{1,*}(\beta,m),m) - \frac{\beta^2}{4}, \nonumber 
\end{align}
where for any $m > 0$, we recall that $q_{1,*}(\beta,m)$ is the largest root of the equation \eqref{eq:q_fixedpt},
 whenever such a root exists. 

\begin{figure}
\centering
\begin{subfigure}[b]{0.45\textwidth}
\centering
\includegraphics[width=0.9\linewidth,keepaspectratio]{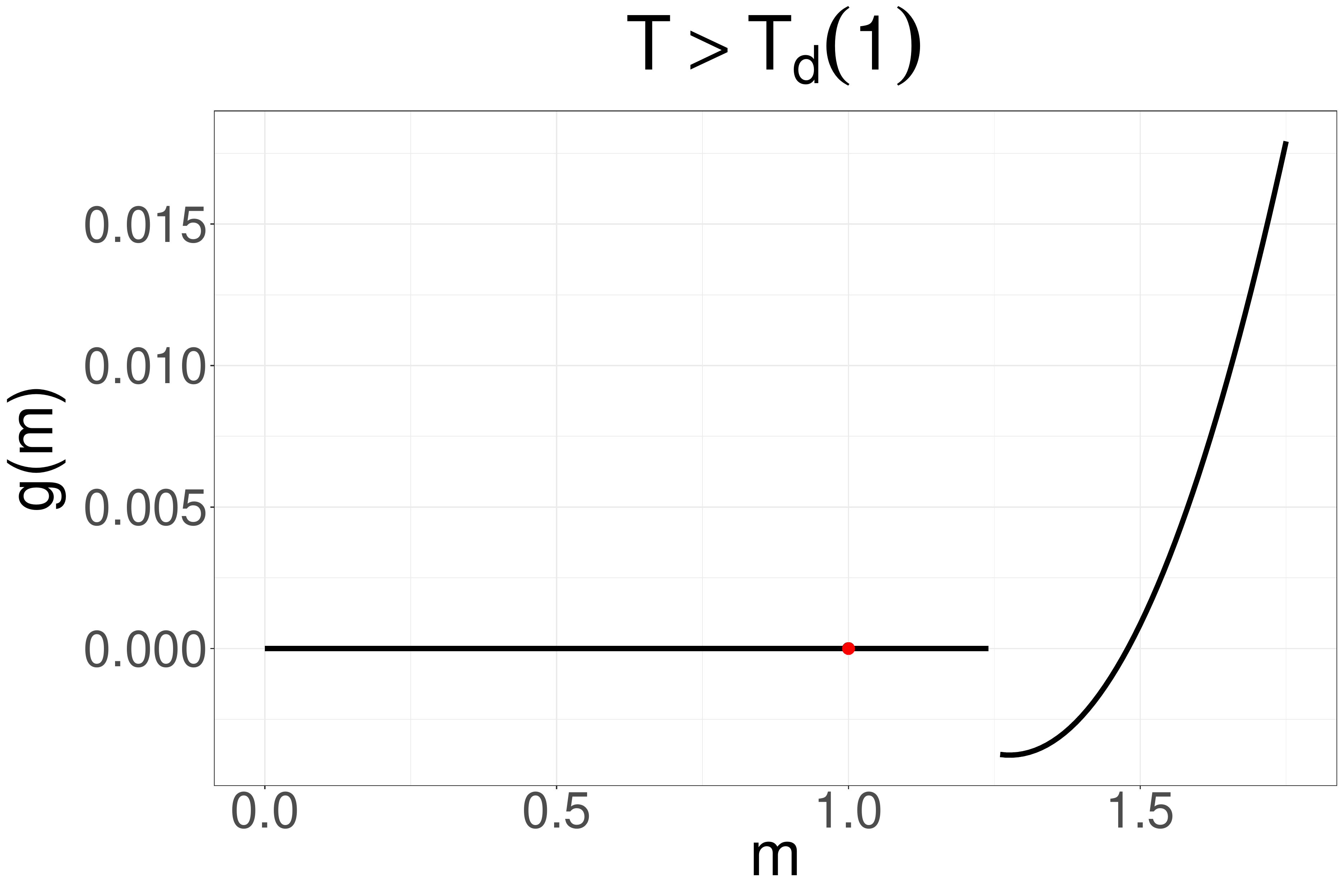}
\label{fig:TbigTd}
\caption{}
\end{subfigure}
\hspace{1pt}
\begin{subfigure}[b]{0.45\textwidth}
\centering
\includegraphics[width=0.9\linewidth,keepaspectratio]{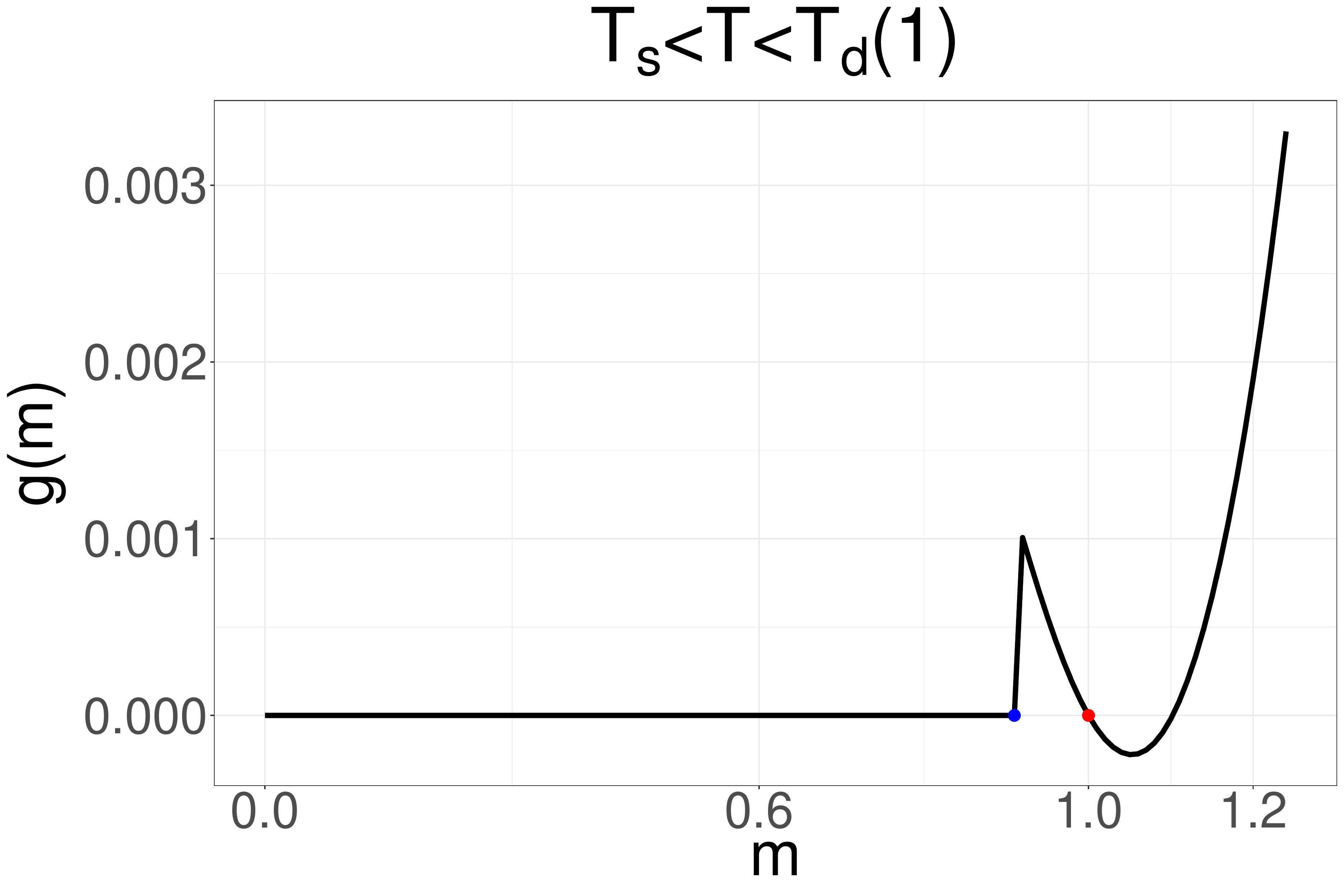}
\caption{}
\end{subfigure}
\newline
\begin{subfigure}[b]{0.45\textwidth}
\centering
\includegraphics[width=0.9\linewidth,keepaspectratio]{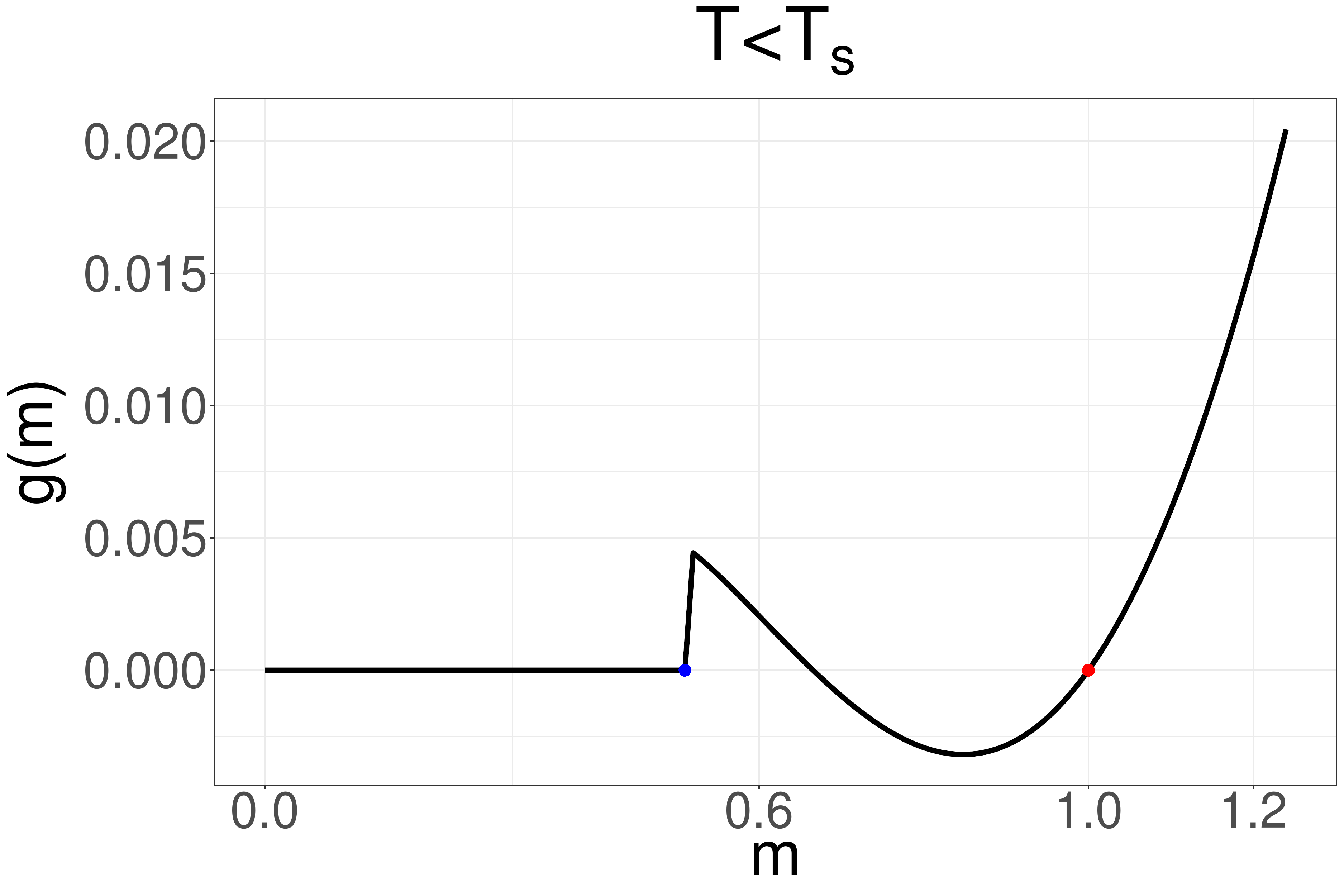}
\caption{}
\end{subfigure}
\caption{We plot $g(m)$ vs. $m$ for $k=3$. In subplot (a), we consider $T > T_d(1)$; the only physically relevant solution is $q_1=0$ and we are back to the paramagnetic solution. In (b), we consider $T_s < T < T_d(1)$; $m(T)$ is represented in \textsf{BLUE}. The function $g$ restricted to the interval $[m(T),1]$ is non-negative, and thus the replica symmetric free energy is correct in this regime. Finally (c) represents the setting $T<T_s$. $g(m)$ on the interval $[m(T),1]$ (represented by the \textsf{BLUE} and \textsf{RED} points attains negative values). Thus the 1RSB free energy strictly improves on the replica symmetric approximation.  }
\label{fig:psivsm} 
\end{figure}


We encounter the following cases according to the temperature $T$:
\begin{enumerate}
\item \emph{$T > T_d(1)$.} In this case, a non-trivial solution $q_{1,*}(\beta,m)$ exists only 
for $m \geq  m(T) >1$. The only physically relevant fixed point has $q_1=0$ and we are back to the paramagnetic solution. 
\item \emph{$T_d(1) > T > T_s$.} In this case, there exists $m(T)$ such that a non-trivial fixed point $q_*(\beta, m)$ exists for $m \in [m(T), 1]$. However, in this regime, $\Psi_{1\rm{RSB}}(q_{1,*}(\beta,m),m) > \Psi_{\sRS}(q_0=0)$ for all $m \in [m(T),1]$. In this case, as before, the relevant fixed point is $m=1$, a situation where $q_1$ is undetermined. Thus the replica symmetric free energy is still correct in this temperature regime.

\item \emph{$T< T_s$,} In this regime, there exists $m(T)$ such that a non-trivial fixed point $q_*(\beta, m)$ exists for $m \in [m(T), 1]$. Moreover, there exists $m_{*}(T) \in [m(T),1]$ such that 
\begin{align}
\Psi_{1\rm{RSB}}(q_{1,*}(\beta,m_*(T)), m_*(T))  < \Psi_{\sRS}(q_0=0). \nonumber 
\end{align}
In this case, we choose the fixed point $(m,q_1) = (m_*(T), q_{1,*}(\beta, m_*(T)))$. 

\end{enumerate}
Finally, we exhibit the dependence of $m_*(T)$ on $T$ in Figure \ref{fig:monT}. 
\begin{figure}
\centering
\hspace{2pt}
\includegraphics[width=.52\linewidth,height=0.36\linewidth]{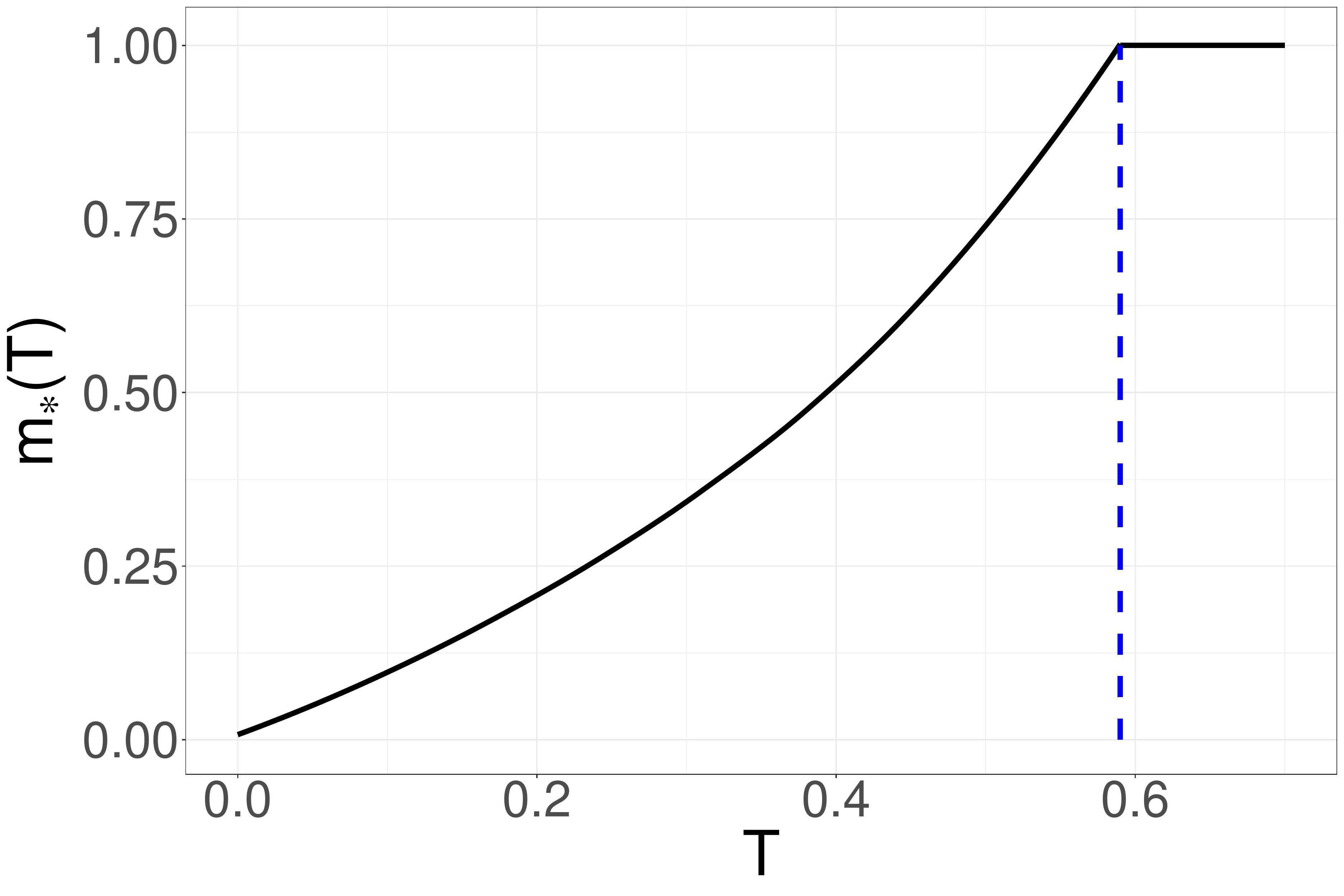}
\caption{The dependence of $m_*(T)$ on $T$. Note the critical static temperature $T_s$ (in blue)}
\label{fig:monT}
\end{figure} 

One might wonder whether the transition temperature $T_d(1)$  has any physical significance,
since the free energy density does not exhibit any non-analyticity at this temperature.
It turns out that his temperature corresponds to an important change in the 
structure of the Gibbs measure, which we will further explore below, but is useful to summarize here.

 Imagine that we lower the temperature $T$ of the system continuously. 
 At sufficiently high temperature, the 
 measure $\mu_{\beta}$ behaves roughly like the uniform measure. 
 At $T_d(1)$, the system spontaneously 
 breaks up into exponentially many pure states, each of which has negligible weight. 
Since the number of states that contribute to the Gibbs measure is exponentially large,
two replicas have negligible probability to belong to the same state. Hence, the overlap
concentrates around a single value, and the replica-symmetric free energy gives the correct asymptotics.

 While this transition does not affect the free energy,  it is 
 important in determining the  behavior of various sampling algorithms. 
This splitting of the state space into many pure states should hinder the 
 exploration of the state space by 
 Glauber or Langevin  dynamics. This is sometimes referred to as  the dynamical
 phase transition. 
 
 As the temperature is further lowered,
   the pure states become smaller and a smaller number of them dominates the Gibbs measure. 
   At the point $T_s$, an $O(1)$ number of pure 
   states dominate, and this dramatically changes the nature of the measure $\mu_{n}$,
   and the overlap distribution.
    This  is usually referred to as the static phase transition (or `random first order'
    phase transition). 
    Below this temperature, the replica symmetric free energy fails to be correct, and must 
    instead be replaced by the $1$RSB free energy.

%
%
\section{The number of critical points}
\label{sec:critical_points} 

In the previous sections, we have studied the Gibbs measure 
$\mu_{n}(\de\bsigma)= \exp\{H(\bsigma)\}\de\bsigma/Z_n$ using the replica method.
 In this section, we turn to the direct study of properties of the landscape $H(\bsigma)$.
 
 We will focus again on the case $\lambda=0$. We rescale the Hamiltonian by a factor 
 $n$ to define $\hd(\bsigma)\equiv H(\bsigma)/n$, that is 
 \begin{align}
 \hd(\bsigma) = \frac{1}{\sqrt{2(k!)}} \langle \mathbf{W}, \bsigma^{\otimes k} \rangle\, ,
 \end{align}
 where the symmetric Gaussian tensor $\bW$ is defined as per Eq.~\eqref{eq:Wconstruction}.
 (This is not to be confused with the perturbation $h\<\bvz,\bsigma\>$ that we previously added to the Hamiltonian and now set to zero.)
 
Of course, the free energy density can be thought of as a way to explore the structure of 
the energy $H(\bsigma)$:
\begin{align}
\phi(\beta) = \lim_{n \to \infty} \frac{1}{n} \log \int \exp{\{ n \beta \hd (\bsigma) \}} \nu_0(\d\bsigma). \nonumber 
\end{align}
For instance, it is not hard to prove that
\begin{align}
\lim_{\beta \to \infty} \frac{\phi(\beta)}{\beta} = \lim_{n\to \infty} 
\E\Big[ \max_{\bsigma \in \sS^{n-1}} \hd(\bsigma) \Big]. \nonumber 
\end{align}
The quantity on the right-hand side is referred to as the `ground state energy'  in physics
(the usual convention in physics is to call $-\hd$ the Hamiltonian, and minimize the latter).

Here, we will take a different approach, and study the structure of the critical points
of the function $\hd(\,\cdot\,)$ directly. 
 For $B\subset \mathbb{R}$, we denote by $C_n(B)$ the number of critical points $\bsigma$
 of $\hd$ with $\hd(\bsigma)\in B$:
\begin{align}
C_n(B) = \big| \{ \bsigma : {\rm grad}\, \hd (\bsigma)  =0, \;\; \hd (\bsigma) \in B \} \big|\, . 
\end{align}
Here ${\rm grad}\hd $ denotes the Riemannian gradient on the sphere $\sS^{n-1}$
and $|A|$ is the cardinality of the set $A$. In suitable coordinates, we have
${\rm grad} \hd (\bsigma) = \proj_{\bsigma}^{\perp}\nabla \hd(\bsigma)$,
where $\proj_{\bsigma}^{\perp} = \id - \bsigma \bsigma^{\sT}$ is the projector onto
 the linear space orthogonal to $\bsigma$. 
 
 In the rest of this section we will estimate the expectation of $C_n(B)$.
 
 \subsection{The Kac-Rice formula}
 
The Kac-Rice formula allows us to compute the expected number of critical 
points for Gaussian processes, subject to certain regularity conditions. 
\begin{lemma}[Kac-Rice formula \cite{adler2007random}] 
\label{lemma:kac-rice}
We have, for any Borel set $B \subset \mathbb{R}$, 
\begin{align}
\E[C_n(B)] &= \int_{\sS^{n-1}} \E[ D(\bsigma) \mathbf{1}_{\{\hd(\bsigma) \in B\} } | \proj_{\bsigma}^{\perp} \nabla \hd(\bsigma) =0] \phi_{\bsigma}(0) \de \bsigma\, , \\
D(\bsigma) &\equiv\big|\det_{\bsigma^{\perp} } [ \proj_{\bsigma}^{\perp} \nabla^2 \hd(\bsigma) \proj_{\bsigma}^{\perp} - \langle \bsigma, \nabla \hd(\bsigma) \rangle \id ] |,  \nonumber 
\end{align}
where $\phi_{\bsigma}(\,\cdot\,)$ is the density of $\proj_{\bsigma}^{\perp}\nabla \hd(\bsigma)$ 
with respect to the Lebesgue measure on $V^{\perp}_{\bsigma}\cong\mathbb{R}^{n-1}$
(where $V^{\perp}_{\bsigma}$ is the tangent space to $\sS^{n-1}$ at $\bsigma$,
which we identify with the linear space orthogonal to $\bsigma$). 
Finally, the determinant is computed on $V^{\perp}_{\bsigma}$ (namely, $\det_{\bsigma^{\perp}}(\bM)$
is the determinant of the matrix obtained by representing the operator $\bM$ 
with respect to an arbitrary basis on $V^{\perp}_{\bsigma}$).
\end{lemma}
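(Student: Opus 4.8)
I would view a critical point of $\hd$ on $\sS^{n-1}$ as a zero of the random vector field $V(\bsigma) := \proj_\bsigma^\perp\nabla\hd(\bsigma) = \nabla\hd(\bsigma) - \langle\bsigma,\nabla\hd(\bsigma)\rangle\,\bsigma$, i.e.\ a section of the tangent bundle $T\sS^{n-1}$, so that $C_n(B)$ is the number of zeros $\bsigma$ of $V$ subject to the constraint $\hd(\bsigma)\in B$. The plan is then to invoke the general Kac-Rice meta-theorem for the expected number of zeros of a smooth Gaussian field on a compact manifold (in the form given in \cite{adler2007random}), and afterwards to carry out two tasks: (i) identify the ``Jacobian'' term appearing there with the determinant $D(\bsigma)$ in the statement, and (ii) check the hypotheses of the meta-theorem for the specific Gaussian field $\hd$ of this model.

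For the analytic content of the formula, one argues as follows. On the full-probability event that $V$ has only non-degenerate (hence isolated) zeros, the coarea/change-of-variables formula localized near each zero gives
\begin{align}
C_n(B) \;=\; \lim_{\eps\to0}\int_{\sS^{n-1}} \psi_\eps\big(V(\bsigma)\big)\,\big|\det{}_{\bsigma^\perp}\!\big(\nabla V(\bsigma)\big)\big|\;\mathbf{1}_{\{\hd(\bsigma)\in B\}}\;\de\bsigma\,, \nonumber
\end{align}
where $\psi_\eps$ is an approximate identity on $V_\bsigma^\perp\cong\reals^{n-1}$ and $\nabla V$ denotes the covariant derivative of $V$. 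Taking expectations, exchanging $\E$ with $\int_{\sS^{n-1}}$ by Fubini, and passing to the limit $\eps\to0$ under the integral (a dominated-convergence step using that $V(\bsigma)$ has a bounded, continuous density $\phi_\bsigma$ on $V_\bsigma^\perp$), one applies the elementary disintegration identity $\E[\psi_\eps(V(\bsigma))\,g(\bsigma)]\to\phi_\bsigma(0)\,\E[g(\bsigma)\mid V(\bsigma)=0]$ with $g(\bsigma) = |\det_{\bsigma^\perp}\nabla V(\bsigma)|\,\mathbf{1}_{\{\hd(\bsigma)\in B\}}$, which yields the displayed formula.

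For (i), I would carry out the Riemannian computation of $\nabla V$. Differentiating $V(\bsigma)=\nabla\hd(\bsigma)-\langle\bsigma,\nabla\hd(\bsigma)\rangle\bsigma$ along a tangent vector $\bu\in V_\bsigma^\perp$ and projecting the result onto $V_\bsigma^\perp$ (which is precisely the Levi-Civita covariant derivative on the round sphere): the $\nabla^2\hd$ term contributes $\proj_\bsigma^\perp\nabla^2\hd(\bsigma)\proj_\bsigma^\perp\bu$, while differentiating the normal-subtraction term contributes $-\langle\bsigma,\nabla\hd(\bsigma)\rangle\bu$ (the second-fundamental-form/Weingarten correction for the unit sphere; the remaining pieces are normal to $\sS^{n-1}$ and are annihilated by the projection). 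Hence $\nabla V(\bsigma)$, as an operator on $V_\bsigma^\perp$, equals the Riemannian Hessian $\proj_\bsigma^\perp\nabla^2\hd(\bsigma)\proj_\bsigma^\perp-\langle\bsigma,\nabla\hd(\bsigma)\rangle\id$, whose absolute determinant is exactly $D(\bsigma)$.

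The main obstacle is (ii), the verification of the regularity hypotheses for the field $\hd$. Its sample paths are almost surely real-analytic (after contracting with the Gaussian tensor, $\hd(\bsigma)$ is a degree-$k$ homogeneous polynomial in $\bsigma$), so the smoothness requirements are automatic; the content lies in showing that for every $\bsigma\in\sS^{n-1}$ the Gaussian vector $(\hd(\bsigma),\proj_\bsigma^\perp\nabla\hd(\bsigma))$ is non-degenerate (so that the regular conditional law given $\{\proj_\bsigma^\perp\nabla\hd(\bsigma)=0\}$ is well defined and $\phi_\bsigma$ is bounded and continuous), that $V$ almost surely has no degenerate zeros, and that the family $\{\psi_\eps(V(\bsigma))\,|\det_{\bsigma^\perp}\nabla V(\bsigma)|\}_\eps$ is uniformly integrable so the exchange of limit and expectation is licit. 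All of these reduce to computations with the explicit covariance of the model, which depends on $(\bsigma,\bsigma')$ only through $\langle\bsigma,\bsigma'\rangle$ (indeed it is proportional to $\langle\bsigma,\bsigma'\rangle^k$); the computations are routine but lengthy. A helpful shortcut I would exploit is the rotational invariance of the law of $\bW$: both the hypotheses and the inner conditional expectation are identical at every $\bsigma$, so everything can be checked at a single reference point, e.g.\ $\bsigma=\be_1$.
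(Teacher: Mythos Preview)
Your proposal is correct and follows essentially the same route as the paper's sketch: both mollify the zero-counting via an approximate identity (your $\psi_\eps$ is the paper's $\eta_\eps$), identify the Jacobian of the tangent gradient field with the Riemannian Hessian $\proj_\bsigma^\perp\nabla^2\hd(\bsigma)\proj_\bsigma^\perp-\langle\bsigma,\nabla\hd(\bsigma)\rangle\id$ by the same linearization (the paper does this via Taylor expansion of $\proj_{\bsigma(\bx)}^\perp\nabla\hd(\bsigma(\bx))$, you via the covariant-derivative/Weingarten computation), and then exchange the $\eps\to0$ limit with expectation and the sphere integral. Your treatment of the regularity hypotheses is in fact more explicit than the paper's, which simply flags the exchange step and defers the details to \cite{adler2007random,auffinger2013random}.
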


A detailed proof of the above lemma may be found in the paper by Auffinger, Ben-Arous, \u{C}ern\'{y} 
 \cite{auffinger2013random}, derived using the differential geometric framework of 
  \cite{adler2007random}. To keep the discussion self-contained, we will sketch the
   proof of this result.
\begin{proof}
We will consider the special case $B = \mathbb{R}$,
i.e. computing the expectation of the total number of critical points  $C_n=C_n(\reals)$.
The generalization to other sets $B$ uses the same ideas.

We note that $\bsigma$ is a critical point if and only if $\proj_{\bsigma}^{\perp}\nabla H(\bsigma) = 0$. For $\varepsilon >0$, define $\eta_{\varepsilon}: \mathbb{R}^{n} \times \sS^{n-1} \to \mathbb{R}$, 
\begin{align}
\eta_{\varepsilon}(\bv, \bsigma) = 
\begin{cases}
\frac{1}{\overline{\omega}_{n-1} \varepsilon^{n-1} }\quad {\rm{if}} \quad  \| \proj_{\bsigma}^{{\perp}} \bv \|_2 \leq \varepsilon, \nonumber  \\
0 \quad \rm{o.w.} , 
\end{cases} 
\end{align}
where $\overline{\omega}_n$ is the volume of the unit ball in $\mathbb{R}^n$. We define 
\begin{align}
C_{n,\varepsilon} = \{ \bsigma \in \sS^{n-1} : \| \proj_{\bsigma}^{\perp} \nabla H(\bsigma) \|_2 \leq \varepsilon \}. \nonumber 
\end{align}
We will need the following lemma. 
\begin{lemma}
\label{lemma:smooth}
If all the critical points are non-degenerate, then $\displaystyle\lim_{\varepsilon \to 0} C_{n, \varepsilon} = C_n$. 
\end{lemma}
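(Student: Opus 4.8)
The plan is to treat this as a deterministic statement about a fixed realization of $H$, since the hypothesis — that every critical point of $H|_{\sS^{n-1}}$ is non-degenerate, i.e.\ $H|_{\sS^{n-1}}$ is a Morse function — is all that is used and holds almost surely. Because $\sS^{n-1}$ is compact, there are then only finitely many critical points $\bsigma_1,\dots,\bsigma_{C_n}$. I would introduce the Riemannian gradient field $F(\bsigma):=\proj_{\bsigma}^{\perp}\nabla H(\bsigma)$, so that $C_{n,\varepsilon}=\{\bsigma\in\sS^{n-1}:\|F(\bsigma)\|_2\le\varepsilon\}$ and $F^{-1}(0)=\{\bsigma_1,\dots,\bsigma_{C_n}\}$, and read the conclusion ``$\lim_{\varepsilon\to 0}C_{n,\varepsilon}=C_n$'' in the sense actually needed for the Kac--Rice derivation: for all small enough $\varepsilon$, the set $C_{n,\varepsilon}$ is a disjoint union of exactly $C_n$ neighbourhoods, the $j$-th containing $\bsigma_j$ and shrinking to $\{\bsigma_j\}$ as $\varepsilon\to 0$; consequently $\int_{\sS^{n-1}}\eta_\varepsilon(\nabla H(\bsigma),\bsigma)\,D(\bsigma)\,\de\bsigma\to C_n$ (with $D$ as in Lemma~\ref{lemma:kac-rice}, with $H$ in place of $\hd$).

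First I would analyse $F$ near a critical point $\bsigma_j$. Working in normal coordinates centred at $\bsigma_j$ (which trivialize $T\sS^{n-1}$ near $\bsigma_j$), the differential of the vector field $F$ at its zero $\bsigma_j$ is the Riemannian Hessian of $H|_{\sS^{n-1}}$, i.e.\ the operator $\proj_{\bsigma_j}^{\perp}\nabla^2 H(\bsigma_j)\proj_{\bsigma_j}^{\perp}-\langle\bsigma_j,\nabla H(\bsigma_j)\rangle\,\id$ acting on $V^{\perp}_{\bsigma_j}$; the scalar $\langle\bsigma_j,\nabla H(\bsigma_j)\rangle$ appears from differentiating the projector $\proj_{\bsigma}^{\perp}=\id-\bsigma\bsigma^{\sT}$, and is meaningful since $\nabla H(\bsigma_j)$ is radial at a critical point. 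Non-degeneracy is exactly invertibility of this operator, i.e.\ $D(\bsigma_j)\neq 0$, so the inverse function theorem yields pairwise disjoint open sets $U_j\ni\bsigma_j$ on which $F$ is a diffeomorphism onto an open neighbourhood $V_j$ of $0\in\reals^{n-1}$.

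Next I would control the complement by compactness: on $K:=\sS^{n-1}\setminus\bigcup_j U_j$ the continuous map $\bsigma\mapsto\|F(\bsigma)\|_2$ has no zero, hence attains a minimum $c>0$; pick also $\varepsilon_0>0$ with $\bar B(0,\varepsilon_0)\subseteq V_j$ for every $j$. Then for every $\varepsilon<\min(c,\varepsilon_0)$ one has $C_{n,\varepsilon}\subseteq\bigcup_j U_j$ and $C_{n,\varepsilon}\cap U_j=(F|_{U_j})^{-1}(\bar B(0,\varepsilon))$, a compact connected neighbourhood of $\bsigma_j$ whose diameter tends to $0$ as $\varepsilon\to 0$ (these nested sets decrease to $(F|_{U_j})^{-1}(0)=\{\bsigma_j\}$). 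That is the ``$C_n$ shrinking blobs'' picture. For the analytic consequence I would change variables $\bu=F(\bsigma)$ on each piece: $F|_{U_j}$ is a diffeomorphism and its Jacobian determinant at $\bsigma$ equals $D(\bsigma)$ — this is precisely the Jacobian computation behind the Kac--Rice formula (see \cite{auffinger2013random}), the curvature term $-\langle\bsigma,\nabla H(\bsigma)\rangle\,\id$ again coming from differentiating $\proj_{\bsigma}^{\perp}$. Since $\eta_\varepsilon(\nabla H(\bsigma),\bsigma)=(\overline{\omega}_{n-1}\varepsilon^{n-1})^{-1}\mathbf{1}\{\bsigma\in C_{n,\varepsilon}\}$, this gives
\[
\int_{\sS^{n-1}}\eta_\varepsilon(\nabla H(\bsigma),\bsigma)\,D(\bsigma)\,\de\bsigma
=\sum_{j=1}^{C_n}\frac{1}{\overline{\omega}_{n-1}\varepsilon^{n-1}}\int_{\bar B(0,\varepsilon)}\frac{D\big((F|_{U_j})^{-1}(\bu)\big)}{\big|\det DF\big((F|_{U_j})^{-1}(\bu)\big)\big|}\,\de\bu ,
\]
and since the integrand is continuous near $\bu=0$ with value $1$ there (numerator and denominator both tend to $D(\bsigma_j)$) while $\mathrm{vol}(\bar B(0,\varepsilon))=\overline{\omega}_{n-1}\varepsilon^{n-1}$, each summand tends to $1$, so the total tends to $C_n$.

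The step I expect to be the actual work — as opposed to the essentially topological skeleton above — is the Riemannian bookkeeping: identifying $DF$ at and near $\bsigma_j$ with $\proj_{\bsigma}^{\perp}\nabla^2 H\proj_{\bsigma}^{\perp}-\langle\bsigma,\nabla H\rangle\,\id$ on $V^{\perp}_{\bsigma}$ (hence $|\det DF|=D$), and choosing charts/trivializations so that the inverse function theorem and the change of variables apply cleanly and uniformly near each critical point. A minor secondary point is simply fixing the intended reading of the statement, since $C_{n,\varepsilon}$ is written as a set while $C_n$ is an integer — resolved by the interpretation stated in the first paragraph.
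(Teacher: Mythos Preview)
Your proposal is correct and follows essentially the same route as the paper's sketch: localize to shrinking neighborhoods of the finitely many non-degenerate critical points via the compactness argument you give, linearize the gradient map there, and cancel the resulting Jacobian $D(\bsigma_j)$ against the integrand so that each neighborhood contributes~$1$. The only cosmetic difference is that the paper packages the linearization as an explicit Taylor expansion of $\proj_{\bsigma}^{\perp}\nabla\hd(\bsigma(\bx))$ and an ellipsoid-volume computation, whereas you invoke the inverse function theorem and a change of variables $\bu=F(\bsigma)$; the content is identical, and you correctly flag the Riemannian bookkeeping (identifying $DF$ with the Hessian operator at the critical point) as the one place requiring care.
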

We delay the proof of Lemma \ref{lemma:smooth} and complete the sketch. We have, 
\begin{align}
\E[C_n] &= \E\Big[ \lim_{\varepsilon \to 0} \int_{\sS^{n-1}} D(\bsigma) \eta_{\varepsilon}(\nabla H(\bsigma); \bsigma) \de \bsigma \Big] \nonumber\\
&\stackrel{(a)}{=} \int_{\sS^{n-1}} \lim_{\varepsilon \to 0} \E[D(\bsigma) \eta_{\varepsilon} (\nabla H(\bsigma) ; \bsigma)] \de \bsigma \nonumber\\
&= \int_{\sS^{n-1}} \lim_{\varepsilon \to 0} \E[D(\bsigma) | \| \proj_{\bsigma}^{\perp} \nabla H(\bsigma) \|_2 \leq \varepsilon] \frac{1}{\overline{\omega}_{n-1} \varepsilon^{n-1}} \mathbb{P} [ \| \proj_{\bsigma}^{\perp}\nabla H(\bsigma) \|_2 \leq \varepsilon] \de \bsigma \nonumber \\
&= \int_{\sS^{n-1}} \E[D(\bsigma) | \proj_{\bsigma}^{\perp} \nabla H(\bsigma) =0 ] \phi_{\bsigma}(0) \de \bsigma. \nonumber 
\end{align}
Note that the inversion of limit and expectations in step $(a)$ requires further 
justification, for which we defer to  \cite{adler2007random,auffinger2013random}.
\end{proof}

We next sketch the proof of Lemma \ref{lemma:smooth}.
\begin{proof}[Proof sketch of Lemma \ref{lemma:smooth}]
Note that $\| \proj_{\bsigma}^{\perp}\nabla \hd(\bsigma) \|_2 = 0$ if and only if $\bsigma$ 
is a critical point. Further, in this case, it is possible to check that there are only 
finitely many critical points almost surely. We will refer to these critical points 
as $\bsigma_1, \cdots, \bsigma_m$. 
We define 
\begin{align}
\Omega_{\varepsilon} = \{ \bsigma \in \sS^{n-1} : \| \proj_{\bsigma}^{\perp} \nabla \hd(\bsigma) \|_2 \leq \varepsilon \}. \nonumber 
\end{align}
For all $\varepsilon>0$ small enough, $\Omega_{\varepsilon} = \cup_{i=1}^{m} \Omega_{\varepsilon}^{(i)}$
 where $\bsigma_i \in \Omega_{\varepsilon}^{(i)}$ and $\Omega_{\varepsilon}^{(i)} \subseteq \Ball(\bsigma_i , \delta(\varepsilon))$ where 
$\Ball(\bx_0,r)$ is the ball of radius $r$ around $\bx_0$ and $\delta(\varepsilon) \downarrow 0$ 
as $\varepsilon \to 0$. In this case, we have, 
\begin{align}
\Omega_{\varepsilon} &= \sum_{i=1}^{n} \int_{\Omega_{\varepsilon}^{(i)}} 
D(\bsigma) \frac{1}{\overline{\omega}_{n-1} \varepsilon^{n-1}} \de \bsigma \nonumber \\
&= \sum_{i=1}^{n} \frac{D(\bsigma_i)}{\overline{\omega}_{n-1} \varepsilon^{n-1}} {\rm{Vol}}_{\sS^{n-1}} (\Omega_{\varepsilon}^{(i)}) ( 1 + o_{\varepsilon}(1) ). \nonumber 
\end{align}
Thus it remains to compute ${\rm{Vol}}_{\sS^{n-1}}(\Omega_{\varepsilon}^{(i)})$. 
For $\varepsilon>0$ sufficiently small, we consider the approximate decomposition 
$\bsigma = \sqrt{ 1 - \| \bx \|_2^2 } \,\,\bsigma_i + \bx$, where 
$\bx \in V_{\bsigma}^{\perp}$. We set 
\begin{align}
\overline{\Omega}_{\varepsilon}^{(i)} = \{ \bx \in V_{\bsigma_i}^{\perp}: \sqrt{1 - \| \bx\|_2^2} \,\,\bsigma_i + \bx \in \Omega_{\varepsilon}^{(i)} \}. \nonumber 
\end{align}
For $\varepsilon$ small, we have, 
\begin{align}
{\rm{Vol}}_{\sS^{n-1}} (\Omega_{\varepsilon}^{(i)}) = {\rm{Vol}}_{V_{\bsigma}^{\perp}}(\overline{\Omega}_{\varepsilon}^{(i)}) (1 + o_{\varepsilon}(1)). \nonumber 
\end{align}
To compute ${\rm{Vol}}_{V_{\bsigma}^{\perp}}(\overline{\Omega}_{\varepsilon}^{(i)})$, we note that for any vector $\bsigma(x) = \bsigma_i + \bx$, we have, 
\begin{align}
\proj_{\bsigma(\bx)}^{\perp} = I - \bsigma(\bx) \bsigma(\bx)^{T} = \proj_{\bsigma_i}^{\perp} - \bx \bsigma_i^{T} - \bsigma_i  \bx^{T} + O(\| \bx \|_2^2). \nonumber 
\end{align}
Thus we have,
\begin{align}
\nabla \hd(\bsigma( \bx)) &= \nabla \hd(\bsigma_i) + \nabla^2 \hd(\bsigma_i) \bx + O(\| \bx \|_2^2). \nonumber \\
\proj_{\sigma}^{\perp}\nabla \hd(\bsigma( \bx)) &= [ \proj_{\bsigma_i}^{\perp} \nabla^2 \hd(\bsigma_i) \proj_{\bsigma_i}^{\perp} - \langle \bsigma_i , \nabla \hd(\bsigma_i) \rangle\id] \bx + O(\| \bx \|_2^2) := M(\bsigma_i) \bx + O(\| \bx\|_2^2). \nonumber 
\end{align}
We define $\mathcal{E}_{\varepsilon}^{(i)} = \{ \bx \in V_{\bsigma_i}^{\perp} : \| M(\bsigma_i) \bx \|_2 \leq \varepsilon \}$. The above approximate computation suggests that 
\begin{align}
{\rm{Vol}}(\overline{\Omega}_{\varepsilon}^{(i)}) = {\rm{Vol}}(\mathcal{E}_{\varepsilon}^{(i)}) ( 1 + o_{\varepsilon}(1)). \nonumber 
\end{align}
We note that $\mathcal{E}_{\varepsilon}^{(i)} = \{ \bx \in V_{\bsigma_i}^{\perp} : \| M(\bsigma_i) \bx \|_2 \leq \varepsilon \}$ is an ellipse. Therefore, the volume follows upon changing coordinates and computing the volume of a sphere. We conclude that 
\begin{align}
{\rm{Vol}}(\overline{\Omega}_{\varepsilon}^{(i)}) = {\rm{Vol}}(\mathcal{E}_{\varepsilon}^{(i)}) (1 + o_{\varepsilon}(1)) = D(\bsigma_i) \overline{\omega}_{n-1} \varepsilon^{n-1}. \nonumber 
\end{align}
This concludes the proof sketch.  
\end{proof}

\subsection{Applying the Kac-Rice formula}

We next apply the Kac-Rice Formula \ref{lemma:kac-rice} to the spherical $p$-spin model,
obtaining the following exact expression for the expected number of critical points.
\begin{lemma}
\label{lemma:crit_points}
For any Borel set $B \subset \mathbb{R}$, we have, 
\begin{align}
\E[C_n(B)] = \Big[\frac{(k-1)(n-1)}{2} \Big]^{\frac{n-1}{2}} \frac{2 \sqrt{\pi}}{\Gamma(\frac{n}{2})} \E\big[\big|{\rm{det}}(\bW_{n-1} - t Z \bI_{n-1})\big| \mathbf{1}_{\{ Z \in \sqrt{2n} B \} }\big], \label{eq:critical-points}
\end{align}
where $\bW_{n-1}\sim \GOE(n-1)$ (in particular has independent entries above the diagonal 
$W_{ij}\sim \normal(0,1/(n-1))$) is independent of $Z \sim \normal(0,1)$, and $t\equiv \sqrt{\frac{k}{(k-1)(n-1)}}$. 
\end{lemma}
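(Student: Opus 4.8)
The plan is to apply the Kac--Rice formula of Lemma~\ref{lemma:kac-rice} and then evaluate every term in it explicitly, using that the law of $\bW$ (hence of $\hd$) is invariant under rotations of $\reals^n$.

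First I would use rotational invariance: the integrand $\bsigma\mapsto \E[D(\bsigma)\mathbf{1}_{\{\hd(\bsigma)\in B\}}\mid \proj_{\bsigma}^{\perp}\nabla\hd(\bsigma)=0]\,\phi_{\bsigma}(0)$ is constant on $\sS^{n-1}$, so I evaluate it at the north pole $\bsigma=\be_n$ and multiply by the surface area $2\pi^{n/2}/\Gamma(n/2)$ of $\sS^{n-1}$. At $\be_n$ the tangent space is $\mathrm{span}(\be_1,\dots,\be_{n-1})$, and since $\hd(\bsigma)=\frac{1}{\sqrt{2(k!)}}\langle\bW,\bsigma^{\otimes k}\rangle$ is homogeneous of degree $k$ and $\bW$ is symmetric, all the relevant objects become explicit linear functionals of the entries of $\bW$: $\hd(\be_n)=\frac{1}{\sqrt{2(k!)}}W_{n\cdots n}$, the Riemannian gradient has components $(\mathrm{grad}\,\hd(\be_n))_i=\frac{k}{\sqrt{2(k!)}}W_{i,n,\dots,n}$ for $i<n$, the radial derivative is $\langle\be_n,\nabla\hd(\be_n)\rangle=k\,\hd(\be_n)$ (Euler's identity), and the compressed Hessian entering $D$ is $(\proj_{\be_n}^{\perp}\nabla^2\hd(\be_n)\proj_{\be_n}^{\perp})_{ij}=\frac{k(k-1)}{\sqrt{2(k!)}}W_{i,j,n,\dots,n}$ for $i,j<n$.

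Next I would read off the joint Gaussian law of these quantities from the construction $\bW=(k!\,n)^{-1/2}\sum_{\pi\in\Symm_k}\bG^{\pi}$. Counting, for each pattern of coinciding indices, how many permutations $\pi$ produce each coordinate of $\bG$, gives $\mathrm{Var}(W_{n\cdots n})=k!/n$, $\mathrm{Var}(W_{i,n\dots n})=(k-1)!/n$ for $i<n$, and that $(W_{i,j,n\dots n})_{i,j<n}$ is a symmetric Gaussian matrix with off-diagonal variance $(k-2)!/n$ and diagonal variance $2(k-2)!/n$. Crucially, the coordinates of $\bG$ that enter $W_{n\cdots n}$, that enter the family $\{W_{i,n\dots n}\}_{i<n}$, and that enter the matrix $(W_{i,j,n\dots n})_{i,j<n}$ are pairwise disjoint (they are distinguished by the multiset of their indices), so these three objects are mutually independent; in particular, conditioning on $\proj_{\be_n}^{\perp}\nabla\hd(\be_n)=0$ does not change the joint law of $(\hd(\be_n),\langle\be_n,\nabla\hd(\be_n)\rangle,\proj_{\be_n}^{\perp}\nabla^2\hd(\be_n)\proj_{\be_n}^{\perp})$. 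Translating: $\hd(\be_n)=Z/\sqrt{2n}$ with $Z\sim\normal(0,1)$; the Riemannian gradient has i.i.d.\ $\normal(0,k/(2n))$ coordinates, so $\phi_{\be_n}(0)=(n/(\pi k))^{(n-1)/2}$; and $\proj_{\be_n}^{\perp}\nabla^2\hd(\be_n)\proj_{\be_n}^{\perp}\ed\sqrt{k(k-1)(n-1)/(2n)}\;\bW_{n-1}$ with $\bW_{n-1}\sim\GOE(n-1)$ independent of $Z$.

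Finally I would substitute everything into the Kac--Rice integral: $D(\be_n)=\big|\det\big(\sqrt{k(k-1)(n-1)/(2n)}\,\bW_{n-1}-k\,\hd(\be_n)\,\bI_{n-1}\big)\big|$ (determinant taken on the tangent space), and pulling the scalar $\sqrt{k(k-1)(n-1)/(2n)}$ out of the $(n-1)\times(n-1)$ determinant produces the factor $(k(k-1)(n-1)/(2n))^{(n-1)/2}$ and turns the shift into $-tZ$ with $t=\sqrt{k/((k-1)(n-1))}$, while $\{\hd(\be_n)\in B\}=\{Z\in\sqrt{2n}B\}$. Multiplying the three constants $\tfrac{2\pi^{n/2}}{\Gamma(n/2)}$, $(n/(\pi k))^{(n-1)/2}$ and $(k(k-1)(n-1)/(2n))^{(n-1)/2}$, the powers of $\pi$, $n$ and $k$ simplify and one is left with $\big[\tfrac{(k-1)(n-1)}{2}\big]^{(n-1)/2}\tfrac{2\sqrt\pi}{\Gamma(n/2)}$, which is exactly \eqref{eq:critical-points}. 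The main obstacle is the bookkeeping in the third step: getting the variances right requires carefully counting, for each index-coincidence pattern, the number of $\pi\in\Symm_k$ that contribute, and the independence argument (hence the irrelevance of the gradient conditioning) must be made rigorous from the disjointness of the underlying $\bG$-coordinates; one also has to keep in mind that $D$ uses the Euclidean Hessian only through its compression to the tangent space, so only the $i,j<n$ block is needed at $\be_n$.
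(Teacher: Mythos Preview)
Your proposal is correct and follows essentially the same route as the paper: evaluate the Kac--Rice integrand at a fixed pole by rotational invariance, identify the joint law of $(\hd,\mathrm{grad}\,\hd,\proj^{\perp}\nabla^2\hd\,\proj^{\perp})$ there as three independent Gaussian objects (a scalar, an i.i.d.\ vector, and a scaled $\GOE(n-1)$), use Euler's identity $\langle\bsigma,\nabla\hd(\bsigma)\rangle=k\,\hd(\bsigma)$, and simplify the constants. The only cosmetic difference is that the paper works at $\be_1$ rather than $\be_n$.
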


\begin{proof}
For our specific Gaussian process $\hd(\bsigma)$, recalling the definition of the symmetric Gaussian tensor, we have, 
\begin{align}
\hd(\bsigma) &= \frac{1}{\sqrt{2n}} \frac{1}{k!} \sum_{\pi, i_1 , \cdots, i_k} G^{\pi}_{i_1, \cdots, i_k} \sigma_{i_1} \sigma_{i_2} \cdots \sigma_{i_k}. \nonumber \\
(\nabla \hd(\bsigma))_i &= \frac{k}{\sqrt{2n}} \frac{1}{k!}  \sum_{\pi, l_1, \cdots, l_{k-1}} G^{\pi}_{i, l_1, \cdots, l_{k-1}} \sigma_{l_1} \cdots\sigma_{ l_{k-1}}. \nonumber\\ 
(\nabla^2 \hd(\bsigma))_{ij} &= \frac{k(k-1)}{\sqrt{2n}} \frac{1}{k!} \sum_{\pi, l_1, \cdots, l_{k-2}} G^{\pi}_{i,j, l_1, \cdots, l_{k-2}} \sigma_{l_1} \cdots \sigma_{l_{k-2}}. \nonumber 
\end{align}
Thus $(\hd(\bsigma), \nabla \hd(\bsigma) , \nabla^2 \hd(\bsigma))$ is jointly Gaussian. We note that the covariances depend only on the inner products and are thus invariant under rotations. Therefore, to describe the marginal distribution of $(\hd(\bsigma), \nabla \hd(\bsigma) , \nabla^2 \hd(\bsigma))$ for any fixed $\bsigma \in \sS^{n-1}$, we will assume, without loss of generality that $\bsigma = \mathbf{e}_1 = (1, 0, \cdots,0)$. In this case, it is easy to see that 
\begin{align}
\hd(\bsigma) &= \frac{1}{\sqrt{2n}} G_{1, \cdots,1} \nonumber \\
(\nabla \hd(\bsigma) )_i &= \frac{1}{\sqrt{2n}} [ G_{i,1, \cdots,1} + G_{1,i, 1,\cdots,1} + \cdots + G_{1, \cdots, 1, i}]. \nonumber \\
(\nabla^2 \hd(\bsigma))_{ij} &= \frac{1}{\sqrt{2n}} [G_{i,j,1,\cdots, 1} + \cdots + G_{1, \cdots,1, i,j}]. \nonumber 
\end{align}
Thus in this case, $(\hd(\bsigma), \proj_{\bsigma}^{\perp} \nabla \hd(\bsigma) , \proj_{\bsigma}^{\perp} \nabla^2 \hd(\bsigma) \proj_{\bsigma}^{\perp})$ are mutually independent and the marginal distributions may be described as follows. 
\begin{align}
\hd(\bsigma) &\ed \frac{1}{\sqrt{2n}} Z  \nonumber \\
\proj_{\bsigma}^{\perp}\nabla \hd(\bsigma) &\ed \sqrt{\frac{k}{2n}} g_{n-1}, \nonumber \\
\proj_{\bsigma}^{\perp} \nabla^2 \hd(\bsigma) \proj_{\bsigma}^{\perp} &\ed \sqrt{\frac{k(k-1) (n-1)}{2n}} \bW_{n-1}, \nonumber
\end{align}
where $Z\sim \normal(0,1)$, $g_{n-1} \sim \normal(0, \id_{n-1})$ and 
$\bW_{n-1}\sim GOE(n-1)$. As stated earlier, $Z, g, \bW$ are independent. 

We can now use Lemma \ref{lemma:kac-rice} in our setup. We have, 
$\phi_{\bsigma}(0) = \Big( \frac{n}{\pi k} \Big)^{\frac{n-1}{2}}$. 
Also, we note that the integrand in Lemma \ref{lemma:kac-rice} is independent of $\bsigma$ in this case. Further, we observe that 
$\langle \bsigma, \hd(\bsigma) \rangle = k \hd(\bsigma)$ and therefore, using independence, we are left with the unconditional expectation. This implies that 
\begin{align}
\E[C_n(B)] =  \Big( \frac{n}{\pi k} \Big)^{\frac{n-1}{2}} \frac{2 \pi^{n/2}}{\Gamma(\frac{n}{2})} \E\Big[ \Big| {\rm{det}}\Big(\sqrt{\frac{k(k-1) (n-1)}{2n}} \bW_{n-1} - \frac{k}{\sqrt{2n}} Z  \bI_{n-1}  \Big) \Big| \mathbf{1}_{ \{ \frac{1}{\sqrt{2n}}Z \in B \} } \Big], \nonumber 
\end{align}
where we have plugged in $A_n = \frac{2 \pi^{n/2}}{\Gamma(\frac{n}{2})}$, the surface measure of 
$\sS^{n-1}$. A final simplification of the coefficients completes the proof. 
\end{proof}

\subsection{Asymptotics of the number of critical points}

Armed with Lemma \ref{lemma:crit_points}, we can study the asymptotic behavior of the 
expected number of critical points. 
\begin{lemma}
We have, as $n\to \infty$, 
\begin{align}
\E[C_n(B)] \doteq \exp\big\{n \sup_{x \in B}  S(x)\big\}\, , \nonumber
\end{align}
where 
\begin{align}
S(x) & \equiv \Omega \Big(x \sqrt{\frac{2k}{k-1}} \Big) - x^2 + \frac{1}{2} \log (k-1) + \frac{1}{2},
\label{eq:FirstCplx}\\
 \Omega(x) &= \begin{cases}
\frac{x^2}{4} - \frac{1}{2} & \mbox{ for $|x| \le  2$}, \\
\frac{x^2}{4} - \frac{1}{2} - \frac{|x| \sqrt{x^2 -4}}{4} + \log \Big(\frac{|x|}{2} + \sqrt{\frac{x^2}{4} -1} \Big) & \mbox{ for $|x|\ge 2$.}
\end{cases}\label{eq:OmegaFormula}
\end{align}
\end{lemma}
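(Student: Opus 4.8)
\emph{Proof strategy.} The plan is to begin from the exact Kac--Rice identity of Lemma \ref{lemma:crit_points} and evaluate each of its three factors — the deterministic prefactor, the Gaussian weight carried by the indicator $\mathbf{1}_{\{Z\in\sqrt{2n}B\}}$, and the expected absolute determinant of a shifted GOE matrix — to leading exponential order, then combine them by Laplace's method. For the prefactor, Stirling's formula gives $\log\Gamma(n/2)=\tfrac n2\log(n/2)-\tfrac n2+O(\log n)$, while $\log\big[\tfrac{(k-1)(n-1)}2\big]^{(n-1)/2}=\tfrac n2\log(k-1)+\tfrac n2\log n-\tfrac n2\log 2+O(\log n)$. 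The $\log n$ and $\log 2$ terms cancel, leaving
\[
\Big[\tfrac{(k-1)(n-1)}2\Big]^{(n-1)/2}\,\frac{2\sqrt\pi}{\Gamma(n/2)}\;\doteq\;\exp\Big\{n\big(\tfrac12\log(k-1)+\tfrac12\big)\Big\},
\]
which accounts for the constant $\tfrac12\log(k-1)+\tfrac12$ in $S$.

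Next I disintegrate over $Z$. Parametrising by the energy level $x=Z/\sqrt{2n}$ (so that the indicator becomes $\{x\in B\}$), the $\normal(0,1)$ density of $Z$ at $\sqrt{2n}\,x$ equals $(2\pi)^{-1/2}e^{-nx^2}$, and the substitution $z=\sqrt{2n}\,x$ gives
\[
\E\big[|\det(\bW_{n-1}-tZ\bI_{n-1})|\,\mathbf{1}_{\{Z\in\sqrt{2n}B\}}\big]=\sqrt{\tfrac n\pi}\int_B\E\big[|\det(\bW_{n-1}-t\sqrt{2n}\,x\,\bI_{n-1})|\big]\,e^{-nx^2}\,dx,
\]
where $t\sqrt{2n}=\sqrt{2kn/((k-1)(n-1))}\to\sqrt{2k/(k-1)}$ as $n\to\infty$.

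The crux is then the classical random matrix asymptotics for the expected characteristic polynomial of the GOE: if $\bW_m\sim\GOE(m)$ (normalised so that its limiting spectral measure is the semicircle $\mu_{\mathrm{sc}}$ on $[-2,2]$) and $y_m\to y$, then $\tfrac1m\log\E|\det(\bW_m-y_m\bI_m)|\to\Omega(y)$, where $\Omega(y)=\int\log|y-\lambda|\,d\mu_{\mathrm{sc}}(\lambda)$ — and a direct evaluation of this logarithmic potential reproduces exactly the two branches of \eqref{eq:OmegaFormula}. This is the main obstacle: the statement concerns the \emph{expectation} of the determinant, so it does not follow merely from convergence of the empirical spectral distribution (which would give only the typical value of $\tfrac1m\log|\det|$); one must rule out an exponential contribution from rare spectral configurations, e.g.\ an eigenvalue escaping far from $y$, and for $|y|\le2$ one must also handle the logarithmic singularity of $\lambda\mapsto\log|y-\lambda|$ at $\lambda=y$. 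I would either cite this estimate — it is the key lemma behind the complexity computation of Auffinger--Ben Arous--\u{C}ern\'{y} \cite{auffinger2013random,adler2007random} — or reproduce the short argument (via the Hermite-polynomial formula for $\E\det(\bW_m-y\bI_m)$ together with an interlacing / moderate-deviation bound, or a Ben Arous--Guionnet-type large deviation computation) in the omitted-calculations section; either way the convergence is needed locally uniformly in $y$ so that it can be fed into the integral above.

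Finally, combine the pieces. Since $\Omega$ grows only logarithmically, $\Omega(x\sqrt{2k/(k-1)})-x^2\to-\infty$ as $|x|\to\infty$, so the supremum over $B$ is attained and the tails of the integral are exponentially negligible even when $B$ is unbounded; Laplace's method (equivalently Varadhan's lemma) then yields
\[
\int_B\E\big[|\det(\bW_{n-1}-t\sqrt{2n}\,x\,\bI_{n-1})|\big]\,e^{-nx^2}\,dx\;\doteq\;\exp\Big\{n\sup_{x\in B}\big[\Omega\big(x\sqrt{\tfrac{2k}{k-1}}\big)-x^2\big]\Big\}.
\]
Multiplying by the prefactor $\exp\{n(\tfrac12\log(k-1)+\tfrac12)\}$ gives $\E[C_n(B)]\doteq\exp\{n\sup_{x\in B}S(x)\}$ with $S$ as in \eqref{eq:FirstCplx}. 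As always with the symbol $\doteq$ and Laplace's method, for a general Borel set the upper bound uses $\sup_{\overline B}S$ and the lower bound $\sup_{\mathrm{int}\,B}S$; these coincide for the sets of interest, and when $\sup_{x\in B}S(x)=-\infty$ both sides are $0$.
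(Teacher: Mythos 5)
Your proof is correct and follows the paper's own route: the exact Kac--Rice identity from Lemma~\ref{lemma:crit_points}, the Gaussian disintegration in the energy variable $x=Z/\sqrt{2n}$ yielding the weight $\sqrt{n/\pi}\,e^{-nx^2}$, the log-potential estimate $\E|\det(\bW_{n-1}-y\bI_{n-1})|\doteq e^{n\Omega(y)}$ (the paper's Lemma~\ref{lemma:det_asymp}) with $\Omega$ the logarithmic potential of the semicircle, and Laplace's method; the Stirling computation of the surface-measure prefactor, which you make explicit, is left implicit in the paper. You also correctly isolate the one nontrivial step — that the determinant-expectation asymptotics does not follow merely from the semicircle law for the empirical spectrum — which is precisely the point the paper defers to \cite{auffinger2013random,subag2017complexity}, observing that its own sketch only yields the upper bound.
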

\begin{proof}
Lemma \ref{lemma:crit_points}, along with independence of $\bW_{n-1}, Z$ implies that, 
\begin{align}
\E[ | {\rm{det}} (\bW_{n-1} - t Z \bI_{n-1}) | \mathbf{1}_{ \{ Z \in \sqrt{2n} B\} } ] = \int_{ B} \E\Big[ | {\rm{det}} \Big(\bW_{n-1} - \sqrt{\frac{2k}{k-1}}x \bI_{n-1} \Big)| \Big] \sqrt{\frac{n}{\pi}} \exp{(- nx^2)} \de x. \label{eq:crit_int}
\end{align}
Further, we have, 
\begin{align}
\E[ | {\rm{det}} ( \bW_{n-1} - x \bI_{n-1} )| ]  = \E[ \exp{\Big[ (n-1) \int \log (\lambda - x) s_{n-1}(\de \lambda) \Big] }, \label{eq:det_exp}
\end{align}
where $s_n = \frac{1}{n} \sum_{i=1}^{n} \delta_{\lambda_i}$ is the empirical spectral measure of the matrix $\bW_n$. The Wigner semi-circle law governs that as $n\to \infty$, almost surely, 
\begin{align}
s_n \stackrel{w}{\Rightarrow} s_{\infty} (\de x) = \frac{\sqrt{4 - x^2}}{2 \pi} \mathbf{1}_{|x| \leq 2} \de x. \nonumber
\end{align}
We define 
\begin{align}
\Omega(x) \equiv  \int \log |\lambda - x| s_{\infty} (\de \lambda)\, .
\end{align}
An exercise in complex analysis reveals that $\Omega(x)$ is indeed given by 
Eq.~\eqref{eq:OmegaFormula}.

The following Lemma establishes the leading exponential behavior of the determinant. 
\begin{lemma}
\label{lemma:det_asymp}
As $n \to \infty$, 
\begin{align}
\E[ | {\rm{det}}( \bW_{n-1} - x \bI_{n-1}) | ] = \exp{(n \Omega (x) + o(n))}. \label{eq:det_asymp1}
\end{align}
\end{lemma}
We have, from \eqref{eq:crit_int} and Lemma \ref{lemma:det_asymp}, using Laplace method, 
\begin{align}
\E[ | {\rm{det}} ( \bW_{n-1} - t Z \bI_{n-1}) | \mathbf{1}_{ \{ Z \in \sqrt{2n} B\} } ] &\doteq \sup_{x \in B} \exp{\Big[n \Big(\Omega \Big(x \sqrt{\frac{2k}{k-1}}  \Big) - x^2  \Big)\Big]}  \nonumber 
\end{align}
 Therefore the desired claim follows from Lemma \ref{lemma:crit_points}.
\end{proof}

\begin{remark}
Equations \eqref{eq:FirstCplx} and \eqref{eq:OmegaFormula} imply that, for $|x|\ge \varepsilon_d
\equiv \sqrt{\frac{2(k-1)}{k}}$ the exponential growth rate takes the form
\begin{align}
S(x) =\frac{1}{2} \log(k-1) - \frac{k-2}{4(k-1)} x^2 - \frac{1}{2} \log \Big( \frac{k}{2} \Big) - \frac{x \sqrt{x^2 - \varepsilon_d^2}}{\varepsilon_d^2} - \log ( x + \sqrt{x^2 - \varepsilon_d^2})\, . 
\end{align}
\end{remark}

It remains to prove Lemma \ref{lemma:det_asymp}. 
We do not provide a complete proof and refer the reader to Auffinger, Ben-Arous, \u{C}ern\'{y} 
\cite{auffinger2013random} and Subag \cite{subag2017complexity} for formal arguments. 
The argument presented here only implies that the right-hand side of Eq.~\eqref{eq:det_asymp1}
is an upper bound on the desired expectation\footnote{Note that in many circumstances this is 
sufficient since $\E[C_n(B)]$ is anyway an upper bound on the typical number of critical points.}.

Consider Eq.~\eqref{eq:det_exp}. The Wigner semi-circle law suggests that as $n \to \infty$, 
\begin{align}
\E\Big[ \exp{\Big[ (n-1) \int \log (\lambda - z) s_{n-1}(\de \lambda) \Big] } \Big]\doteq \exp{( n \Omega(z) + o(n) )}, 
\end{align}
provided $s_n$ concentrates closely around $s_{\infty}$. This might be formalized using the 
following large deviation result of empirical spectral distribution of GOE random matrices,
due to Ben Arous and Guionnet \cite{arous1997large}.

\begin{theorem}[\cite{arous1997large}] 
For any two probability measures $\mu, \nu$, define the metric 
\begin{align}
d(\mu, \nu) = \sup \Big\{ \int f \, \de (\mu- \nu) (x) : f \,\,1- {\rm Lipschitz}, \| f \|_{\infty} \leq 1 \Big\}. \nonumber
\end{align} 
Then the sequence ${s_n}$ satisfies a large deviation principle (LDP) with respect to this topology. 
In particular,  for any $\varepsilon>0$, there exists a constant $c(\varepsilon) >0$ such that 
\begin{align}
\prob[ d(s_n, s_{\infty}) > \varepsilon] = \exp{(- c(\varepsilon) n^2 + o(n^2))}. \nonumber
\end{align}
\end{theorem}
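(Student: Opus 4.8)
The plan is to derive the stated tail estimate as an immediate consequence of the full large deviation principle for $s_n$ at speed $n^2$, with an explicit good rate function. The entry point is the classical formula for the joint law of the eigenvalues $\lambda_1,\dots,\lambda_n$ of a $\GOE(n)$ matrix: with the normalization under which the semicircle has radius $2$, the unordered eigenvalues have density on $\reals^n$ proportional to $\prod_{i<j}|\lambda_i-\lambda_j|\,\exp\{-\tfrac{n}{4}\sum_i\lambda_i^2\}$. Writing $\sum_{i<j}\log|\lambda_i-\lambda_j| = \tfrac{n^2}{2}\iint_{x\neq y}\log|x-y|\,s_n(\de x)s_n(\de y)$ and $\sum_i\lambda_i^2 = n\int x^2\,s_n(\de x)$, the density takes the form $\exp\{n^2[\tfrac12\iint_{x\neq y}\log|x-y|\,s_n(\de x)s_n(\de y) - \tfrac14\int x^2\,s_n(\de x)] + o(n^2)\}$, where the $o(n^2)$ absorbs the normalizing constant. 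This exhibits the candidate rate function
\[
I(\mu) = \tfrac14\int x^2\,\mu(\de x) - \tfrac12\iint\log|x-y|\,\mu(\de x)\mu(\de y) - c_0,
\]
with $c_0$ chosen so that $\inf_\mu I = 0$.

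The argument then proceeds in the standard four steps. \emph{Exponential tightness:} the confinement term gives $\prob[\int x^2\,s_n(\de x) > M] \le e^{-c M n^2}$ for $M$ large, so one may restrict to measures supported in a fixed large compact interval, on which $-\log|x-y|$ is bounded below; this also shows $I$ has compact level sets. \emph{Good rate function and unique minimizer:} on such compacts the double logarithmic integral is lower semicontinuous in the weak topology and the quadratic term is continuous, so $I$ is lower semicontinuous; the logarithmic energy is strictly convex on signed measures of zero total mass, so together with the confinement $I$ has a unique minimizer, identified as $s_\infty$ by checking that the Euler–Lagrange condition $\tfrac{x^2}{2} - 2\int\log|x-y|\,s_\infty(\de y) = \text{const}$ holds on $[-2,2]$ (equivalently that $\mathrm{p.v.}\int (x-y)^{-1} s_\infty(\de y) = x/2$); hence $I(\mu)=0$ iff $\mu=s_\infty$. \emph{Upper bound:} for a closed set $F$, integrate the eigenvalue density over $\{s_n\in F\}$ and bound $\iint_{x\neq y}\log|x-y|\,s_ns_n$ from above by the value of the continuous functional at a mollified measure $s_n\ast\kappa_\delta$, using that $\log$ is bounded above and letting $\delta\downarrow 0$; this yields $\limsup n^{-2}\log\prob[s_n\in F] \le -\inf_F I$. \emph{Lower bound:} for an open $G$ and $\mu\in G$ with $I(\mu)<\infty$, approximate $\mu$ by a compactly supported, bounded-density measure, place the eigenvalues on a near-deterministic grid adapted to $\mu$, and estimate the probability that every $\lambda_i$ lies in a small box around its grid point; the Vandermonde factor reconstructs the off-diagonal logarithmic energy and the confinement reconstructs $\int x^2$, each up to an $o(n^2)$ error, giving $\liminf n^{-2}\log\prob[s_n\in G] \ge -I(\mu)$, and then one optimizes over $\mu\in G$.

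The quantitative statement then follows directly. The set $\{\mu : d(\mu,s_\infty)\ge\varepsilon\}$ is closed for the topology induced by $d$ (which metrizes weak convergence on the relevant compacts) and does not contain $s_\infty$; since $I$ is a good rate function whose unique zero is $s_\infty$, $c(\varepsilon) := \inf\{I(\mu) : d(\mu,s_\infty)\ge\varepsilon\} > 0$, and the upper bound gives $\prob[d(s_n,s_\infty)>\varepsilon] = \exp\{-c(\varepsilon)n^2 + o(n^2)\}$.

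I expect the main obstacle to be the careful handling of the logarithmic singularity on the diagonal in the upper and lower bounds: the functional $\mu\mapsto\iint\log|x-y|\,\mu(\de x)\mu(\de y)$ equals $-\infty$ at the empirical measure itself, so one cannot simply substitute $s_n$, and the regularization (mollification in the upper bound, discretization in the lower bound) must be arranged so that all error terms are genuinely $o(n^2)$ uniformly over the admissible configurations — this is the technical heart of the Ben Arous–Guionnet argument. A secondary but essential point is the strict convexity of the logarithmic energy, which underlies both the identification of $s_\infty$ as the unique minimizer and the strict positivity of $c(\varepsilon)$.
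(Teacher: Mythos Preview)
The paper does not prove this theorem; it is quoted as a result of Ben Arous and Guionnet \cite{arous1997large} and used as a black box in the subsequent argument about $\E[|\det(\bW_{n-1}-x\bI_{n-1})|]$. So there is no ``paper's own proof'' to compare against.

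That said, your outline is a faithful sketch of the actual Ben Arous--Guionnet argument: start from the explicit joint eigenvalue density, read off the candidate rate function $I(\mu)$ as a logarithmic energy plus quadratic confinement, establish exponential tightness, prove that $I$ is a good rate function with unique minimizer $s_\infty$, and obtain matching upper and lower bounds by regularizing around the diagonal singularity. Your identification of the main technical issue (the logarithmic singularity making the energy functional equal to $-\infty$ at any empirical measure) and of the strict convexity of the logarithmic energy as the key to uniqueness and to $c(\varepsilon)>0$ is exactly right. One small point: the displayed conclusion $\prob[d(s_n,s_\infty)>\varepsilon] = \exp(-c(\varepsilon)n^2 + o(n^2))$ with an equality sign requires both the LDP upper bound and the matching lower bound on the closed ball complement; you have sketched both directions, so this is fine, but be explicit that the lower bound applies to the open set $\{d(\mu,s_\infty)>\varepsilon\}$ and that the infimum of $I$ over the open and closed versions agree by lower semicontinuity and goodness of $I$.
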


This theorem implies that deviations of averages $\int f(\lambda) s_n(\de \lambda)$ from the asymptotic
value  $\int f(\lambda) s_\infty(\de \lambda)$ are exponentially small in $n^2$. It is then easy
to deduce the following lemma.
\begin{lemma}
For any $\psi : \mathbb{R} \to \mathbb{R}$ such that $0< \varepsilon \leq \psi(x)\leq M < \infty$ for all $x\in\reals$, 
\begin{align}
\lim_{n \to \infty} \frac{1}{n} \log \E \Big[ \prod \psi (\lambda_i) \Big] = \int \psi(\lambda) s_{\infty} (\de \lambda). \nonumber 
\end{align}
\end{lemma}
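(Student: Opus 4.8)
The plan is to recognise the statement as a Laplace-type (Varadhan) computation: the product over eigenvalues is the exponential of a linear spectral statistic, and that statistic concentrates around its limit at the superexponential rate $n^2$, which swamps the linear rate $n$ appearing in the statement.

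First I would write, since $\bW_n$ has $n$ eigenvalues,
\begin{align}
\prod_{i=1}^n\psi(\lambda_i) = \exp\Big\{n\int\log\psi(\lambda)\,s_n(\de\lambda)\Big\} =: e^{nX_n},\qquad X_n:=\int\log\psi(\lambda)\,s_n(\de\lambda).\nonumber
\end{align}
Because $\varepsilon\le\psi\le M$, we have $|\log\psi|\le L:=\max(|\log\varepsilon|,|\log M|)$, so $X_n\in[-L,L]$ deterministically. Wigner's law ($s_n\Rightarrow s_\infty$ almost surely) together with the boundedness of $\log\psi$ gives $X_n\to x_*:=\int\log\psi(\lambda)\,s_\infty(\de\lambda)$ a.s. (this is also where the right-hand side of the statement should read $\int\log\psi(\lambda)\,s_\infty(\de\lambda)$ rather than $\int\psi(\lambda)\,s_\infty(\de\lambda)$). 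Moreover, the Ben Arous--Guionnet theorem quoted above, applied to $\log\psi$ (or to a bounded Lipschitz approximation of it), upgrades this to a quantitative bound: for each $\delta>0$ there is $c(\delta)>0$ such that
\begin{align}
\prob\big(|X_n-x_*|>\delta\big)\le \exp\!\big(-c(\delta)\,n^2+o(n^2)\big);\nonumber
\end{align}
alternatively this follows from Gaussian concentration, since $\bW_n$ is a Lipschitz image, with constant $O(1/n)$, of a standard Gaussian vector and $\lambda\mapsto\log\psi(\lambda)$ is (approximately) Lipschitz, so that the concentration rate is again of order $n^2$.

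Next I would establish matching one-sided bounds on $\tfrac1n\log\E[e^{nX_n}]$. For the lower bound, restrict the expectation to the event $\{X_n\ge x_*-\delta\}$, whose probability tends to $1$, to get $\liminf_n\tfrac1n\log\E[e^{nX_n}]\ge x_*-\delta$. For the upper bound, split according to whether $X_n\le x_*+\delta$ or not:
\begin{align}
\E[e^{nX_n}]\le e^{n(x_*+\delta)}+e^{nL}\,\prob\big(X_n>x_*+\delta\big)\le e^{n(x_*+\delta)}+\exp\!\big(nL-c(\delta)n^2+o(n^2)\big),\nonumber
\end{align}
and note that the second term is negligible next to the first because the $n^2$ rate dominates the linear one; hence $\limsup_n\tfrac1n\log\E[e^{nX_n}]\le x_*+\delta$. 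Letting $\delta\downarrow0$ in both bounds finishes the proof.

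The only genuinely delicate point — and the one I would spell out carefully — is the regularity of $\psi$: the hypotheses give boundedness away from $0$ and $\infty$ but not continuity, so to pass from weak convergence of $s_n$ to convergence of $X_n$, and to invoke the large-deviation estimate (which is stated for Lipschitz test functions), one should either assume $\psi$ continuous — the relevant case in the application to Lemma~\ref{lemma:det_asymp} — and approximate $\log\psi$ uniformly by bounded Lipschitz functions, the approximation error costing at most $2L$ times a quantity one sends to $0$ last, or use that $s_\infty$ is absolutely continuous so that $\int g\,\de s_n\to\int g\,\de s_\infty$ for every bounded, $s_\infty$-a.e.-continuous $g$. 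Everything else is a routine two-sided Laplace estimate; the structural input is simply that the empirical spectral distribution concentrates at rate $n^2\gg n$.
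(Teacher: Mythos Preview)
Your proposal is correct and is precisely the argument the paper has in mind: the paper does not write out a proof but simply remarks that the Ben~Arous--Guionnet large deviation principle gives concentration of linear spectral statistics at rate $n^2$, from which ``it is then easy to deduce'' the lemma. You have filled in exactly that Varadhan-type argument (split the expectation, use the $n^2$ rate to kill the tail), and you correctly flag both the typo in the right-hand side (it should be $\int\log\psi\,\de s_\infty$) and the regularity caveat on $\psi$ needed to invoke the LDP and weak convergence.
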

We note that, in order to prove Lemma \ref{lemma:det_asymp}, we would need to apply the last lemma to
$\psi (\lambda) = | \lambda - z|$ which is unbounded and not bounded away from $0$. 
Hence the result does not directly follow by an application of  Lemma \ref{lemma:det_asymp}.

Nevertheless, Lemma \ref{lemma:det_asymp} has a number of direct consequences that are nearly as 
useful, and we outline here:
\begin{itemize}
\item A better upper bound on the typical number of critical points is obtained by
computing a truncated first moment $\E[C_n(B)\bfone_{\cG_1}]$ 
where $\cG_n$ is a high-probability event (i.e. $\lim_{n\to\infty}\P(\cG_n)=1$). 

Fixing $\eta>0$, we can let $\cG_n\equiv\{\max_{i\le n}|\lambda_i(\bW_n)|\le 2+\eta\}$ 
which holds with high-probability by the Bai-Yin law. As a consequence, it is sufficient to have
$0< \varepsilon \leq \psi(x)\leq M < \infty$ for $x\in [-2,2]$, which holds as soon as $z\not\in[-2,2]$.
\item Using $\psi_{\eps} (\lambda) = | \lambda - z|\vee \eps$,
and taking $\eps\downarrow 0$ at after $n\to\infty$ yields an upper bound for all $z$.
\end{itemize}

Auffinger, Ben-Arous, \u{C}ern\'{y} \cite{auffinger2013random} carry out a more general 
analysis for critical points with fixed indices (recall that the index of a critical point
is the number of negative eigenvalues of the Hessian at that point). 
Their results can be roughly summarized as follows. 
\begin{theorem}[\cite{auffinger2013random}] 
If $C_{n,k}(B)$ denotes the number of critical points of $\hd(\sigma)$ in $B$ with index $n-k$
(i.e. $k$ positive eigendirections), then
\begin{align}
\E[C_{n,k}(B)] \doteq \sup_{x \in B} \exp{(nS_k(x) + o(n))}. \nonumber 
\end{align}
for some appropriate complexity function $S_k$. Further:
\begin{enumerate}
\item $S_0(x) = S(x)$ for $x\ge\eps_d$ and $S_0(x)=-\infty$ for $x<\eps_d$.
\item For any fixed $k\ge 1$, $S_{k}(x)<S_0(x)$ for $x>\eps_d$, $S_k(\eps_d) = S_0(\eps_d)$,
and  $S_k(x)=-\infty$ for $x<\eps_d$.
\end{enumerate}
\end{theorem}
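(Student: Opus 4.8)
The plan is to rerun the Kac--Rice computation of Lemma~\ref{lemma:crit_points} with an extra indicator recording the number of positive eigenvalues of the tangential Hessian, and then to extract the exponential rate by combining the determinant asymptotics of Lemma~\ref{lemma:det_asymp} with a large deviations estimate for the number of \emph{outlier} eigenvalues of a GOE matrix. To avoid a clash with the tensor order $k$ (which enters $\hd$, $t$, $\varepsilon_d$, etc.), I write $j$ for the index parameter: $C_{n,j}(B)$ counts critical points in $B$ whose Hessian has exactly $j$ positive eigenvalues (the ``$k$'' of the theorem statement). The Kac--Rice formula of Lemma~\ref{lemma:kac-rice} has a standard refinement \cite{adler2007random,auffinger2013random} in which the integrand carries the additional factor $\mathbf{1}\{\#\{\text{positive eigenvalues of }\proj_{\bsigma}^{\perp}\nabla^2\hd(\bsigma)\proj_{\bsigma}^{\perp}-\langle\bsigma,\nabla\hd(\bsigma)\rangle\id\}=j\}$. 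Taking $\bsigma=\be_1$ by rotational invariance, conditioning on $\proj_{\bsigma}^{\perp}\nabla\hd(\bsigma)=0$ and $\hd(\bsigma)=x$, and using $\langle\bsigma,\nabla\hd(\bsigma)\rangle=k\hd(\bsigma)$, the tangential Hessian has the law of $\sqrt{k(k-1)(n-1)/(2n)}\,\bigl(\bW_{n-1}-u(x)\bI_{n-1}\bigr)$, $\bW_{n-1}\sim\GOE(n-1)$, $u(x)\equiv x\sqrt{2k/(k-1)}$, so the event ``$j$ positive eigenvalues'' becomes $\{N_{>u(x)}=j\}$ with $N_{>u}\equiv\#\{i:\lambda_i(\bW_{n-1})>u\}$. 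Running the constant bookkeeping exactly as in Lemma~\ref{lemma:crit_points} and the subsequent asymptotics gives
\[
\E[C_{n,j}(B)]\doteq \exp\bigl\{n\bigl(\tfrac12\log(k-1)+\tfrac12\bigr)\bigr\}\int_B \E\bigl[\,|\det(\bW_{n-1}-u(x)\bI_{n-1})|\,\mathbf{1}\{N_{>u(x)}=j\}\bigr]\sqrt{\tfrac{n}{\pi}}\,e^{-nx^2}\,\de x.
\]

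The next step is the constrained determinant asymptotics. Set $\Phi_j(u)\equiv\lim_{n\to\infty}\frac1n\log\E\bigl[\,|\det(\bW_{n-1}-u\bI_{n-1})|\,\mathbf{1}\{N_{>u}=j\}\bigr]$. If $u\ge2$ and $j=0$, then $\{N_{>u}=0\}$ has probability $1-o(1)$ by the Bai--Yin law and does not affect the exponential order, so $\Phi_0(u)=\Omega(u)$ by Lemma~\ref{lemma:det_asymp}. If $u<2$ and $j$ is fixed, then $\{N_{>u}=j\}$ forces $\Theta(n)$ eigenvalues below $u$, an event of probability $e^{-\Theta(n^2)}$ by the Ben~Arous--Guionnet LDP (speed $n^2$) \cite{arous1997large}, hence $\Phi_j(u)=-\infty$. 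If $u>2$ and $j\ge1$, I would split the GOE spectrum into $j$ outliers $\mu_1,\dots,\mu_j\ge u$ and an $(n-1-j)$-point bulk: the bulk still obeys the semicircle law and contributes $e^{n\Omega(u)}$ to $\E[|\det|]$; the outliers contribute only the subexponential factor $\prod_i|\mu_i-u|$; and producing $j$ outliers at positions $\mu_i$ costs $\exp\{-n\sum_i I(\mu_i)+o(n)\}$, where $I(\mu)=\int_2^\mu\sqrt{t^2/4-1}\,\de t$ is the GOE top-eigenvalue rate function (equivalently $I(\mu)=\mu^2/4-\Omega(\mu)-1/2$ for $\mu\ge2$, cf.\ Eq.~\eqref{eq:OmegaFormula}). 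Since $I$ is increasing on $[2,\infty)$, the infimum of $\sum_i I(\mu_i)$ over $\mu_i\ge u$ is attained at $\mu_i=u$, so
\[
\Phi_j(u)=\Omega(u)-j\,I(u),\qquad u\ge 2,
\]
which also covers $j=0$, and is continuous at $u=2$ since $I(2)=0$.

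Substituting $\Phi_j(u(x))$ and applying Laplace's method to the $x$-integral exactly as in the proof of the $\E[C_n(B)]$ asymptotics yields $\E[C_{n,j}(B)]\doteq\exp\{n\sup_{x\in B}S_j(x)\}$ with, using $u(x)\ge2\iff x\ge\varepsilon_d$,
\[
S_j(x)=\begin{cases} S_0(x)-j\,I\!\bigl(x\sqrt{2k/(k-1)}\bigr), & x\ge\varepsilon_d,\\[1mm] -\infty, & x<\varepsilon_d,\end{cases}\qquad S_0(x)=\begin{cases} S(x), & x\ge\varepsilon_d,\\[1mm] -\infty, & x<\varepsilon_d,\end{cases}
\]
with $S(x)$ as in Eq.~\eqref{eq:FirstCplx}. (For $x<\varepsilon_d$ the total count $\E[C_n(B)]$ stays finite because it is dominated there by critical points of macroscopic index $j\asymp n$, which the fixed-$j$ statement does not see.) The three asserted properties are now immediate: $I(u)>0$ for $u>2$, so $S_j(x)<S_0(x)$ for $x>\varepsilon_d$ and $j\ge1$; $I(2)=0$, so $S_j(\varepsilon_d)=S_0(\varepsilon_d)$; and $S_j\equiv-\infty$ on $(-\varepsilon_d,\varepsilon_d)$ for every $j\ge0$.

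The main obstacle is the $j\ge1$ case of Step~2: making rigorous the ``$j$-outlier'' decomposition of the GOE eigenvalue density — controlling the $\log|\mu_i-\lambda|$ interactions between outliers and bulk together with the determinantal weight $\prod|\lambda-u|$, and showing the $j$-dimensional variational problem localizes at $\mu_i=u$ uniformly. This is exactly the Ben~Arous--Dembo--Guionnet-type analysis carried out in \cite{auffinger2013random}. Moreover, as already for Lemma~\ref{lemma:det_asymp}, pinning down the \emph{exact} additive constants in $S_j$ (rather than the rate up to $o(n)$) requires the careful $\Gamma$-function / Selberg-integral bookkeeping, so we would only sketch this here and refer to \cite{auffinger2013random,subag2017complexity} for the complete argument.
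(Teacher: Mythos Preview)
The paper does not supply its own proof of this theorem; it is quoted as a result from \cite{auffinger2013random}. Your outline is the correct strategy and matches that reference: insert the index indicator into the Kac--Rice integrand, reduce to a constrained GOE determinant expectation, and read off the rate. Your formula $S_j(x)=S_0(x)-j\,I\bigl(x\sqrt{2k/(k-1)}\bigr)$ for $x\ge\varepsilon_d$, with $I(u)=u^2/4-\Omega(u)-1/2$, is the right one and immediately yields the three claimed properties.

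One point worth flagging: your Step~2 for $j\ge 1$ computes $\Phi_j(u)=\Omega(u)-j\,I(u)$ via a heuristic bulk/outlier split (bulk gives $\Omega(u)$, $j$ outliers each cost $I(u)$). This yields the right exponent, but it is not how \cite{auffinger2013random} make it rigorous. Their key step (Lemma~3.3 there) is an exact random-matrix identity that rewrites $\E\bigl[|\det(\bW_{n-1}-u\bI)|\,\mathbf{1}\{N_{>u}=j\}\bigr]$ as a constant times an expectation involving only the $(j+1)$-th largest eigenvalue of a GOE of dimension $n$; the large-deviations rate for a single extreme ordered eigenvalue then applies directly, without ever needing to control the outlier--bulk interactions you flag as the obstacle. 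Your decomposition is a good heuristic explanation of why the answer is $j\,I(u)$, but if you want to upgrade the sketch to a proof, the route is that identity rather than a direct justification of the split.
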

In other words, the last result implies that local maxima dominate the total count of critical points
with energy $\hd(\bsigma)>\eps_d$. For $\hd(\bsigma)<\eps_d$ it turns out instead that the typical 
index of critical points is linear in $n$: the landscape is dominated by saddles with a
 large number of positive eigendirections.
 
 The above analysis has immediate implications on the ground state energy.
 Define
 \begin{align}
 \eps_*\equiv\sup\big\{x\in\reals: \;\; S(x) \ge 0\big\}\, .
 \end{align}
 \begin{corollary}
 The following holds almost surely
 \begin{align}
 \lim_{n\to\infty}\max_{\bsigma\in\sS^{n-1}}\hd(\bsigma) \le \eps_*\, .\label{eq:UB-BAC}
 \end{align}
 \end{corollary}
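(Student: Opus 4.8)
The plan is to bound the maximum of $\hd$ over the sphere by a first-moment (union bound) argument, using the expected-count asymptotics just established. First I would fix any $\eps > \eps_*$ and show that $\P\big(\max_{\bsigma}\hd(\bsigma) > \eps\big) \to 0$; combined with monotonicity in $\eps$ and a union over a countable sequence $\eps \downarrow \eps_*$, this gives the almost sure bound \eqref{eq:UB-BAC}. The key observation is that if $\max_{\bsigma\in\sS^{n-1}}\hd(\bsigma) > \eps$, then since $\sS^{n-1}$ is compact and $\hd$ is smooth, the maximum is attained at some point $\bsigma^\star$, which is necessarily a critical point of $\hd$ with $\hd(\bsigma^\star) \geq \eps$. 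Hence $\{\max_{\bsigma}\hd(\bsigma) > \eps\} \subseteq \{C_n((\eps,\infty)) \geq 1\}$, and by Markov's inequality $\P\big(\max_{\bsigma}\hd(\bsigma) > \eps\big) \leq \E[C_n((\eps,\infty))]$.

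Next I would invoke the asymptotics of Lemma (the one stating $\E[C_n(B)] \doteq \exp\{n\sup_{x\in B} S(x)\}$) with $B = (\eps,\infty)$: this gives $\frac{1}{n}\log \E[C_n((\eps,\infty))] \to \sup_{x > \eps} S(x)$. Now I would check that $\sup_{x > \eps} S(x) < 0$ for every $\eps > \eps_*$. Recall $\eps_* = \sup\{x : S(x) \geq 0\}$, so $S(x) < 0$ for all $x > \eps_*$; what needs a small argument is that the supremum over the half-line $(\eps,\infty)$ is strictly negative, not merely that each value is. This follows from the explicit formula for $S(x)$ on $|x| \geq \eps_d$ given in the Remark: there $S(x)$ is (eventually) a concave-looking expression dominated by $-\frac{k-2}{4(k-1)}x^2$ minus positive terms, hence $S(x) \to -\infty$ as $x\to\infty$; combined with continuity of $S$ on the compact interval $[\eps, M]$ for large $M$, the supremum is attained and is strictly negative. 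Therefore $\E[C_n((\eps,\infty))] \to 0$ exponentially fast, which gives $\P\big(\max_{\bsigma}\hd(\bsigma) > \eps\big) \to 0$ as desired.

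Finally, to upgrade convergence in probability to the almost sure statement, I would use a Borel--Cantelli argument: since the bound $\E[C_n((\eps,\infty))] \leq e^{-cn + o(n)}$ for some $c = c(\eps) > 0$ is summable in $n$, $\P\big(\max_{\bsigma}\hd(\bsigma) > \eps \text{ i.o.}\big) = 0$ for each fixed $\eps > \eps_*$. Taking a sequence $\eps_j \downarrow \eps_*$ and intersecting the corresponding full-probability events gives $\limsup_{n} \max_{\bsigma}\hd(\bsigma) \leq \eps_*$ almost surely. The main obstacle — though a mild one — is the interchange of limit/expectation hidden inside the Kac--Rice asymptotics (already handled by the cited Lemma and the references to \cite{auffinger2013random, subag2017complexity}) and the need to control the supremum over an unbounded set $(\eps,\infty)$ rather than a bounded Borel set; the latter is dispatched using the explicit large-$|x|$ form of $S$ from the Remark to rule out any contribution from large energies.
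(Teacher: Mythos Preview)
Your argument is correct and is precisely the first-moment/Markov/Borel--Cantelli route that the paper has in mind when it says the ``above analysis has immediate implications on the ground state energy'' (the paper does not spell out a proof of the corollary). The only cosmetic point is that your argument yields $\limsup_{n}\max_{\bsigma}\hd(\bsigma)\le \eps_*$ almost surely, whereas the statement is written with $\lim$; this is harmless since the existence of the limit is supplied elsewhere (concentration plus the Parisi formula, or Subag's matching lower bound mentioned just after the corollary).
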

Of course we have $\eps_*>\eps_d$ strictly, and therefore there is a large interval of 
energy such that the energy landscape is dominated by local maxima.
This is believed to have a direct impact on algorithms: both sampling and search algorithms
are expected to fail at finding configurations with $\hd(\bsigma) \in(\eps_d,\eps_*]$,
and in fact we have rigorous evidence for this expectation.

Subag \cite{subag2017complexity} carries out a related second moment argument to 
establish that the typical behavior of local maxima is governed by that of
 its expectation in an interval of energies. In particular, this result implies
 that Eq.~\eqref{eq:UB-BAC} holds with equality. (The last conclusion also follows from rigorous
 proofs of Parisi formula.)


\section{Back to the replica method}
\label{sec:replica_critical} 

We now go back to the non-rigorous replica method, and explore some connections 
to the rigorous analysis of the last section. We use the replica method to zoom
 into the low temperature regime $T\to 0$. Recall the $1$
 RSB free energy functional 
\begin{align}
\Psi_{1\rm{RSB}} (q_1,m) = \frac{\beta^2}{4} [ 1- (1-m) q_1^k ] + \frac{1}{2m} \log (1 - (1-m) q_1) - \frac{1-m}{2m} \log (1-q_1). \nonumber 
\end{align}
The $1$-RSB free energy is given by $\Psi_{1\rm{RSB}}(q_{1, *}(T, m_*(T)), m_*(T))$, for any temperature $T$. For any $m$, the corresponding minimizer $q_{1,*}(T,m)$ satisfies the fixed point equation 
\begin{align}
q_{1,*}(T,m)^{k-2} (1 - q_{1,*}(T,m)) ( 1 - (1-m_{*}) q_{1,*}(T,m)) = \frac{2}{k} T^2 . \label{eq:rsb_fixed}
\end{align}
We assume $m= \mu T$ as $T \to 0$ for some constant $\mu$, and setting $q_{1,*} (T,m)= 1 - z_{*} T + o(T)$, Taylor expansion of  \eqref{eq:rsb_fixed} around $1$ yields that $z_*$ must satisfy the equation 
\begin{align}
z_* ( \mu + z_*) = \frac{2}{k}
\implies z_*:= z_*(\mu) = \sqrt{\frac{2}{k} + \frac{\mu^2}{4}} - \frac{\mu}{2}. \nonumber 
\end{align}
Plugging in these values, we have that 
\begin{align}
\lim_{\beta \to \infty} \frac{1}{\beta} \Psi_{1\rm{RSB}}(q_{1,*}(T,m), m) = e_{1\rm{RSB}}(\mu), \nonumber \\
e_{1\rm{RSB}}(\mu) = \frac{1}{4} [ \mu + k z_*(\mu)] + \frac{1}{2\mu} \log \Big(1 + \frac{\mu}{z_*(\mu)} \Big). \nonumber 
\end{align}
We set $e_* = \min_{\mu} e_{1\rm{RSB}} (\mu)$. $e_*$ is interpreted as the limiting ground state energy, that is 
\begin{align}
e_* = \lim_{n \to \infty} \E[ \max_{\bsigma} \hd(\bsigma)]. \nonumber 
\end{align}
Here we discover the first connection to the rigorous study of the last section. We obtain that 
$e_* = \inf \{ x \geq 0: S(x) \leq 0\}$. This suggests that for any $\delta >0$ there are exponentially many critical points with energy at least $\max_{\bsigma} \hd(\bsigma) - \delta$. Further, we define $\phi(\mu) = \mu e(\mu)$ and evaluate the Fenchel-Legendre transform 
\begin{align}
\Sigma(\varepsilon) = \sup_{\mu} \{ \phi(\mu) - \varepsilon \mu \}. \nonumber 
\end{align}
To find the dual $\Sigma(\varepsilon)$, we define $\mu^* = \argmin \,\, \phi(\mu)$ and set 
\begin{align}
\varepsilon = \frac{1}{2} \mu + \frac{1}{4} k z_*(\mu^*) + \frac{1}{2} \frac{1}{[\mu^* + z_*(\mu^*)]}. \nonumber 
\end{align}
Here we run into the second surprising fact--- the Fenchel- Legendre dual $\Sigma(\varepsilon)$ turns out to be in fact exactly equal to the complexity function $S(\varepsilon)$ derived in the last section ! 

This surprise was explained within the framework of the replica method by R\'{e}mi Monasson, and this argument is therefore usually referred to as Monasson's argument. We sketch this argument to explore the connections between the complexity and the free energy. We start with the partition function 
\begin{align}
Z_n = \int_{\sS^{n-1}} \exp{[n \beta H(\bsigma) ]} \de \nu_0(\bsigma). \nonumber 
\end{align}
Recall the intuitive picture of the random measure $\mu_{\beta}$. We believe that $\sS^{n-1} = \cup_{\alpha=1}^{N} \Gamma^{(\alpha)}$ may be decomposed into ``pure" states and the measure decomposes naturally into a convex combination of distributions over the pure states. Thus we have, $\mu_{\beta}(\de \sigma) = \sum_{\alpha=1}^{N} w^{(\alpha)} \mu_{\beta}^{(\alpha)} (\de \sigma)$, where $\mu_{\beta}^{(\alpha)} (\cdot)  \propto \mu_{\beta}(\cdot) \mathbf{1}_{\Gamma^{(\alpha)}}$ is the restriction of the measure onto the pure state.  We define 
\begin{align}
Z_n^{(\alpha)} = \int_{\Gamma^{(\alpha)}} \exp{[n \beta H(\bsigma)]} \de \nu_0 (\bsigma). \nonumber 
\end{align}
Thus the weight of a pure state $w^{(\alpha)} = \frac{Z_n^{(\alpha)}}{Z}$. We assume that $Z^{(\alpha)} \doteq \exp{[n f^{(\alpha)}]}$ and let $\widehat{N} = \sum_{\alpha} \delta_{f^{\alpha}}$ denote the counting measure corresponding to the limiting free energy of the pure states. Thus it suggests that 
\begin{align}
Z_n \doteq \exp[ n \sup_{f: \Sigma_{\beta}(f) \geq 0} ( f + \Sigma_{\beta}(f)) ], \nonumber 
\end{align}
where $\Sigma_{\beta}(f)$ is the cluster complexity functional, which captures the number of pure states with free energy $f$. This captures the energy-entropy tradeoff in determining the free energy. Now, consider the free energy of a coupled system with $m$ ``real" replicas. 
\begin{align}
Z_n(m) = \int_{(\sS^{n-1})^m} \exp{\Big[n \beta \sum_{a=1}^{m} H(\bsigma^a) + n \delta g(\bsigma^{1}, \cdots, \bsigma^{m}) \Big]} \prod_{i=1}^{m}\de \nu_0(\bsigma_i). \nonumber  
\end{align}
Here $g(\bsigma^{1}, \cdots, \bsigma^{m})$ is any function, for example, $g(\bsigma^1, \cdots, \bsigma^m) = \sum_{1\leq a <b \leq m} \langle \bsigma^a, \bsigma^b \rangle$, which favors configurations that align together, while $\delta>0$ is a small perturbation. We note that if we let $\delta \to 0$ for any fixed $n$, we obtain $Z_n(m) = (Z_n)^m$ and the system de-couples into $m$ independent systems. However, if $\delta>0$ is fixed while $n\to \infty$, we expect that the replicas will ``condense" on the same pure state. The perturbation with $g$ should be looked upon as a device to select one of the pure states. This is analogous to the choice of a pure state in the Curie-Weiss model by introducing an external magnetic field. Thus if we then send $\delta \to 0$, we expect, (since the system is $1$-RSB, once the pure state has been selected, everything is ``uniform")
\begin{align}
Z_n(m) \doteq \sum_{\alpha=1}^{N} (\exp{[n f^{(\alpha)}]})^m. \nonumber
\end{align}
Thus we expect that as $n \to \infty$, 
\begin{align}
Z_n(m) \doteq \exp{\Big[n \sup_{ f} (mf + \Sigma_{\beta}(f)) \Big]}. \nonumber 
\end{align}
We compute the asymptotics of $Z_n(m)$ using the replica method as earlier. For $r \geq 1$ a multiple of $m$, we obtain, 
\begin{align}
\E[(Z_n(m))^{r/m}] \doteq \exp{[n S(\bQ)]}, \nonumber 
\end{align}
where $S(\cdot)$ is the same action functional derived in \eqref{eq:RS_Action}. We evaluate the function on the $1$-RSB saddle point $\bQ^{1-\rm{RSB}}$. Finally, as before, we send $r \to 0$ to obtain 
\begin{align}
\lim_{r \to 0}m \frac{1}{n (r/m)} \log \E[(Z_n(m))^{r/m}] = m \Psi_{1\rm{RSB}}(m). \nonumber 
\end{align}
Now, this calculation suggests something remarkable! We derive, as a result, that for $m \in [0,1]$, 
\begin{align}
m\Psi_{1\rm{RSB}}(m) = \sup_{f} ( mf + \Sigma_{\beta}(f)). \label{eq:legendre}
\end{align}
Thus the complexity functional is related to the $1$-RSB free energy functional through Legendre-Fenchel duality. Formally, we can therefore invert the free energy functional $\Psi_{1\rm{RSB}}$ functional to obtain the complexity function $\Sigma_{\beta}(f)$. This explains the relation observed between the ground state energy functional $e_{1\rm{RSB}}$ and the complexity function $S(\cdot)$ at zero temperature. 

We continue to explore the connections between the free-energy, complexity and the RS to 1-RSB phase transition. We recall that intuitively, we expect 
\begin{align}
Z_n \doteq \exp{[n \max_{f : \Sigma_{\beta}(f) \geq 0} (f + \Sigma_{\beta}(f))]}. \nonumber 
\end{align}
When the maximum occurs at $f = f_*$, we intuitively believe that the free energy is dominated by $\exp[n \Sigma_{\beta}(f_*)]$ pure states, each having energy $f_*$. We invert \eqref{eq:legendre} to express $\Sigma_{\beta}$ as a function of $m$ and obtain the stationary conditions 
\begin{align}
\Sigma_{\beta} = m \Psi_{1\rm{RSB}} (m) - m f ,\nonumber \\
f = \frac{\de}{\de m} \Big[m \Psi_{1\rm{RSB}}(m) \Big], \nonumber 
\end{align}
where implicitly, we set $\Sigma_{\beta}$ and $f$ to be functions of $m$. Consider first the unconstrained problem $\max_{f}(f+ \Sigma_{\beta}(f))$. The stationary condition in this case is $ \Sigma_{\beta}'(f) = -1$. Further, it is easy to see that if $\Sigma_{\beta}$ and $f$ are expressed as functions of $m$, then $\frac{\de \Sigma_{\beta}}{\de f} = -m$. Therefore, the unconstrained maximum corresponds to $m=1$. In this case, 
\begin{align}
f =  \frac{\de}{\de m} \Big[m \Psi_{1\rm{RSB}}(m)\Big] \Big|_{m=1} = \Psi_{1\rm{RSB}}(1) + \Psi_{1\rm{RSB}}'(1), \quad
\Sigma_{\beta} = - \Psi_{1\rm{RSB}}'(1). \nonumber 
\end{align} 
We note that this implies that the unconstrained maximizer is a feasible solution to the problem only if $\Psi_{1\rm{RSB}}'(1) \leq 0$. Recall the static and dynamic critical temperatures $T_s$ and $T_d := T_d(1)$ defined earlier.  Further, recall from Fig \ref{fig:psivsm} the behavior of the function $\Psi_{1RSB}(m)$ as a function of $m$. 

For $T_s < T < T_d$, $\Psi_{1\rm{RSB}}'(1)\leq 0$, and thus the unrestricted maximum is feasible. In this case, we set $m=1$ and the free energy is the RS free energy. However, the complexity $\Sigma_{\beta} >0$. This validates the multiple pure states picture we had introduced earlier. Indeed, in this case, we expect to have exponentially many pure states, each corresponding to the energy $f_* = f(1)$. In case $T < T_s$, the complexity $\Psi_{1\rm{RSB}}'(1) >0$, and thus the unrestricted maximum is infeasible.  At this point, recall $m_*= m_*(T)$, the minimizer of $m \mapsto \Psi_{1\rm{RSB}}(m)$. In this case, using the stationary conditions obtained above, we have,
$\Sigma_{\beta} = - m^2 \Psi_{1\rm{RSB}}'(m )$ and therefore, $\Sigma_{\beta} \geq 0$ if and only if $\Psi_{1\rm{RSB}}'(m) \leq 0$, or equivalently, if $m \leq m_*$. Further, 
$\frac{\de}{\de f}(f+ \Sigma(f)) = (1 - m) \geq 0$ and thus the optimal solution in this case is to chose $m = m_*$. For $m = m_*$, $\Sigma_{\beta} = 0$ and $ f = \Psi_{1\rm{RSB}}(m_*)$. In this case, we expect that the free energy is governed by $O(1)$ pure states, each corresponding to the energy $f_s:= \Psi_{1\rm{RSB}}(m_*)$. 

 Further, physicists also predict the structure of the ``extremal process" of the pure states with $f = f_s + \frac{c}{n}$. For such states, using Taylor expansion around $f= f_s$, physicists predict that $\exp[n \Sigma_{\beta}(f)] \approx \exp[- n m_* (f - f_s)]$ and thus expect that the extremal process, suitably scaled, should converge to a Poisson point process with intensity $\exp[- m_* x]$. Such pure states are expected to have weight $w_{\alpha} = \exp{[ x_{\alpha}]}/ \sum_{\alpha} \exp{[  x_{\alpha}]}$, which is usually referred to as the Poisson-Dirichlet distribution with parameter $m_*$. This provides a very detailed description of the dominant states and their complexities at all temperatures. As a final sanity check, it is not hard to see that if states are distributed with weights given by Poisson-Dirichlet($m_*$), then marginal probability of two iid draws to be from the same cluster is $1-m_*$, which corroborates with our earlier interpretation of this parameter. 
 
 This completes our analysis of the $p$-spin model. We take up the Sherrington-Kirkpatrick model of spin glasses in the next chapter.

\section*{Bibliographic notes}
\addcontentsline{toc}{section}{\protect\numberline{}Bibliographic notes}

We refer to \cite{cover1991elements} for a general definition of mutual information, and its properties. 

The replica method was first  applied to the spherical $p$-spin model (corresponding to the $\lambda=0$ case of the model treated here)
by Crisanti and Sommers \cite{crisanti1992spherical}. The same authors also studied the energy landscape in \cite{crisanti1995thouless},
and the Langevin dynamics in \cite{crisanti1993sphericalp}.

The extension to  $\lambda>0$ is treated  in \cite{gillin2000p}. The spherical SK model (corresponding to $k=2$ and $\lambda=0$) was first studied by \cite{kosterlitz1976spherical} in physics.

The celebrated Baik-Ben Arous-Pech\'e phase transition
\cite{baik2005phase} was first discovered by Hoyle and Rattray using the replica method in
\cite{hoyle2003pca,hoyle2004principal}.

For a rigorous treatment of the spherical model, see \cite{talagrand2006free} which covers the case of even $k$, and
\cite{chen2013aizenman}.
which solves the general case. These papers do indeed consider the more general `mixed $p$-spin model'.

In the context of \eqref{eq:SpikedTensor}, the hypothesis testing problem
 $H_0: \lambda =0 $ vs. $H_1: \lambda >0$ has been examined extensively in the
  recent literature, under different assumptions on the spike $\bv_0$. In the 
  case of the uniform spherical prior on $\bv_0$, 
  \cite{montanari2014statistical,montanari2016limitation} established the existence of 
  $\lambda^- < \lambda^+$ such that for $\lambda > \lambda^+$, it is possible to detect the 
  signal with vanishing errors as $n \to \infty$. On the other hand, for $\lambda < \lambda^-$, 
  the Total Variation distance between these measures converges to zero. 
  \cite{perry2020statistical} analyzed this problem for spherical, iid Rademacher
   and sparse Rademacher prior on the spike $\bv_0$, and derived analogous constants 
   $\lambda^-, \lambda^+$ for each prior. Along the way, they improved the estimates in
    \cite{montanari2014statistical,montanari2016limitation} for the spherical prior. 
    \cite{lesieur2017statistical} characterized the limiting mutual information between 
    the tensor $\bY$ and the latent vector $\bv_0$ for any product prior on
     $\bv_0$, and derived a sharp constant $\lambda_p$ such that consistent detection 
     (i.e. with vanishing Type I and Type II errors) is possible whenever 
     $\lambda > \lambda_p$. Finally, \cite{chen2019phase} established that this is the sharp detection 
     threshold for this problem, in that the null and alternative distributions are asymptotically indistinguishable for any $\lambda < \lambda_p$.   

%
%
%

\section{Some omitted technical calculations}
\label{sec:omitted_calc} 

\subsection{Proof of Lemma~\ref{lemma:I-MMSE}}
\label{sec:appendix_proof_immse} 

\begin{proof}
Recall that 
\begin{align}
\Info(\bvz ;\bY) &=\E\left\{\log \Big(\frac{\de\mu_{\sBayes}}{\de \nu_0}(\bvz)\Big)\right\}\, = \E\Big[ \frac{n\lambda}{k!} \< \bY , \bvz^{\otimes k} \>  \Big] - n \Phi_{n, \sBayes}(\lambda). \nonumber 
\end{align}
The first identity follows upon dividing both sides by $n$ and noting that $\E[\< \bY, \bvz^{\otimes k} \>] = \lambda$. 
Therefore, 
\begin{align}
\frac{1}{n} \frac{\partial}{\partial \lambda} \Info(\bvz ; \bY) = \frac{2 \lambda}{k!} - \frac{\partial \Phi_{n, \sBayes}}{\partial \lambda} (\lambda). \nonumber 
\end{align}

\noindent 
Recall from \eqref{eq:phi_bayes} and \eqref{eq:SpikedTensor}  that 
\begin{align}
\Phi_{n, \sBayes}(\lambda) &= \frac{1}{n} \mathbb{E}\log \int_{\sS^{n-1}}\exp\Big\{\frac{n\lambda}{k!}\,
\<\bY,\bsigma^{\otimes k}\> \Big\} \, \nu_0(\de\bsigma) \nonumber \\
&= \frac{1}{n} \mathbb{E}\log \int_{\sS^{n-1}} \exp \Big\{ \frac{n \lambda^2}{k!} \< \bvz, \bsigma \>^k + \frac{n \lambda}{k!} \< \bW, \bsigma^{\otimes k} \> \Big\} \, \nu_0(\de\bsigma). \nonumber
\end{align} 
Differentiating in $\lambda$, we obtain, 
\begin{align}
\frac{\partial \Phi_{n, \sBayes}}{ \partial  \lambda}(\lambda) = \mathbb{E}\int_{\sS^{n-1}} \Big\{ \frac{2\lambda}{k!} \< \bvz, \bsigma\>^k + \frac{1}{k!} \< \bW, \bsigma^{\otimes k} \>\Big\} \mu_{\sBayes}(\de \bsigma). 
\end{align} 
Using Gaussian integration by parts and Bayes rule, we have, 
\begin{align}
\mathbb{E}\int_{\sS^{n-1}} \< \bW, \bsigma^{\otimes k} \> \mu_{\sBayes}(\de \bsigma) = \lambda ( 1 - \mathbb{E}\< \hbX_{\sBayes}(\bY),\bXz \>  ). \nonumber 
\end{align}  
Plugging this back, we obtain 
\begin{align}
\frac{1}{n} \frac{\partial}{\partial \lambda} \Info(\bvz ; \bY) = \frac{\lambda}{k!} (1- \mathbb{E}\<  \hbX_{\sBayes}(\bY),\bXz  \> )  = \frac{\lambda}{2 (k!)} \E\big\{\|\hbX_{\sBayes}(\bY)-\bXz\|_F^2\big\}\, . 
\end{align} 
This completes the proof.

\end{proof} 

\subsection{Proof of Eq.~(\ref{eq:Q-density})}
\label{sec:ProofQdensity}

We use the following change-of-measure trick.

\begin{lemma}
Let $(\Omega,\cF)$ be a measurable space and consider two $sigma$-finite measure $\mu$, $\nu$ on it. Let $\bX:\Omega\to\reals^d$ be measurable and 
assume it has densty $f_{\mu}(\bx)$, $f_{\nu}(\bx)$ with respect to Lebesgue measure, when $(\Omega,\cF)$ is endowed with $\mu$ or $\nu$. 
Assume that  $\nu$ is absolutely continuous with respect to $\mu$, with Radon-Nikodym derivative which depends on $\omega$ only through $\bX(\omega)$,
and write (with an abuse of notation) $\frac{\de \nu}{\de \mu}(\bX(\omega))$ for this derivative. Then
\begin{align}
f_{\nu}(\bx) = f_{\mu} (\bx)\frac{\de\nu}{\de \mu}(\bx)\, .
\end{align}
\end{lemma}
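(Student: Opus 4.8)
The plan is to prove the change-of-measure identity for densities directly from the definitions of pushforward measures and Radon–Nikodym derivatives. Write $P_\mu = \bX_* \mu$ and $P_\nu = \bX_* \nu$ for the laws (pushforwards) of $\bX$ under $\mu$ and $\nu$ respectively; by hypothesis these have densities $f_\mu$ and $f_\nu$ with respect to Lebesgue measure on $\reals^d$. The key observation is that the assumption ``$\frac{\de\nu}{\de\mu}$ depends on $\omega$ only through $\bX(\omega)$'' means precisely that there is a measurable function $\rho:\reals^d\to\reals_{\ge 0}$ with $\frac{\de\nu}{\de\mu}(\omega) = \rho(\bX(\omega))$ $\mu$-a.e. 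The goal reduces to showing $f_\nu(\bx) = f_\mu(\bx)\rho(\bx)$ Lebesgue-a.e.

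First I would verify that $\rho$ is a version of $\frac{\de P_\nu}{\de P_\mu}$, i.e. that $P_\nu(\de\bx) = \rho(\bx)\, P_\mu(\de\bx)$. This follows by a standard unwinding: for any Borel set $A\subseteq\reals^d$,
\begin{align}
P_\nu(A) = \nu(\bX^{-1}(A)) = \int_{\bX^{-1}(A)} \frac{\de\nu}{\de\mu}\,\de\mu = \int_{\bX^{-1}(A)} \rho(\bX(\omega))\,\mu(\de\omega) = \int_A \rho(\bx)\, P_\mu(\de\bx)\, ,\nonumber
\end{align}
where the last step is the change-of-variables formula for pushforward measures applied to the function $\rho \cdot \mathbf{1}_A$. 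Since this holds for all Borel $A$, $\rho = \frac{\de P_\nu}{\de P_\mu}$ ($P_\mu$-a.e.).

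Next I would chain the Radon–Nikodym derivatives. We have $P_\mu(\de\bx) = f_\mu(\bx)\,\de\bx$ and $P_\nu(\de\bx) = f_\nu(\bx)\,\de\bx$ by definition of the densities, and $P_\nu(\de\bx) = \rho(\bx)\, P_\mu(\de\bx)$ from the previous step. Combining, $f_\nu(\bx)\,\de\bx = \rho(\bx) f_\mu(\bx)\,\de\bx$, hence $f_\nu(\bx) = \rho(\bx) f_\mu(\bx)$ for Lebesgue-almost every $\bx$, which is the claimed identity (with $\rho(\bx)$ written, by the stated abuse of notation, as $\frac{\de\nu}{\de\mu}(\bx)$).

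The only delicate point — the ``main obstacle,'' though it is minor — is making the measure-theoretic bookkeeping clean: one should confirm that the hypothesis genuinely gives a Borel-measurable $\rho$ on $\reals^d$ (this is where $\sigma$-finiteness and the assumption that the Radon–Nikodym derivative factors through $\bX$ are used, so that $\rho$ can be taken $P_\mu$-measurable and then, up to completion, Borel), and that the a.e.\ qualifications are consistent — $\mu$-a.e.\ statements about $\bX$ push forward to $P_\mu$-a.e.\ statements, which in turn are Lebesgue-a.e.\ statements on the set where $f_\mu>0$; on $\{f_\mu = 0\}$ both sides vanish since $P_\nu \ll P_\mu \ll$ Lebesgue forces $f_\nu = 0$ there too. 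Everything else is a direct application of the change-of-variables formula for pushforwards and the chain rule for Radon–Nikodym derivatives.
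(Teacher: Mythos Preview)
Your proof is correct and complete. The paper does not actually prove this lemma: it is stated as a ``change-of-measure trick'' in Section~\ref{sec:ProofQdensity} and then immediately applied to the pair $(\gamma_{\bLambda},\gamma_{\id})$, without any argument for the lemma itself. Your argument via pushforwards and the chain rule for Radon--Nikodym derivatives is exactly the standard justification one would supply, and your care with the a.e.\ qualifications (in particular on $\{f_\mu=0\}$) is appropriate.
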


Applying this to $\nu = \gamma_{\bLambda}$, $\mu = \gamma_{\id}$, $\bX = \hbQ$, we obtain
\begin{align}
G_{\bLambda}(\bQ) = \det(\bLambda)^{n/2}\, e^{-n(\<\bLambda,\bQ\>-\<\id,\bQ\>)/2}\, G_{\id}(\bQ)\, .
\end{align}
Substituting in Eq.~(\ref{eq:Q-density}), and recalling $Q_{ii}=1$, we conclude that it is sufficient to prove the claim for $\bLambda = \id$, namely 
\begin{align}
f_{n,r}(\bQ) &=\frac{G_{\id}(\bQ)}{g_{n}(1)^{r+1}}\, .
\end{align}
This follows from the fact that the distribution of $\bsigma^a\sim\normal(0,\id_n/n)$, conditioned on $\|\bsigma^a\|_2=1$ (corresponding to 
the density on the right hand side) coincides with the uniform measure over $\bsigma^a\in \sS^{n-1}$.

\section*{Exercises}
\addcontentsline{toc}{section}{\protect\numberline{}Exercises}

\begin{exercise}\label{exer:GeneratingFunctions}
Prove Eqs.~(\ref{eq:PartialPhiH}), (\ref{eq:PartialPhiLambda}).
\end{exercise}

\begin{exercise}\label{exer:MixedSphericalModel}
Assume that for each $\ell\in\{1,\dots,k\}$, we observe a tensor $\bY_{\ell}\in (\reals^n)^{\otimes \ell}$, given by
\begin{align}
\bY_{\ell} = \lambda_{\ell}\, \bvz^{\otimes \ell} + \bW_{\ell}\, \label{eq:SpikedTensorMixed}
\end{align}
where $\bW_{\ell}\in (\reals^n)^{\otimes \ell}$ is a random Gaussian tensor distributed as in Section \ref{sec:Pspin}
(with $\ell$ replacing $k$), and $\bvz$ is a unit vector. We use $\bY = (\bY_1,\dots,\bY_{k})$ to denote collectively all the observations,
 $\bW = (\bW_1,\dots,\bW_{k})$ to denote all the noise tensors, and $\blambda = (\lambda_1,\dots,\lambda_k)$ for all the model parameters. Assume that the tensors $(\bW_1, \dots, \bW_{k})$ are independent. 
\begin{itemize}
\item[$(a)$] Write the maximum likelihood estimator for $\bvz$.
Assuming a uniform prior for $\bvz\in\sS^{n-1}$, derive the Bayesian posterior distribution of $\bvz$ given $\bY$.
\item[$(b)$] Introducing additional parameters $\beta,h$ define an analogue of the random probability measure (\ref{eq:PspinDef})
for this case. In particular, it should include the Bayes measure and the maximum likelihood problem as special cases.
\item[$(c)$] Derive the replica-symmetric expression for the free energy density in this model.
\end{itemize}
\end{exercise}

\begin{exercise}[Concentration of the free energy]
Consider the log-partition function as a function of the noise $\bG$ (where we use the construction (\ref{eq:Wconstruction})), namely,
for fixed $\beta,\lambda, h$, let
\begin{align}
Z_n(\bG) &=
\int_{\sS^{n-1}} 
\exp\Big\{\frac{n\beta\lambda}{\sqrt{2(k!)}}\, \<\bvz,\bsigma\>^k+\beta\sqrt{\frac{n}{2}} \<\bG,\bsigma^{\otimes k}\> +  n\,h \<\bvz,\bsigma\>
\Big\} \, \nu_0(\de\bsigma)\, ,\\
F_n(\bG) & \equiv \log Z_n(\bG)\, .
\end{align}
Use Gaussian concentration (Theorem \ref{thm:GaussianConcentration}) to prove that, for any $t\ge 0$ 
\begin{align}
\prob\Big(\big|\log Z_n-\E\log Z_n\big|\ge t\Big)\le 2\, e^{-t^2/(n\beta^2)}\, . \label{eq:GaussianConcentrationPspin}
\end{align}
\end{exercise}

\begin{exercise}[Second moment method]\label{exer:SecondMoment}
The second moment method is a simple rigorous technique that uses the first two moments of the partition function
to characterize the asymptotic free energy density.
\begin{itemize}
\item[$(a)$] Let $Z_n\ge 0$ be a sequence of random variables. Prove that, if
\begin{align}
\lim_{n\to\infty} \frac{1}{n}\log \frac{\E\{Z_n^2\}}{\{\E Z_n\}^2} &= 0\, ,\label{eq:ConditionSecondMoment}\\
\lim_{n\to\infty} \frac{1}{n}\log\E Z_n&= \phi\, ,\label{eq:AnnealedLimit}
\end{align}
then for any $\eps>0$, we have 
\begin{align}
\prob\Big(e^{-\eps n}\E Z_n\le Z_n\le e^{\eps n}\E  Z_n\Big)\ge  e^{-o(n)}\, .
\end{align}
[Hint: Use Markov and Paley-Ziegmund inequalities.]
\item[$(b)$] Let $Z_n$ be the partition function of the spherical model
  (\ref{eq:PspinDef}), for $h=0$. Use Gaussian concentration proved in the previous exercise,
cf. Eq.~(\ref{eq:GaussianConcentrationPspin}) to prove that, under assumptions  (\ref{eq:ConditionSecondMoment}), (\ref{eq:AnnealedLimit}),
we have $n^{-1}\log Z_n\to \phi$ in probability.
\item[$(c)$] Use Eq. (\ref{eq:Action-p-spin}) to compute the limit on the left-hand side of (\ref{eq:ConditionSecondMoment}), again
for $Z_n$ the partition function of the spherical model (\ref{eq:PspinDef}), for $h=0$.
\item[$(d)$] Following from previous point, assuming $k\ge 2$, prove that there exists $\beta_0$, $\lambda_0>0$ such that, if $\beta\le \beta_0$, $\lambda\le \lambda_0$,
Eq.~(\ref{eq:ConditionSecondMoment}) holds.
\item[$(e)$] Show that there exist values of $\beta,\lambda$ for which condition (\ref{eq:ConditionSecondMoment}) does not hold.

[We do not require a completely rigorous proof for this point.  It is acceptable to evaluate numerically the expression for the limit derived at point $(c)$.]
\end{itemize}
\end{exercise}

\begin{exercise}\label{exer:SpikedMatrix}
Consider the spherical model with $k=2$ and $\lambda=0$. In other words, we are interested in the partition function
\begin{align}
Z_n(\beta;\bW) = \int_{\sS^{n-1}} e^{n\beta\<\bsigma,\bW\bsigma\>/2} \nu_0(\de\bsigma)\, ,
\end{align}
where $\bW=\bW^{\sT}$ is a $\GOE(n)$ matrix, i.e. a matrix with entries $(W_{ij})_{i\le j}$ independent, with $W_{ij}\sim\normal(0,1/n)$ for
$i<j$ and $W_{ii}\sim\normal(0,2/n)$. The objective of this exercise is to obtain a rigorous proof of the free energy formula, for $\beta<1$.

It is useful to recall the following known facts about the eigenvalues of $\bW$, to be denoted by $\lambda_1(\bW)\ge \lambda_2(\bW)\ge \dots \ge \lambda_n(\bW)$.
\begin{theorem}
Let, for each $n\in\naturals$, $\bW_n\sim\GOE(n)$. Then we have, almost surely, $\lim_{n\to\infty}\lambda_1(\bW_n)=2$, $\lim_{n\to\infty}\lambda_n(\bW_n)=-2$.
Further, denoting by $s_n = n^{-1}\sum_{i=1}^n\delta_{\lambda_i(\bW)}$  the empirical spectral distribution , we have $s_n\stackrel{w}{\Rightarrow} s_{\infty}$ almost surely,
where $\stackrel{w}{\Rightarrow}$ denotes weak convergence and $s_\infty$ is the non-random probability distribution on $\reals$ defined by
\begin{align}
s_{\infty}(\de x) = \frac{1}{2\pi}\sqrt{4-x^2}\, \bfone_{|x|\le 2}\,\de x\, .
\end{align}
\end{theorem}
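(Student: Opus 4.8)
The statement has two halves — weak convergence of the empirical spectral distribution $s_n$ to the semicircle law $s_\infty$, and convergence of the extreme eigenvalues $\lambda_1(\bW_n)\to 2$, $\lambda_n(\bW_n)\to -2$ — and I would prove both by the classical moment method, upgraded to almost-sure statements via Gaussian concentration (Theorem~\ref{thm:GaussianConcentration}) and Borel--Cantelli.

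\emph{Semicircle law.} First compute the expected moments $\E\big[\int x^k\,s_n(\de x)\big]=n^{-1}\E\,\Tr(\bW_n^k)$ by Wigner's walk-counting argument: expanding $\Tr(\bW_n^k)$ as a sum over closed walks $i_0\to i_1\to\cdots\to i_{2k-1}\to i_0$ on $[n]$, only walks in which every edge is traversed an even number of times survive the expectation, and a count of vertices/edges shows that the contribution of order $n$ comes entirely from walks that double-traverse a tree on $k+1$ vertices. This gives $n^{-1}\E\,\Tr(\bW_n^{2k})\to C_k$ (the $k$-th Catalan number) and $n^{-1}\E\,\Tr(\bW_n^{2k+1})\to 0$, and these are precisely the moments of $s_\infty$ (indeed $\int x^{2k}\,s_\infty(\de x)=C_k$). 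Since $s_\infty$ has compact support it is determined by its moments, and the uniform bound $\E\int x^2\,s_n(\de x)=\tfrac{n+1}{n}=O(1)$ gives tightness of $\{\E s_n\}$, whence $\int f\,\de(\E s_n)\to\int f\,\de s_\infty$ for every bounded continuous $f$. To pass to almost-sure convergence, observe that for a fixed $1$-Lipschitz $f$ the map $\bW\mapsto\int f\,\de s_n$ is Lipschitz in the underlying i.i.d.\ standard Gaussians with constant $O(1/n)$: by Cauchy--Schwarz and the Hoffman--Wielandt inequality $|\int f\,\de s_n^{A}-\int f\,\de s_n^{B}|\le n^{-1/2}\|A-B\|_F$, and $\bW_n$ depends on the Gaussians with Lipschitz constant $\sqrt{2/n}$ in Frobenius norm. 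Gaussian concentration then gives $\P\big(|\int f\,\de s_n-\E\int f\,\de s_n|>t\big)\le 2e^{-cn^2t^2}$, which is summable, so $\int f\,\de s_n\to\int f\,\de s_\infty$ a.s.; intersecting over a countable convergence-determining family of such $f$ yields $s_n\stackrel{w}{\Rightarrow}s_\infty$ almost surely.

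\emph{Extreme eigenvalues.} On the (probability-one) event that $s_n\stackrel{w}{\Rightarrow}s_\infty$, for any $\varepsilon>0$ the open interval $(2-\varepsilon,3)$ carries positive $s_\infty$-mass, so by the portmanteau theorem $s_n$ eventually charges it, forcing $\lambda_1(\bW_n)>2-\varepsilon$ for all large $n$; letting $\varepsilon\downarrow 0$ gives $\liminf_n\lambda_1(\bW_n)\ge 2$ a.s., and symmetrically $\limsup_n\lambda_n(\bW_n)\le -2$ a.s. For the matching bounds I would use high moments together with a quantitative version of the walk count (the Füredi--Komlós estimate on the number of walk shapes), which yields a uniform bound $\E\,\Tr(\bW_n^{2k})\le Cn\,4^k$ valid for all $k\le n^{c}$ (the non-tree shapes contributing only lower-order terms in this range). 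Since $\lambda_1(\bW_n)^{2k}\le\Tr(\bW_n^{2k})$, for fixed $\delta>0$ with $\rho:=4/(2+\delta)^2<1$ Markov's inequality gives $\P(\lambda_1(\bW_n)\ge 2+\delta)\le Cn\rho^{k}$; choosing $k=k_n:=\lceil 3\log n/\log(1/\rho)\rceil$ (which lies in the admissible range for $n$ large) makes this $\le Cn^{-2}$, hence summable, so by Borel--Cantelli $\limsup_n\lambda_1(\bW_n)\le 2+\delta$ a.s.; intersecting over $\delta=1/j$ gives $\limsup_n\lambda_1(\bW_n)\le 2$ and therefore $\lambda_1(\bW_n)\to 2$ a.s. Applying the same bound to $\lambda_n(\bW_n)^{2k}\le\Tr(\bW_n^{2k})$ gives $\limsup_n|\lambda_n(\bW_n)|\le 2$, and with the lower bound above, $\lambda_n(\bW_n)\to -2$ a.s.

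\emph{Main obstacle.} The leading-order moment computation and the Gaussian-concentration upgrade are routine. The real technical work is the \emph{uniform-in-$k$} estimate $\E\,\Tr(\bW_n^{2k})\le Cn\,4^k$ with $k$ allowed to grow with $n$, which drives the edge upper bound: one must show that, beyond the Catalan (tree) term, the total contribution of all non-tree walk shapes is negligible provided $k$ is a small power of $n$. This is Wigner's classical argument in its quantitative form and is where the bulk of the effort lies. An alternative route for this step is to invoke Gaussian concentration of $\lambda_1(\bW_n)$ (which is $\sqrt{2/n}$-Lipschitz in the Gaussians, giving $\lambda_1(\bW_n)-\E\lambda_1(\bW_n)\to 0$ a.s.) and reduce to showing $\limsup_n\E\lambda_1(\bW_n)\le 2$, but estimating $\E\lambda_1(\bW_n)\le(\E\,\Tr\bW_n^{2k})^{1/2k}$ still forces one to take $k\to\infty$ with $n$ and hence requires essentially the same quantitative control.
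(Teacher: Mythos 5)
The paper does not actually prove this theorem: it is stated inside Exercise~\ref{exer:SpikedMatrix} as a "known fact" about GOE eigenvalues that the student is meant to take for granted in solving the exercise, and no argument is supplied. So there is no paper proof to compare against — your task is just to assess whether the sketch is a correct proof of the classical result.

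Your sketch is essentially the standard textbook proof (Wigner's moment method upgraded by Gaussian concentration, plus a Füredi--Komlós-type quantitative walk count for the edge), and it is correct. A few points worth tightening if you wrote this out in full. First, the Hoffman--Wielandt/Cauchy--Schwarz step gives that $\bW\mapsto\int f\,\de s_n$ is $n^{-1/2}$-Lipschitz in Frobenius norm for $1$-Lipschitz $f$, and pre-composing with $\bG\mapsto(\bG+\bG^\sT)/\sqrt{2n}$ (Lipschitz constant $\sqrt{2/n}$) gives combined constant $\sqrt{2}/n$, which is what produces the $e^{-cn^2t^2}$ tail you state; good. Second, for the countable convergence-determining family, you should be explicit that bounded Lipschitz functions are enough (they determine weak convergence of probability measures), and since each has finite Lipschitz constant the concentration bound applies after rescaling. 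Third, and this is the real nontrivial load-bearing step, the uniform estimate $\E\,\Tr(\bW_n^{2k})\le Cn\,4^k$ for $k$ up to a small power of $n$ needs care: the admissible range of $k$ depends on how sharply one bounds the non-tree walk shapes, and for GOE with Gaussian entries the combinatorics simplify but the estimate is still a page or two of work. You correctly flag this as the main obstacle. Your alternative route — Gaussian concentration of $\lambda_1$ itself (it is $\sqrt{2/n}$-Lipschitz as a function of the Gaussians) combined with $\E\lambda_1\le(\E\Tr\bW_n^{2k})^{1/2k}$ — also works and slightly cleanly separates concentration from the expectation estimate, but as you note it still needs $k\to\infty$ with $n$ in the trace bound, so the same lemma is doing the work either way.
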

\begin{enumerate}
\item[$(a)$] For $\bM\in\reals^{n\times n}$ a symmetric positive definite matrix, let $f_{\bM}(\,\cdot\,)$ denote the density of 
$\|\bx\|^2$ when $\bx\sim\normal(0,(n\bM)^{-1})$. Prove the following identity, valid for all $z>\lambda_1(\bW)$,
\begin{align}
Z_n(\beta;\bW)  = \frac{(\beta^{-1} e^{\beta z -1})^{n/2}}{\det(z\id-\bW)^{1/2}} \left[\frac{f_{\beta(z\id-\bW)}(1)}{f_{\id} (1)}\right]\, .
\end{align}
\item[$(b)$] Prove that this implies, for $\beta<1$, almost surely
\begin{align}
\lim_{n\to\infty}\frac{1}{n}\log Z_n(\beta;\bW )& = \phi(\beta)\equiv -\frac{1}{2}\log \beta+\frac{1}{2}\beta z-\frac{1}{2}-\frac{1}{2}\int\log(z-\lambda)\, s_{\infty}(\de\lambda)\, ,
\end{align}
where $z>2$ solves the equation
\begin{align}
\beta = \int \frac{1}{z-\lambda}\, s_{\infty}(\de\lambda)\, .
\end{align} 
Use the above expression for $s_{\infty}$ to compute explicitly the integrals in the last two displays.
\item[$(c)$] Compare the result for $\phi(\beta)$ obtained at the previous point, with the replica symmetric formula in Section \ref{sec:SpikedMatrix}.
\end{enumerate}
\end{exercise}

\begin{exercise}
Consider the one-step replica symmetry breaking expression for the free energy density, given in Section \ref{sec:pSpin1RSB},
and assume $\lambda=0$, whence $b=0$. Further consider stationary points with $q_0=0$, and denote by 
$\Psi_{\rm{1RSB}}(q_1, m;\beta) =\Psi_{\rm{1RSB}}(b=0, q_0=0, q_1, m;\beta,\lambda=0)$
the corresponding free energy functional. 
\begin{enumerate}
\item[$(a)$] Write a program to find the correct stationary point $q_{1,*}$, $m_*$ for given $\beta, k$.
\item[$(b)$] Plot $q_{1,*}=q_{1,*}(\beta)$, $m_*=m_*(\beta)$ and the resulting free energy density 
$\Psi_{\rm{1RSB}}(q_{1,*}(\beta), m_*(\beta))$, as a function of $T=1/\beta$ for $k\in\{3,4,5\}$.
\item[$(c)$] Compute numerically the critical temperature $T_s$ for $k\in\{3,4,\dots,10\}$.
\end{enumerate}
\end{exercise}

\begin{exercise}
As in the previous exercise, assume $\lambda=b=q_0=0$, and denote by $\Psi_{\rm{1RSB}}(q_1, m;\beta) =\Psi_{\rm{1RSB}}(b=0, q_0=0, q_1, m;\beta,\lambda=0)$
the corresponding free energy functional. We will consider the  limit $\beta\to\infty$ (equivalently, $T=1/\beta\to 0$).
\begin{enumerate}
\item[$(a)$] Assuming $m = \mu\, T$ for some $\mu\in\reals_{>0}$ as $T\to 0$, and $q_{1,*}(\beta,m) = 1-c(\mu)T+o(T)$, compute $c(\mu)$.
\item[$(b)$] Substituting in the free energy functional, show that
\begin{align}
\Psi_{\rm{1RSB}}(q_{1,*}(\beta,m=\mu T), m=\mu T;\beta) =\beta\, e_{\rm{1RSB}}(\mu)+o(\beta)\, ,
\end{align}
and compute the function $e_{\rm{1RSB}}(\mu)$. 
\item[$(c)$] What is the meaning of $e^*_{\rm{1RSB}}=\min_{\mu>0}e_{\rm{1RSB}}(\mu)$? Compute numerically this quantity for $k\in\{3,4,\dots,10\}$.
\end{enumerate}
\end{exercise}

\begin{exercise}
Consider the spherical model  with $h=\lambda=0$, cf. Eq.~(\ref{eq:PspinDef}), with $k=3$.
\begin{enumerate}
\item[$(a)$] Choose two temperatures $\{T_1,T_2\}$ with $T_1\in (0,T_s)$ and $T_2\in(T_s, T_d)$.
For each of these temperatures, minimize the 1RSB expression of free energy $\Psi_{\sRSB{1}}(q_1,m)$, cf. Eq.~(\ref{eq:SimplePsi1RSB})
to compute the thermodynamic order parameter $q_{1,*}(m)$, and the corresponding free energy $\Psi(m) = \Psi_{\sRSB{1}}(q_{1,*}(m),m)$.
Plot $q_{1,*}(m)$ and $\Psi(m)$ as a function of $m$.
\item[$(b)$] Recall that the internal energy of pure states corresponding to parameter $m$ is given by $f(m) = \frac{\de\phantom{m}}{\de m}[m\Psi(m)]$.
Compute an analytic expression for this function by taking the derivative of  $m\Psi_{\sRSB{1}}(q_{1,*}(m),m)$.
Evaluate this expression numerically and plot $f(m)$ as a function of $m$.
\item[$(c)$] By taking the Legendre-Fenchel dual of the function $m\Psi(m)$, compute the complexity function $\Sigma(f)$
and plot it as a function of $f$, for each $T\in\{T_1,T_2\}$. 

[Concretely, you can obtain a plot of $\Sigma(f)$ by a parametric plot of $m\Phi(m)-mf(m)$ versus $f(m)$.] 
\item[$(d)$] Denote by $[f_d,f_s]$ the interval of free energies over which $\Sigma(f)\ge 0$ (with $f_d$ corresponding to
threshold states and $f_s$ to states with the largest free energy). Compute $f_s$ and $f_d$ for $T\in\{T_1,T_2\}$.
Check that $\Sigma'(f_s)>-1$ for $T=T_1$ and  $\Sigma'(f_s)<-1$ for $T=T_2$.
Compute also $f_*$, i.e. the value that maximizes $f+\Sigma(f)$ over $f\in [f_s,f_d]$.

How can you compute $f_s$, $f_d$, $f_*$ directly from $\Psi(m)$?
\item[$(e)$] Use the answer to the last question at the previous point to  compute $f_s(T)$, $f_d(T)$, $f_*(T)$ on
a grid of values of $T$ and plot them together as a function of $T$. Identify $T_s$, $T_d$ on this plot. 
\item[$(f)$] This question is \emph{optional}.
Recall that the picture behind 1RSB is that the measure $\mu$ decomposes in pure states
\begin{align}
\mu(\de\bsigma) = \sum_{\alpha=1}^N w^{(\alpha)}\mu^{(\alpha)}(\de\bsigma)\, .
\end{align}
The internal energy of state $\alpha$ can be defined by 
\begin{align}
u^{(\alpha})= \frac{1}{\sqrt{2n}} \int_{\sS^{n-1}} \<\bG,\bsigma^{\otimes k}\> \, \mu^{(\alpha)}(\de\bsigma)\, .
\end{align}
Can you use the replica method to compute the typical internal energy of states with 
internal free energy $f\in [f_s,f_d]$. Denote this by $u(f)$.

Use this expression for the temperatures $\{T_1,T_2\}$ considered above and plot $u(f)$ as a function
of $f$. Report the values of $u(f_s)$ and $u(f_d)$.
\end{enumerate}

\end{exercise}

\chapter{The Sherrington-Kirkpatrick model and $\mathbb{Z}_2$ synchronization}
\label{ch:SK}
\section{Introduction}

In this chapter, we study the Sherrington-Kirkpatrick (SK) model of spin glasses. 
This classical model was introduced by David Sherrington and Scott Kirkpatrick in 1975 
\cite{sherrington1975solvable} as a simplified model for spin glasses,
 which are a class of magnetic alloys. Sherrington and Kirkpatrick believed 
 that their model was amenable to
 more straightforward mathematical analysis than the  more
 realistic lattice model of Edwards and Anderson \cite{edwards1975theory}. 
 However, it took several years until Giorgio 
 Parisi came up with a heuristic derivation of the asymptotic free energy density, introducing 
 the idea of replica symmetry breaking \cite{parisi1979infinite}, and three decades before 
 Michel Talagrand proved Parisi's formulas
 \cite{talagrand2006parisi}.
 Efforts to analyze the SK model have enriched statistical physics and probability since
 its introduction, and still many interesting questions remain open. 
 
 As motivation for studying the SK model, we will use a statistical estimation problem, 
 group synchronization, which we introduce in Section \ref{sec:GroupSynch}. As further motivation,
 we discuss the connection with extremal cuts of random graphs in Section \ref{sec:Cuts}.


\subsection{The group synchronization problem}
\label{sec:GroupSynch}

An instance of the  group synchronization problem is defined by a finite graph 
$G = (V,E)$ and a group $\mathcal{G}$. Without loss of generality, we assume
 $V = [n]$ and assign a direction arbitrarily to each edge $\{i,j\} \in E$. 
 
  Let $\bx_0= (x_{0,1}, \cdots, x_{0,n}) \in \mathcal{G}^{V}$ 
 denote an unknown assignment of group elements to the vertices of the graph.   
We observe $\{Y_{ij}: (i,j) \in E \}$, where $Y_{ij}$ is a noisy version of 
 $x_{0,i}^{-1} x_{0,j}$, and would like to estimate the unknown object $\bx_0$.
 Of course this is possible only up to a global shift by a group element,
 since $\bx_0' = (gx_{0,1}, \cdots, gx_{0,n})$, $g\in\cG$ yields the same observations as $\bx_0$.
 
  A special case of practical interest is 
 $\cG = {\sf SO}(3)$, which arises in imaging applications,
  such as structure from motion \cite{ozyecsil2017survey}. In this case, the main question
   is to reconstruct a 3D image of an object from numerous 2D images taken by multiple 
   cameras placed in unknown positions. The first step is to determine the relative positions 
   of the cameras, which leads to the synchronization problem over ${\sf SO}(3)$. 

Arguably, the simplest version of this problem arises when
$\cG= \integers_2 = \{ \pm 1, \cdot \}$ and $G = K_n$, where $K_n$ is the complete graph 
on $n$ vertices. Given an unknown signal $\bx_0 = ( x_{01}, \cdots, x_{0n}) \in \{ \pm 1\}^n$ 
we observe, for $i < j$, 
\begin{align}
Y_{ij} = \frac{\lambda}{n} x_{0i}x_{0j} + W_{ij}, \label{eq:model}
\end{align}
where $W_{ij} \sim \normal(0,1/n)$ are independent random variables and $\lambda \geq 0$.
We can form a matrix $\bY\in \reals^{n\times n}$ by letting $Y_{ji}= Y_{ij}$ 
and assigning $Y_{ii}$ arbitrarily. A convenient choice is to take $Y_{ii} = (\lambda/n)+W_{ii}$
with $W_{ii}\sim \normal(0,2/n)$, whence the above model can be rewritten in matrix
notation as
\begin{align}
\label{eq:Z2model2}
\bY = \frac{\lambda}{n} \bx_{0}\bx_{0}^{\sT} + \bW\, ,\;\;\;\;\;\bW\sim\GOE(n)\, .
\end{align}

 Of course, this is in no way the most general form of the $\mathbb{Z}_2$ 
 synchronization problem.
 In particular, we could consider more general noise models defined by
 a transition probability kernel $Q$ (a `noisy channel'), which in this case is uniquely defined by 
 two probability measures  $Q(\cdot | +1)$ and $Q(\cdot | -1)$
 on a measurable space $\cX$. In this more general setup, for each edge $(i,j)$ we observe
 \begin{align}
 Y_{ij}\sim Q(\,\cdot\,| x_{0i} x_{0j})\, ,
 \end{align}
 and these observations are conditionally independent given $\bx_0$. 
 
 We notice in passing that ---if the transition kernel $Q$ is known--- there is no loss of 
 generality in assuming the observations to be real-valued, i.e. 
 $Y_{ij}\in \cX= \reals$. 
Indeed, without loss of generality  $Q(\, \cdot\, | +1), Q(\,\cdot\,|-1) \ll Q_0$ 
for some measure $Q_0$ on $\cX$ (take $Q_0(\,\cdot\, )=Q(\, \cdot\, | +1)+Q(\,\cdot\,|-1)$). 
The likelihood of the observations is
\begin{align}
L(\bY| \bx_0) = \prod_{i <j } \Big( \frac{\de Q(Y_{ij}|+1)}{ \de Q(Y_{ij} |-1)} \Big)^{\frac{x_{0i} x_{0j}}{2}} \sqrt{ \frac{\de Q(Y_{ij}|+1)}{\de Q_0} \cdot \frac{\de Q(Y_{ij}|-1)}{ \de Q_0} }. \nonumber 
\end{align} 
where we wrote  $\frac{\de Q(Y_{ij}|+1)}{ \de Q(Y_{ij} |-1)}$ for the ratio 
$\frac{\de Q(Y_{ij}|+1)}{\de Q_0} / \frac{\de Q(Y_{ij}|-1)}{ \de Q_0}$.
Thus, by the Fisher-Neyman factorization theorem \cite{lehmann2006theory},
the variables 
\begin{align}
\widehat{Y}_{ij} = \log \Big(\frac{\de Q(Y_{ij}| +1)}{\de Q(Y_{ij}| -1)} \Big) \, ,\;\;\;\;
i<j\le n
\end{align}
are sufficient statistics for $\bx_0$ given $\bY$. We can thus assume without loss of generality that our
 observations are real valued. 
 
We return to the study of the model \eqref{eq:model}, which observes the signal
 with additive Gaussian noise. For maximum likelihood estimation, we solve
\begin{align}\label{eq:maxlikelihood}
\begin{split}
\mbox{\rm maximize}&\;\;\;\;\;\<\bY,\bsigma \bsigma^{\sT}\>\, ,\\
\mbox{\rm subject to}&\;\;\;\;\;\bsigma\in \{ \pm 1\}^n . 
\end{split}
\end{align}
To carry out Bayesian inference, we assign a prior on $\bx_0$ which is uniform on 
$\{+1,-1\}^n$, and form the posterior distribution 
\begin{align}
\mu_{\sBayes} ( \bsigma) &= \frac{1}{\tilde{Z}_{\sBayes}(\lambda)} 
\exp\Big\{- \frac{n}{4}  \Big\| \boldsymbol{Y} - \frac{\lambda}{n}\bsigma \bsigma^{\sT} \Big\|_{F}^2 \Big\} \\
 &= \frac{1}{Z_{\sBayes}(\lambda)}  \exp\Big\{ \frac{\lambda}{2} \langle \bY, \bsigma \bsigma^{\sT} \rangle \Big\}. \label{eq:Bayes}
\end{align}
As in the previous chapter, this model  enjoys a large invariance group. 
Namely, it is equivariant under multiplications  $\bx_0\mapsto \bD \bx_0$
with $\bD$ a diagonal matrix with $\{+1,-1\}$ entries on the diagonal. The uniform
prior is the only invariant prior under this group and therefore, by the Hunt-Stein's 
theorem \cite{lehmann2006theory}, the Bayes estimator with respect to this prior is also 
minimax optimal.

As for the spiked tensor model, to study these estimation procedures in one unified framework,
 we introduce the following probability measure over $\bsigma\in\{+1,-1\}^n$:
\begin{align}
\mu_{\beta}(\bsigma) = \frac{1}{Z_n(\beta,h)} \exp{\Big\{\frac{\beta}{2} \langle \boldsymbol{Y}, \bsigma \bsigma^{\rm{T}} \rangle + h \langle\bsigma, \boldsymbol{x_0} \rangle \Big\}},  \label{eq:gibbs}
\end{align}
where $\beta >0$ is the inverse temperature and $Z_n(\beta, h)$ is the partition function.
 The case $h=0$, $\lambda = \beta$ corresponds to Bayes estimation, while we recover 
 maximum likelihood for $h=0$, $\beta \to \infty$. 

\subsection{Extremal cuts of random graphs}
\label{sec:Cuts}

The Sherrington-Kirkpatrick model is intimately connected to 
some canonical optimization problems on sparse random graphs. 
The most direct connection is with the max-cut problem.
Given a graph $G = (V, E)$ max-cut  seeks to
  partition the vertices $V$ into two groups (not necessarily of equal size) 
  such that the number of edges connecting the two parts is maximized.
This problem is known  to be NP-hard in the worst case, and indeed hard to approximate
\cite{papadimitriou1991optimization,arora2005non}.

The analysis of this problem on random instances 
provides a concrete benchmark for practical algorithms, and has directly motivated the 
 development of novel algorithms. 
 Two standard random graph distributions are the \ER 
 ensemble  $\boldsymbol{G}^{\sf{ER}}(n, d/n)$  and the 
 random regular graph $\boldsymbol{G}^{\sf{Reg}}(n,d)$. 

An \ER random graph $G\sim \boldsymbol{G}^{\sf{ER}}(n,d/n)$ has $n$ vertices, 
and is generated by connecting each pair of vertices $\{i,j\}$ by an edge 
independently with probability $d/n$. We will consider the sparse case in which $d>0$ is a 
constant independent of $n$. In this model, the degree of a randomly chosen vertex converges 
in distribution to $\Poisson(d)$ as $n \to \infty$.
In particular, $d$ is the average vertex degree.

 On the other hand, a random regular graph $G\sim \boldsymbol{G}^{\sf{Reg}}(n,d)$ is 
 constructed by a uniform draw among all $d$-regular (simple) graphs on $n$ vertices.
 (The set of such graphs is non-empty if $nd$ is even.)

Let ${\sf{MaxCut}}(G)$ denote the number of edges in the max-cut of the graph $G$. The 
next result connects the typical value of the max-cut on \ER and random regular graphs to the 
``ground state energy'' of the Sherrington-Kirkpatrick model. 

We begin by defining a constant ${\sf{P}_*}$  as follows: 
\begin{align}
{\sf{P}_*} = \lim_{n \to \infty} \frac{1}{2n} \mathbb{E}\Big[ \max_{\bsigma \in \{ \pm 1\}^n} 
\<\bsigma,\bW\bsigma\> \Big], \;\;\;\;\; \bW\sim\GOE(n)\, . \nonumber 
\end{align} 
Note that the optimization problem on the right-hand side corresponds to finding the mode 
of the distribution \eqref{eq:gibbs} in the special case $\lambda=h=0$.

The existence of the constant ${\sf{P}_*}$ follows directly from the analyses presented in the 
subsequent sections, and a variational characterization of this constant is
given by the Parisi formula (see Section \ref{sec:SKrigorous} and references 
provided there).
\begin{theorem}[\cite{dembo2017extremal}]
\label{thm:maxcut} 
Let $G_n$ be distributed as $\boldsymbol{G}^{\sf{ER}}(n, d/n)$ or $\boldsymbol{G}^{\sf{Reg}}(n,d)$. Then we have, 
\begin{align}
\frac{{\sf{MaxCut}}(G_n)}{n} = \frac{d}{4} + {\sf{P}}_* \sqrt{\frac{d}{4}} + o_d(\sqrt{d}). \label{eq:maxcut_result} 
\end{align} 
\end{theorem}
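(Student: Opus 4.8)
The plan is to reduce ${\sf MaxCut}(G_n)$ to the ground state of a quadratic Ising Hamiltonian, show that the per‑vertex value converges as $n\to\infty$, and then compare this limit to the Gaussian (Sherrington--Kirkpatrick) ground state ${\sf P}_*$ via a Gaussian‑universality argument. For the reduction, using $\mathbf 1\{\sigma_i\neq\sigma_j\}=(1-\sigma_i\sigma_j)/2$ and $\sum_{(i,j)\in E_n}\sigma_i\sigma_j=\tfrac12\langle\bsigma,A_{G_n}\bsigma\rangle$ for the adjacency matrix $A_{G_n}$, one gets
\[
{\sf MaxCut}(G_n)=\frac{|E_n|}{2}+\frac14\max_{\bsigma\in\{\pm1\}^n}\langle\bsigma,-A_{G_n}\bsigma\rangle .
\]
Writing $A_{G_n}=\tfrac{d}{n}\mathbf 1\mathbf 1^{\sT}+\bar A_n$ (with $d$ the exact common degree in the regular case, and $\bar A_n$ the centered adjacency in the \ER case) gives $\langle\bsigma,-A_{G_n}\bsigma\rangle=d-\tfrac dn\langle\mathbf 1,\bsigma\rangle^2+2\sum_{i<j}(-\bar A_{n,ij})\sigma_i\sigma_j$. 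Since $|E_n|/n\to d/2$ a.s., and the rank‑one penalty only favours balanced $\bsigma$ (one checks via spectral estimates that $\mathbf 1$ is not a near‑optimal direction, so the value changes by at most $o_d(\sqrt d\,n)$ if this term is dropped), the problem reduces to the asymptotics of $\tfrac1{2n}\max_{\bsigma}\sum_{i<j}(-\bar A_{n,ij})\sigma_i\sigma_j$.

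A preliminary step is to show that $\tfrac1n{\sf MaxCut}(G_n)$ has an almost sure limit for each fixed $d$. Convergence of $\tfrac1n\E[{\sf MaxCut}(G_n)]$ is by now standard: for the \ER ensemble it follows from the combinatorial interpolation method of Bayati--Gamarnik--Tetali (interpolating between graphs of different sizes), and an analogous interpolation handles the random regular ensemble; concentration around the mean follows from a bounded‑differences (McDiarmid) inequality for the vertex‑exposure martingale, after truncating the atypically high‑degree vertices, which gives fluctuations $o(n)$. Call the resulting limit ${\sf MC}(d)$.

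The crux is to show ${\sf MC}(d)=\tfrac d4+{\sf P}_*\sqrt{d/4}+o_d(\sqrt d)$. The centered couplings $\bar A_{n,ij}$, $i<j$, are i.i.d., mean zero, with variance $\tfrac dn(1+o_n(1))$, and I would prove that replacing them by i.i.d.\ $\normal(0,d/n)$ variables changes $\tfrac1n\E\max_{\bsigma}\sum_{i<j}(-\bar A_{n,ij})\sigma_i\sigma_j$ by $o_d(\sqrt d)$. Because the $\max$‑functional is not smooth, one first passes to the free energy at inverse temperature $\beta$, namely $\tfrac1{n\beta}\E\log\sum_{\bsigma}\exp(\beta\sum_{i<j}J_{ij}\sigma_i\sigma_j)$, and interpolates the couplings $J$ along a path from the sparse‑graph model to the Gaussian one, in the spirit of the Guerra--Toninelli interpolation for dilute spin systems. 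Differentiating in the interpolation parameter, the matched first and second moments make the leading contributions cancel, and the remainder is bounded by the third absolute moments $\E|\bar A_{n,ij}|^3=O(d/n)$ together with quantitative bounds on the $\beta$‑dependent derivatives of the free energy; summing over the $\Theta(n^2)$ coordinates and sending $\beta\to\infty$ after $n\to\infty$ yields an error $o_d(\sqrt d\,n)$. Once the couplings are Gaussian, $\bar A_{n,ij}\ed\sqrt d\,W_{ij}$ with $\bW\sim\GOE(n)$, and using $\sum_{i<j}(-W_{ij})\sigma_i\sigma_j\ed\tfrac12\langle\bsigma,\bW\bsigma\rangle$ up to a negligible diagonal,
\[
\frac1{2n}\,\E\max_{\bsigma}\sum_{i<j}(-\bar A_{n,ij})\sigma_i\sigma_j \;=\; \frac{\sqrt d}{4}\cdot\frac1n\,\E\max_{\bsigma}\langle\bsigma,\bW\bsigma\rangle + o_n(1)\;\xrightarrow[n\to\infty]{}\;\frac{\sqrt d}{2}\,{\sf P}_* \;=\; {\sf P}_*\sqrt{d/4},
\]
by the very definition of ${\sf P}_*$. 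Combining with the reduction above gives ${\sf MC}(d)=\tfrac d4+{\sf P}_*\sqrt{d/4}+o_d(\sqrt d)$, which together with the concentration step is the claim.

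The main obstacle is the Gaussian‑comparison step: a universality statement robust enough to tolerate sparse, non‑Gaussian, skewed couplings in the regime where the average degree $d$ diverges. A naive Lindeberg swap fails because the free energy has third derivatives growing with $n$, so the interpolation must be carried out with explicit control of the joint $\beta$‑ and $n$‑dependence of every error term, including the legitimacy of exchanging the limits $n\to\infty$ and $\beta\to\infty$; this is where essentially all the work lies. Secondary difficulties are the hard degree constraint for the random regular ensemble (handled through the configuration‑model representation, the $O(1)$ multi‑edges and self‑loops being shown negligible) and the spectral and concentration estimates needed to discard the rank‑one term in the reduction.
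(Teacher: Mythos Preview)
Your proposal is correct and follows essentially the same route as the paper's sketch: rewrite ${\sf MaxCut}$ via the centered adjacency matrix, argue that the rank-one balancing term costs only $o_d(\sqrt d)$ (the paper phrases this as ``first optimize the deterministic part, then the random part, then drop the constraint $\sum_i\sigma_i=0$''), and replace the sparse centered entries by matched-variance Gaussians at cost $o_d(\sqrt d)$ to land on the SK ground state ${\sf P}_*$. You supply more detail than the paper's heuristic---the Bayati--Gamarnik--Tetali interpolation for existence of the limit, and the finite-$\beta$ interpolation for universality---and you correctly locate the main technical difficulty in the Gaussian comparison for sparse, skewed couplings.
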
 
(We say that a sequence of random variables satisfies $X_n = o_d(\sqrt{d})$ if there exists a
 deterministic function $f(d) = o(\sqrt{d})$ such that $\mathbb{P}[|X_n| \leq f(d)] \to 1$ as
  $n \to \infty$.)

Before providing a heuristic argument to justify Theorem \ref{thm:maxcut}, 
let us interpret this result. Given any graph $G$, a naive randomized 
strategy is to assign the vertices into two groups uniformly at random. 
This strategy cuts about half of the edges in expectation. As 
$\E[|E|] = nd/2+o_n(n)$, the expected cut size for this strategy is $nd/4+ o_n(n)$,
and the actual cut size is tightly concentrated around the expectation\footnote{This can be proven by
computing the variance, or using a bounded difference martingale argument.}.
We thus obtain the lower bound 
\begin{align}
\frac{{\sf{MaxCut}}(G_n)}{n} \ge \frac{d}{4} + o_n(1)\, .
\end{align}
which explains the $d/4$ term appearing in \eqref{eq:maxcut_result}.
 The $\sqrt{d}$-term captures the effect of optimization. The theorem
 states that---to leading order---optimizing the cut size in a random graph
 is equivalent (from the point of view of the value achieved) to optimizing 
 the random quadratic form $\<\bsigma,\bW\bsigma\>$ over the hypercube 
 $\bsigma \in \{ \pm 1\}^n$ (this coincides with optimizing the SK Hamiltonian).
 
 Next, we provide a heuristic justification for this
 correspondence.  We assume, to be definite, 
 $G_n \sim \boldsymbol{G}^{\sf{ER}}(n, d/n)$. Let $\bA = (A_{ij})_{i,j\le n}$ denote the 
 adjacency matrix of $G_n$; formally, $A_{ij} = \boldsymbol{1}(\{i,j\} \in E)$. We have
\begin{align}
\mathbb{P}(A_{ij} = 1 ) = 1 - \mathbb{P}(A_{ij} =0) = \frac{d}{n}. \nonumber  
\end{align} 

Observe that we can encode a vertex partition via a vector $\bsigma \in \{\pm 1\}^n$:
 the vertices with $\sigma_i=+1$ form one group, while vertices with $\sigma_i=-1$ form
  the complementary group. Let ${\sf{cut}}(\bsigma)$ denote the number of 
  cut-edges in this partition, i.e. edges $\{i,j\} \in E$ with $\sigma_i \sigma_j =-1$. 
  We observe that 
\begin{align}
{\sf{cut}}(\bsigma) = \frac{1}{2} \sum_{i<j} A_{ij}(1- \sigma_i \sigma_j)\, .  \label{eq:cut_defn} 
\end{align}
This implies
\begin{align}
\frac{{\sf{MaxCut}}(\boldsymbol{G}_n)}{n}  &=\frac{1}{2n}\max_{\bsigma \in \{ \pm 1\}} \sum_{i<j} A_{ij}(1- \sigma_i \sigma_j) \nonumber \\
&= \max_{\bsigma \in \{ \pm 1\}} \Big[ \frac{d}{2n^2} \Big(\frac{n^2}{2} - \Big( \sum_i \sigma_i \Big)^2 \Big) +  \frac{1}{2n} \sum_{i<j} \Big(A_{ij} - \frac{d}{n} \Big) (1- \sigma_i \sigma_j)  \Big] \nonumber \\
&=  \sqrt{d} \cdot \max_{\bsigma \in \{ \pm 1\}} \Big[ \frac{\sqrt{d}}{2n^2} \Big(\frac{n^2}{2} - \Big( \sum_i \sigma_i \Big)^2 \Big) -  \frac{1}{2n \sqrt{d} } \sum_{i<j} \Big(A_{ij} - \frac{d}{n} \Big)  \sigma_i \sigma_j  \Big] + o_n(1), \label{eq:opt_int1}  
\end{align} 
where the last display follows from Chebychev inequality. Indeed, for any $\varepsilon>0$, 
\begin{align}
\mathbb{P} \Big[ \Big| \sum_{i<j} \Big(A_{ij} - \frac{d}{n} \Big)  \Big| > \varepsilon n   \Big] \leq \frac{\mathbb{E}\Big[  \Big(  \sum_{i<j} \Big(A_{ij} - \frac{d}{n} \Big)  \Big)^2 \Big] }{\varepsilon^2 n^2} \leq \frac{d}{2 \varepsilon^2 n} \to 0. \nonumber 
\end{align} 
Returning to Eq \eqref{eq:opt_int1}, we make two observations: $(i)$~The 
first term in the optimization problem is deterministic, while the second term is random;
 $(ii)$~The random variables $\{ A_{ij} - \frac{d}{n} : i <j \}$ are independent, with
 mean zero and $\mathbb{E}[(A_{ij} - \frac{d}{n})^2] = \frac{d}{n} + o_n(1)$. 
 Theorem \ref{thm:maxcut} is proven by establishing the following: 
\begin{itemize}
\item[$(i)$] At the cost of a $o_d(\sqrt{d})$ term, we can replace the $A_{ij}-\frac{d}{n}$ 
variables by independent Gaussians with mean zero and matching variances. Formally, 
one can show that (recall that $\ed$ denotes equality in distribution):
\begin{align}
\frac{{\sf{MaxCut}}(\boldsymbol{G}_n)}{n} \ed
\max_{\bsigma \in \{ \pm 1\}} \Big[ \frac{d}{2n^2} \Big(\frac{n^2}{2} - \Big( \sum_i \sigma_i \Big)^2 \Big) -  \frac{\sqrt{d} }{2n} \sum_{i<j} W_{ij} \sigma_i \sigma_j  \Big] + o_d(\sqrt{d} ). \label{eq:opt_gaussian} 
\end{align} 
\item[($ii$)] In \eqref{eq:opt_gaussian}, the Gaussian optimization term will yield a term of 
order $\sqrt{d}$, whereas the first deterministic term is of order $d$. Thus for large 
$d>0$, the optimum value is approximately obtained by first optimizing the deterministic 
part, and subsequently optimizing the random part subject to this constraint. 
In this specific setting, note that the deterministic part is optimized by setting $\sum_{i\le n} \sigma_i =0$; incorporating this constraint, we wish to solve
\begin{align}
\mbox{\rm maximize}&\;\;\;\;\; \sum_{i<j} W_{ij} \sigma_i \sigma_j ,\\
\mbox{\rm subject to}&\;\;\;\;\;\bsigma\in \{\pm 1\}^n, \sum_{i=1}^n \sigma_i =0. \, ,
\end{align}

\item[($iii$)] A direct application of Gaussian concentration (see Appendix \ref{app:Probability})
 establishes that the optimal value above concentrates around its expectation. Finally, it 
 is relatively straightforward to see that the constraint $\sum_{i=1}^n \sigma_i=0$ can 
 be dropped without changing significantly the value of this optimization problem. 
\end{itemize} 
This completes a sketch of the proof of Theorem \ref{thm:maxcut}.

The techniques outlined above can be used for analyzing several related graph cut
 quantities on random graphs.  Recall the definition of ${\sf{cut}}(\bsigma)$  from
  \eqref{eq:cut_defn}. For a graph $G$ with adjacency matrix 
  $\boldsymbol{A} = (A_{ij})_{i,j\le n}$, we introduce, 
\begin{align}
{\sf{MaxBis}}(G) = \max_{\bsigma \in \{ \pm 1\}^n: \sum_i \sigma_i =0} {\sf{cut}}(\bsigma), \nonumber \\
{\sf{MinBis}}(G) = \min_{\bsigma \in \{ \pm 1\}^n: \sum_i \sigma_i =0} {\sf{cut}}(\bsigma). \nonumber 
\end{align}  
We refer to  these as the max-bisection and min-bisection of $G$:
they are the extremal sizes of cuts that partition the graph into equal components.
 The next result collects the analogue of Theorem~\ref{thm:maxcut} for these quantities. 
\begin{theorem}[\cite{dembo2017extremal}]
\label{thm:bisection} 
Let $G_n$ be distributed as $\boldsymbol{G}^{\sf{ER}}(n, d/n)$ or $\boldsymbol{G}^{\sf{Reg}}(n,d)$. Then we have, 
\begin{align}
\frac{{\sf{MaxBis}}(\boldsymbol{G}_n)}{n} = \frac{d}{4} + {\sf{P}}_* \sqrt{\frac{d}{4}} + o_d(\sqrt{d}),  \label{eq:maxbis_result} \\
\frac{{\sf{MinBis}}(\boldsymbol{G}_n)}{n} = \frac{d}{4} - {\sf{P}}_* \sqrt{\frac{d}{4}} + o_d(\sqrt{d}). \label{eq:minbis_result}
\end{align} 
\end{theorem}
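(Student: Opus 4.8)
\textbf{Proof plan for Theorem~\ref{thm:bisection}.}
The plan is to mirror the three-step reduction sketched above for Theorem~\ref{thm:maxcut}, but now carry the equal-partition constraint $\sum_i\sigma_i=0$ through the argument rather than dropping it at the end. First I would write, exactly as in \eqref{eq:cut_defn}--\eqref{eq:opt_int1}, the identity
\begin{align}
{\sf cut}(\bsigma) = \frac{d}{2n}\cdot\frac{n}{2} - \frac{d}{2n^2}\Big(\sum_i\sigma_i\Big)^2 + \frac{1}{2}\sum_{i<j}\Big(A_{ij}-\frac{d}{n}\Big)(1-\sigma_i\sigma_j)\, ,\nonumber
\end{align}
and then restrict to $\bsigma$ with $\sum_i\sigma_i=0$. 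On this slice the deterministic quadratic term $\big(\sum_i\sigma_i\big)^2$ vanishes identically, so both ${\sf MaxBis}$ and ${\sf MinBis}$ reduce, after dividing by $n$, to $d/4 + o_n(1)$ plus (respectively) the max and min over the balanced slice of $\frac{1}{2n}\sum_{i<j}(A_{ij}-\tfrac{d}{n})(1-\sigma_i\sigma_j)$; the constant term $\sum_{i<j}(A_{ij}-\tfrac dn)$ is $o_d(\sqrt d\,)\cdot n$ with high probability by the same Chebyshev bound used in the excerpt, so it can be absorbed, leaving $\pm\frac{1}{2n}\sum_{i<j}(A_{ij}-\tfrac dn)\sigma_i\sigma_j$ as the term that matters.

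Second, I would invoke the Gaussian-substitution step $(i)$ from the max-cut proof verbatim: at the cost of an $o_d(\sqrt d\,)$ additive error one replaces $A_{ij}-\tfrac dn$ by independent $\normal(0,d/n)$ variables, i.e. by $\sqrt{d/n}\,W_{ij}$ with $\bW\sim\GOE(n)$ (up to the irrelevant diagonal). This substitution is constraint-agnostic — the optimization domain $\{\bsigma:\sum_i\sigma_i=0\}$ is the same on both sides — so nothing new is needed here beyond what is already cited. One then arrives at
\begin{align}
\frac{{\sf MaxBis}(G_n)}{n} \ed \frac{d}{4} + \frac{\sqrt d}{2n}\max_{\bsigma\in\{\pm1\}^n:\,\sum_i\sigma_i=0}\Big(-\sum_{i<j}W_{ij}\sigma_i\sigma_j\Big) + o_d(\sqrt d\,)\, ,\nonumber
\end{align}
and the analogous display with $\min$ (hence $+\sum_{i<j}W_{ij}\sigma_i\sigma_j$) for ${\sf MinBis}$. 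By the $\bsigma\mapsto-\bsigma$ and $\bW\mapsto-\bW$ symmetries of $\GOE(n)$, both the balanced-max and the balanced-min of $\sum_{i<j}W_{ij}\sigma_i\sigma_j$ have the same expectation up to sign, which already produces the symmetric $\pm{\sf P}_*\sqrt{d/4}$ structure of \eqref{eq:maxbis_result}--\eqref{eq:minbis_result} once the next step is in place.

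Third, and this is where the balanced constraint actually has to be dealt with rather than waved away, I would show that the balanced optimum of the $\GOE$ quadratic form equals the unconstrained one up to $o(n)$ in expectation, i.e.
\begin{align}
\frac{1}{2n}\,\E\Big[\max_{\bsigma\in\{\pm1\}^n:\,\sum_i\sigma_i=0}\<\bsigma,\bW\bsigma\>\Big] = {\sf P}_* + o_n(1)\, ,\;\;\;\;\;\bW\sim\GOE(n)\, .\nonumber
\end{align}
One direction is trivial (the balanced max is $\le$ the free max). For the other, take a near-optimal free maximizer $\bsigma^*$; if its imbalance is $s=\sum_i\sigma^*_i$, flip $\lceil |s|/2\rceil$ coordinates to make it balanced. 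Each flip changes $\<\bsigma,\bW\bsigma\>$ by at most $O(\max_i\sum_j|W_{ij}|) = O(\sqrt n\,)$ in operator-norm terms — more carefully, flipping coordinate $i$ changes the form by $-4\sigma_i\sum_{j\ne i}W_{ij}\sigma_j$, whose magnitude is $O(\sqrt n\,)$ with high probability uniformly in $i$ since $\bW$ has operator norm $O(1)$ — so a typical $|s|=O(\sqrt n\,)$ costs $O(n)$ total, which is $o(n)\cdot$? — here I need the sharper fact that the imbalance of the maximizer is actually $o(\sqrt n\,)\cdot$?; failing a direct estimate, one instead adds a Lagrangian penalty $-\mu\big(\sum_i\sigma_i\big)^2/n$ and uses that for the optimal $\mu$ the penalty is $O(1)$ per spin — equivalently one quotes the known fact (from the Parisi-formula analysis / interpolation, as referenced in Section~\ref{sec:SKrigorous}) that the SK ground state over the balanced slice and over the full cube coincide to leading order, since the overlap with the all-ones vector is $o(1)$ at the ground state. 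Finally, Gaussian concentration (Appendix~\ref{app:Probability}) upgrades the expectation statements to high-probability statements, and combining the three steps yields \eqref{eq:maxbis_result} and \eqref{eq:minbis_result}.

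\textbf{Main obstacle.} The delicate point is Step~three: controlling the effect of the balance constraint $\sum_i\sigma_i=0$ on the value of the optimum. The cheap "flip a few coordinates" bound only gives $o_d(\sqrt d\,)$ if the imbalance of a near-maximizer is $o(\sqrt n\,)$, which is not obvious a priori; the clean route is to show the balanced and unbalanced SK ground-state energies agree to leading order using the interpolation method or a Lagrangian-duality argument, exactly parallel to item $(iii)$ in the max-cut sketch but now justified rather than asserted. Everything else (the algebraic identity, the Chebyshev bound, the Gaussian substitution, Gaussian concentration) is a routine re-run of the Theorem~\ref{thm:maxcut} argument.
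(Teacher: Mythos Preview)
Your proposal is correct and follows exactly the route the paper indicates: the paper does not give a separate proof of Theorem~\ref{thm:bisection} but simply states that ``the techniques outlined above can be used,'' meaning the three-step reduction (cut identity, Gaussian substitution, removal of the balance constraint on the $\GOE$ quadratic form) from the MaxCut sketch. Your write-up is in fact more explicit than the paper's---you correctly observe that on the balanced slice the deterministic quadratic vanishes identically (which is essential for MinBis, where dropping the constraint on the full cut would be disastrous), and you are right that the $\bW\mapsto-\bW$ symmetry is what produces the $\pm{\sf P}_*$ pair.

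Your candor about Step~three is well placed: the naive ``flip $O(\sqrt n)$ coordinates at cost $O(\sqrt n)$ each'' argument indeed only gives $O(n)$, not $o(n)$, so it does not close the gap by itself. The paper's sketch for MaxCut asserts this step as ``relatively straightforward'' without details; the actual proof in \cite{dembo2017extremal} handles it (and the Gaussian substitution) via an interpolation argument that compares free energies of the constrained and unconstrained models, which is closer to your Lagrangian-penalty suggestion than to coordinate-flipping. So your identification of the obstacle and your fallback plan are both on target.
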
 

\begin{remark}
Based on the cavity method, \cite{zdeborova2010conjecture} conjectured that 
\begin{align}
{\sf{MinBis}}(\boldsymbol{G}^{\sf{Reg}}(n,d)) + {\sf{MaxCut}}(\boldsymbol{G}^{\sf{Reg}}(n,d)) = o_n(n). \nonumber 
\end{align}  
Combining \eqref{eq:maxcut_result} and \eqref{eq:minbis_result}, we have, 
\begin{align}
{\sf{MinBis}}(\boldsymbol{G}^{\sf{Reg}}(n,d)) + {\sf{MaxCut}}(\boldsymbol{G}^{\sf{Reg}}(n,d)) = n o_d(\sqrt{d}). \nonumber 
\end{align}  
This is significantly weaker than the claim, but provides some supporting evidence to the veracity of this conjecture. 
\end{remark}

We finally note that the connection between optimization with random instances and spin 
glass models is significantly deeper than what is sketched here.
In particular, generalizations of the techniques developed in the next sections exist
for random sparse graphs. These allow, to cite just one example, to derive exact
(albeit non-rigorous) predictions for $\lim_{n\to\infty}{\sf{MaxCut}}(G_n)/n$
in the above graph models, \emph{at fixed $d$}. We refer to \cite{MezardMontanari}
for an entry-point to that literature.

\noindent
\textbf{Organization:} The rest of the chapter is organized as follows. We describe the 
replica symmetric approximation for this problem in Section \ref{sec:RS-Z2-a}. In Section
 \ref{sec:replica_symmetric_cavity}, we introduce the replica symmetric cavity method. The cavity 
 method provides an alternative to the replica method to arrive at the same predictions. 
 The replica method is based on a sequence of formal manipulations that,
 although not not well-defined mathematically and somewhat mysterious, are very
 rigid. 
 In contrast, the cavity method is closer to a probabilistic argument (an induction over $n$) 
 but highlights and makes use of certain deep probabilistic assumptions that 
 underly both approaches. The replica symmetric 
 cavity method naturally suggests some algorithmic approaches to computing the low 
 dimensional marginals of the probability distributions $\mu_{\beta}(\cdot)$. We develop
  and rigorously explore these algorithms in Section \ref{sec:algorithmic}. As in the 
  previous chapter, the replica symmetric approximation is inaccurate for certain parameters
   $(\beta,h)$---we overcome this limitation using the replica symmetry breaking (RSB) scheme in
    Section \ref{sec:kRSB-Replica-All}. Although the full replica symmetry breaking (RSB)
     approach yields the right approximation for all parameters $(\beta,h)$, the probabilistic
      assumptions inherent in this approach are more opaque. To gain more insights into 
      the structure of the distribution $\mu_{\beta}(\cdot)$, we introduce the replica 
      symmetry breaking version of the cavity method in Section \ref{sec:CavityRSB}; 
      this sheds valuable light into the independence structures inherent in the 
      replica symmetry breaking approach. Finally, we prove the rigorous replica 
      symmetry breaking upper bounds on the free energy in Section \ref{sec:SKrigorous}.

\section{The replica symmetric asymptotics}
\label{sec:RS-Z2-a}

\subsection{The replica calculation}
\label{sec:RS-Z2}
In this section, we initiate the study of the partition function using the replica method 
and study the replica symmetric asymptotics. For the sake of simplicity we will focus on the
case $h=0$, and drop the arguments of the partition function  $Z_n=Z_n(\beta,h=0)$.
Writing explicitly the dependence of $\bY$ on $\bx_0$ and the random matrix $\bW$, 
cf. Eq.~\eqref{eq:Z2model2}, we have
\begin{align}
Z_n = \sum_{\bsigma \in \{\pm 1\}^n } \exp{\Big\{\frac{\beta \lambda}{2n} \langle \bx_0 , \bsigma \rangle^2 + \frac{\beta}{2} \langle \bW, \bsigma \bsigma^{\rm{T}} \rangle \Big\}}. \label{eq:partition}
\end{align}
Recall that $\boldsymbol{x}_0$ is uniformly distributed on $\{ \pm 1 \}^n$ and 
$\bW = (\bG + \bG^{\sT})/\sqrt{2n}$, where $\bG = (G_{ij})_{n \times n}$ is a square matrix with 
i.i.d. $\normal(0, 1)$ entries. We will compute $\E[Z_n(\beta)^r]$ for integer $r$, then send
 $n\to \infty$, followed by $r \to 0$ to evaluate the free energy in the replica method. 
 
Taking expectations with respect to the randomness of $\boldsymbol{x}_0$ and $\bW$, we have, 
\begin{align}
\E[Z_n^r] &= \frac{1}{2^n} \sum_{\bsigma^0, \bsigma^1, \cdots, \bsigma^r} \exp{\Big\{ \frac{\beta \lambda}{2n} \sum_{a=1}^{r} \langle \bsigma^0, \bsigma^{a} \rangle^2 \Big\}} \E\exp{\Big\{ \frac{\beta}{\sqrt{2n}} \langle \bG, \sum_{a=1}^{r}\bsigma^{a}(\bsigma^{a})^{\sT} \rangle\Big\}} \nonumber  \\
&= \frac{1}{2^n} \sum_{\bsigma^{0}, \cdots, \bsigma^{r}} \exp{\Big\{ \frac{\beta\lambda}{2n} \sum_{a=1}^{r} \langle \bsigma^0, \bsigma^a \rangle^2 + \frac{\beta^2}{4n} \Big\| \sum_{a=1}^{r} \bsigma^a (\bsigma^a)^{\sT} \Big\|_{F}^2 \Big\} }  \nonumber \\
&= \frac{1}{2^n} \sum_{\bsigma^0, \cdots, \bsigma^{r}} \exp{\Big\{ \frac{\beta^2 r n}{4}  + \frac{\beta \lambda}{2n} \sum_{a=1}^{r} \langle \bsigma^0, \bsigma^a \rangle^2  + \frac{\beta^2 }{ 2n} \sum_{1 \leq a < b \leq r} \langle \bsigma^a, \bsigma^b \rangle^2  \Big\}} \, .\nonumber 
\end{align}
We note that the summands depend on the replicas $\bsigma^0, \cdots, \bsigma^{r}$ only via
their overlaps $\widehat{Q}_{ab} = \frac{1}{n} \langle \bsigma^a , \bsigma^b \rangle $. We are therefore in a situation similar to the one encountered when applying the replica
method to the tensor PCA model. We need to estimate an integral (or sum) over a high-dimensional space
(in this case $\{+1,-1\}^{nr}$) and the integrand only depends on a low-dimensional function 
on that space (in the present case $r(r-1)/2$-dimensional).
This sum can be estimated by large deviations tools.

 In the present case, the calculation is simplified by the following elementary 
 observation. For $x \in \reals$, 
\begin{align}
e^{x^2/2}= \int_{\reals} \!\exp{\Big( - \frac{q^2}{2} + x q  \Big)}\, \frac{\de q}{\sqrt{2\pi}}. \nonumber 
\end{align} 
This is often referred to as the `Gaussian disintegration trick' or as the 
`Hubbard-Stratonovich transform' in the physics literature. An application of this 
trick for each overlap linearizes the expectation and we obtain, 
\begin{align}
\E[Z_n^r] &= \int \frac{1}{2^n} \sum_{\bsigma^0, \cdots, \bsigma^r} \exp{\Big\{ n C(\bQ) +  \beta^2 \sum_{1 \leq a < b \leq r} Q_{a,b} \langle \bsigma^a, \bsigma^b \rangle  + \beta \lambda \sum_{a=1}^{r} Q_{0,a} \langle \bsigma^0, \bsigma^a \rangle \Big\}} D\bQ, \nonumber \\
C(\bQ) &=  \frac{\beta^2 r }{4} - \frac{\beta^2 }{2} \sum_{1 \leq a < b \leq r} Q_{a,b}^2 - \frac{\beta \lambda }{2} \sum_{a=1}^{r} Q_{0,a}^2 ,\nonumber  
\end{align}
where we use $D\bQ$ to denote the measure
\begin{align}
D\bQ = \prod_{a=1}^{r} \sqrt{\frac{\beta \lambda n}{2\pi}} \de Q_{0,a} \prod_{1\leq a < b \leq r} \sqrt{\frac{\beta^2 n}{2\pi}} \de Q_{a,b}. \nonumber 
\end{align}
We define 
\begin{align}
z_r(\bQ) = \frac{1}{2} \sum_{\sigma^0, \cdots, \sigma^r \in \{\pm 1 \}} \exp{ \Big[\beta \lambda \sum_{a=1}^{r} Q_{0,a} \sigma^{a} \sigma^{0} + \beta^2 \sum_{1\leq a < b \leq r} Q_{a,b} \sigma^{a} \sigma^{b} \Big]}. \nonumber 
\end{align}
We define the ``action" functional $S(\bQ) = C(\bQ) + \log z_r(\bQ)$ and note that,
by Laplace method:
\begin{align}
\E[Z_n^r] = \int \exp\big\{n S(\bQ)\big\} D\bQ = \exp\big\{n \max_{\bQ} S(\bQ)+o(n)\big\} \, .\label{eq:LaplaceSK} 
\end{align}

Thus, using the replica method, we expect the asymptotic free energy density to be given by
\begin{align}
 \phi &= \lim_{r \to 0} \frac{1}{r} S(\bQ^*)\, ,\;\;\;\; 
 \left.\frac{\partial S}{\partial \bQ}\right|_{\bQ^*}=0\, ,\\
 S(\bQ) & = \frac{\beta^2 r }{4} - \frac{\beta^2 }{2} \sum_{1 \leq a < b \leq r} Q_{a,b}^2 - \frac{\beta \lambda }{2} \sum_{a=1}^{r} Q_{0,a}^2 + \log z_r(\bQ)\, .
 \label{eq:SK_Action}
 \end{align}
 Note that we replaced  $\max_{\bQ} S(\bQ)$ which appears in Eq.~\eqref{eq:LaplaceSK}
 with $S(\bQ^*)$ where $\bQ^*$ is only required to be a stationary point of $S(\bQ)$. 
 Indeed, as we saw in the previous chapter, in the limit $r\to 0$, the relevant 
 stationary point is often not a maximum.

We will begin by evaluating the action functional on the replica symmetric saddle point. 
This saddle point is again motivated by symmetry considerations, in that the action 
$S(\, \cdot\, )$ is invariant to a permutation of the indices $\{1, \cdots , r \}$. 
Thus in this case, we consider 
\begin{align}\label{eq:RSansatz-Z2}
\bQ^{\sRS}_{a, b} = \begin{cases}
1 \quad {\rm{if }}\,\, a=b  \\
b \quad {\rm{if }}\,\, a=0, b \in \{1, \cdots, r\}  \\
q \quad {\rm{o.w.}} 
\end{cases}\nonumber 
\end{align}
Notice that the diagonal elements are once again set by the condition $\<\bsigma^a,\bsigma^a\>/n=1$.
We wish to evaluate $S(\bQ^{\sRS})$. To this end, we have, 
\begin{align}
z_r(\bQ^{\sRS}) &= \frac{1}{2} \sum_{\sigma^0, \cdots, \sigma^{r} \in \{ \pm 1 \}} \exp{\Big[ \beta \lambda b \sum_{a=1}^{r} \sigma^0 \sigma^{a} + \beta^2 q \sum_{1 \leq a <b \leq r} \sigma^{a}\sigma^{b} \Big]} \nonumber \\
&= \sum_{\sigma^{1}\cdots, \sigma^{r} \in \{ \pm 1 \}} \exp{\Big[ \beta \lambda b \sum_{a=1}^{r}  \sigma^{a} + \frac{\beta^2 q }{2} \Big(\sum_{a=1}^{r} \sigma^{a} \Big)^2 - \frac{\beta^2 q r }{2} \Big]} \nonumber \\
&= \E\Big[ \sum_{\sigma^1, \cdots, \sigma^r}  \exp{ \Big[ \beta \lambda b \sum_{a=1}^{r} \sigma^a + \beta \sqrt{q} g \sum_{a=1}^{r} \sigma^a - \frac{\beta^2 q r}{2} \Big]} \Big] \nonumber \\
&= \exp{\Big[ - \frac{\beta^2 q r }{2} \Big]} \E\Big[ ( 2 {\rm{cosh}} \beta(\lambda b + \sqrt{q} g) )^r \Big], \nonumber 
\end{align}
where $g \sim \normal(0,1)$. Finally, as $r \to 0$, we have, 
\begin{align}
\frac{1}{r} \log z_r(\bQ_{\rm{RS}} ) = - \frac{1}{2} \beta^2 q + \E\log [ 2 {\rm{cosh}} \beta (\lambda b + \sqrt{q} g) ] + o(r). \nonumber 
\end{align}
Further, we have, at the replica symmetric saddle point, $\sum_{a=1}^{r} Q_{0,a}^2 = r b^2$ and $\sum_{1\leq a < b \leq r } Q_{a,b}^2 = - \frac{r}{2} q^2+ O(r^2)$. Plugging these back into $S(\bQ)$, we obtain the replica-symmetric free energy functional
\begin{align}
&\lim_{r \to 0} \frac{1}{r} S(\bQ^{\sRS}) = \Psi_{\sRS}(b,q) \, ,\\
&\Psi_{\sRS}(b,q) = \frac{1}{4} \beta^2 (1- q)^2 - \frac{1}{2} \beta \lambda b^2 + \E[\log 2 {\rm{cosh}} (\beta (\lambda b + \sqrt{q} g) ) ].  \label{eq:rs-functional}
\end{align}
For sanity check, we note that in case we plug in $b = q=0$, we obtain
 $\Psi_{\sRS}(0,0) = \beta^2/4 +\log 2$. The stationary point $b=q=0$  corresponds usually
  to the `annealed' free energy density
  $\lim_{n\to\infty}n^{-1} \log \E[Z_n(\beta)]$. Indeed it is easy to check that this quantity coincides with
  $\beta^2/4 +\log 2$.
  
As in the previous chapter, we look for stationary points of  
$(b,q)\mapsto\Psi_{\sRS}(b,q)$  to derive the replica symmetric free energy 
density. To this end, we study the stationarity conditions
\begin{align}
\frac{\partial \Psi_{\sRS}(b,q)}{\partial q} &= \frac{\beta^2 q}{2} - \frac{\beta^2}{2} \E[\tanh^2 (\beta(\lambda b + \sqrt{q} G))] =0 , \nonumber \\
\frac{\partial \Psi_{\sRS}(b,q)}{\partial b} &= - \beta \lambda b + \beta \lambda \E[\tanh (\beta(\lambda b + \sqrt{q} G ))] =0 , \nonumber 
\end{align} 
where $G\sim\normal(0,1)$ and 
the first equation is derived by an application of Gaussian integration by parts. This 
implies the following system of fixed point equations for $b,q$: 
\begin{align}
b &= \E[ \tanh (\beta ( \lambda b + \sqrt{q} G))] , \nonumber \\
q &= \E[ \tanh^2 ( \beta (\lambda b + \sqrt{q} G))]. \label{eq:rs-fixed}
\end{align}

Before we start analyzing the phase diagram resulting from the replica symmetric ansatz,
 we discuss the interpretation of the parameters $b,q$, ant their connection with the marginal 
 distribution of each spin $\sigma_i$ under the measure 
 $\mu_{\beta}$. 
Set $\hbx_{\beta}(\bY) = \sum_{\bsigma} \bsigma \mu_{\beta}(\bsigma)$, 
the expectation under the measure $\mu_{\beta}$. Note that,
for any  $\beta$, $\hbx_{\beta}(\bY)$ is a valid estimator of $\bx_0$, and it coincides with 
the Bayes estimator for $\beta=\lambda$, and with the maximum likelihood estimator
for $\beta\to\infty$.  We will write $\hbx_{\beta} = \hbx_{\beta}(\bY)$ for simplicity.

 Recall from the previous chapter that $b$ is interpreted as the asymptotic overlap of 
 $\hbx$ and $\bx_0$ (equivalently, this is the expected overlap
 between $\bsigma\sim \mu_{\beta}$ and $\bx_0$), while $q$ should be the overlap of $\hbx$ with itself
 (equivalently, this is the expected overlap
 between $\bsigma^1$ and $\bsigma^2$, where $\bsigma^1,\bsigma^2\sim^{iid} \mu_{\beta}$).
 In formulas 
 \begin{align}
 b&= \lim_{n \to \infty} \frac{1}{n} \E[ \langle \hbx_{\beta}, \bx_0 \rangle ] \, ,\\
 q &= \lim_{n \to \infty} \frac{1}{n} \E[ \|\hbx_{\beta}\|^2_2 ] \, .
 \end{align}
 Without loss of generality, we can set $\bx_0 = \bfone$. In this case, 
 using the exchangeability of the coordinates of $\sigma$, we can rewrite
$b = \lim_{n \to \infty} \E[\hat{x}_{\beta,1}]$
and  $q= \lim_{n \to \infty} \E[ \hat{x}_{\beta,1}^2]$.

Note that $\mu_{\beta}$ is a random distribution and consequently $\mu_{\beta,i}(\,\cdot\,)$,
 the marginal distribution of $\sigma_i$ under $\mu_{\beta}$ is a random distribution supported on 
 $\{ +1,-1 \}$. We parametrize the  distribution as 
\begin{align}
\mu_{\beta,i}(\sigma_i ) = \frac{e^{\beta h_i \sigma_i }}{2 \cosh (\beta h_i )}.  \label{eq:FirstMargSK}
\end{align}
The quantity $h_i$ is referred to as the `effective field' on the spin $\sigma_i$. 
Under this parametrization, we have $\hat{x}_{\beta,1} =\tanh (\beta h_1)$ and therefore
\begin{align}
b = \lim_{n\to\infty }\E[\tanh (\beta h_1)] , \quad \quad 
q = \lim_{n\to\infty }\E[ \tanh^2 (\beta h_1)]. \nonumber 
\end{align}
Comparing with the fixed point equation system \eqref{eq:rs-fixed},
these equations suggest that the law of  $h_1$ converges to $\normal(\lambda b, q)$
as $n\to\infty$. With the replica method, this can be confirmed by computing
other moments $\E[ \tanh^{\ell}(\beta h_1)]$.
This heuristic will be extremely useful in the discussion of the 
replica symmetric cavity method in Section \ref{sec:replica_symmetric_cavity}.

Summarizing, the replica symmetric ansatz corresponds to the following picture
of the measure $\mu_{\beta}$:
\begin{itemize}
\item Under $\bsigma\sim \mu_{\beta}$, the coordinates
of $\bsigma = (\sigma_1,\dots,\sigma_n)$ are roughly  independent
(in the sense of finite-dimensional 
distributions).
\item The marginals of this measure can be parametrized as in Eq.~\eqref{eq:FirstMargSK}.
\item The effective fields  $h_i$ are roughly i.i.d. (in the sense of finite-dimensional 
distributions) with common law $\normal(\lambda b, q)$. 
\item The parameters $b,q$ 
solve
the fixed point equations \eqref{eq:rs-fixed}.
\end{itemize}
We will see that the replica symmetric picture is correct at high temperature (and in particular
on the line $\beta=\lambda$), but in general it is \emph{incorrect}. As in the previous
chapter, replica symmetry breaking will be needed to get the correct behavior.

\subsection{The replica symmetric phase diagram}

We next study the solutions of the fixed point equations \eqref{eq:rs-fixed} 
and their dependence on $(\beta,\lambda)$. When
multiple solutions exist, the asymptotic free energy density, and the qualitative behavior
of $\mu_{\beta}$ are determined by the solutions that maximize the replica symmetric free
energy functional $\Psi_{\sRS}$ with respect to $b$ and minimize it with respect to $q$. 

Qualitatively different solutions correspond to qualitatively different behaviors of $\mu_{\beta}$
(phases):
\begin{enumerate}
\item \emph{Uninformative/paramagnetic phase.}
  $b_{\sP} = q_{\sP} = 0$ is always a solution to the fixed point equations \eqref{eq:rs-fixed}.
For $(\lambda, \beta)$ sufficiently small, this is the only solution,
 and the system is said to be in the paramagnetic phase. According to the interpretation
 of $b,q$ in the previous section,  we expect
 $\lim_{n \to \infty} n^{-1} \E[\<\hbx_\beta, \bx_0\> ]=0$, 
  $\lim_{n \to \infty} n^{-1} \E[\|\hbx_\beta\|^2 ]=0$, and the effective fields are $h_i\approx 0$. 
  In other words, the Gibbs measure 
  $\mu_{\beta}$ is roughly uniform (in the sense of the finite-dimensional marginals), 
  and uncorrelated with the signal $\bx_0$.
  In this phase, the free energy density coincides with the annealed one 
  $\Psi_{\sRS}(b_P,q_P) = \frac{\beta^2}{4} + \log 2$. 
\item \emph{Spin glass phase.}  For $\lambda$ sufficiently small, 
and $\beta$ sufficiently large, another fixed point $b_{\sSG} =0, q_{\sSG} >0$ 
emerges and the system is referred to be in the spin-glass phase. 
In this phase, $\lim_{n \to \infty} \frac{1}{n} \E[\< \hbx_{\beta}, \bx_0 \> ] = 0$ while 
$\lim_{n \to \infty} \frac{1}{n} \E[ \| \hbx_\beta \|_2^2 ] = q_{\sSG} > 0$. 
In words, the Gibbs measure is far from uniform, since its barycenter  $\hbx_\beta$
is far from zero. On the other hand, it is uncorrelated with the true signal $\bx_0$.

The cavity fields are asymptotically i.i.d. $h_i\sim \normal(0,q_*)$.
Hence the finite dimensional marginals of the Gibbs measure have approximate product 
form but are not uniform, and carry no information about $\bx_0$.

 The free energy prediction is
\begin{align}
\Psi_{\sRS}(0, q_{\sSG}) &= \frac{\beta^2}{4} ( 1- q_{\sSG})^{2} + \E[ \log2 \cosh (\beta \sqrt{q_\sSG} G)] , \nonumber \\
q_{\sSG} &= \E[ \tanh^2 (\beta\sqrt{q_\sSG} G)]. \nonumber 
\end{align} 
By studying the last equation, it is possible to see that such a solution exists for 
$\beta>1$, and indeed it is the dominant solution for all $\beta>1$ and $\lambda<\lambda_c(\beta)$,
for some $\lambda_c(\beta)$. We leave this calculation as an exercise for the reader.
\item \emph{Recovery/ Ferromagnetic phase.} For $\lambda$ sufficiently large, we have a 
fixed point $b_{\sR} >0$, $q_{\sR} >0$.
 In this case, $\lim_{n \to \infty} n^{-1}
  \E[ \langle \hbx_\beta , \bx_0 \rangle ] = b_{\sR}>0$ and thus the estimator 
  is positively correlated with the signal. 
  The cavity fields have asymptotic distribution $h_i\sim\normal(\lambda b_{\sR}x_{0,i},q_{\sR})$.
\end{enumerate}
\begin{remark}\label{rmk:SymmetryZ2}
The reader has probably noticed that, strictly speaking, $\hbx_{\beta}(\bY) = \bfzero$
identically because the Gibbs measure $\mu_{\beta}$ of Eq.~\eqref{eq:gibbs} is symmetric under
$\bsigma\to -\bsigma$ for $h=0$. We already encountered this problem in tensor
PCA, see Section \ref{sec:TensorEstimator}, and the same considerations apply 
to the present case.

Summarizing, we can redefine $\hbx_{\beta}(\bY)$ by using the principal eigenvector 
$\bv_1(\hbM)$ and principal eigenvalue $\lambda_1(\hbM)$ of the matrix
of $\hbM_{\beta}\equiv \sum_{\bsigma}\bsigma\bsigma^{\sT}\mu_{\beta}(\bsigma)$,
as $\hbx^{+}_{\beta}(\bY) = \sqrt{\lambda_1(\hbM)}\, \bv_1(\hbM)$. The estimation error
of $\hbx^{+}_{\beta}(\bY)$ (for losses that are invariant under $\hbx^{+}_{\beta}\mapsto -\hbx^{+}_{\beta}$)
is expected to be asymptotically the same as for $\hbx_{\beta,h}(\bY)$
(where the measure $\mu_{\beta}$ is perturbed with $h>0$), in the limit $h\to 0$
after $n\to\infty$.
\end{remark}

\subsection{Bayes optimal and maximum likelihood estimation}

\begin{figure}
\begin{center}
\includegraphics[scale=0.6]{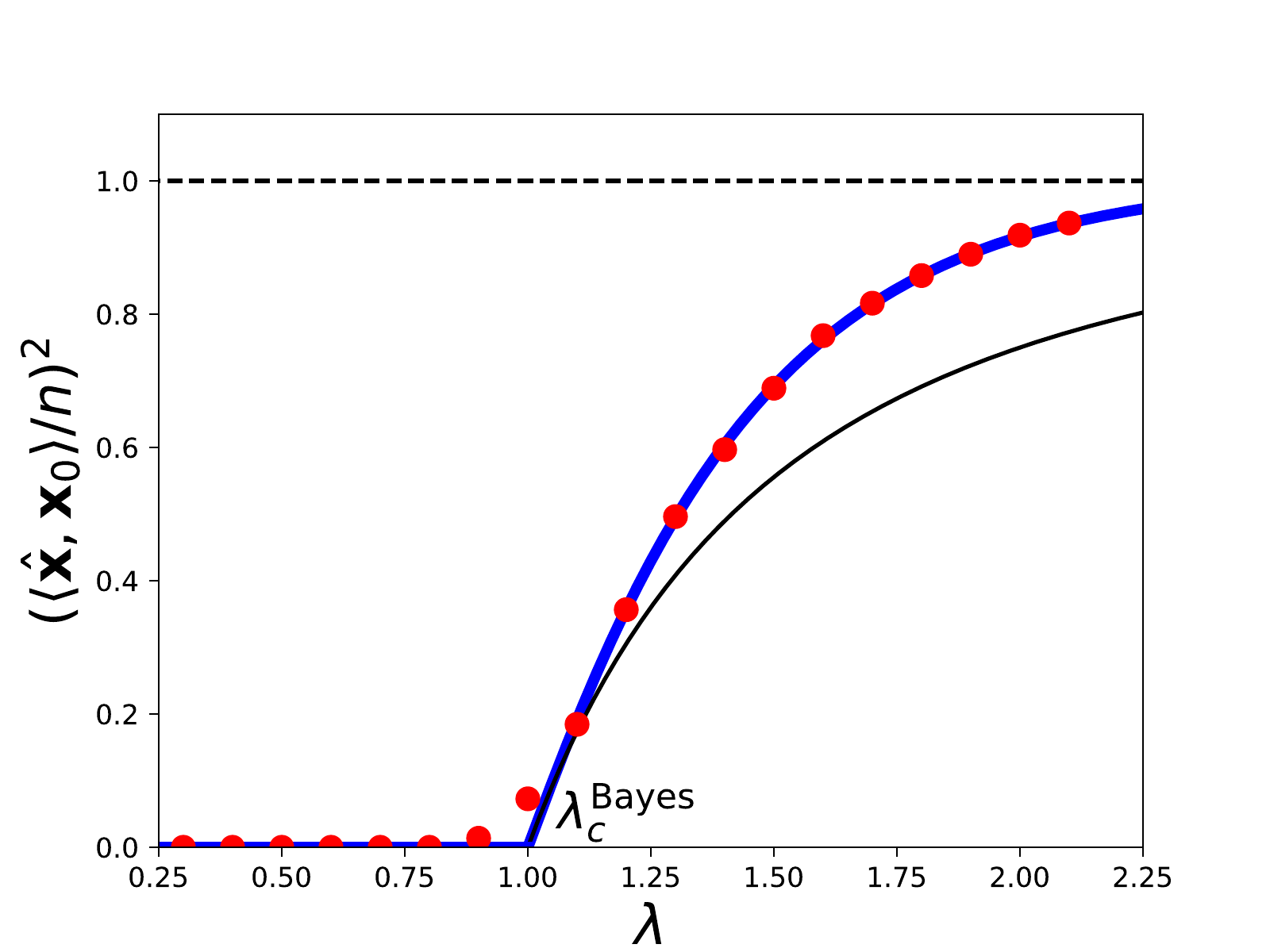}
\end{center}
\caption{Reconstruction accuracy in the $\integers_2$ synchronization model,
as a function of the signal-to-noise ratio $\lambda$. Blue line: asymptotic accuracy of
the Bayes-optimal estimator as predicted by the replica symmetric formula.}
\label{fig:rsb_saddle1}
\end{figure}

For $\beta = \lambda$, the Gibbs measure coincides with 
 the Bayes posterior. Using properties of
 posterior expectation, it is easy to see that $b_* = q_*$, where
  $b_*$ is determined as the solution of the fixed point equation 
\begin{align}
b_* = \E[ \tanh (\lambda^2 b_* + \sqrt{ \lambda^2 b_*} G)]. \label{eq:FP_Bayes-Z2}
\end{align}
The free energy density on this line is $\Psi_{\sBayes} (b_*)$ where 
\begin{align}
\Psi_{\sBayes} (b) \equiv \frac{\lambda^2}{4} (1- 2b - b^2) + \E[\log 2 \cosh ( \lambda^2 b + \sqrt{\lambda^2b} g)]. \nonumber 
\end{align}
The asymptotic free energy in this case has been derived rigorously in
 \cite{korada2009exact} and the corresponding estimation error was
 characterized in  \cite{deshpande2016asymptotic}. 
 
 By studying Eq.~\eqref{eq:FP_Bayes-Z2} the reader can show that 
 a solution $b_*>0$ exists (and dominates the free energy) for $\lambda>1$, while for $\lambda\le 
 1$, the only solution to this equation is $b_*=0$. 
 Since we are studying the Bayes-optimal estimator, the phase transition
 at $\lambda_{c}^{\sBayes} =1$ has the interpretation of a fundamental statistical barrier 
 for any estimator:
 \begin{itemize}
 \item For $\lambda\le \lambda_{c}^{\sBayes}$ no estimator can achieve a positive correlation with the
 signal, i.e. $\lim_{n \to \infty} n^{-1}
  \E[ \langle \hbx , \bx_0 \rangle ] = 0$ for any estimator $\hbx$.
 \item For $\lambda>\lambda_{c}^{\sBayes}$ the optimal estimator achieves a positive correlation with the
 signal, i.e. $\lim_{n \to \infty} n^{-1}
  \E[ \langle \hbx_{\sBayes} , \bx_0 \rangle ] > 0$
  (with $\hbx_{\sBayes}=\hbx_{\beta=\lambda}$ the Bayes estimator).
  \end{itemize}
  A phenomenon of this type is sometimes referred to as a `weak-recovery threshold.'
  We note in passing that ---in this specific model--- the same threshold is achieved by 
  a simpler principal component analysis (PCA) estimator that computes the leading eigenvector of 
  $\bY$. However, the PCA estimator does not achieve the optimal correlation above the threshold 
  $\lambda_{c}^{\sBayes}$.

 On the Bayes line, the vector of cavity fields is asymptotically
 distributed as  $\bh = \lambda b_* \bx_0 + \sqrt{b_*}\, \bg$
 for $\bg\sim \normal(0,\id_n)$. Letting $\by \equiv \bh/(\lambda b_*)$
 and $\tau \equiv (\lambda^2 b_*)^{-1/2}$,   the marginals of the posterior read
 \begin{align}
 \mu_{\sBayes,i}(\sigma_i) \approx \frac{e^{\lambda h_i\sigma_i}}{2\cosh(\lambda h_i)}
 = \frac{1}{z(y_i)}\, \exp\Big\{-\frac{1}{2\tau^2}(y_i-\sigma_i)^2\Big\}\, .
 \end{align}
 In other words, the Bayes posterior is asymptotically equivalent (in the sense of
 finite-dimensional marginals) to that of a much simpler model in 
  which we want to estimate a vector $\boldsymbol{x}_0 \in \{\pm 1\}^n$, 
 observed under additive Gaussian noise, $\by = \bx_0 + \tau \bg$,  
 $\bg \sim \normal(0,\id_n)$.

How does the maximum likelihood estimator (MLE) $\hbx_{\sML}=\hbx_{\beta=\infty}$ 
(which solves Eq.~\eqref{eq:maxlikelihood})
compare to Bayes estimation $\hbx_{\sBayes}=\hbx_{\beta=\lambda}$? It is easy to show 
that, for specific metrics, maximum likelihood is strictly sub-optimal
to the Bayes estimator. The simplest case is the one of (normalized)
matrix mean square error. Letting $\hbM_{\sBayes}(\bY) \equiv\E[\bx_0\bx_0^{\sT}|\bY]=
\hbM_{\beta=\lambda}(\bY)$, and $\hbM_{\sML}(\bY) \equiv \hbx_{\sML}\bx_{\sML}^{\sT}=
\hbM_{\beta=\infty}(\bY)$ (cf. Remark \ref{rmk:SymmetryZ2} for the definition of
$\hbM_{\beta}(\bY)$), we have:
\begin{align*}
\MSE_n(\hbM_{\sBayes})&\equiv\frac{1}{n^2}\E\big\{
\big\|\bx_0\bx_0^{\sT}-\E[\bx_0\bx_0^{\sT}|\bY]\big\|_F^2\big\}\\
& =\frac{1}{n^2}\E\big\{\big\|\bx_0\bx_0^{\sT}-\hbM_{\sML}(\bY)\big\|_F^2\big\}-\frac{1}{n^2}
\E\big\{\big\|\hbM_{\sML}(\bY)-\hbM_{\sBayes}(\bY)\big\|_F^2\big\}\\
& \le \MSE_n(\hbM_{\sML})-\frac{1}{n^2}\Big(
\E\big\{\big\|\hbM_{\sML}(\bY)\big\|_F^2\big\}^{1/2}-
\E\big\{\big\|\hbM_{\sBayes}(\bY)\big\|_F^2\big\}^{1/2}\Big)^2\\
& \le \MSE_n(\hbM_{\sML})-(1-q_{*,M})^2+o_n(1)\, .
\end{align*}
In the last step we used the fact that 
$n^{-2}\E\big\{\big\|\hbM_{\sBayes}(\bY)\big\|_F^2\big\} =q_{*,M}^2+o_n(1)$.
The replica method predicts $q_{*,M}=q_*<1$, but for obtaining a gap between maximum likelihood
and Bayes, it is sufficient to show $q_{*,M}<1$, which relatively easy to argue.

A much more subtle question is whether the MLE achieves the optimal 
weak recovery threshold.
We denote the MLE threshold as
\begin{align}
\lambda_{c}^{\sML} \equiv \inf\{ \lambda >0 : \lim_{n \to \infty} 
\frac{1}{n}\E[ |\<\hbx_{\sML}(\bY),\bx_0\>| ] >0 \}.
\end{align}
Interestingly, the replica symmetric calculation predicts $\lambda_c^{\sML} >1$.
On the other hand, taking into account replica symmetry breaking yields 
$\lambda_c^{\sML} =1$. A complete proof that $\lambda_c^{\sML} =1$ is at present an
open problem. (See \cite{javanmard2016phase} for further discussion.)

\section{The replica symmetric asymptotics: cavity approach}
\label{sec:replica_symmetric_cavity} 

In this section, we re-derive the results of the previous section
using a different approach: the cavity method. The cavity method offers an alternative
route to derive the results of the replica method. While still non-rigorous---
 it does not make use of formal tricks (number of replicas going to zero) 
 and its assumptions are more explicit. 
As for the replica method, it provides a hierarchy of approximations, that are
referred to as replica symmetric (RS), one-step replica-symmetry breaking (1RSB), and so on, 
matching the analogous hierarchy in the replica method. This terminology 
is sometimes misleading for a newcomer, since replicas do not really appear in 
the cavity method, and assumptions are in terms of probabilistic dependency structures,
rather than RSB patterns.

We will emphasize the key assumptions as items {\sf C1}, {\sf C2},
and so on.

 We want to compute the free energy density, 
 $ \phi \equiv \lim_{n \to \infty} \frac{1}{n}\E \log Z_n(\beta, \lambda)$,
 where we recall  the partition function  $Z_n(\beta, \lambda)$ from Eq.~\eqref{eq:partition}. 
The cavity method starts with the following simple observation 
\begin{align}
\frac{1}{n} \E\log Z_{n}(\beta, \lambda) = \frac{1}{n} \sum_{i=0}^{n-1} A_i , \quad \quad
A_i = \E \Big[\log \frac{Z_{i+1}(\beta, \lambda)}{Z_i(\beta, \lambda)} \Big]. \nonumber 
\end{align}
Now, elementary considerations about C\'{e}saro averages 
imply that $\phi = \lim_{n \to \infty} A_n$, provided the limit exists. 
The cavity method seeks to  compute this limit. Physically, $A_n$
is the expected change in free energy of the system when a single spin is added
to a model with $n$ spins. 
The basic idea is to write this change in terms of the Gibbs measure of the $n$-spins model.
It is therefore crucial to characterize the marginals of this measure.

\subsection{The distribution of cavity fields}
\label{sec:RSCavityFields}

Recall the Gibbs measure of the $n$-spins system, cf. Eq.~\eqref{eq:gibbs},
 which we rewrite here in the case $h=0$, for the reader's convenience
\begin{align}
\mu(\bsigma) = \frac{1}{Z_n} \, \exp{\Big\{ \beta \sum_{i<j\le n}Y_{ij}\sigma_i \sigma_j \Big\}}\, .
\end{align}
 For any $A \subset [n]$, we define $\bsigma_A = \{ \sigma_i : i \in A\}$. For any two subsets
  $A, B$ of $[n]$ with $A \cap B = \emptyset$, we define 
  $\nu_{A \to B}(\bsigma_A)$ to be the marginal of the $\bsigma_A$ spins in the modified 
  model where the $\bsigma_B$ variables have been ``removed". Formally:
\begin{align}
\nu_{A\to B}(\bsigma_{A})
 \propto \sum_{\{\sigma_i: i\in B^c\setminus A\}}
 \exp{\Big\{\beta \sum_{i<j: i,j \in B^c} Y_{ij} \sigma_i \sigma_j \Big\}}. \nonumber 
\end{align} 
In physics language, we are creating a `cavity' by removing the spins 
$\bsigma_B = \{ \sigma_i : i \in B\}$ from the system: this image is at the origin of 
the name of the method. In the special case in which $B=\{j\}$ is a single vertex, we will write
$\nu_{A\to j} = \nu_{A\to\{j\}}$, and analogously if $A= \{i\}$.
We also denote by $\mu_B$ the marginal distribution of $\bsigma_B$
when $\bsigma\sim \mu$.
We will refer to $\nu_{A\to B}$ as a `cavity marginal' and to $\mu_B$ as a `marginal'.

We can express ordinary marginals in terms of cavity marginals. 
For instance, considering the case $B=\{i\}$, 
 we have 
\begin{align}
\mu_i(\sigma_i) &=\frac{1}{z'_i} \sum_{\bsigma_{[n]\setminus i} }
\exp{\Big\{\beta \sum_{k<l : k,l \in[n]\setminus i} Y_{kl} \sigma_k \sigma_l + \beta \sigma_i \sum_{j \in [n]\setminus i} Y_{ij} \sigma_j \Big\}} \nonumber \\
&= \frac{1}{z''_i}\sum_{\bsigma_{[n]\backslash i} } 
\nu_{[n]\setminus i \to i}(\bsigma_{[n]\setminus i}) 
\exp{\Big\{\beta \sigma_i \sum_{j \in [n]\setminus i }Y_{ij} \sigma_j \Big\}}. \label{eq:FirstCavity}
\end{align}
(Here and in the following lines, we will introduce normalization constants denoted by $z$,
$z'$, etc, which are implicitly defined by the normalization condition of quantities on the left-hand side.)

We note that the pairwise interactions among the spin variables $\sigma_k$, $\sigma_{l}$
under  $\nu_{[n]\backslash i \to i}$ are of order $Y_{kl}=O(1/\sqrt{n})$,
 and thus it is reasonable to assume that the entries are approximately independent. 
 \begin{itemize}
 \item[{\sf C1}] We assume that a negligible error is made in
 replacing $\nu_{[n]\setminus i \to i}(\bsigma_{[n]\setminus i})$ by the product of marginals
  $\prod_{l \in [n]\backslash i } \nu_{l \to i}(\sigma_l)$ in Eq.~\eqref{eq:FirstCavity}.
 \end{itemize}
 The cavity method assumes this explicit independence and thus posits that 
\begin{align}
\mu_i(\sigma_i) &= \frac{1}{z_i}\sum_{\bsigma_{[n]\setminus i} } \prod_{l \in [n]\backslash i } \nu_{l \to i}(\sigma_l)  \exp{\Big[\beta \sigma_i \sum_{l \in [n]\backslash i }Y_{il} \sigma_l \Big]}
+o_n(1). \label{eq:MuNu}
\end{align}

Now, we parametrize 
\begin{align}
\mu_{i}(\sigma_i) = \frac{e^{\beta h_{i} \sigma_i }}{2 \cosh (\beta h_{i})} \,\;\;\;
\nu_{i \to j}(\sigma_i) = \frac{e^{\beta h_{i\to j} \sigma_i }}{2 \cosh (\beta h_{i \to j})},
 \label{eq:cavityfields}
\end{align}
where $h_{i \to j}$ is usually referred to as the cavity field. 
Using Eq.~\eqref{eq:MuNu}, we get
\begin{align}
h_{i} &= \frac{1}{2\beta}  \log \Big[ 
\frac{ \sum_{\sigma_l : l \in [n]\setminus i} \exp[\beta Y_{il} \sigma_l + \beta h_{l \to i } \sigma_l] }{ \sum_{\sigma_l : l \in[n]\setminus i } \exp[- \beta Y_{il} \sigma_l + \beta h_{l \to i} \sigma_l ]} \Big]+o_n(1)
\nonumber\\
 & = \frac{1}{2\beta} \sum_{l \in [n]\setminus i} \log \Big[ \frac{\cosh [\beta Y_{il} + \beta h_{l \to i} ]}{ \cosh [ - \beta Y_{il} + \beta h_{l\to i} ]} \Big]+o_n(1)
 \label{eq:CavityRec1}\\
 & =\sum_{l \in [n]\setminus i} u (Y_{il}, h_{l\to i}) +o_n(1), \nonumber 
\end{align}
where we defined
\begin{align}
u(Y,h) = \frac{1}{\beta} {\rm atanh}\big[\tanh(\beta Y) \tanh(\beta h)\big]\, .
\end{align}
(We used the trigonometric identities $\cosh(x+y) = \cosh(x) \cosh(y) + \sinh(x) \sinh(y)$ and
$\rm{atanh}(x) = \frac{1}{2} \log \frac{1+x}{1-x}$. Further, we set $Y_{ii} = 0$ without loss of generality. )

We can further simplify the above formula by using the fact that
 $Y_{ij} = O(1/\sqrt{n})$. By Taylor expansion 
 $u(Y,h) = Y\tanh(\beta h) +O(Y^3)$ and therefore
\begin{align}
h_{i} = \sum_{l \in [n]\setminus i} Y_{il} \tanh(\beta h_{l \to i}) + o_n(1). \label{eq:selfconsistent}
\end{align}

Note that the cavity fields $(h_{l \to i})_{l\in [n]\setminus i}$ are random variables 
independent of  $(Y_{il})_{l\in [n]\setminus i}$ (since they are functions of
$(Y_{kl})_{k,l\in [n]\setminus i}$). Further recalling the definition of $\bY$,
we get
\begin{align}
h_{i} = \frac{\lambda}{n}x_{0,i}\sum_{l \in [n]\setminus i} x_{0,l} \tanh(\beta h_{l \to i})
+\sum_{l \in [n]\setminus i} W_{il} \tanh(\beta h_{l \to i})
 + o_n(1). 
\end{align}
We will consider, without loss of generality,  $\bx_0$ to be fixed (the reader can consider $\bx_0=\bfone$).
Let $\rP^{(n)}_{h_i|\bY_{-i}}$ be the conditional distribution of $h_i$
given $(Y_{kl})_{k,l\in [n]\setminus i}$. Since the $W_{il}$ are independent Gaussian random 
variables\footnote{This conclusion does not actually rely on Gaussianity, and follows more generally
using the central limit theorem.},
we get
\begin{align}
&{\sf dist}\big(\rP^{(n)}_{h_i|\bY_{-i}} ,\normal(\lambda b_i x_{0,i},q_i)\big) =o_n(1)\, ,\\
& b_i\equiv \frac{1}{n}\sum_{l \in [n]\setminus i} x_{0,l} \tanh(\beta h_{l \to i})\, ,\\
& q_i \equiv \frac{1}{n}\sum_{l \in [n]\setminus i} \tanh^2(\beta h_{l \to i})
\end{align}
In other words $\rP^{(n)}_{h_i|\bY_{-i}}$ is approximately Gaussian with mean $\lambda b_i x_{0,i}$ 
and variance  $q_i$. (The mathematically minded
reader can think of ${\rm dist}$ as a distance metrizing weak topology.)

 Note that both $b_i$ and $q_i$ are averages over 
$n$ terms. It is reasonable to think that these terms concentrate, which is our next assumption.
\begin{itemize}
\item[{\sf C2}] We assume that $b_i$ and $q_i$ concentrate around their expectations
$\ob^{(n)}$ and $\oq^{(n)}$:
\begin{align}
\ob^{(n)} = \E[ x_0 \tanh (\beta h_{\to })]\, ,\;\;\;\;\;\;
\oq^{(n)} = \E[ \tanh^2 ( \beta h_{ \to })] . \label{eq:RSCavityQB}
\end{align}
Here $(x_0,h_{\to })$ is a random variable distributed as any one of the $(x_{0,l},h_{l\to i})$.
\end{itemize}
Note that the latter assumption is implied by independence of $(h_{l\to i})_{l\in [n]\setminus i}$,
but much weaker than it. Under this assumption, the conditional 
distribution $\rP^{(n)}_{h_i|\bY_{-i}}$ is essentially independent of
$\bY_{-i}$, and hence we conclude that the unconditional distribution 
$\rP^{(n)}_{h_i}$ is also approximately normal:
\begin{align}
&{\sf dist}\big(\rP^{(n)}_{h_i},\normal(\lambda \ob^{(n)} x_{0,i},\oq^{(n)})\big) =o_n(1)\, .
\end{align}

At this point we need to `close' the equations  \eqref{eq:RSCavityQB} for $\ob^{(n)}$, $\oq^{(n)}$ which 
parametrize the distribution of the effective fields. To this hand, we begin by noting that
the distribution of the cavity fields $h_{ \to }$ in the system with $n$ spins is essentially the same 
as the distribution of the effective fields in a system with $n-1$ spins.
Both quantities are parametrization of single spin marginals as per Eq.~\eqref{eq:cavityfields},
in a system with $n-1$ spins. There is however one difference: for the system with $n-1$ 
spins, the mean and variance of the $Y_{ij}$ are scaled by a factor $1/(n-1)$
instead of $1/n$. This amounts to a difference of order $1/n$ in $\lambda$ and $\beta$.
It is reasonable to believe 
\begin{itemize}
\item[{\sf C3}] The distribution of the effective fields $\rP^{(n)}_{h}$ has
a limit as $n\to\infty$.
Further, the limit distribution depends continuously on $\beta,\lambda$.
\end{itemize}

Together, these assumptions imply that the distribution of $h_{\to}$ 
on the right-hand side of Eq.~\eqref{eq:RSCavityQB} is approximately 
$\normal(\lambda \ob^{(n)} x_{0},\oq^{(n)})$. Therefore $\ob^{(n)}$, $\oq^{(n)}$
satisfy 
\begin{align*}
\ob^{(n)}= \E[ x_0\tanh (\beta ( \lambda x_0 \ob^{(n)} + \sqrt{\oq^{(n)}} G ))] +o_n(1)\, ,\\
\oq^{(n)} = \E[\tanh^2( \beta ( \lambda x_0 \ob^{(n)} + \sqrt{\oq^{(n)}} G )) ]+o_n(1)\, .
\end{align*}
where $G \sim \normal(0,1)$. 
These equations imply that $\ob^{(n)}\to b_*$, $\oq^{(n)}\to q_*$, where 
$b_*,q_*$ solve the limit equations 
\begin{align}
b= \E[ \tanh (\beta ( \lambda b + \sqrt{q} G ))]\, , \quad \quad 
q= \E[\tanh^2( \beta ( \lambda b + \sqrt{q} G )) ]\, . \label{eq:fixedpt}
\end{align}
Note that this is exactly the same set of equations derived from the stationary
 point conditions of the replica symmetric free energy for this problem \eqref{eq:rs-fixed}!  
 The present derivation was longer, but the assumptions are transparent, 
 and the Gaussian distribution appears more naturally. 

\subsection{The free energy density}

Armed with the description of cavity fields derived in the previous section, we return to the computation of the 
free energy as outlined in the beginning of the section. Recall that we wish to compute
 $\phi = \lim_{n \to \infty} A_n$, where 
$A_{n} = \E \log [ Z_{n+1}(\beta,\lambda)/Z_n(\beta,\lambda)]$.
 We define $\tilde{\bW} = \sqrt{n} \bW$ and without loss of generality, set $\bx_{0i} =\bfone$. 
 Thus  we have, 
\begin{align}
Z_{n+1}(\beta ,\lambda) = \sum_{\bsigma\in \{ \pm 1 \}^{n+1}} 
\exp{\Big\{\frac{\beta \lambda}{n+1} \sum_{1\leq i < j \leq n+1} \sigma_i \sigma_j + \frac{\beta}{\sqrt{n+1}} \sum_{1\leq i < j \leq n+1} \tilde{W}_{ij} \sigma_i \sigma_j \Big\} }. \nonumber 
\end{align}
Setting $\beta_{n+1} = \beta \sqrt{\frac{n+1}{n}}$, $\lambda_{n+1} = \lambda \sqrt{\frac{n+1}{n}}$, direct computation yields 
\begin{align}
Z_{n+1}(\beta_{n+1}, \lambda_{n+1}) &= \sum_{\bsigma} \exp{\Big\{ \frac{\beta \lambda}{n} \sum_{1\leq i < j \leq n} \sigma_i \sigma_j + \frac{\beta}{\sqrt{n}} \sum_{1\leq i < j \leq n} \tilde{W}_{ij} \sigma_i \sigma_j + \beta \sigma_{n+1} \sum_{i=1}^{n}Y_{i (n+1)} \sigma_i  \Big\}}. \nonumber \\
&= Z_n(\beta, \lambda) \sum_{\bsigma} \nu_{[n] \to (n+1)}(\bsigma_{[n]}) \exp{\Big\{\beta \sigma_{n+1} \sum_{i=1}^{n} Y_{i (n+1)} \sigma_i \Big\}}. \nonumber 
\end{align}
We assume that the distribution $\nu_{[n]\to (n+1)}(\, \cdot\, )$ factorizes as in 
point {\sf C1} above to get 
\begin{align}
\frac{Z_{n+1}(\beta_{n+1}, \lambda_{n+1})}{Z_n(\beta, \lambda)} &= \sum_{\bsigma} \prod_{i=1}^{n} \nu_{i \to (n+1)} (\sigma_i) \, e^{\beta Y_{i (n+1)} \sigma_i \sigma_{n+1}} \cdot\big(1+o_n(1)\big) \nonumber \\
&= \left\{\sum_{\sigma _{n+1}} \prod_{i=1}^{n}  \frac{\cosh \beta(Y_{i(n+1)} \sigma_{n+1} + h_{i \to (n+1)}) }{ \cosh (\beta h_{i \to (n+1)} )} \right\} \cdot  \big(1+o_n(1)\big) ,  \nonumber 
\end{align}
where we introduce the cavity fields as in \eqref{eq:cavityfields}. Next, using the trigonometric identities $\cosh(x+y) = \cosh (x) \cosh(y) + \sinh(x) \sinh(y)$ and $\sinh(\sigma x) = \sigma \sinh(x)$ for $\sigma \in \{\pm 1 \}$, we have, 
\begin{align}
&\frac{Z_{n+1}(\beta_{n+1}, \lambda_{n+1})}{Z_n(\beta, \lambda)} = \Big[\prod_{i=1}^{n} \cosh(\beta Y_{i (n+1)}) \Big] \sum_{\sigma_{n+1}} \prod_{i=1}^{n} ( 1 + \sigma_{n+1} \tanh(\beta Y_{i (n+1)} ) \tanh(\beta h_{i \to (n+1)})) \nonumber \\
&= \Big[\prod_{i=1}^{n} \cosh(\beta Y_{i (n+1)}) \Big]  \sum_{\sigma_{n+1}} \exp{ \Big[\sum_{i=1}^{n} \log ( 1 + \sigma_{n+1} \tanh(\beta Y_{i(n+1)}) \tanh(\beta h_{i \to (n+1)})  \Big]}. \label{eq:intermediate}
\end{align}
First, we note that each $Y_{i(n+1)}$ is $O(\frac{1}{\sqrt{n}})$ and therefore, using Taylor expansion, we expect 
\begin{align}
\prod_{i=1}^{n} \cosh(\beta Y_{i(n+1)}) = \prod_{i=1}^{n} \exp{\Big[ \frac{\beta^2 Y_{i(n+1)}^2}{2} + O\Big(\frac{1}{n^{3/2}} \Big) \Big]} = \exp{\Big[ \frac{\beta^2}{2} + o(1) \Big]}. \nonumber 
\end{align}
Further, using Taylor expansion, we expect, 
\begin{align}
&\sum_{i=1}^{n}\log (1 + \sigma_{n+1} \tanh(\beta Y_{i(n+1)}) \tanh(\beta h_{i \to (n+1)}) )= \nonumber \\
&\sigma_{n+1} \sum_{i=1}^{n} \tanh(\beta Y_{i(n+1)}) \tanh(\beta h_{i\to (n+1)}) - \frac{1}{2} \sum_{i=1}^{n} \tanh^2(\beta Y_{i(n+1)}) \tanh^2 (\beta h_{i \to (n+1)}) + o_n(1). \nonumber \\
&= \beta \sigma_{n+1} \sum_{i=1}^{n} Y_{i(n+1)} \tanh(\beta h_{i \to (n+1)}) - \frac{\beta^2}{2} \E[\tanh^2(\beta h_{i \to (n+1)})]+o_n(1), \nonumber 
\end{align} 
where we use the approximation that $\tanh(x) = x+O(|x|^3)$ as $x \to 0$ and $\E[Y_{i(n+1)}^2] = 1/n+o(1/n)$. 
 Recalling the equation for the effective field \eqref{eq:selfconsistent} and the 
 fixed point equations for $q,b$ \eqref{eq:fixedpt}, we obtain 
\begin{align}
\sum_{i=1}^{n} \log (1 + \sigma_{n+1} \tanh(\beta Y_{i(n+1)}) = \beta h_{n+1}\sigma_{n+1}  - 
\frac{\beta^2}{2} q+o_n(1)\, . \nonumber 
\end{align}
Plugging these back into \eqref{eq:intermediate}, we obtain, 
\begin{align}
\frac{Z_{n+1}(\beta_{n+1}, \lambda_{n+1})}{Z_n(\beta, \lambda)} &= \exp{\Big[\frac{\beta^2}{2} (1-q) \Big]} \sum_{\sigma_{n+1}} \exp{[\beta  h_{n+1}\sigma_{n+1} ]}
 \cdot\big(1+o_n(1)\big)\\
&= \exp{\Big[\frac{\beta^2}{2} (1-q) \Big]} 2\cosh(\beta h_{n+1 \to i})\cdot\big(1+o_n(1)\big)
\, . \nonumber 
\end{align}
Taking expectations and using the fact that the law of $h_{n+1}$ 
converges to $\normal(\lambda b_*, q_*)$, we obtain, 
\begin{align}
\E \log \frac{Z_{n+1}(\beta_{n+1}, \lambda_{n+1})}{Z_n(\beta, \lambda)} = 
\frac{\beta^2}{2}(1-q_*) + \E[\log 2 \cosh(\beta (\lambda b_* + \sqrt{q_*} G))] + o_n(1), 
\label{eq:FirstPieceCavity}
\end{align}
where $G\sim \normal(0,1)$. 

Finally, we need to evaluate $\log \big[Z_n(\beta_n, \lambda_n)/Z_n(\beta, \lambda) \big]$. 
To this end, we note that (for $\mu = \mu_{n,\beta,\lambda}$)
\begin{align}
\frac{Z_n(\beta_n, \lambda_n)}{Z_n(\beta, \lambda)} &= \sum_{\bsigma} \mu(\bsigma) 
\exp{\left\{ \frac{\beta_n \lambda_n - \beta \lambda}{n} \sum_{i <j} \sigma_i \sigma_j  + \frac{\beta_n - \beta}{\sqrt{n}} \sum_{i<j} \tilde{W}_{ij} \sigma_i \sigma_j \right\}}\nonumber\\
 &= \sum_{\bsigma} \mu(\bsigma) \Big[ 1 + \frac{\beta \lambda}{n^2} \sum_{i <j} \sigma_i \sigma_j + \frac{\beta}{2n^{3/2}} \sum_{i < j } \tilde{W}_{ij} \sigma_i \sigma_j + o_n(1) \Big]. \nonumber 
\end{align}
Thus we have, 
\begin{align}
& \E\log \frac{Z_n(\beta_n,\lambda_n)}{Z_n( \beta, \lambda)}  = T_1 + T_2 + o_n(1), \nonumber \\
&T_1 = \frac{\beta\lambda}{n^2} \sum_{i<j} \E \Big[\sum_{\bsigma} \mu(\bsigma) \sigma_i \sigma_j \Big], \quad\quad
T_2 = \frac{\beta}{2n^{3/2}} \sum_{i <j} \E[\tilde{W}_{ij} \sum_{\bsigma} \mu(\bsigma) \sigma_i \sigma_j]. \nonumber 
\end{align}
We analyze each term in turn. In order to estimate $T_1$, we assume, as 
in point {\sf C1} above, that $\sigma_i$, $\sigma_j$ are approximately independent under $\mu$, 
and hence
\begin{align*}
\sum_{\bsigma} \mu(\bsigma) \sigma_i \sigma_j &=\Big(\sum_{\sigma_i}\mu_i(\sigma_i)\,\sigma_i\Big)
\Big(\sum_{\sigma_j}\mu_j(\sigma_j)\,\sigma_j\Big)+o_n(1)\\
& = \tanh(\beta h_i)\tanh(\beta h_j)+o_n(1)\, .
\end{align*}
 Therefore  
\begin{align*}
T_1 &= \frac{\beta\lambda}{2} \cdot 
\E\left\{\Big(\frac{1}{n}\sum_{i=1}^n \tanh(\beta h_i)\Big)\right\} +o_n(1)\\
& \stackrel{(a)}{=} \frac{\beta\lambda}{2} \cdot 
\E\big\{\tanh(\beta h_1)\big\}^2 +o_n(1)\\
& \stackrel{(b)}{=} \frac{\beta\lambda}{2} \cdot 
\E\big\{\tanh(\beta (\lambda b_*+\sqrt{q_*}G))\big\}^2+o_n(1) \\
& \stackrel{(c)}{=}\frac{\beta\lambda}{2} \cdot b_*^2+o_n(1)\, .
\end{align*}
In step $(a)$ we used the assumption that $n^{-1}\sum_{i=1}^n \tanh(\beta h_i)$
concentrates around its expectation, as per point {\sf C2}; in $(b)$ we used the asymptotic
characterization of the distribution of effective fields derived above;
in $(c)$ we used the fixed point equations \eqref{eq:fixedpt}.

Next, using Gaussian integration by parts, we obtain
\begin{align}
T_2 = \frac{\beta}{2n^{3/2}} \sum_{ i <j } 
\E\Big[ \frac{\partial \phantom{\tilde{W}_{ij}}}{\partial \tilde{W}_{ij}} \sum_{\bsigma} \mu(\bsigma) \sigma_i \sigma_j \Big]
= \frac{\beta^2}{2n^2} \sum_{i<j} \E \Big[ 1 - \Big( \sum_{\bsigma} \mu_{\beta}(\bsigma) \sigma_i \sigma_j \Big)^2 \Big]. \label{eq:T2}
\end{align}
Again, assuming as per point {\sf C1} (and as in the calculation of $T_1$) that 
$\sigma_i$, $\sigma_j$ are approximately independent under $\mu$,
and recalling the definition of effective fields, we get
\begin{align*}
T_2 &= \frac{\beta^2}{2n^2}  \sum_{i<j} \E \big[ 1 - \big( \tanh\beta h_i\tanh\beta h_j \big)^2 \big]+o_n(1)\\
&= \frac{\beta^2}{2} -   \frac{\beta^2}{2}\E\left\{\left(\frac{1}{n}
\sum_{i=1}^n (\tanh\beta h_i)^2\right)\right\}+o_n(1)\\
&\stackrel{(a)}{=} \frac{\beta^2}{2} -   \frac{\beta^2}{2} \E\{(\tanh\beta h_1)^2\}^2+o_n(1)\\
&\stackrel{(b)}{=}\frac{\beta^2}{2} \left\{1-  \E\{\tanh(\beta (\lambda b_*+\sqrt{q_*}G))^2\}^2+\right\}+o_n(1)\\
&\stackrel{(c)}{=} \frac{\beta^2}{2}\big(1-q_*^2\big)+o_n(1)\, .
\end{align*}
Here we proceeded analogously to the computation of $T_1$:
in step $(a)$ we used the assumption that $n^{-1}\sum_{i=1}^n \tanh^2(\beta h_i)$
concentrates around its expectation, as per point {\sf C2}; in $(b)$ we used the asymptotic
characterization of the distribution of effective fields derived above;
in $(c)$ we used the fixed point equations \eqref{eq:fixedpt}.

Finally, combining Eq.~\eqref{eq:FirstPieceCavity} with these results for $T_1$, $T_2$, we obtain
\begin{align*}
A_n &= \E\log \frac{Z_{n+1}(\beta,\lambda)}{Z_n(\beta,\lambda)}\\
& = \E\log \frac{Z_{n+1}(\beta_{n+1},\lambda_{n+1})}{Z_n(\beta,\lambda)}  - \E \log \frac{Z_{n+1}(\beta_{n+1},\lambda_{n+1})}{Z_{n+1}(\beta,\lambda)} \\
&= \frac{\beta^2}{4} (1-q_*)^2 - \frac{\beta\lambda}{2}b_*^2 + \E[ \log 2 \cosh (\beta (\lambda b_* + \sqrt{q_*}\, G) )]  + o_n(1)\\
& = \Psi_{\sRS}(b_*,q_*) + o_n(1)\,.
\end{align*}
where $\Psi_{\sRS}(b,q)$ is the replica symmetric  free energy functional 
which we already derived in Section \ref{sec:RS-Z2}.

Thus the cavity method successfully 
recovers the RS free energy density prediction $\phi=\Psi_{\sRS}(b_*,q_*)$,
without relying on replicas, and instead making specific assumptions 
{\sf C1}, {\sf C2},  {\sf C3} about the structure of the Gibbs measure. As we will see in Section
\ref{sec:CavityRSB},  these assumptions can be modified to obtain RSB predictions. 

\section{Algorithmic questions}
\label{sec:algorithmic} 

As we discussed above, one way to perform estimation is to compute the mean
of the measure $\mu$, i.e.
\begin{align}\label{eq:MeanEstimator}
\hbx_{\beta}(\bY) \equiv \sum_{\bsigma\in\{+1,1\}^n}
\mu_{n,\beta}(\bsigma) \,\bsigma\, .
\end{align}
We sweep under the carpet the technical nuisance that, in the present model, the mean on the right-hand side is zero
for symmetry reasons. We assume throughout that this symmetry is broken, e.g. by
additional side information or by a spectral initialization.

Writing Eq.~\eqref{eq:MeanEstimator} does not solve the problem because 
evaluating the right-hand side requires summing over exponentially many 
vectors $\bsigma$.
In this section we briefly discuss the question of computing
this average via efficient algorithms.

\subsection{From the cavity method to message passing algorithms}

Computing the effective fields $\{h_i\}_{i\le n}$ is of course equivalent to 
computing the marginals of the Gibbs measure $\mu$, and is therefore 
sufficient to compute the mean vector $\hbx_{\beta}(\bY)$. We saw in turn 
that the effective fields are closely related to the cavity fields $\{h_{i\to j}\}$,
and it is therefore natural to try to compute the latter.
Indeed $h_i=h'_{i\to 0}$ where $h'_{i\to j}$ are cavity fields in a fictitious model in 
which we added a variable $\sigma_0$.

By repeating the argument to derive Eq.~\eqref{eq:CavityRec1}, we obtain a set of
equations that ---under similar assumptions--- are approximately satisfied by the cavity 
fields, namely 
\begin{align}
h_{i\to j}& =\sum_{l \in [n]\setminus \{i,j\}} u (Y_{il}, h_{l\to i}) +o_n(1)\, ,\\
u(Y,h) &= \frac{1}{\beta} {\rm atanh}\big[\tanh(\beta Y) \tanh(\beta h)\big]\, .
\end{align}
Indeed note that Eq.~\eqref{eq:CavityRec1}, applied to the system from which
$\sigma_j$ has been removed, yields 
$h_{i\to j} =\sum_{l \in [n]\setminus \{i,j\}} u (Y_{il}, h_{l\to \{i,j\}}) +o_n(1)$,
and the last display follows from the approximation $h_{l\to \{i,j\}}\approx h_{i\to j}$.

It is useful to rewrite these equations for an Ising model on a general graph
$G=(V=[n],E)$ (i.e., for the Gibbs measure \eqref{eq:gibbs} with $Y_{ij}$ set to $0$ whenever 
$(i,j)\not\in E$). We have 
\begin{align}
x_{i\to j}& =\sum_{l: (i,l)\in E, l\neq j } u (Y_{il}, x_{l\to i})\, ,
\;\;\; \forall (i,j)\in E\, .\label{eq:BP-FP}
\end{align}
This provides a set of $2|E|$ equations for the $2|E|$ unknowns $(x_{i\to j}:\, (i,j)\in E)$.
Here we changed our notation from $h_{i\to j}$ to $x_{i\to j}$ to emphasize the
 fact that we regard these as equations in the unknowns $(x_{i\to j}:\, (i,j)\in E)$,
 whose solutions might (or might not) be related  to the actual cavity fields.
 
Two questions arise naturally: $(i)$~What algorithms can be used to 
solve the system of equations \eqref{eq:BP-FP} in polynomial time? $(ii)$~What is
the relation between the solutions (or approximate solutions) found by such algorithms and the actual
cavity fields $(h_{i\to j}:\, (i,j)\in E)$?

 Given the form in which the equations \eqref{eq:BP-FP} are written there is 
 natural iterative algorithm that we might hope to use:
\begin{align}
x^{(t+1)}_{i\to j}& =\sum_{l: (i,l)\in E, l\neq j } u (Y_{il}, x^{(t)}_{l\to i})\, ,
\;\;\; \forall (i,j)\in E\, .\label{eq:BP-Ising}
\end{align}
This is known as belief propagation (BP) or the sum-product algorithm
\cite{gallager1962low,wainwright2008graphical,MezardMontanari}. 
Following \cite{RiU08}, we refer to it as a `message passing algorithm'. This name refers to 
the fact that we can think of each variable $x^{(t)}_{i\to j}$ as a message passed from vertex $i$ to vertex
$j$.

 Abstractly, we
can think of it as defining a map $\sF_{\bY}(\,\cdot\,):\reals^{2|E|}\to \reals^{2|E|}$,
for which the above iteration can be written as
\begin{align}
\bx^{(t+1)} = \sF_{\bY}(\bx^{(t)})\, ,\;\;\;\; \bx^{(t)}\equiv (x^{(t)}_{i\to j}:\, (i,j)\in E)\, ,
\end{align}
Existence of fixed points of this recursion (and hence existence of solutions of Eqs.~\eqref{eq:BP-FP})
follows from Brouwer's fixed point theorem (applied to the 
variables $u_{i\to j} = u (Y_{ij}, x^{(t)}_{i\to j})$). Convergence and
relation with the actual cavity fields are far more subtle questions.

There is one case in which these questions have a simple answer, as stated 
below.
\begin{proposition}
If $G = (V,E)$ is a finite tree, then the belief propagation iteration 
\eqref{eq:BP-Ising} converges to a unique fixed point $\bx^* = 
(x^*_{i\to j}: (i,j)\in E)$ in at most ${\rm diam}(G)$ iterations.

Further, at this fixed point, the variables $x^*_{i\to j}$ coincide with
the cavity field $x^*_{i\to j}=h_{i\to j}$, and the effective fields are given by
$h_i = \sum_{l: (i,l)\in E } u (Y_{il}, x^*_{l\to i})$.
\end{proposition}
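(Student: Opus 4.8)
The plan is to proceed by induction on the structure of the tree, exploiting the fact that removing a vertex from a tree disconnects it into a forest of subtrees. First I would establish the combinatorial observation underlying belief propagation on trees: for an edge $(i,j)\in E$, deleting $j$ from the graph splits the tree into connected components, and the component containing $i$ is itself a tree, call it $T_{i\to j}$, rooted at $i$. The cavity marginal $\nu_{i\to j}$ is precisely the single-spin marginal of $\sigma_i$ in the Ising model restricted to $T_{i\to j}$. The key structural fact is that the subtrees $T_{l\to i}$ for the various neighbors $l\neq j$ of $i$ are vertex-disjoint and their union (together with $i$) is exactly $T_{i\to j}$; hence in the restricted model the spins $\{\sigma_l : l\sim i, l\neq j\}$ are \emph{exactly} independent under $\nu_{[n]\setminus\{i,j\}\to i}$ — assumption {\sf C1} holds with no error on a tree. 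Carrying out the same computation that led to Eq.~\eqref{eq:CavityRec1}, but now exactly rather than up to $o_n(1)$, gives the fixed point relation $h_{i\to j} = \sum_{l:(i,l)\in E, l\neq j} u(Y_{il}, h_{l\to i})$, i.e. the true cavity fields solve the BP equations \eqref{eq:BP-FP} on the nose.

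Next I would prove convergence of the iteration \eqref{eq:BP-Ising} in at most $\diam(G)$ steps. The argument is that $x^{(t)}_{i\to j}$ depends only on the initial data on vertices within the subtree $T_{i\to j}$ at graph-distance at most $t$ from $i$ — formally, one shows by induction on $t$ that $x^{(t)}_{i\to j}$ equals the cavity field of the Ising model restricted to the ball of radius $t$ in $T_{i\to j}$ around $i$ (with the messages entering from the "boundary leaves" of that ball being whatever the initialization dictates, but these do not matter once the ball exhausts the subtree). Once $t\ge \depth(T_{i\to j})$, the ball equals all of $T_{i\to j}$ and $x^{(t)}_{i\to j}$ has stabilized at the true value $h_{i\to j}$. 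Since every $T_{i\to j}$ has depth at most $\diam(G)$, after $\diam(G)$ iterations all messages have converged. Uniqueness of the fixed point then follows because any fixed point $\bx^*$ is in particular a fixed point reached by initializing at $\bx^*$ itself, and the stabilization argument forces $x^*_{i\to j}=h_{i\to j}$; alternatively, uniqueness follows from a leaf-to-root argument: the messages out of leaves are forced (empty sums, or the trivial value), and then each message is a deterministic function of messages strictly closer to the leaves.

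Finally, for the effective field formula, I would repeat the derivation of Eq.~\eqref{eq:FirstCavity}–\eqref{eq:CavityRec1} with $B=\{i\}$ and no auxiliary vertex: on a tree, deleting $i$ splits $G$ into subtrees $T_{l\to i}$ over all neighbors $l$ of $i$, these are vertex-disjoint, so $\nu_{[n]\setminus i\to i}$ factorizes exactly as $\prod_l \nu_{l\to i}$, and the same algebra gives $h_i=\sum_{l:(l,i)\in E} u(Y_{il},x^*_{l\to i})$. The main obstacle — really the only non-routine point — is setting up the subtree bookkeeping cleanly: one must be careful that $T_{i\to j}\setminus\{i\} = \bigsqcup_{l\sim i,\, l\neq j} T_{l\to i}$ as a disjoint union, which is where the tree (acyclicity) hypothesis is used in an essential way, and then verify that the Gibbs weight of $T_{i\to j}$ factors as the product of the weights of the pieces times the edge factors $e^{\beta Y_{il}\sigma_i\sigma_l}$. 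Everything else (the trigonometric manipulation producing $u$, the Taylor expansions) is either unnecessary here — because on a tree the identities are exact, not asymptotic — or already carried out in Section~\ref{sec:replica_symmetric_cavity}.
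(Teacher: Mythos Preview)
Your proposal is correct and follows the standard approach. The paper itself does not prove this proposition: it states ``We leave the proof as an exercise for the reader'' and cites \cite{MezardMontanari}, so there is nothing to compare against beyond noting that your argument is precisely the intended one --- exact factorization of the cavity marginals on a tree (so that assumption {\sf C1} holds with no error), followed by a depth induction showing that $x^{(t)}_{i\to j}$ stabilizes to $h_{i\to j}$ once $t$ exceeds the depth of the subtree $T_{i\to j}$, which is bounded by $\diam(G)-1$.

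One minor quibble: your induction-on-$t$ formulation (``$x^{(t)}_{i\to j}$ equals the cavity field of the model restricted to the ball of radius $t$'') is slightly loose, because the messages at the boundary of that ball still carry initialization-dependent values. The cleaner statement to induct on is ``$x^{(t)}_{i\to j} = h_{i\to j}$ whenever $t \ge \depth(T_{i\to j})+1$,'' with base case the leaf messages (empty sums equal to $0$) and inductive step exactly as you wrote. This is a presentational point only; the content of your argument is right.
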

We leave the proof as an exercise for the reader \cite{MezardMontanari}. 
If the graph $G$ is not a tree, the last proposition can still be of use.
One case of interest is the one of locally tree-like graphs. We refer 
to the bibliography section for pointers to this literature.

\subsection{Approximate Message Passing}

We are interested in a setting that is seemingly opposite
to the tree case described above. Namely, in our setting $G=K_n$ is the complete graph over $n$
vertices. We will see that nevertheless, the iteration \eqref{eq:BP-Ising}
(or its close relatives that we will develop here) produce correct marginals.

We begin by noting that $Y_{ij} = O(1/\sqrt{n})$ and therefore we can linearize
the function $u(Y,x)$ in $Y$, as we did in the previous sections.
We thus replace the BP iteration by the following simpler
form:
\begin{align*}
x^{(t+1)}_{i\to j}& =\sum_{l: (i,l)\in E, l\neq j } Y_{il}\tanh(\beta x^{(t)}_{l\to i})\, .
\end{align*}
Notice that the connection with the Gibbs measure \eqref{eq:gibbs} is encoded in the
nonlinear function $\tanh(\beta x)$ appearing on the right-hand side. 
The following arguments do not depend on this specific function and 
they are more transparent if we replace $\tanh(\beta x)$ by a generic function $f_t:\reals\to \reals$
(possibly dependent on the iteration number):
\begin{align*}
x^{(t+1)}_{i\to j}& =\sum_{l: (i,l)\in E, l\neq j } Y_{il}f_t( x^{(t)}_{l\to i})\, .
\end{align*}
If $G$ is the complete graph, we can rewrite this iteration as
(setting for simplicity $Y_{ii}=0$ for all $i\in [n]$)
\begin{align*}
x^{(t+1)}_{i\to j}& =\sum_{l=1}^n Y_{il}f_t( x^{(t)}_{l\to i}) - Y_{ij}f_t( x^{(t)}_{j\to i}) \, .
\end{align*}
In other words, we can write $x_{i\to j}^{(t)} = x_i^{(t)} + \delta_{i \to j}^{(t)} $, 
where $\delta^{(t)}_{i\to j} = O(1/\sqrt{n})$ and
\begin{align*}
x^{(t+1)}_{i}& =\sum_{l=1}^n Y_{il}f_t( x^{(t)}_{l\to i})\, ,\\
\delta_{i \to j}^{(t+1)} & = - Y_{ij}f_t( x^{(t)}_{j\to i}) \, .
\end{align*}
Substituting $x_{i\to j}^{(t)} = x_i^{(t)} + \delta_{i \to j}^{(t)} $ on the right-hand
side and Taylor expanding, we get
\begin{align*}
x^{(t+1)}_{i}& =\sum_{l=1}^n Y_{il}f_t( x^{(t)}_{l})+\sum_{l=1}^n Y_{il}f'_t( x^{(t)}_{l})
\delta^{(t)}_{l\to i}+O(1/\sqrt{n})\, ,\\
\delta_{i \to j}^{(t+1)} & = - Y_{ij}f_t( x^{(t)}_{j})+O(1/\sqrt{n}) \, .
\end{align*}
Finally, using the second equation in the first one, we get
\begin{align*}
x^{(t+1)}_{i}& =\sum_{l=1}^n Y_{il}f_t( x^{(t)}_{l})-
\left(\sum_{l=1}^n Y^2_{il}f'_t( x^{(t)}_{l})\right) f_{t-1}( x^{(t-1)}_{i})+
O(1/\sqrt{n})\\
& =\sum_{l=1}^n Y_{il} f_t ( x^{(t)}_{l})- \sd_t f_{t-1}( x^{(t-1)}_{i})+
O(1/\sqrt{n})\, ,\\
\sd_t &\equiv \frac{1}{n}\sum_{l=1}^n f'_t( x^{(t)}_{l})\, .
\end{align*}
In the last step we used a central limit theorem heuristic to replace the
sum $\sum_{l=1}^n Y^2_{il}f'_t( x^{(t)}_{l})$ by $\sd_t$.

If we drop the $O(1/\sqrt{n})$ terms in the above iteration, 
and rewrite it in matrix notation, we have derived the following iteration
\begin{align}
\bx^{(t+1)}& =\bY f_t( \bx^{(t)}_{l})- \sd_t f_{t-1}( \bx^{(t-1)})\, , \label{eq:AMPGeneral}\\
 &\sd_t = \frac{1}{n}\sum_{i=1}^n f'_t( x^{(t)}_{i})\, .
\end{align}
Here it is understood that $f_t:\reals\to \reals$ acts entrywise on vectors,
namely $f_t(\bx) \equiv (f_t(x_1),\dots,f_t(x_n))$. We also note that the diagonal entries of $\bY$
have a negligible impact of this iteration (indeed $Y_{ii}\sim\normal(\lambda/n,2/n)$)
and therefore we include them. Also, by convention we set $\sd_0=0$.

Regardless of the sequence of arguments that brought us to this iteration, it
defines a perfectly reasonable class of algorithms. A specific algorithm in this class
is defined by two sequence of functions $\{f_t\}_{t\ge 0}$ and $\{g_t\}_{t\ge 0}$
(see below). Spelling out the details, the algorithm proceeds as follows:
\begin{enumerate}
\item \emph{Initialize $\bx^{(0)}$.} The initialization can be taken to be a function of the data
matrix $\bY$ or of additional side information. An example of the first type is
the spectral initialization $\bx^{(0)}= \sqrt{n} \bv_1(\bY)$ (where $\bv_1(\bY)$ is the 
principal eigenvector of $\bY$). An example of the second type arises if we 
observe $\by = \eps \bx_0+\bg$ for $\bg\sim\normal(0,\id_n)$ and set $\bx^{(0)} = \by$.
\item \emph{Iterate} the map \eqref{eq:AMPGeneral} for a number $t_{*}$ of steps.
\item \emph{Output}
\begin{align}
\hbx^{(t_*)} = g_t(\bx^{(t_*)})\, .\label{eq:hbX}
\end{align}
Here again, it is understood that $g_t$ is applied to $\bx^{(t_*)}$ entrywise.
\end{enumerate}
This family of algorithms is an example of an even broader class of
algorithms known as Approximate Message Passing (AMP)  algorithms.

Remarkably, AMP admits an exact characterization in the limit $n\to\infty$,
for any constant number of iterations. 
\begin{theorem}[\cite{BM-MPCS-2011}]\label{thm:StateEvolution}
Let $F:\reals^2\to \reals$ be any function that is locally Lipschitz and
with at most polynomial growth $|F(\bx)|\le C(1+\|\bx\|_2)^k$ for constants $C,k$.
Further assume the initial condition $\bx^{(0)}$ and true signal $\bx_0$ to be 
deterministic and such that $n^{-1}\sum_{i=1}^n \delta_{x_{0,i},x^{(0)}_i}$ converges
weakly and in $k$ moments to the law of $(X_0,X^{(0)})$.
Finally assume the functions $\{f_t\}$ to be Lipschitz.

Define the sequences $a_t, q_t$, $t\ge 0$, by letting, for $t\ge 0$
\begin{align}
a_{t+1} &= \lambda \E[X_0f_t(a_tX_0 + \sqrt{q_t} G)],  \quad
 q_{t+1} = \E[ f_t(a_tX_0 + \sqrt{q_t} G) ^2], \nonumber\\
 &(X_0,G)\sim {\sf Unif}(\{+1,-1\})
 \otimes \normal(0,1),\label{eq:StateEvolution}
\end{align}
with initialization $a_0 \equiv \E\{X_0 f_0(X^{(0)})\}$, $q_0 \equiv \E\{ f_0(X^{(0)})^2\}$.

Then we have, for each $t\ge 1$, almost surely,
\begin{align}
\lim_{ n \to \infty} \frac{1}{n} \sum_{i=1}^{n} F(x_{0,i},\bx_i^{(t)}) 
=\E F(X_0, a_tX_0 + \sqrt{q_t} G)\, ,
\end{align}
where expectation is with respect to $G\sim \normal(0,1)$ independent of $X_0$.
\end{theorem}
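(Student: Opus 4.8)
The plan is to prove this by the standard conditioning argument of Bolthausen, adapted to the matrix-AMP setting of \cite{BM-MPCS-2011}. First I would reduce to the case $\lambda = 0$ with $\bY = \bW \sim \GOE(n)$ by absorbing the rank-one spike: write $\bY = \bW + (\lambda/n)\bx_0\bx_0^{\sT}$, note that $\bY f_t(\bx^{(t)}) = \bW f_t(\bx^{(t)}) + (\lambda/n)\bx_0 \langle \bx_0, f_t(\bx^{(t)})\rangle$, and observe that the scalar coefficient $(1/n)\langle \bx_0, f_t(\bx^{(t)})\rangle$ concentrates (by the inductive hypothesis) on $\E[X_0 f_t(a_t X_0 + \sqrt{q_t}G)]$, which is exactly what produces the $a_{t+1} = \lambda\E[X_0 f_t(\cdots)]$ term in the state evolution. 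So the core is the symmetric Gaussian iteration $\bx^{(t+1)} = \bW f_t(\bx^{(t)}) - \sd_t f_{t-1}(\bx^{(t-1)})$. I would set up an induction on $t$: the inductive claim is that, conditionally on the sigma-algebra generated by $\bx^{(0)},\dots,\bx^{(t)}$ (equivalently by the vectors $\bm^{(s)} := f_s(\bx^{(s)})$ for $s\le t$ and the images $\bW\bm^{(s)}$ for $s<t$), the next iterate $\bx^{(t+1)}$ is distributed as a specific deterministic linear combination of the previous iterates plus an independent Gaussian vector whose empirical covariance converges to the state-evolution prediction $q_{t+1}\bI$, and the Onsager correction $\sd_t f_{t-1}(\bx^{(t-1)})$ is precisely what cancels the spurious deterministic feedback term.

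The key technical device is the conditional distribution of a $\GOE$ matrix given a set of linear observations $\bW \bm^{(s)} = \bb^{(s)}$, $s=0,\dots,t-1$. Collecting $\bM_t = [\bm^{(0)}|\cdots|\bm^{(t-1)}]$ and $\bB_t = [\bb^{(0)}|\cdots|\bb^{(t-1)}]$, the law of $\bW$ conditional on $\bW\bM_t = \bB_t$ is that of $\bE[\bW|\bW\bM_t = \bB_t] + \tilde{\bW}$, where the conditional mean is an explicit quadratic-form expression in $\bM_t$, $\bB_t$ (involving the Gram matrix $\bM_t^{\sT}\bM_t$), and $\tilde{\bW}$ is a fresh $\GOE$ matrix projected to be orthogonal to the observation directions. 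Applying this to $\bW\bm^{(t)}$: the conditional-mean part contributes a linear combination $\sum_{s<t}\alpha_s \bm^{(s)}$ of previous messages (plus a term proportional to $\bm^{(t)}$ itself which, after combining with the $-\sd_t f_{t-1}(\bx^{(t-1)})$ Onsager term and the Gram-matrix structure, produces exactly the cancellation), while the $\tilde{\bW}\bm^{(t)}$ part is, up to a small correction from the orthogonal projection and up to conditioning, asymptotically $\|\bm^{(t)}\|_2/\sqrt{n}$ times a standard Gaussian vector independent of everything so far. The upshot is that $\bx^{(t+1)}$, conditionally, equals (deterministic terms that telescope away via Onsager) $+ \sqrt{q_t}\,\bg^{(t)}$ with $\bg^{(t)} \sim \normal(0,\bI_n)$ conditionally independent, and by the inductive characterization of $(\bx^{(0)},\dots,\bx^{(t)})$ one gets $(\bx_{0,i}, x^{(t+1)}_i) \approx (X_0, a_{t+1}X_0 + \sqrt{q_{t+1}}G)$ empirically.

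Given the conditional Gaussianity, the stated limit $n^{-1}\sum_i F(x_{0,i}, x^{(t)}_i) \to \E F(X_0, a_t X_0 + \sqrt{q_t}G)$ follows from a law of large numbers for the nearly-i.i.d. (conditionally Gaussian) coordinates, using the polynomial-growth bound on $F$ to control moments, together with a uniform integrability argument; the empirical $k$-th moment convergence of the initialization is exactly what is needed to propagate moment control through the Lipschitz $f_t$'s and feed the induction. I would carry out the steps in this order: (1) spike reduction to $\lambda=0$; (2) statement of the Gaussian-conditioning lemma for $\GOE$ given linear constraints; (3) the inductive hypothesis, carefully bookkeeping which quantities are measurable with respect to the conditioning sigma-algebra; (4) the base case $t=0,1$; (5) the inductive step, where the Onsager term cancellation is verified by expanding the conditional mean and using that the Gram matrix $\bM_t^{\sT}\bM_t/n$ converges to the state-evolution covariance; (6) the final concentration/LLN step. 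The main obstacle — and the part that requires genuine care rather than bookkeeping — is the inductive step: showing that the various $o(1)$ errors (from the orthogonal projection in the conditional law, from linearization in the Onsager derivation, from replacing empirical averages by their limits, and from the accumulation of these over $t$ steps) are genuinely negligible in the $n\to\infty$ limit, uniformly enough that the induction closes; this is where \cite{BM-MPCS-2011} does the real work and where I would lean on their estimates rather than reprove them.
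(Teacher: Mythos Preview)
Your proposal is correct and follows essentially the same approach as the paper's proof sketch: both rely on Bolthausen's conditioning technique, computing the conditional law of $\bW$ given the linear constraints $\bW\bm^{(s)} = \bb^{(s)}$ as conditional mean plus projected fresh $\GOE$, identifying the Onsager term as the piece that cancels the deterministic feedback from the conditional mean, and deferring the rigorous error bookkeeping to \cite{BM-MPCS-2011}. Your treatment of the spike reduction (absorbing the rank-one term $(\lambda/n)\bx_0\bx_0^{\sT}$ into a deterministic drift that produces $a_{t+1}$) is slightly more explicit than the paper's, which simply sets $\lambda=0$ ``for the sake of simplicity'' without spelling out the reduction.
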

\begin{proof}[Proof idea]
We will only sketch the main idea of the proof that uses a technique introduced by 
Bolthausen \cite{bolthausen2014iterative}, and subsequently generalized in \cite{BM-MPCS-2011}
and follow up work.

For the sake of simplicity, we consider the case $\lambda=0$, $\bx^{(0)}=0$, which implies $a_t=0$
for all $t$.  For $t=1$, we have, $\bx^{(1)} = \bW f_0(0)$, which is a 
Gaussian vector with approximately i.i.d. $\normal(0,f_0(0)^2)$ entries. 
Indeed, for a fixed unit vector $\bu\in\reals^n$, we have
$\bW\bu\ed \sqrt{2/n} g_0\bu+\bP_{\bu}\bg/\sqrt{n}$, with $g_0\sim\normal(0,1)$ independent 
of $\bg\sim\normal(0,\id_n)$.

If we try to use the same argument to characterize $\bW f_1(\bx^{(1)})$
 we run into some difficulty as $\bx^{(1)}$ is 
no-longer independent of $\bW$. Indeed it is a useful exercise to check that, for a non-constant
function $f_1$, the vector $\bW f_1(\bx^{(1)})$ is not well approximated by normal vector with i.i.d. 
entries. 
In general, at the $t$-th step, $\bx^{(t)}$ is not independent of $\bW$ and thus this problem persists.

The difficulty is that $\bx^{(t)}$ is a complicated non-linear function of $\bW$
and hence there does not seem to be hope to obtain its distribution in closed form.
However, things simplify if we try to compute the conditional distribution of $\bW$ given 
$\bx^{(1)}, \cdots, \bx^{(t)}$. 
Note  that conditional on 
$\bx^{(1)}, \cdots, \bx^{(t)}$, $f_0(\bx^{(0)}), \cdots , f_t(\bx^{(t-1)})$ are also given, and therefore, 
conditioning on $\bx^{(1)}, \cdots , \bx^{(t)}$ is equivalent to conditioning on 
\begin{align}
\mathcal{E}_t = \Big\{ \bW:\; \bx^{(1)} = \bW f ( \bx^{(0)} ), \cdots, \bx^{(t)} + \sd_{t-1} f_{t-2}(\bh^{(t-2)}) = 
\bW f_{t-1}(\bh^{(t-1)}) \Big\}. \nonumber 
\end{align}
This is equivalent to conditioning on a set of linear measurements of a Gaussian vector
$\bW$. Equivalently, we are interested in the conditional distribution of the Gaussian vector $\bW$
subject to $\bW\in\cE_t$ an affine space. This is well known to be the conditional
expectation plus the projection onto $\cE^0_t$ (the linear space corresponding to $\cE_t$) 
of a fresh Gaussian vector. In formulas
\begin{align}
\bW |_{\cE_t} \ed \E[ \bW | \bx^{(0)}, \cdots, \bx^{(t)}] +
 \bP_{t}^{\perp} \bW^{\snew} \bP_{t}^{\perp}\, ,
\end{align}
where $\bW^{\snew}$ is an independent copy of $\bY$ and $\bP_{t}$ is 
the orthogonal projector onto the linear span of
$f_0(\bx^{(0)})$, $\cdots$ , $f_t(\bx^{(t-1)})$, and $\bP_{t}^{\perp}=\id-\bP_{t}$
is its orthogonal complement.
 Thus conditioning on $\bx^{(0)}, \cdots, \bx^{(t-1)}$, we have
 \begin{align*}
\bW f_t(\bx^{(t)})|_{\cE_t} 
\ed \E[ \bW | \bx^{(0)}, \cdots, \bx^{(t)}]\, f_t(\bx^{(t)}) +
\bW^{\snew}\bP_{t}^{\perp} f_t(\bx^{(t)}) - \bP_{t}\bW^{\snew}
 \bP_t^{\perp}f(\bx^{(t)}) \, .
 \end{align*}
The term $\bW^{\snew}\bP_{t}^{\perp} f_t(\bx^{(t)})$ is approximately Gaussian 
with independent entries of variance $\|\bP_{t}^{\perp} f_t(\bx^{(t)})\|_2^2/n$, since
$\bW^{\snew}$ is independent of $\bx^{(0)}, \cdots, \bx^{(t)}$.
 The term $\bP_{t}\bW^{\snew}
 \bP_t^{\perp}f(\bx^{(t)})$ is small since it is a low-dimensional projection of a 
 high-dimensional Gaussian.
 
It remains to track the $\E[ \bW | \bx^{(0)}, \cdots, \bx^{(t)}]\, f_t(\bx^{(t)})$.
This can be done by induction showing that it can be approximately 
decomposed into a term that is canceled by the correction
$- \sd_t f_{t-1}( \bx^{(t-1)})$ of Eq.~\eqref{eq:AMPGeneral},
and a linear combination of $\bx^{(1)},\dots ,\bx^{(t)}$, which is Gaussian by induction hypothesis.
 We refer the reader to \cite{BM-MPCS-2011} for a formalization of these ideas and a rigorous proof. 
\end{proof}

\begin{remark}
The last theorem is an application of the more general theorem proved in 
\cite{BM-MPCS-2011}, and substantial generalizations of the latter have been proved in the literature. 
These include non-symmetric matrices, non-Gaussian matrices, including matrices
with independent or non-identically distributed entries, algorithms with memory and
so on. We refer to the bibliography section for pointers to this literature.

The case of random initializations $\bx^{(0)}$ is covered by Theorem \ref{thm:StateEvolution}
as long as $\bx^{(0)}$ is independent of $\bW$, and the empirical distribution
$n^{-1}\sum_{i=1}^n \delta_{x_{0,i},x^{(0)}_i}$ converges almost surely.

On the other hand, the case of a spectral initialization $\bx^{(0)} = \sqrt{n}\, \bv_1(\bY)$
is not covered by this theorem, since this initialization 
is correlated with the noise matrix $\bW$. On the other hand, as shown in \cite{montanari2021estimation},
spectral initializations can be treated within  the same framework discussed here.
\end{remark}

The iteration \eqref{eq:StateEvolution} for parameters $a_t,q_t$ is known as `state evolution,'
a term introduced in \cite{DMM09}.
Theorem  \ref{thm:StateEvolution} allows one to determine the accuracy of the estimator
$\hbx^t$ produced by the algorithm, see Eq.~\eqref{eq:hbX}. Choosing
$F(x_0,x)= (x_0-g_t(x))^2$ in the theorem, we obtain, 
\begin{align}
\lim_{ n \to \infty} \frac{1}{n}\big\|\hbx^t-\bx_0\big\|_2^2 
=\E \big\{\big(X_0- g_t(a_tX_0 + \sqrt{q_t} Z)\big)^2\big\}\, .
\end{align}
The function $g_t$ that minimizes the right-hand side is  
$g_t(y) = \E\{X_0|a_tX_0 + \sqrt{q_t} Z=y\}$. Note that the resulting 
error only depends on the `signal-to-noise' ratio $b_t =  a_t^2/(\lambda^2 q_t)$,
and therefore  we would like to choose functions $f_1,\dots,f_t$ as to maximizes this
quantity. (The factor $\lambda^2$ is introduced for future convenience.)

 A recursive argument shows that the optimal functions, and the corresponding 
 quantities $(b_s)_{s\ge 1}$ are given by
 \begin{align*}
b_{t+1} &= \E\big\{X_0\E[X_0|\lambda b_tX_0+\sqrt{b_t} G]\big\}\, ,\\
f_t(y) & = \E[X_0|\lambda b_tX_0+\sqrt{b_t} G=y]\, .
\end{align*}
Since $X_0\sim {\sf Unif}(\{+1,-1\})$, it is easy to compute these conditional expectations to get
 \begin{align}
b_{t+1} &= \E\big\{\tanh(\lambda^2 b_t+\sqrt{\lambda^2 b_t} G)\big\}\, ,\\
f_t(y) & = \tanh(\lambda y)\, .
\end{align}
We therefore recover the nonlinearity $f_t(y) = \tanh(\beta y)$ that was derived heuristically 
in the last section, with the Bayes choice $\beta = \lambda$.

We proved that AMP with nonlinearity $f_t(y)= \tanh(\lambda y)$ is optimal among all the algorithms of
the form  Eq.~\eqref{eq:AMPGeneral}. We refer to this specific algorithm as  `Bayes AMP,'
because it appears to make, at each step, the Bayes-optimal choice.
Note that the fixed point condition corresponding to Eq.~\eqref{eq:Bayes}
coincides with the one that characterizes the (global) Bayes optimal estimator,
Eq.~\eqref{eq:FP_Bayes-Z2}. This points to a connection between the two.

This finding motivates two questions:
\begin{itemize}
\item Does Bayes AMP converge (in the large $n$ and large $t$ limits) to the Bayes optimal estimator?
\item Fixing a number of iterations $t$, is Bayes AMP optimal (in the sense of
statistical accuracy) among all algorithm with comparable computational complexity?
\end{itemize}
The answer is positive to both questions, provided a small amount of side information 
or a spectral initialization are used to break symmetry at the first iteration
\cite{deshpande2017asymptotic,celentano2020estimation}. Figure~\ref{fig:rsb_saddle1} illustrates these points by reporting the accuracy achieved by Bayes AMP and comparing it 
with the optimal Bayes accuracy in the large $n$ limit.

\section{Replica symmetry breaking} 
 \label{sec:kRSB-Replica-All}
 
We have derived the replica symmetric asymptotic free energy density of  the model
\eqref{eq:gibbs} using two alternative techniques ---the replica method and the cavity method.
In both cases, we made specific assumptions that allowed us to complete
the calculation. Within the replica method, we assumed the overlap matrix to 
take the replica symmetric form of Eq.~\eqref{eq:RSansatz-Z2}. Within the cavity method,
we assumed specific properties  {\sf C1},  {\sf C2},  {\sf C3} of the Gibbs measure. 

On the other hand, we know that at low enough temperature (large $\beta$),
the replica symmetric prediction for the free energy density is necessarily incorrect. 
Indeed, as explained in Section \ref{sec:NeedRSB}, if the prediction was
correct, i.e. if we had $\phi(\beta,\lambda) = \Psi_{\sRS}(b_*,q_*;\beta,\lambda)$
at all temperatures (here we are restoring the dependence of these quantities upon $\beta,\lambda$),
this would imply $\Entro(\mu_{\beta,\lambda})<0$ which is impossible (Recall that $\Entro(\mu)$
denotes the entropy of probability distribution $\mu$).

In this section, we correct this inconsistency by introducing the full 
Replica Symmetry Breaking (fRSB) asymptotics for the free energy, originally due to Parisi
\cite{parisi1979infinite}. 
Parisi formula was eventually established rigorously thanks to the work of 
 Talagrand \cite{talagrand2006parisi}, Panchenko 
  \cite{panchenko2014parisi} and many other authors. 
  In this section we will derive this formula using the replica method. 
  In Section \ref{sec:CavityRSB}, we will show how to modify the cavity method to 
  incorporate replica-symmetry breaking. Finally we will describe some rigorous 
  results using interpolation techniques in Section \ref{sec:SKrigorous}.
  
  To simplify our treatment, we will focus on the case $\lambda=0$, $h=0$.
  This case is not very interesting from the point of view of statistical estimation in the 
  $\integers_2$-synchronization model. Indeed, it corresponds to the case in which the data 
  matrix $\bY=\bW$ is pure noise. However, the generalization to $\lambda>0$ 
  is relatively straightforward, and left as an exercise for the reader.
  
  \subsection{$k$-step replica symmetry breaking}
  \label{sec:kRSB-Replica}
  
Recall the action functional derived by the replica method in Section \ref{sec:RS-Z2}.
The replica method predicts the  asymptotic free energy density to be given by
  $\phi  = \lim_{r \to 0} \frac{1}{r} S(\bQ_*)$, where 
\begin{align}
S(\bQ) &= \frac{\beta^2r }{4} - \frac{\beta^2}{2} \sum_{1 \leq a < b \leq r} Q_{ab}^2 + \log z_r(\bQ)\, ,\label{eq:sk_action_again} \\
z_r(\bQ) &= \sum_{\sigma^{1}, \cdots, \sigma^{r} \in \{ \pm 1 \}} \exp{\Big\{\beta^2 \sum_{1\leq a< b \leq r} Q_{ab} \sigma^{a} \sigma^{b} \Big\}},  \nonumber
\end{align}
and $\bQ_*$ is a suitable stationary point of $S$.
Note that we slightly simplified this expression with respect to Eq.~\eqref{eq:SK_Action}.
Indeed, since we are assuming $\lambda =0$, we can take $Q_{0a}=b=0$ for all 
$a\in\{1,\dots, r\}$ and redefine $\bQ$ to be an $r \times r$ matrix. 
For general $\lambda$, we would have to keep track of $b$ and optimize the action $S$ with respect to
$b$ as well. 

The RS stationary points $\bQ^{\sRS}$ are the only ones that are symmetric 
with respect to permutations of the replicas $\{1,\dots,r\}$. 
In the context of the $p$-spin model, we improved on the RS free energy by introducing the 
$1$RSB ansatz. The $1$RSB subspace is defined by partitioning 
$\{ 1, \cdots, r \} = \cup_{\ell=1}^{r/m} \mathcal{I}_{\ell}$ with $|\mathcal{I}_{\ell}|=m$
and setting
\begin{align}
Q^{\sRSB{1}}_{ab} &= 
\begin{cases}
1 & \mbox{if } a=b \in \{0, 1, \cdots, r\},\\
 q_1 & \mbox{if } a\neq b \in \mathcal{I}_{\ell}, \ell \in \{1, \cdots, r/m\}, \\ 
q_0 & \mbox{otherwise.}
\end{cases}
\end{align}
Substituting in $S(\bQ)$, taking the limit $r\to 0$
and optimizing over $q_0, q_1$ and $m$ obtains the $1$RSB free energy.
This calculation was originally carried out in \cite{parisi1979toward}.
It yields a better behavior at low temperature, but predicted  
$\lim_{\beta\to\infty}\lim_{n\to\infty}n^{-1}\Entro(\mu_{\beta})$ to be approximately $-0.01$,
which is impossible.
The same paper, however, suggested that iterating the same construction infinitely many times would yield
the correct stationary point\footnote{Apparently, this informal remark in 
\cite{parisi1979toward}  was 
criticized by one of the referees of the manuscript.} of $S$. 

Indeed the correct asymptotic free-energy is obtained by 
constructing a $k$-step RSB ansatz: a subspace of overlap matrices with reduced symmetry,
that can be obtained by iterating the $1$RSB ansatz.
 The correct limiting free energy density can be obtained in 
  the limit $k\to\infty$.
This more general calculation was carried out in \cite{parisi1979infinite}.

We construct a  hierarchical partition of $\{1, \cdots , r\}$ into blocks as follows. 
At the first step, set $\{1, \cdots ,r \} = \sqcup_{l_1=1}^{r/m_0} B_{l_1}$ for some integer $m_0 \geq 1$, with $|B_{l_1}| = m_0$. Next, partition each $B_{l_1} = \sqcup_{l_2 =1}^{m_0/m_1} B_{l_1 l_2}$, with $|B_{l_2}|= m_1$. We proceed iteratively, and finally partition 
$B_{l_1 l_2 \cdots l_{k-1}} = \cup_{l_k = 1}^{m_{k-2}/ m_{k-1}} B_{l_1, \cdots, l_k}$ with
 $|B_{l_1, \cdots, l_k}| = m_{k-1}$ for each $l_1, \cdots, l_k$.  Let $\bJ$ denote the 
 $r \times r$ matrix of all ones, and let $\bJ_{S}$ denote the $r \times r$ 
 matrix with ones on the $|S| \times |S|$ block indexed by 
  $S \subset \{1, \cdots , r \}$ and zero otherwise. Given the hierarchical partition 
  $\{B_{l_1, \cdots, l_m} : m\leq k\}$ and $0\leq q_0 \leq q_1 \leq \cdots \leq q_k \leq 1$, the 
  $k$RSB ansatz consists of matrices of the form
\begin{align}
\bQ^{\sRSB{k}} =& q_0 \bJ + (q_1 - q_0) \sum_{l_1} \bJ_{B_{l_1}} 
+ (q_2 - q_1) \sum_{l_1 l_2} \bJ_{B_{l_1 l_2}} + \nonumber \\
& \cdots + (q_k - q_{k-1}) \sum_{l_1, \cdots, l_k } \bJ_{B_{l_1,\cdots, l_k}} + (1-q_k) \bI
\, . \nonumber 
\end{align}
A schematic representation of the $k$RSB saddle point is given in Figure \ref{fig:rsb_saddle}. 
\begin{figure}
\begin{center}
\includegraphics[scale=0.6]{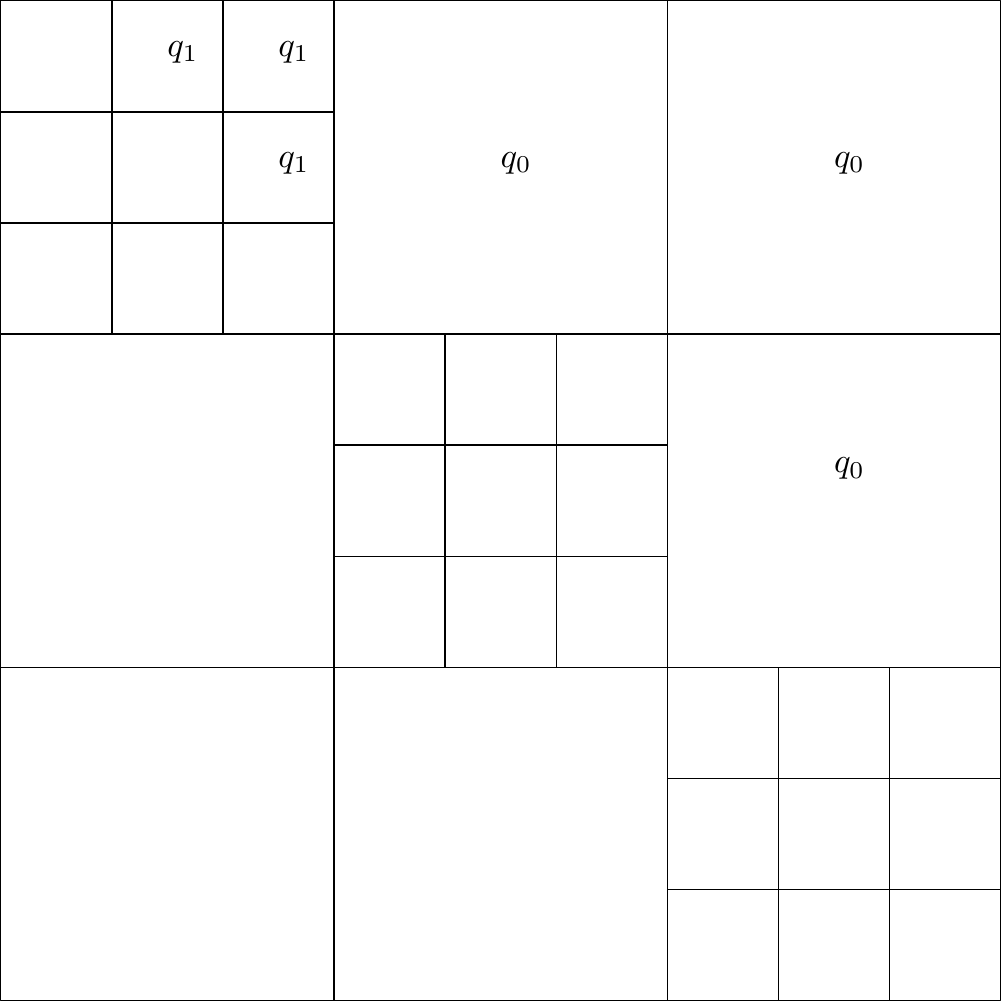}
\end{center}
\caption{A schematic representation of the $k$RSB saddle point}
\label{fig:rsb_saddle}
\end{figure}

Before we attempt to study the $k$RSB free energy functional, 
it is useful to interpret the $k$RSB ansatz in terms of the overlap distribution. 
This will also clarify the meaning of parameters $m_0, \cdots, m_{k-1}$,
$q_0, \cdots, q_{k}$, and the range over which they should be optimized.
 Recall the replica analysis leading to \eqref{eq:overlap3}, which predicts that if 
 $\rho_n$ is the distribution of
  $n^{-1}|\langle \bsigma^{1}, \bsigma^{2} \rangle|$ for $\bsigma^{1}, \bsigma^{2}$ i.i.d.
   samples from $\mu_{n}(\cdot)$, then $\rho_n\Rightarrow \rho$, where, for any continuous 
   bounded function $f$,
\begin{align}
\int_{[0,1]}\! f(q) \, \rho(\de q) = \lim_{r \to 0} \frac{1}{r(r-1)} \sum_{a \neq b} f(\bQ^*_{ab}), \nonumber 
\end{align}
where $\bQ^*$ is the stationary point of $S(\bQ)$ that achieves the correct free energy density.
 
We next evaluate the last expectation on the $k$RSB stationary point. We have
\begin{align}
\label{eq:kRSBfq}
\sum_{a \neq b} f(Q_{ab}) = \Big( r^2 - r m_0 \Big) f(q_0) + \frac{r}{m_0} \Big(m_0^2 - m_0m_1 \Big) f(q_1) + \cdots + f(q_k) \frac{r}{m_{k-1}} (m_{k-1}^2 - m_{k-1}). 
\end{align}
Taking the limit $r\to0$, we obtain, 
\begin{align}
\int_{[0,1]} \!f(q)\, \rho(\de q) = m_0 f(q_0) + (m_1 - m_0) f(q_1) + \cdots + (1- m_{k-1}) f(q_k), \nonumber 
\end{align}
which implies
\begin{align}\label{eq:RhoKrsb}
 \rho = \sum_{l=0}^{k} (m_l - m_{l-1}) \delta_{q_l}\, , \;\;\;\; m_k= 1, \;\; m_{-1}= 0\, .
 \end{align}
In words,  the $k$RSB ansatz assumes that the asymptotic overlap distribution
is formed by   $k+1$ point masses at  $q_l$, $l\in\{0,\dots,k\}$, 
with probabilities $(m_l - m_{l-1})$ respectively. 
 This also implies that we should take 
 $0\leq m_0 \leq m_1 \leq \cdots \leq m_{k-1} \leq 1$ 
 in the limit $r\to 0$.
 
 Viceversa,  any $k$RSB saddle point $\bQ^{\sRSB{k}}$ can be specified 
 by a probability measure $\rho$ on the interval $[0,1]$ with $k+1$ point masses.
 Equivalently, it can be specified by the distribution function $q\mapsto\rho([0,q])$.
 In physics, this is referred to as the ``functional order parameter.'' 
 We can therefore think of the RSB action as a function of the 
 probability distribution  $\rho$.

Next, we turn to the computation of the $k$RSB free energy functional. We will encode the parameters 
in vectors 
$\bq = (q_0, \cdots, q_k)$ and $\bm = (m_0, \cdots, m_{k-1})$. Note that using $f(q) = q^2$ 
in Eq.~\eqref{eq:kRSBfq}   above, we have, 
\begin{align}
\lim_{r \to 0} \frac{1}{r} \sum_{a<b} (Q^{\sRSB{k}}_{ab})^2 = - \frac{1}{2} \Big\{ m_0 q_0^2 + \cdots + (1- m_{k-1}) q_k^2 \Big\}. \label{eq:krsbfirst}
\end{align}
Therefore, it remains to compute $\lim_{r \to 0} r^{-1}\log z_r(\bQ^{\sRSB{k}})$. 
To this end, we note that 
\begin{align}
\sum_{a \neq b} Q_{ab}^{\sRSB{k}} \sigma^a \sigma^b = & \sum_{a,b} Q_{ab}^{k\rm{RSB}} \sigma^a \sigma^b - r \nonumber \\
= &q_0 \Big(\sum_a \sigma^a \Big)^2 + (q_1 - q_0) \sum_{l_1} \Big(\sum_{a \in B_{l_1}} \sigma^a \Big)^2 + \cdots \nonumber \\
&+ 
(q_k- q_{k-1}) \sum_{l_1, \cdots, l_k} \Big(\sum_{a \in B_{l_1, \cdots, l_k}} \sigma^a \Big)^2 + (1- q_k) r - r. \label{eq:QuadraticExponent}
\end{align}
Therefore, \eqref{eq:sk_action_again} implies, 
\begin{align}
&z_r(\bQ^{k\rm{RSB}} ) = \sum_{\sigma^1, \cdots , \sigma^r \in \{\pm 1 \}} \exp{\Big\{ \frac{\beta^2}{2} \sum_{a\neq b} Q_{ab}^{\sRSB{k}} \sigma^{a}\sigma^{b} \Big\}} = 
e^{-\beta^2 q_k r/2} \cdot T,\label{eq:krsbsecond}\\
T &= \E\Big\{ \sum_{\sigma^1, \cdots, \sigma^r} \exp{\Big[ \beta \sqrt{q_0} g_0 \Big( \sum_a \sigma^a \Big) + \beta \sum_{m=1}^{k} \sqrt{q_m - q_{m-1}} \sum_{l_1, \cdots, l_m} g_{l_1, \cdots, l_m} \Big( \sum_{a \in B_{l_1, \cdots, l_m} } \sigma^a \Big) \Big]} \Big\}\, ,\nonumber 
\end{align}
where expectation is with respect to $\{ g_0, g_{l_1, \cdots, l_m} : l_1,\dots,l_m\in\naturals,
m\leq k \}$, a collection of independent   $\normal(0,1)$ random variables.
The above expression is obtained by applying 
the  ``Gaussian disintegration" trick to each term in the exponent
\eqref{eq:QuadraticExponent}, as to linearize the exponent with respect to the $\sigma^a$'s.

We next carry out the sum over $\sigma^1,\dots,\sigma^r$.
 First, we fix $a \in \{1, \cdots, r\}$. Then there exist unique $l_1, l_2, \cdots, l_k$ 
 such that $a \in B_{l_1}$, $a \in B_{l_1l_2}$, $\cdots$, $a \in B_{l_1l_2\cdots l_k}$. 
 The terms involving $\sigma^a$ are
\begin{align}
&\sum_{\sigma^a} \exp{\Big[\beta \sigma^a \Big( \sqrt{q_0} g_0 + \sqrt{q_1 - q_0} g_{l_1} + \cdots+ \sqrt{q_k - q_{k-1}} g_{l_1 l_2 \cdots l_k}  \Big) \Big]} \nonumber \\
&= 2\cosh \Big[ \beta \Big( \sqrt{q_0} g_0 + \sqrt{q_1 - q_0} g_{l_1} + \cdots+ \sqrt{q_k - q_{k-1}} g_{l_1 l_2 \cdots l_k}  \Big) \Big]. \label{eq:exp_simplification}
\end{align} 
Further, for any fixed $(l_1, \cdots, l_k)$, there are exactly $m_{k-1}$ many 
$a \in \{1, \cdots, r\}$ such that $\sigma^a$ belongs to the same partitions 
$B_{l_1}, \cdots, B_{l_1l_2 \cdots l_k}$. These sums will give
 the same contribution as \eqref{eq:exp_simplification} and thus we obtain, 
\begin{align}
z_r(\bQ^{\sRSB{k}}) &= \E\Big[ \prod_{l_1, \cdots , l_k} \Big( 2\cosh \Big[ \beta \Big( \sqrt{q_0} g_0 + \sqrt{q_1 - q_0} g_{l_1} + \cdots+ \sqrt{q_k - q_{k-1}} g_{l_1 l_2 \cdots l_k}  \Big) \Big] \Big)^{m_{k-1}} \Big] \nonumber \\
&=: \E\Big[\prod_{l_1, \cdots, l_k} \Zrep_{l_1,\cdots,l_k}^{m_{k-1}} \Big]. \nonumber 
\end{align}
Here $\E$ denotes expectation with respect to the random variables
$\{ g_0, g_{l_1, \cdots, l_m} : l_1,\dots,l_m\in\naturals,\,
m\leq k \}$. This expectation can be  computed recursively, by evaluating only $k+1$
one-dimensional integrals as we next explain.

 For any fixed $l_1, \cdots, l_{k-1}$, $(g_{l_1 l_2 \cdots l_k})_{l_k\le m_{k-2}/m_{k-1}}$
  are i.i.d. random variables. Thus for fixed $l_1, \cdots, l_{k-1}$, we can first compute the 
  expectation 
\begin{align}
\tZrep_{l_1, \cdots, l_{k-1}}^{m_{k-1}} := \E_{g_{l_1 \cdots l_{k-1},l_k}} \Big[ \Zrep_{l_1 l_2 \cdots l_{k-1},l_k}^{m_{k-1}}\Big]=
\E_{g_{l_1 \cdots l_{k-1},1}} \Big[ \Zrep_{l_1 l_2 \cdots l_{k-1},1}^{m_{k-1}}\Big], \nonumber 
\end{align}
where we note, that for each fixed $l_1, \cdots, l_{k-1}$, this expectation is the same for all $l_k$. 
The exponent $m_{k-1}$ was introduced on the left-hand side for future convenience. 
Indeed, there are $m_{k-2}/ m_{k-1}$ such terms, and thus we have
\begin{align}
z_r(\bQ^{\sRSB{k}}) = \E\Big\{ \prod_{l_1, \cdots, l_{k-1}} \Big[ \tZrep_{l_1, \cdots, l_{k-1}}\Big]^{m_{k-2}} \Big\}\, . \nonumber 
\end{align}
We repeat this operation iteratively to define 
$\tZrep_{l_1, \cdots, l_{k-2}}^{m_{k-1}} = 
 \E_{g_{l_1 l_2 \cdots l_{k-1}} } \Big[ \Big[ \tZrep_{l_1, \cdots, l_{k-1}}\Big]^{m_{k-2}} \Big] \Big]$,
  $\tZrep_{l_1, \cdots, l_{k-3}}$ and so on. Finally, we will obtain
$z_r(\bQ^{k\rm{RSB}}) = \E_{g_0} [ (\tZrep_0)^{r}]$, where $\tZrep_0$ is only a function of $g_0$. 

Summarizing, let $g_0,\dots,g_k\sim \normal(0,1)$, and define the sequence of
random variables $(\Zrep_i)_{0\le i\le k}$ via the recursion 
\begin{align}\label{eq:RecursiveFirst}
\begin{split}
\Zrep_k& =
 2\cosh \Big[ \beta \Big( \sqrt{q_0} g_0 + \sqrt{q_1 - q_0} g_{1} + \cdots+ \sqrt{q_k - q_{k-1}} g_{k}  \Big) \Big] \,,\\
 \Zrep_{i-1} & = \E\big[\Zrep^{m_{i-1}}_{i}\big|g_0,\dots, g_{i-1}\big]\,,\\
 F(\bq, \bm)& =\E\log\Zrep_0\, .
 \end{split}
 \end{align}
 We note that each $\Zrep_{i}$ is a function of $g_0,\dots,g_{i}$
 and the correspondence with the variables defined above is that $\tZrep_{l_1\dots l_i}$
 is distributed as $\Zrep_i$ for each $i\le r$. 
Thus we obtain 
\begin{align}
\lim_{r\to 0} \frac{1}{r}\log z_r(\bQ^{\sRSB{k}}) = F(\bq, \bm)\, . \nonumber 
\end{align}
 Finally, plugging everything back into \eqref{eq:sk_action_again}, using \eqref{eq:krsbfirst} 
 and \eqref{eq:krsbsecond}, we obtain $\lim_{r\to 0}r^{-1}S(\bQ^{\sRSB{k}})
  =\Psi_{\sRSB{k}}(\bq, \bm)$, where
 \begin{align}
\Psi_{\sRSB{k}}(\bq, \bm) =  
\frac{\beta^2}{4}(1- 2 q_k) + \frac{\beta^2}{4} \sum_{l=0}^{k} (m_l - m_{l-1}) q_l^2 + F(\bq, \bm). \label{eq:k-RSBfreeenergyfunctional}
\end{align}
This completes the derivation of the $k$RSB free energy functional. As a sanity check,
 we check that in case $k=0$, we recover the RS free energy. 
 In this case, we set $m_0 =1$ and $q_k = q$ to get 
\begin{align}
\Psi_{\sRSB{k}}(\bq, \bm)\Big|_{k=0} = \frac{\beta^2}{4} (1- 2q) + 
\frac{\beta^2}{4} q^2 + \E[\log 2 \cosh(\beta (\sqrt{q}g))] = \Psi_{\sRS}(q)\, .
\end{align}
Another case of interest is $1$RSB. We set $\bm = (m_0, m_1)$ with $0 \leq m_0 \leq m_1 =1$ and 
$\mathbf{q} = (q_0, q_1)$ with $0 \leq q_0 \leq q_1 \leq 1$. In this case, 
the function $F$ reads:
\begin{align}
F(q_0, q_1, m_0) = \frac{1}{m_0} \E_{g_0} \Big[ \log \Big( \E_{g_1} \Big[ \Big( 2 \cosh \beta ( \sqrt{q_0} g_0 + \sqrt{q_1 - q_0} g_1 ) \Big)^{m_0} \Big] \Big) \Big]. \nonumber
\end{align}

In the next sections, we will try to elucidate the meaning and origin of the $k$RSB free energy
 functional by describing other approaches towards its derivation (cavity and interpolation methods).
The fact that it yields the correct free energy density \emph{at all temperatures}
was eventually proven by Talagrand \cite{talagrand2006parisi} and Panchenko \cite{panchenko2014parisi}. 
\begin{theorem}[\cite{talagrand2006parisi}]\label{thm:MainTalagrand}
Consider the SK model \eqref{eq:model}. Setting $Z_n(\beta)$ 
to be the partition function of Eq.~\eqref{eq:partition} at $\lambda=0$, we have, 
\begin{align}
\phi:=\lim_{n \to \infty} \frac{1}{n} \log Z_n(\beta) = \inf_{k , \bq, \bm} \Psi_{k\rm{RSB}}(\bq, \bm), \nonumber 
\end{align}
where the limit holds almost surely and in $L^1$.
\end{theorem}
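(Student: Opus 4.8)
The plan is to prove the Parisi formula for the SK model by combining two inequalities: the upper bound $\phi \le \inf_{k,\bq,\bm} \Psi_{k\rm{RSB}}(\bq,\bm)$ via Guerra's interpolation scheme, and the matching lower bound via Talagrand's (later Panchenko's) cavity-based argument. First I would establish existence of the limit $\phi = \lim_{n\to\infty} n^{-1}\log Z_n(\beta)$. Concentration follows immediately from Gaussian concentration (Theorem~\ref{thm:GaussianConcentration}) since $\log Z_n$ is a Lipschitz function of the Gaussian entries of $\bW$ with Lipschitz constant $O(\sqrt{n})$, giving the claimed almost sure and $L^1$ convergence once we know the expectation converges. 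Convergence of $n^{-1}\E\log Z_n$ can be obtained by a superadditivity argument (Guerra--Toninelli): one shows $\E\log Z_{n_1+n_2}(\beta) \ge \E\log Z_{n_1}(\beta) + \E\log Z_{n_2}(\beta) - o(n)$ by interpolating between the Hamiltonian on $n_1+n_2$ spins and the decoupled Hamiltonian, using that the covariance of the SK Hamiltonian depends only on overlaps and Slepian-type Gaussian comparison.

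For the upper bound, the key tool is \emph{Guerra's replica symmetry breaking interpolation}. Fix $k$, a sequence $0 = m_{-1} \le m_0 \le \cdots \le m_{k-1} \le m_k = 1$ and $0 = q_{-1}\le q_0 \le \cdots \le q_k \le q_{k+1} = 1$. One constructs an interpolating family of Hamiltonians $H_t(\bsigma)$, $t\in[0,1]$, which at $t=1$ is the SK Hamiltonian and at $t=0$ is a sum of decoupled one-dimensional problems whose free energy is exactly (up to an explicit Gaussian correction term) the Parisi functional $\Psi_{k\rm{RSB}}(\bq,\bm)$. The interpolating Hamiltonian adds, at `time' $t$, a scaled copy of the original disorder together with a hierarchically-structured auxiliary Gaussian field indexed by the levels $\ell=0,\dots,k$ (the $g_0, g_{l_1},\ldots,g_{l_1\cdots l_k}$ of Eq.~\eqref{eq:RecursiveFirst}). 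Differentiating the interpolating free energy $\varphi(t) = \frac{1}{n}\E\log Z_{n,t}$ in $t$, one uses Gaussian integration by parts; the resulting expression is a sum over levels of terms of the form $-\frac{\beta^2}{4}(m_\ell - m_{\ell-1})\E\langle (Q_{12} - q_\ell)^2\rangle_t$ where $\langle\cdot\rangle_t$ is a tilted Gibbs average and $Q_{12}$ is the overlap of two replicas. Since each such term is $\le 0$ (the $m$'s are increasing), $\varphi'(t) \le 0$, hence $\varphi(1) \le \varphi(0) = \Psi_{k\rm{RSB}}(\bq,\bm)$. Optimizing over $k,\bq,\bm$ gives $\phi \le \inf_{k,\bq,\bm}\Psi_{k\rm{RSB}}(\bq,\bm)$. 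This half is relatively clean and is the part most directly amenable to the interpolation techniques referenced in Section~\ref{sec:SKrigorous}.

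The lower bound is the hard part. The historically first proof (Talagrand) proceeds by a delicate cavity computation: one adds a block of spins and tracks the change in free energy, showing that \emph{whatever} the limiting overlap distribution $\rho$ is, the free energy is at least $\Psi_{k\rm{RSB}}$ for the $\bq,\bm$ encoding $\rho$, provided one first establishes strong structural facts about the Gibbs measure --- in particular a version of \emph{ultrametricity} of the overlaps and the Ghirlanda--Guerra identities, together with a key positivity/concentration statement controlling the cavity fields. The cleaner route (Panchenko) is to: (i)~prove the Ghirlanda--Guerra identities hold after a small perturbation of the Hamiltonian that does not change the free energy; (ii)~deduce from them that the overlap array is ultrametric (Panchenko's ultrametricity theorem); (iii)~invoke the Dovbysh--Sudakov representation so the asymptotic Gibbs measure is realized on a Ruelle probability cascade indexed by an ultrametric tree; (iv)~compute the free energy of such a cascade explicitly, obtaining a variational lower bound that matches the Parisi functional after a convexity/duality argument (Aizenman--Sims--Starr cavity functional). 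I would present the Panchenko route as the main line, since it modularizes the argument and the ultrametricity input is exactly the structural fact whose failure would break everything.

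The main obstacle, and the step I expect to devote the most care to, is the lower bound's reliance on ultrametricity and the Ghirlanda--Guerra identities. These are genuinely deep: the Ghirlanda--Guerra identities require a careful perturbation argument (adding a term $\sum_p x_p n^{-p/2}\sum_{i_1<\cdots<i_p}g_{i_1\cdots i_p}\sigma_{i_1}\cdots\sigma_{i_p}$ with random coefficients and showing the free energy is insensitive to it while the identities become valid), and the passage from these identities to ultrametricity (for the full, not just $1$-level, structure) is Panchenko's most substantial theorem. A secondary difficulty is bookkeeping in the interpolation for general $k$: keeping the hierarchical Gaussian fields and the nested exponents $m_\ell$ straight through the integration-by-parts computation is error-prone, though ultimately routine. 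I would also need to be careful about the order of limits --- taking $k\to\infty$ requires showing the infimum over finite $k$-RSB functionals equals the infimum over the continuous Parisi functional (monotonicity in $k$ plus a density argument on the space of order-parameter distribution functions), but this is a soft compactness/continuity argument given the explicit recursive formula in Eq.~\eqref{eq:RecursiveFirst}.
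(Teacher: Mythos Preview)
Your outline is correct, but you should know that the paper does \emph{not} give a complete proof of this theorem. It explicitly states that a full proof ``goes beyond the scope of these notes'' and restricts itself to two ingredients: existence of the limit (Theorem~\ref{thm:thermodynamic}) and the upper bound $\phi \le \inf_{k,\bq,\bm}\Psi_{\sRSB{k}}(\bq,\bm)$ (Theorem~\ref{thm:guerra_upper}). The lower bound is deferred entirely to \cite{talagrand2006parisi,panchenko2014parisi,panchenko2013sherrington}. So your proposal is more ambitious than what the paper actually does; your discussion of the Ghirlanda--Guerra identities, ultrametricity, and the Aizenman--Sims--Starr scheme is the right content for the lower bound, but there is nothing in the paper to compare it against.

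For the parts the paper \emph{does} prove, there is one presentational difference worth noting. Your description of the upper bound follows Guerra's original 2003 interpolation, with hierarchical Gaussian fields attached directly to the replicas and a derivative that splits into a sum $-\tfrac{\beta^2}{4}\sum_\ell (m_\ell - m_{\ell-1})\E\langle(Q_{12}-q_\ell)^2\rangle_t$. The paper instead uses the Ruelle Probability Cascade reformulation (following \cite{aizenman2003extended}): it enlarges the configuration space to $\{+1,-1\}^n \times \mathbb{N}^k$, places the RPC weights $\nu_\alpha$ as a reference measure on the $\alpha$-coordinate, and interpolates via the Hamiltonian $H_t(\bsigma,\alpha)$ of Eq.~\eqref{eq:interpolatinghamiltonian}. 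The derivative then collapses to the single expression $\phi_n'(t) = -\tfrac{\beta^2}{4}\E\big[\mu_t^{\otimes 2}\big((Q_{12}-q_{|\alpha^1\wedge\alpha^2|})^2\big)\big]\le 0$, where $|\alpha^1\wedge\alpha^2|$ is the depth of the most recent common ancestor in the RPC tree. The two approaches are equivalent, but the RPC version makes the tree structure explicit and avoids the nested-expectation bookkeeping you flagged as error-prone; it also sets up the machinery (Lemmas~\ref{lemma:marks1}--\ref{lemma:representation_rpc2}) that identifies $\phi_n(0)$ and $\phi_n(1)$ with the Parisi functional cleanly. Your existence-of-limit sketch via Gaussian concentration plus superadditivity matches the paper's Section on Theorem~\ref{thm:thermodynamic} exactly.
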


\subsection{Taking the limit of full RSB}

Given a $k$RSB order parameter $\bq = (q_0, \cdots, q_k)$ and $\bm = (m_0, \cdots, m_{k-1})$,
it is easy to `embed it' in the $(k+1)$RSB space, e.g.,
\begin{align}
\bq' = (q_0, \cdots, q_k,q_{k+1}=1),\;\;\;\;\; \bm' = (m_0, \cdots, m_{k-1},m_k=1)\, .
\end{align}
We leave it to the reader to check that $\Psi_{\sRSB{(k+1)}}(\bq', \bm') = \Psi_{\sRSB{k}}(\bq, \bm)$,
whence
\begin{align}
 \inf_{\bq, \bm} \Psi_{\sRSB{(k+1)}}(\bq, \bm)\le  \inf_{\bq, \bm} \Psi_{\sRSB{k}}(\bq, \bm) \, .
 \end{align}
In other words  $\Psi^*_{\sRSB{k}}:= \inf_{\bq, \bm} \Psi_{\sRSB{k}}(\bq, \bm)$
form a sequence of non-increasing upper bounds on the free energy density, and
\begin{align}
\phi= \inf_{k\ge 0}\Psi^*_{\sRSB{k}} = \lim_{k\to\infty}\Psi^*_{\sRSB{k}}\, .
\end{align}
Two cases are possible:
\begin{itemize}
\item The infimum is achieved. Letting $k$ denote  the smallest integer such that  
$\Psi^*_{\sRSB{k}} =\inf_{k'\ge 0}\Psi^*_{\sRSB{k'}}$, the model (the Gibbs measure) 
is said be in a $k$RSB phase.

When this happens, most often $k=0$ (RS) or $k=1$, although models with any $k$
can be constructed.
\item The infimum is not achieved. In this case the correct 
free energy density is only obtained by taking the limit $k\to\infty$.
This scenario is known as \emph{full replica symmetry breaking (fRSB)}.

Full RSB is believed to hold for the SK model at any $\beta>1$, although a 
rigorous proof is missing.
\end{itemize}

Motivated by these remarks we will next derive a description of the $k\to\infty$ 
limit. 
We begin by noting that  we can assume without loss of generality that $q_0 =0$---indeed any $k$RSB order parameter can be expressed as an equivalent $(k+1)$RSB
order parameter with $q_0=0$, simply by taking $m_0=0$.
Equivalently, setting $q_0=0$ amounts to adding an atom at $0$
of weight $m_0$ to the functional order parameter $\rho$. 

Next, we reformulate the recursive construction of the Parisi functional
in Eq.~\eqref{eq:RecursiveFirst}. We construct a sequence of functions 
$\Phi_{i}:\reals\to\reals$,  proceeding backwards for $i\in\{0,\dots,k\}$.
Letting  $\E[\;\cdot\;]$ denote expectation with respect to $g\sim \normal(0,1)$ 
\begin{align}
\Phi_k(x) &= \log 2 \cosh(\beta x), \nonumber \\
\Phi_i (x) &= \frac{1}{m_i} \log \E\Big[ \exp \Big( m_i \Phi_{i+1} ( x + \sqrt{ q_{i+1} - q_i} g) \Big) \Big], \forall i\in\{0,\dots, k-1\}\, .
\end{align}
Using Eq.~\eqref{eq:RecursiveFirst}, it is not hard to see that 
$F(\bq, \bm) = \Phi_0(0)$.  More generally, the relation between the functions $\Phi_i$,
and the random variables $\Zrep_{i}$ defined there is
\begin{align}
\log\Zrep_i = \Phi_i\Big(\sum_{\ell=1}^{i}\sqrt{q_i - q_{i-1}} g_{i}\Big)\, .
\end{align}

We next claim that for each $i\in\{0,\dots k\}$, 
\begin{align}
\Phi_i(x)=\Phi(x,q_i)-\frac{\beta^2}{2}(1-q_k)\, , \label{eq:ClaimPhi}
\end{align}
 where
$\Phi:[0,1]\times \reals\to \reals$, $(q,x)\mapsto \Phi(q,x)$ is the unique solution of the following 
anti-parabolic PDE (we denote by $\partial_q$, $\partial_x$ the partial derivatives with 
respect to $q$ and $x$ and by $\partial_{xx}$ the second derivative with respect to $x$): 
\begin{align}\label{eq:Parisipde}
\begin{split}
&\partial_q \Phi+ \frac{1}{2} \big[ \partial_{xx} \Phi + \rho(q) \big( \partial_x \Phi\big)^2  \big]=0 ,  \nonumber \\
&\Phi(x,1) = \log 2 \cosh(\beta x)\, .
\end{split}
\end{align}
Here $\rho(q) = \rho([0,q])$ is the cumulative distribution function corresponding to 
the asymptotic overlap distribution $\rho(\de q)$ and encodes the $k$RSB order parameter
of Eq.~\eqref{eq:RhoKrsb}.
In particular (recall that $q_0=0$, $q_{k+1}=1$, $m_k=1$):
\begin{align}
\rho(q) = m_i \;\;\; \mbox{ for } q\in [q_i,q_{i+1}),\;\; i\in\{0,\dots, k\}\, .
\end{align}
Finally, Eq.~\eqref{eq:Parisipde} is understood to be solved backwards in `time' $q$
starting from the boundary condition at $q=1$.

The claim \eqref{eq:ClaimPhi} can be proved by induction, proceeding backward from $i=k$.
Indeed notice that, for $q\in [q_i,q_{i+1})$, the solution of the PDE
\eqref{eq:Parisipde} must satisfy 
\begin{align}\label{eq:Parisipde_one_interval}
&\partial_q \Phi+ \frac{1}{2} \big[ \partial_{xx} \Phi + m_i \big( \partial_x \Phi \big)^2  \big]=0\, .
\end{align}
In this interval, define $W(x,q)\equiv \exp(m_i\Phi(x,q))$, a change of variables known as
`Cole-Hopf transformation'. Note that
\begin{align*}
\partial_q W &= m_i e^{m_i\Phi }\partial_q \Phi , \;\;\; \;
\partial_x W = m_i e^{m_i\Phi }\partial_x \Phi\, , \\
\partial_{xx} W &= m_i e^{m_i \Phi } \big\{  \partial_{xx} \Phi+ m_i
(\partial_x \Phi )^2 \big\}\, , 
\end{align*}
and therefore, $W$ satisfies the (time reversed) heat equation 
\begin{align*}
&\partial_q W+ \frac{1}{2}  \partial_{xx} W=0\, .
\end{align*}
The solution of this equation is well known to admit a probabilistic representation:
for any $q' \geq q$, we have  $W(x,q) = \E[ W(x + \sqrt{q' -q} \,g, q')]$,
where expectation is with respect to  $g\sim\normal(0,1)$.
Hence the function $\Phi$ has the following representation for $q\in [q_i,q_{i+1})$:
\begin{align}
\Phi(x,q) = \frac{1}{m_i} \log \E\Big[ \exp\big( m_i \Phi( x + \sqrt{q_{i+1}-q} g , q_{i+1} ) \big) \Big]. \label{eq:ColeHopf}
\end{align}
Using this representation, we can easily prove claim \eqref{eq:ClaimPhi}.
For $i=k$, notice that $m_k=1$ and therefore
\begin{align*}
\Phi(x,q_k)&= \log \E\Big[ 2\cosh\beta\big( x + \sqrt{1-q_{k}} \,g  \big) \Big]\\
&= \frac{1}{2}\beta^2(1-q_k)+\log 2\cosh\beta x\, ,
\end{align*}
as claimed.  Next, we assume that the claim holds for $\Phi_{i+1}(\, \cdot\, )$ and
$\Phi(\, \cdot\, ,q_{i+1})$, and prove it for $\Phi_{i}(\, \cdot\, )$ and
$\Phi(\, \cdot\, ,q_{i})$. By Eq.~\eqref{eq:ColeHopf} and the induction hypothesis,
we have
\begin{align*}
\Phi(x,q_i)&= 
 \frac{1}{m_i} \log \E\Big[ \exp\big( m_i \Phi_{i+1}( x + \sqrt{q_{i+1}-q_i} g )+
 (\beta^2/2)(1-q_k) \big) \Big]\\
 & = \Phi_i( x )+\frac{1}{2}\beta^2(1-q_k)\, .
\end{align*}

We can now represent the $k$RSB free energy functional  \eqref{eq:k-RSBfreeenergyfunctional} 
in terms of the solution the Parisi PDE, which we denote by $\Phi(x,q;\rho)$
in order to emphasize its dependence on $\rho$:
\begin{align}
\Psi_{k\rm{RSB}} (\bq,\bm) = -\frac{1}{4} \beta^2 + \frac{\beta^2}{4} \int_{[0,1]} q^2 \rho(\de q) + 
\Phi(0,0; \rho). \nonumber  
\end{align}
Note that the right-hand side makes sense for any probability measure $\rho$,
not only a probability measure formed by a finite number of 
atoms. 

One mathematical difficulty here is that defining the functional for arbitrary $\rho$
requires showing existence and uniqueness of solutions of the PDE
 \eqref{eq:Parisipde} for $\rho(q)$ a general distribution function (not just piecewise constant).
 While this is possible (see below), an alternative approach was originally developed by
Guerra \cite{guerra2003broken}. This paper established that for two probability measures 
$\mu, \nu$ on $[0,1]$ with finitely many atoms, $|\Phi(0,0;\mu) - \Phi(0,0;\nu) | \leq \int_{0}^{1}
 | \mu([0,x]) - \nu([0,x]) | \de x$. Thus, if the space of all probability measures on $[0,1]$
  is metrized by ${\sf d}(\mu , \nu) = \int_{0}^{1} | \mu([0,x]) - \nu([0,x]) | \de x$, then 
  $\rho\mapsto \Phi(0,0;\rho)$ is 
  Lipschitz and can thus be extended uniquely to the space of all probability measures on $[0,1]$. 
  
  Thus for any probability measure $\rho$ on $[0,1]$, we define the free energy functional
\begin{align}
\label{eq:ParisiFRSB}
\cuP(\rho) = - \frac{\beta^2}{4} + \frac{\beta^2}{4} \int q^2 \rho(\de q) + \Phi(0,0; \rho). 
\end{align}
In terms of the free energy functional $\mathscr{P}$,
the asymptotics of the free energy density is 
\begin{align}
\lim_{n \to \infty} \frac{1}{n}\log Z_n(\beta) = \inf_{\rho} \cuP(\rho). \label{eq:FRSB-Formula}
\end{align}

As a final sanity check, we note that for $\lambda= h=0$ and $\beta$ sufficiently small, 
we expect the free energy functional to be minimized at $\rho = \delta_0$
(indeed this can be proved to be the case).
In this case, $\Phi(0,0;\delta_0) = \log \E[ 2 \cosh ( \beta g)]$ for $g \sim \normal(0,1)$.
 Thus $\Phi(0,0;\delta_0) = \beta^2/2 + \log 2$ and $\mathscr{P}(\delta_0) = \beta^2/4 + \log 2 
 = \lim_{n \to \infty} \frac{1}{n}\log \E[Z_n(\beta)]$. This agrees with the intuition
 that for sufficiently high temperature, and in zero external field,
 the asymptotic free energy density coincides with  the annealed free energy density.

At first sight, the formula \eqref{eq:FRSB-Formula} for the free energy 
density might appear difficult to use because requires to solve an infinite-dimensional 
optimization problem. On the contrary, it turns out to be very useful.

The minimization over $\rho$ can be approximated numerically by restricting to 
distributions with $k$ atoms either at fixed of variable locations.
A particular consequence of \eqref{eq:FRSB-Formula} is the value of the ground state energy:
\begin{align}
\lim_{n\to\infty}\frac{1}{n}\max_{\bsigma\in\{+1,-1\}^n}H(\bsigma) = \sP_*\equiv
\lim_{\beta\to\infty}\frac{1}{\beta}\inf_{\rho}\cuP(\rho) \, .
\end{align}
An evaluation of the right-hand side yields $\sP_* \approx 0.763 166 726 567$
\cite{crisanti2002analysis,schmidt2008replica}.

Several rigorous results are available about the functional 
$\rho \mapsto \cuP(\rho)$. Auffinger-Chen \cite{auffinger2015parisi} and
 Jagannath-Tobasco \cite{jagannath2016dynamic} establish that $\cuP(\rho)$ is strictly convex in
  $\rho$ and therefore has a unique minimizer (recall that,
  as mentioned above,  $\cuP(\rho)$ is continuous in $\rho$). Recall that $\rho^*$ 
  is expected to be give the asymptotic overlap distribution 
  as $n\to\infty$.
   Figure \ref{fig:parisi_meas} illustrates the expected behavior of $\rho^*$ for
    different temperatures $\beta$. We refer the reader to 
    \cite{auffinger2015properties} for rigorous results about the minimizer. 

\begin{figure}
\begin{center}
\includegraphics[scale=0.6]{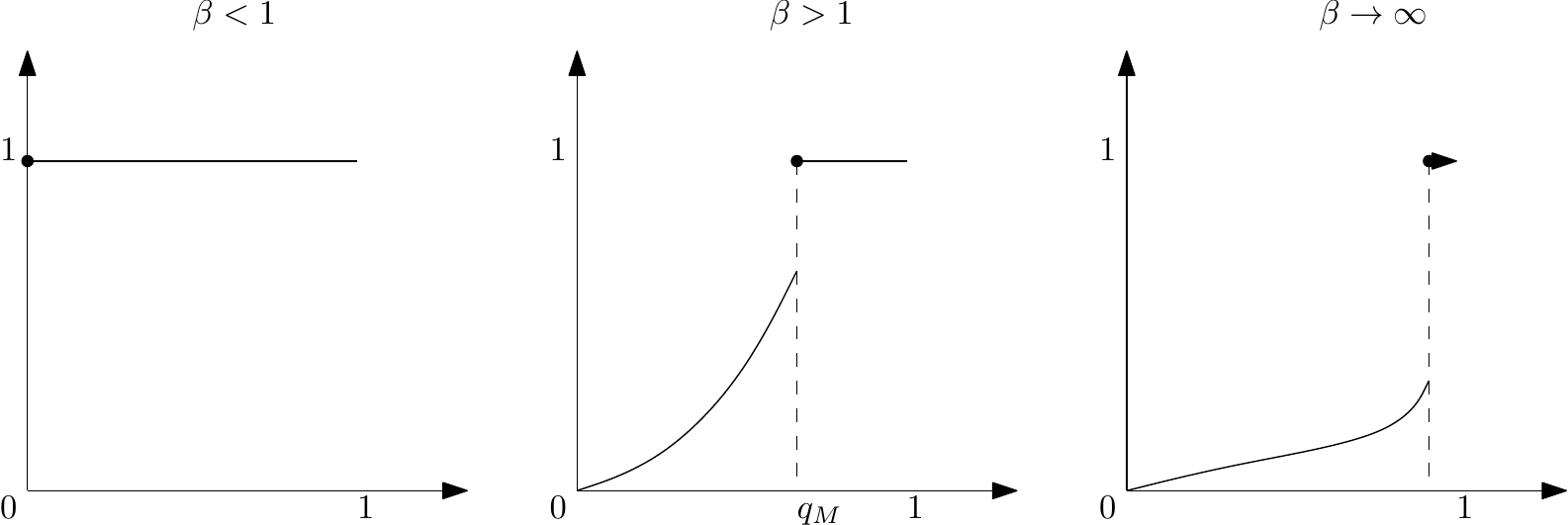}
\end{center}
\caption{A schematic representation of the distribution function of $\rho^*$ for different values 
of the inverse temperature $\beta$}.
\label{fig:parisi_meas}
\end{figure}

\subsection{On the physical interpretation of Parisi's PDE}

Let $\Phi(x,q) = \Phi(x,q;\rho)$ be the solution of the PDE \eqref{eq:Parisipde}
for a certain probability distribution $\rho$.
Define $s(x, q) = \beta^{-1} \partial_x \Phi(x, q)$. Differentiating the 
PDE \eqref{eq:Parisipde} with respect to $x$, we note that 
$s(\cdot, \cdot)$ solves
\begin{align}
&\partial_q s +\frac{1}{2} \partial_{xx} s + \beta \rho(q) s(x, q) \partial_x s=0\, 
, \label{eq:pde1} \\
 &s(x, 1) = \tanh(x)\, . \nonumber
\end{align}
For a general function $s : \reals \times [0,1] \to \reals$ (under suitable regularity conditions), 
we can also define the 
stochastic differential equation (SDE):
\begin{align*}
\de X_q = \beta \rho(q) s(X_q, q) \de q + \de B_q\, . 
\end{align*}
where $\{ B_q : q \geq 0 \}$ is a standard Brownian motion.
 We first observe that if $s$ solves \eqref{eq:pde1}, then $s(X_q, q)$ is a martingale
 (i.e. $\E[s(X_{q'},q')|X_q] = s(X_q,q)$ for all $q\le q'$). 
 This follows upon a direct application of Ito's lemma. However, to keep the discussion elementary 
 and self-contained, we sketch a proof below. We have, 
\begin{align}
\E&[s(X_{q+\eps},q+\eps)] | X_q = x] - s(x, q) \nonumber \\
&= \partial_x s(x,q) \E[ X_{q+ \varepsilon} - X_q | X_q = x] + \varepsilon \partial_q s(x, q) + \frac{1}{2} \partial_{xx}s(x,q) \E[ (X_{q + \varepsilon} - X_ q)^2 | X_q =x] + o(\varepsilon). \nonumber \\
&= \varepsilon \Big( \partial_q s + \frac{1}{2} \partial_{xx} s + \beta \rho(q) s \partial_x s  \Big)  + o(\varepsilon).  \nonumber 
\end{align}
Upon differentiating $\E[s(X_{q'},q')|X_q]$ with respect to $q'$, we obtain the desired conclusion.
Notice in particular that the martingale property implies
\begin{align}
s(x,q) = \E[\tanh(\beta X_1)|X_q=x]\, .\label{eq:SMartingale}
\end{align}

Given any probability distribution $\rho$, we can construct $\Phi$, $s$ and the stochastic
process $\{ X_q : q \geq 0 \}$, so that $s(X_q,q)$ is a martingale. 
However these quantities assume a special physical significance when 
$\rho=\rho^*$ is the unique minimizer of the Parisi functional $\cuP(\rho)$. 
  We will refer to the corresponding diffusion as 
 $\{ X^*_q: q \geq 0 \}$. 
 
 Among other properties, using the replica method,  de Almeida-Lage \cite{de1983internal}
 predicted that the law $X^*_1$ is the asymptotic distribution of the cavity fields. 
 Recall the Gibbs measure $\mu=\mu_{\beta}$ of Eq.~\eqref{eq:gibbs}, where we are here assuming $\lambda=h=0$.
 We already introduced the parametrization of its one-dimensional marginals 
 in terms of effective fields $h_i$: 
\begin{align}
\mu_i(\sigma_i) = \frac{e^{\beta h_i \sigma_i }}{ 2 \cosh (\beta h_i)}. \nonumber 
\end{align}
Consider the probability distribution of the effective field at vertex $i$ (with respect to the
random matrix $\bW$) $\rP_{h_i}^{(n)}$. By exchangeability of the vertices, this is independent of
$i\in\{1,\dots,n\}$, so we can focus on $\rP_{h_1}^{(n)}$. 
In the context of the
 replica symmetric cavity method, we derived the behavior of 
 $\rP_{h_1}^{(n)}$ and argued that (as long as the replica symmetric assumption holds)
 it converges weakly to a Gaussian distribution, see Section \eqref{sec:RSCavityFields}.
 
 
As we discussed above, the replica symmetric ansatz only holds at high temperature (small $\beta$)
or large enough external field. In general, 
$\rP_{h_1}^{(n)}$ converges weakly as $n \to \infty$ to the law of $X_{q_M}^*$,
where $q_M\equiv\sup\{q:\; q\in\supp(\rho^*)\}$ is the maximum overlap in the support of $\rho^*$.
It is a useful exercise for the reader to check that the Gaussian limit of
Section \eqref{sec:RSCavityFields} is recovered in the replica symmetric phase (i.e. 
if $\rho^* = \delta_{q_*}$ is a point mass).

Finally, we note that explicit stationary conditions can be derived
for the Parisi measure $\rho^*$, which involve the diffusion $(X_{t})_{t\in[0,q]}$.
To be definite, we  $\rho(q) = \rho([0,q])$ as a distribution function,
namely a non-negative, non-decreasing left continuous function on the interval $[0,1]$ , with $\rho(1) = 1$.
Omitting technical details, the variation of $\cuP(\rho)$, cf. Eq.~\eqref{eq:ParisiFRSB}, 
with respect to 
a change in $\rho$ can be shown to take the form:
\begin{align}
\cuP(\rho+\eps\Delta)-\cuP(\rho) =
\frac{1}{2}\beta^2\eps\int_{0}^1\Big\{\E\big[s(X_q,q)^2\big]-q\Big\}\, \Delta(q)\, \de q+o(\eps)\, .
\end{align}
Hence we obtain the following stationarity conditions:
\begin{align}
F_{\rho}(q)&\equiv \int_{q}^{q_M}\Big\{\E\big[s(X_t,t)^2\big]-t\Big\}\de t\, ,\\
q\in{\rm supp}(\rho^*) &\;\;\; \Rightarrow\;\;\; F_{\rho}(q) = 0,\, \;\; F'_{\rho}(q) = 0\, ,\\
 q \in [0,1] \, \;  &\;\;\; \Rightarrow\;\;\;  F_{\rho}(q) \ge 0\, ,
 \end{align}
The martingale condition \eqref{eq:SMartingale} can also be used to express 
the above conditions in a different form. 
Given $X_q$,
   let $\{ X^{(1)}_q : q \geq q_0 \}$ and $\{ X^{(2)}_q : q \geq q_0 \}$ be two independent copies 
   the diffusion $X_{t}$ started at $X^{(1)}_q = X^{(2)}_q = X_{q}$. 
 Then we have
\begin{align}
\E\big[s(X_t,t)^2\big] = \E[ \tanh (\beta X^{(1)}_{1} ) \tanh( \beta X^{(2)}_{1})]. \nonumber 
\end{align}

\section{Cavity method with replica symmetry breaking}
\label{sec:CavityRSB}

In Section \ref{sec:replica_symmetric_cavity}, we studied the SK model using the cavity method.
We made a certain number of assumptions and recovered the replica symmetric free energy. 
However, the replica symmetric free energy is incorrect at sufficiently low temperature 
(i.e. $\beta$ sufficiently large). 
This prompts the natural questions:
\textit{Can we sharpen the replica symmetric cavity method to study the 
free energy of the system at low temperature? In particular, can we modify/relax
our assumptions as to derive the RSB free energy?}

In this section, we will accomplish this goal, in the sense that we will
recover the $1$RSB free energy. This approach can be extended to $k$-steps
replica symmetry breaking in a natural way, but we will refrain from 
doing this. 

The replica symmetric cavity method hinges on the absence of global correlations 
in the model. In particular, two fixed coordinates $\sigma_i$, $\sigma_j$ are approximately 
independent when $\bsigma\sim \mu_{\beta}$. We stated a stronger form (or consequence)
of this assumptions as {\sf C1} in Section \ref{sec:replica_symmetric_cavity}.

At low temperature, approximate independence does not hold any longer, and the replica symmetric 
results are inaccurate. Note that the distribution of the overlap, and the fact that it does 
not concentrate is intimately related to this emergence of global correlations.
It is perhaps useful to revisit this point in a simple way. We use the 
shorthand $\<f(\bsigma^1,\dots,\bsigma^k)\>_{\mu}\equiv  \int f(\bsigma^1,\dots,\bsigma^k)\, \mu^{\otimes k}(\de \bsigma)$
for the expectation with respect to the Gibbs measure (even if the expectation is 
actually a finite sum). Considering the mean square 
correlation of two variables $\sigma_i$, $\sigma_j$, we have
\begin{align*}
{\sf Corr}^2_n & = 
\frac{1}{n^2} \sum_{i,j=1}^n\E\Big\{\big(\<\sigma_i\sigma_j \>_{\mu}-
\<\sigma_i\>_{\mu}\<\sigma_j\>_{\mu}\big)^2\Big\}\\
& = \frac{1}{n^2} \sum_{i,j=1}^n\E\Big\{\<\sigma_i\sigma_j\>_{\mu}^2
\Big\}-\frac{2}{n^2} \sum_{i,j=1}^n\E\Big\{\<\sigma_i\sigma_j\>_{\mu}
\<\sigma_i\>_{\mu}\<\sigma_j\>_{\mu}\Big\}+
\frac{1}{n^2} \sum_{i,j=1}^n\E\Big\{\<\sigma_i\>^2_{\mu}\<\sigma_j\>^2_{\mu}
\Big\}\\
& = \frac{1}{n^2} \sum_{i,j=1}^n\E\Big\{\<\sigma^1_i\sigma^1_j\sigma^2_i\sigma^2_j\>_{\mu}\Big\}
-\frac{2}{n^2} \sum_{i,j=1}^n
\E\Big\{\<\sigma^1_i\sigma^1_j\sigma^2_i\sigma^3_j \>_{\mu}
\Big\}+
\frac{1}{n^2} \sum_{i,j=1}^n
\E\Big\{\<\sigma^1_i\sigma^2_j\sigma^3_i\sigma^4_j \>_{\mu} \Big\}\\
& = \E\{Q_{12}^2\}-2\E\{Q_{12}Q_{13}\}+\E\{Q_{12}Q_{34}\}\, .
\end{align*}
Here in the last equation, we introduced the overlap $Q_{ab}\equiv n^{-1}\sum_{i=1}^n
\sigma_i^a\sigma_i^b$, and used $\E\{f(\bsigma^1,\dots,\bsigma^k)\}$
for $\E\int f(\bsigma^1,\dots,\bsigma^k)\, \mu^{\otimes k}(\de \bsigma)$.

If the overlap concentrates, i.e. if the overlap distribution converges weakly to a point mass
$\rho_n\to \rho$, $\rho=\delta_{q_*}$, then the above calculation
shows that the mean square correlation vanishes asymptotically 
 $\lim_{n\to\infty}{\sf Corr}^2_n=0$. 
 We need therefore to give up the assumption that variables $\bsigma=(\sigma_1,\dots,\sigma_n)$
 are approximately pairwise independent under the Gibbs measure. 
 This seems hopeless since non-product form measures are very complex to describe.
 
Remarkably, M\'{e}zard, Parisi and Virasoro \cite{mezard1986sk,SpinGlass} 
developed a set of assumptions that allow one to carry out the cavity calculation,
and recover $k$RSB asymptotics for any $k$. 
This assumption can be described as a specific form of conditional independence of
the coordinates of $\bsigma$. This will be our topic of interest for the rest of the
section (for $k=1$), although we will use a somewhat more mathematical language than
the original physics papers. We will pause in the next section to develop 
some of this language.

\subsection{A digression into Poisson processes} 
\label{sec:Poisson}

Our starting point is a Poisson Point Process (PPP) $\{ \phi^{\alpha}: \alpha \in \mathbb{N} \}$ 
with intensity measure $\Lambda(x) \de x$ on $\mathbb{R}$. This is the unique point process on 
$\mathbb{R}$ such that, letting
$N(B) = \sum_{\alpha} \mathbf{1}(\phi^{\alpha}\in B)$ denote the number of points in the Borel
set $B\subset \mathbb{R}$, the following two properties hold:
\begin{itemize}
\item  For any Borel set $B$,  $N(B)\sim \Poisson(\int_{B} \Lambda(x) \de x)$.
\item  For any collection of disjoint Borel sets $B_1, \cdots, B_k$, the 
random variables $N(B_1), \cdots, N(B_{k})$ are mutually independent.
\end{itemize}
This definition generalizes immediately to PPP's on $\mathbb{R}^d$ for any $d \geq 1$. 
We recall that an immediate consequence of the definition is that, for any (measurable)
bounded function $g:\reals\to\reals$:
\begin{align}\label{eq:ExpectationPPP}
\E\Big\{\sum_{\alpha}g(\phi^{\alpha})\Big\} = \int g(x)\,\Lambda(x)\, \de x\, .
\end{align}
(This can be proved by first considering the case in which $g$
is a \emph{simple function}, i.e. $g(x) = \sum_{k=1}^mc_k\bfone_{B_k}$ for some Borel sets $B_k$.)

In fact, the following characterization of PPP's (see e.g. \cite[Chapter 7.4]{daley1998introduction})
will be useful. 
\begin{lemma}
\label{lemma:marks2}
$\{x_{\alpha}: \alpha \in \mathbb{N} \}$ follows ${\rm{PPP}}$ with intensity $\rho$ if and only if for all bounded $g : \mathbb{R} \to \mathbb{R}$, 
\begin{align}
\E\Big[\exp{\sum_{\alpha} g(x_{\alpha})} \Big] = \exp\Big[ \int \Big( {\rm{e}}^{g(x)} - 1 \Big) \rho(\de x) \Big]. \nonumber 
\end{align}
Further, the above are equivalent to this identity holding for all $g$ bounded continuous.
\end{lemma}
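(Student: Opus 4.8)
The plan is to establish the equivalence through two substantive implications --- $(i)$ a PPP with intensity $\rho$ satisfies the displayed identity for every bounded measurable $g$, and $(ii)$ the identity holding for every bounded \emph{continuous} $g$ already forces the process to be a PPP with intensity $\rho$ --- together with the trivial remark that bounded continuous functions are bounded, which closes the logical loop and also yields the ``Further'' clause.

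For $(i)$, I would start with simple functions. If $g=\sum_{k=1}^m c_k\mathbf{1}_{B_k}$ with the $B_k$ disjoint Borel sets of finite $\rho$-measure, then $\sum_\alpha g(x_\alpha)=\sum_{k} c_k N(B_k)$, and using that $N(B_1),\dots,N(B_m)$ are independent with $N(B_k)\sim\Poisson(\rho(B_k))$ and that the moment generating function of a $\Poisson(\lambda)$ variable at argument $t$ is $e^{\lambda(e^{t}-1)}$, one gets
\[
\E\Big[\exp\sum_\alpha g(x_\alpha)\Big]=\prod_{k}\E\big[e^{c_kN(B_k)}\big]=\prod_{k}e^{\rho(B_k)(e^{c_k}-1)}=\exp\Big[\int\big(e^{g(x)}-1\big)\rho(\de x)\Big].
\]
One then extends to nonnegative bounded $g$ by choosing simple $g_n\uparrow g$ and applying monotone convergence both to the sum $\sum_\alpha g_n(x_\alpha)$ inside the exponential and to $\int(e^{g_n}-1)\,\de\rho$, and finally to general bounded $g$ (in the regime where $\int|g|\,\de\rho<\infty$, so the sum is absolutely convergent almost surely) by dominated convergence.

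For $(ii)$, the heart of the matter is to upgrade the identity from continuous test functions to indicators. First, for a relatively compact open set $U$, take continuous $g_n(x)=\min(1,n\,d(x,U^c))$ so that $0\le g_n\uparrow\mathbf{1}_U$ pointwise; monotone (or dominated, according to the sign of $c$) convergence of $\sum_\alpha c\,g_n(x_\alpha)$ and dominated convergence of $\int(e^{cg_n}-1)\,\de\rho$ --- the integrand being dominated by a multiple of $\mathbf{1}_U$, which is $\rho$-integrable since $\rho$ is locally finite --- yield the identity for $g=c\mathbf{1}_U$. A Dynkin/monotone-class argument, using outer regularity of $\rho$ and of the laws of the random variables $N(\cdot)$, then propagates the identity to $g=\sum_k c_k\mathbf{1}_{B_k}$ for arbitrary disjoint bounded Borel sets $B_k$. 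At that point the identity says that the joint moment generating function of $(N(B_1),\dots,N(B_m))$ is finite everywhere and equals $\prod_k e^{\rho(B_k)(e^{c_k}-1)}$; uniqueness of the moment generating function then gives that the $N(B_k)$ are independent $\Poisson(\rho(B_k))$, which is exactly the definition of a PPP with intensity $\rho$. (A shortcut for the whole of $(ii)$ is to invoke the classical fact that the Laplace functional $g\mapsto\E[e^{-\sum_\alpha g(x_\alpha)}]$ on compactly supported nonnegative continuous $g$ determines the law of a locally finite point process, and to compare it with the value computed in step $(i)$.)

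I expect the main obstacle to be precisely this passage from continuous $g$ to Borel indicators in $(ii)$: it is where regularity of the measures enters and, more delicately, where one must use the local finiteness of the point configuration to justify interchanging limits with the (possibly infinite, but almost surely locally finite) sums $\sum_\alpha g_n(x_\alpha)$. The forward computation for simple $g$ and the concluding moment-generating-function uniqueness step are both routine.
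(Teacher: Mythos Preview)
Your proof sketch is correct and follows the standard route to this result. However, you should know that the paper does not actually prove this lemma: it cites it as a known fact from the point process literature (specifically \cite[Chapter 7.4]{daley1998introduction}), adding only the remark that the reader can easily verify the identity when $g$ is a simple function --- which is exactly your opening computation in part $(i)$. So your proposal is not competing with a proof in the paper; it is supplying one where the paper defers to a reference.
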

The reader can easily check that this identity holds when $g$ 
is a simple function.

We will be specifically interested in the PPP $\{ \phi^{\alpha}: \alpha \in \mathbb{N} \}$
 with intensity function $\Lambda(x) = \exp[- m x]$ for $x \in \mathbb{R}$, $m>0$. 
 The following properties of the process will be specifically relevant for us. 
\begin{lemma}\label{lemma:PPP-Basic}
The PPP $\{\phi^{\alpha}: \alpha \in \mathbb{N} \}$ with intensity $\Lambda(x)\de x = e^{- mx}\de x$ 
has the following properties:
\begin{enumerate}
\item The process has a finite maximum almost surely, i.e. there exists a point
$\phi^{\alpha_0}$ such that $ \phi^{\alpha_0}>\phi^{\alpha}$ for every $\alpha\neq \alpha_0$. 
\item  Almost surely, there exists a (measurable) reordering of the points 
$\{\phi^{\alpha} : \alpha \in \mathbb{N} \}=\{\phi^{(k)} : k \in \mathbb{N} \}$ 
such that $\phi^{(1)} \geq \phi^{(2)} \geq \cdots$. 
\item For $0<m<1$, $\sum_{\alpha} e^{\phi^{\alpha}} < \infty$ almost surely. 
\end{enumerate}
\end{lemma}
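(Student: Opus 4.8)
The plan is to read everything off the two defining properties of the PPP together with the explicit intensity. Write $\Lambda(x) = e^{-mx}$ and, for $t\in\reals$, set $\lambda_t := \int_t^\infty e^{-mx}\,\de x = e^{-mt}/m$, so that $N((t,\infty))\sim\Poisson(\lambda_t)$ by the first defining property. First I would prove part~1. Since $\lambda_t<\infty$ for every $t$, the count $N((t,\infty))$ is almost surely finite for each fixed $t$; intersecting over $t\in\integers$ and using that $t\mapsto N((t,\infty))$ is non-increasing, almost surely $N((t,\infty))<\infty$ for \emph{all} $t\in\reals$ simultaneously. On the other hand $\lambda_t\to\infty$ as $t\to-\infty$, so $\prob(N((t,\infty))\ge 1)=1-e^{-\lambda_t}\to 1$, and hence almost surely $N((t,\infty))\ge 1$ for all sufficiently negative integers $t$. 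On the intersection of these two almost sure events, fix such a $t$: there is at least one point above $t$, only finitely many of them, and (as points of a PPP with a density are almost surely distinct) a strict maximum among them; every remaining point lies in $(-\infty,t]$, so this is the global maximum $\phi^{\alpha_0}$. This proves part~1.

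Part~2 follows by the same bookkeeping. Note that $\lambda_{-\infty}=\int_{\reals}e^{-mx}\,\de x=\infty$ forces $N(\reals)=\infty$ almost surely (since $\prob(N((t,\infty))\ge j)\to 1$ as $t\to-\infty$ for each fixed $j$). On the almost sure event that $N((t,\infty))<\infty$ for every integer $t$, list, for each integer $t$, the finitely many points of $(t,\infty)$ in decreasing order; these finite decreasing lists are consistent as $t$ decreases and their union exhausts the (infinite, bounded above, with no finite accumulation point) point set, so concatenating them yields a reordering $\phi^{(1)}\ge\phi^{(2)}\ge\cdots$ of all the points. Measurability of $k\mapsto\phi^{(k)}$ as a functional of the configuration is routine — e.g. $\{\phi^{(1)}\le a\}=\{N((a,\infty))=0\}$, and each later $\phi^{(k)}$ is the maximum of the remaining points — and I would only remark on it.

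For part~3 the key point is that one must \emph{not} compute $\E[\sum_\alpha e^{\phi^\alpha}]$ directly: by the expectation formula \eqref{eq:ExpectationPPP} this would equal $\int_{\reals}e^x e^{-mx}\,\de x=\int_{\reals}e^{(1-m)x}\,\de x=+\infty$ (the divergence coming from $x\to+\infty$ since $1-m>0$). Instead I would truncate and write $\sum_\alpha e^{\phi^\alpha}=\sum_{\phi^\alpha>0}e^{\phi^\alpha}+\sum_{\phi^\alpha\le 0}e^{\phi^\alpha}$. The first sum is a finite sum almost surely because, by part~1, only finitely many points exceed $0$. For the second sum, apply \eqref{eq:ExpectationPPP} with the bounded function $g(x)=e^x\mathbf{1}_{\{x\le 0\}}$:
\[
\E\Big[\sum_{\phi^\alpha\le 0}e^{\phi^\alpha}\Big]=\int_{-\infty}^0 e^x e^{-mx}\,\de x=\int_{-\infty}^0 e^{(1-m)x}\,\de x=\frac{1}{1-m}<\infty,
\]
where finiteness uses precisely $1-m>0$, i.e. $m<1$. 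Hence the second sum is almost surely finite, and therefore so is $\sum_\alpha e^{\phi^\alpha}$.

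The argument is essentially routine; the only places needing a little care are the passage from ``finite for each fixed level'' to ``finite for all levels simultaneously'' in parts~1--2, and the truncation at $0$ in part~3 — a naive application of the Campbell-type formula \eqref{eq:ExpectationPPP} to the full sum gives a divergent integral and would point to the wrong conclusion. I would also add a one-line remark that for $m\ge 1$ the sum in part~3 diverges almost surely (the points accumulating at $-\infty$ contribute $\Theta(1)$ per unit interval), so the hypothesis $0<m<1$ is sharp.
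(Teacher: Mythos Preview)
Your proof is correct and essentially identical to the paper's: both argue finiteness of $N((t,\infty))$ for part~1 and use the same split at zero plus the Campbell-type formula \eqref{eq:ExpectationPPP} for part~3, while for part~2 the paper simply defers to Panchenko's book where you give a sketch of the construction. Your treatment of part~1 is in fact slightly more careful than the paper's, which only observes $\E N([0,\infty))=1/m<\infty$ and does not explicitly verify that the process has at least one point above the chosen threshold.
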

\begin{proof}
To prove point 1, note that the expected number of points $\phi^{\alpha}\ge 0$ is
\begin{align}
\E N(\reals_{\ge 0}) = \int_0^{\infty}e^{-mx}\, \de x = \frac{1}{m}<\infty\, .
\end{align}
Hence, almost surely, the process contains a finite number of non-negative
(distinct) points. Any time this happens, there is a maximum point as well, since a finite
set always has a maximum.

Let us now consider point 3. We separate the contributions of positive and negative points:
\begin{align*}
\sum_{\alpha} e^{\phi^{\alpha}} &= 
\sum_{\alpha} e^{\phi^{\alpha}}\bfone_{\phi^{\alpha}\ge 0} 
+\sum_{\alpha} e^{\phi^{\alpha}} \bfone_{\phi^{\alpha}< 0}\\
& \equiv S_{\ge 0}+S_{<0}\, .
\end{align*}
By point 1, the first sum $S_{\ge 0}$ only contains finitely many terms and therefore is finite almost surely.
As for the second sum, by Eq.~\eqref{eq:ExpectationPPP} we have
\begin{align*}
\E\{S_{<0}\} = \int_{-\infty}^0 e^x e^{-mx}\, \de x = \frac{1}{1-m}<\infty\, .
\end{align*}
Therefore $S_{<0}<\infty$ almost surely, which proves the claim. Note that
here we used in a crucial way the fact that $m\in (0,1)$.

Finally, point 2 is both very easy to accept and somewhat subtle from
a measure-theoretic perspective. We refer the reader to \cite[Chapter 2]{panchenko2013sherrington} for 
a rigorous proof.
\end{proof}

 Our next lemma introduces the crucial property that makes this PPP particularly important:
 its behavior under random i.i.d. perturbations. 
\begin{lemma}
\label{lemma:marks1}
Let $\{\Delta^{\alpha} : \alpha \in \mathbb{N} \}$ be a sequence of i.i.d.
random variables (`marks') independent of the PPP $\{ \phi^{\alpha} : \alpha \in \mathbb{N} \}$.
Assume that the common distribution $\nu$ of the $\Delta^{\alpha}$ is 
such that $C_{\nu} = \int \exp(m \Delta)\, \nu(\de \Delta) < \infty$. Then we have, 
\begin{align}
\{ \phi^{\alpha} + \Delta^{\alpha} : \alpha \in \mathbb{N} \} \sim {\rm{PPP}} (C_{\nu} \Lambda). \nonumber 
\end{align}
\end{lemma}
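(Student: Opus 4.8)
The plan is to derive \ref{lemma:marks1} from the Laplace-functional characterization of a PPP in Lemma \ref{lemma:marks2}, by conditioning on the points $\{\phi^\alpha\}$ and integrating out the i.i.d.\ marks. It suffices to verify the characterizing identity for $g:\reals\to\reals$ continuous with compact support $K\subseteq[a,b]$, since such functions determine the law of a locally finite point process. First I would set $\psi_g(x)\equiv\int e^{g(x+\Delta)}\,\nu(\de\Delta)$; then $e^{-\|g\|_\infty}\le\psi_g(x)\le e^{\|g\|_\infty}$, so $\log\psi_g$ is bounded and continuous, and moreover $\int|\psi_g(x)-1|\,\Lambda(x)\,\de x<\infty$. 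Indeed $|\psi_g(x)-1|\le\|g\|_\infty e^{\|g\|_\infty}\,\nu(K-x)$, so by Fubini this integral is at most $\|g\|_\infty e^{\|g\|_\infty}\,m^{-1}(e^{-ma}-e^{-mb})\int e^{m\Delta}\,\nu(\de\Delta)=\|g\|_\infty e^{\|g\|_\infty}\,m^{-1}(e^{-ma}-e^{-mb})\,C_\nu<\infty$, which is exactly where the hypothesis $C_\nu<\infty$ enters. In particular only finitely many summands $g(\phi^\alpha+\Delta^\alpha)$ are nonzero almost surely, so conditionally on the $\sigma$-field generated by $\{\phi^\alpha\}$ (under which the marks remain i.i.d.), a routine truncation gives
\begin{align}
\E\Big[\exp\sum_\alpha g(\phi^\alpha+\Delta^\alpha)\,\Big|\,\{\phi^\alpha\}\Big]=\prod_\alpha\psi_g(\phi^\alpha)=\exp\sum_\alpha\log\psi_g(\phi^\alpha)\, .\nonumber
\end{align}

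Next I would take the outer expectation and apply Lemma \ref{lemma:marks2} to the original process $\{\phi^\alpha\}$ with the bounded continuous function $\log\psi_g$, obtaining $\E[\exp\sum_\alpha g(\phi^\alpha+\Delta^\alpha)]=\exp[\int(\psi_g(x)-1)\,\Lambda(x)\,\de x]$. The final step is the change of variables $y=x+\Delta$ (licensed by the absolute integrability just established):
\begin{align}
\int(\psi_g(x)-1)\Lambda(x)\,\de x &=\int\!\!\int\big(e^{g(x+\Delta)}-1\big)e^{-mx}\,\de x\,\nu(\de\Delta)\nonumber\\
&=\int\!\!\int\big(e^{g(y)}-1\big)e^{-m(y-\Delta)}\,\de y\,\nu(\de\Delta)\nonumber\\
&=C_\nu\int\big(e^{g(y)}-1\big)\Lambda(y)\,\de y\, .\nonumber
\end{align}
Hence $\E[\exp\sum_\alpha g(\phi^\alpha+\Delta^\alpha)]=\exp[\int(e^{g(y)}-1)\,(C_\nu\Lambda)(\de y)]$ for all such $g$, which is precisely the Laplace-functional characterization of ${\rm PPP}(C_\nu\Lambda)$, so $\{\phi^\alpha+\Delta^\alpha\}\sim{\rm PPP}(C_\nu\Lambda)$.

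I do not expect a genuine obstacle here; the work is measure-theoretic bookkeeping, and the single place where care is needed is convergence/integrability. The hypothesis $C_\nu=\int e^{m\Delta}\,\nu(\de\Delta)<\infty$ is what makes every estimate go through: it is equivalent to the displaced configuration still accumulating only at $-\infty$ and carrying a locally finite intensity $C_\nu\Lambda$ (without it, infinitely many displaced points could pile up in a bounded region and the limit object would fail to be a point process). As an alternative, cleaner route I would note that $\{(\phi^\alpha,\Delta^\alpha)\}$ is a PPP on $\reals^2$ with intensity $\Lambda(x)\,\de x\,\nu(\de\Delta)$ by the marking theorem, and then push forward under $(x,\Delta)\mapsto x+\Delta$ via the mapping theorem; the pushforward-intensity computation is identical to the change of variables above, and again $C_\nu<\infty$ is exactly the condition guaranteeing that preimages of bounded intervals under the displacement map have finite $\Lambda\otimes\nu$ measure, so that the mapping theorem applies.
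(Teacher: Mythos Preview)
Your proof is correct. The paper's own proof is precisely what you sketch at the end as the ``alternative, cleaner route'': it observes directly that $\{(\phi^\alpha,\Delta^\alpha)\}$ is a PPP on $\reals^2$ with intensity $e^{-mx}\,\de x\otimes\nu(\de\Delta)$, applies the two-dimensional version of Lemma~\ref{lemma:marks2}, and performs the same change of variables $y=x+\Delta$ you write down. Your main route---conditioning on $\{\phi^\alpha\}$, integrating out the marks to obtain $\psi_g$, then applying Lemma~\ref{lemma:marks2} on $\reals$ with $\log\psi_g$---arrives at the identical integral but is in fact more careful than the paper: you restrict to compactly supported $g$ and verify explicitly that $\int|\psi_g(x)-1|\,\Lambda(x)\,\de x<\infty$, which is exactly the point where $C_\nu<\infty$ is used and which the paper glosses over by invoking Lemma~\ref{lemma:marks2} ``for all bounded $g$'' without comment on the infinite total intensity. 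Either route is fine; yours trades a one-line appeal to the marking theorem for a transparent conditioning argument with the integrability made explicit.
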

\begin{proof}
The proof follows from a direct application of Lemma \ref{lemma:marks2}. 
We first note that $\{ (\phi^{\alpha}, \Delta^{\alpha}): \alpha \in \mathbb{N} \}$ is a PPP on
$\mathbb{R}^2$ with intensity $ \exp(- mx) \de x \otimes  \nu(\de\Delta)$. Thus, 
 for any $g: \mathbb{R} \to \mathbb{R}$ bounded 
\begin{align}
\E\Big\{ \exp \Big[ \sum_{\alpha} g(\phi^{\alpha} + \Delta^{\alpha}) \Big] \Big\} &= 
\exp\Big\{ \int ( e^{g(x+y)} -1 )  e^{-mx}\,\de x \,\nu(\de y) \Big\} \nonumber \\
&= \exp\Big\{ \int  (e^{g(x)}-1) e^{-m(x - y)} \, \de x \, \nu(\de y)   \Big\} \\
&= 
\exp\Big\{ C_{\nu} \int  ( e^{g(x)} -1 ) \, e^{- m x}\,  \de x\Big\}\, . \nonumber 
\end{align}
By Lemma \ref{lemma:marks2}, this completes the proof.
\end{proof}
 Lemma \ref{lemma:marks1} establishes a stability property of this 
 PPP. First note that the intensity $C_{\nu}\Lambda(x)\de s=C_{\nu}e^{-mx}\de x$ can be rewritten as
 $C_{\nu}\Lambda(x)\de x = \Lambda(x-x_0)\de x$ for $x_0 = -m^{-1}\log C_{\nu}$,
 i.e. the change in intensity is just a translation. Hence the lemma
 can be restated as follows: if we perturb the PPP by i.i.d.
 random shifts, the resulting point process is distributed according to the original one,
 shifted by a deterministic amount $x_0$.
 
 An instructive image is proposed in  \cite{ruzmaikina2005characterization}.
 Imagine that the $\{\phi^{\alpha}\}$ are positions of cars in a car race. In a given interval of
 time (say, one minute), each car moves by an independent and identically distributed 
 amount $\Delta^{\alpha}$. The new car positions are therefore $\{\phi^{\alpha}+\Delta^{\alpha}\}$.
 The overall race moved forward by a distance $x_0$ but---once this shift is accounted for---the distribution of car position is unchanged apart from a reordering.
 This image motivates the name Indy500 for this process.
 
Liggett \cite{liggett1978random} studied this PPP in connection with interacting 
particle systems. His analysis implies that---under certain technical conditions---it is the only point processes that enjoys the above stability property.
Of course, uniqueness holds up to a global shift or, equivalently, multiplying the intensity
$e^{-mx}\de x$ by a constant.
Notice that, as a consequence of stationarity, the 
point process of gaps $\{\phi^{(1)}-\phi^{(\alpha)}:\; \alpha\in\naturals\}$ remains unchanged after 
random shifts.
Ruzmaikina-Aizenman \cite{ruzmaikina2005characterization} showed that 
all the point processes satisfying this weaker condition are convex combinations of PPP's
with intensity $e^{-m x}\de x$ (with varying $m$).

 A strengthening of the above invariance property is established in  
 the following result.
\begin{lemma}
\label{lemma:marks3}
Let $(\cX,\cF,\mu)$ be a probability space and $\Delta: \mathcal{X} \to \mathbb{R}$,
 $h: \mathcal{X} \to \mathbb{R}$ two measurable functions. Let 
 $\{ (\phi^{\alpha} , x_{\alpha}): \alpha \in \mathbb{N} \}$ be a PPP on $\mathbb{R} \times \mathcal{X}$ 
 with intensity measure $\exp(- m \phi ) \de \phi \otimes \nu(\de x)$. We assume
  $C_{\nu} = \int e^{m \Delta(x)} \nu(\de x) < \infty$ for some $m >0$ and define
   the probability measure $\nu_{m}(\de x) \equiv C_{\nu}^{-1} e^{m \Delta(x)} \nu(\de x)$. 
   Then we have:
\begin{align}
\Big\{ \big( \phi^{\alpha} + \Delta(x_{\alpha}), h(x_{\alpha}) \big) :\;\alpha\in\naturals\Big\} 
\sim {\rm{PPP}} \Big(C_{\nu} e^{-m\phi} \de \phi \otimes   \nu_m\circ h^{-1} \Big) . \nonumber  
\end{align}
(Here $\nu_{m}\circ h^{-1}$ denotes the law of $h(X)$ where $X\sim \nu_{m}$.)
\end{lemma}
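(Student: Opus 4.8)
The plan is to mirror the proof of Lemma~\ref{lemma:marks1}, upgrading the Laplace-functional computation from $\reals$ to the product space $\reals\times\cX$ and then tracking the push-forward under the map $x\mapsto h(x)$. First I would observe that the Poisson-process characterization of Lemma~\ref{lemma:marks2} extends verbatim to an arbitrary space on which the relevant Poisson processes exist: a point process $\{\xi_\alpha\}$ on $\cS$ is ${\rm PPP}(\Lambda)$ if and only if
\begin{align}
\E\Big[\exp\sum_\alpha G(\xi_\alpha)\Big] = \exp\Big[\int_\cS\big(e^{G(\xi)}-1\big)\,\Lambda(\de\xi)\Big] \nonumber
\end{align}
for all $G$ in a sufficiently rich class of test functions (see below). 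I would use this in two ways: to expand the generating functional of the given PPP $\{(\phi^\alpha,x_\alpha)\}$ on $\reals\times\cX$ with intensity $e^{-m\phi}\de\phi\otimes\nu(\de x)$, and to recognize the resulting expression as that of a PPP on $\reals\times\reals$.

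Next I would fix a test function $g:\reals\times\reals\to\reals$ (bounded, and vanishing for $\phi<a$ for some $a$, so that all integrals below converge and the sum $\sum_\alpha g(\phi^\alpha+\Delta(x_\alpha),h(x_\alpha))$ is almost surely finite) and set $G(\phi,x):=g(\phi+\Delta(x),h(x))$, which is bounded and measurable on $\reals\times\cX$. Then the generating functional of $\{(\phi^\alpha+\Delta(x_\alpha),h(x_\alpha))\}$ equals $\E[\exp\sum_\alpha G(\phi^\alpha,x_\alpha)]$, and by the product-space version of Lemma~\ref{lemma:marks2} this is
\begin{align}
\exp\Big\{\int_\cX\int_\reals\big(e^{g(\phi+\Delta(x),h(x))}-1\big)\,e^{-m\phi}\,\de\phi\,\nu(\de x)\Big\}\, . \nonumber
\end{align}
In the inner integral I would substitute $\psi=\phi+\Delta(x)$ (with $x$ fixed), so that $e^{-m\phi}\de\phi=e^{m\Delta(x)}e^{-m\psi}\de\psi$; pulling $e^{m\Delta(x)}$ out of the inner integral and using $e^{m\Delta(x)}\nu(\de x)=C_\nu\,\nu_m(\de x)$, the exponent becomes $C_\nu\int_\cX\big(\int_\reals(e^{g(\psi,h(x))}-1)e^{-m\psi}\de\psi\big)\nu_m(\de x)$. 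Since the bracketed quantity depends on $x$ only through $h(x)$, the outer integral is the $\nu_m$-expectation of a function of $h(x)$, hence equals $C_\nu\int_\reals\big(\int_\reals(e^{g(\psi,t)}-1)e^{-m\psi}\de\psi\big)(\nu_m\circ h^{-1})(\de t)$. This is exactly the generating functional of ${\rm PPP}\big(C_\nu e^{-m\psi}\de\psi\otimes\nu_m\circ h^{-1}\big)$, so the converse direction of Lemma~\ref{lemma:marks2} on $\reals\times\reals$ yields the claim.

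The routine steps are the measurability of $G$ and the use of Fubini/Tonelli (licensed by boundedness of $e^g-1$ together with the support restriction on $g$), which I would state but not belabor. The one genuine subtlety, and the step I expect to need the most care, is the appeal to the product-space version of Lemma~\ref{lemma:marks2} and the choice of test class for $g$: the intensity $e^{-m\psi}\de\psi$ has infinite mass near $-\infty$, so one cannot test against all bounded $g$. The clean fix is to restrict to $g$ bounded and vanishing on $\{\psi<a\}$ for arbitrary $a$ (equivalently, $g$ bounded continuous with support bounded below in the first coordinate), invoke Lemma~\ref{lemma:PPP-Basic}(1) to see that both processes have only finitely many points above any level so that the relevant sums are finite a.s., and note that such a class separates the laws of all such processes. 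This parallels exactly what is implicitly done in the proof of Lemma~\ref{lemma:marks1}.
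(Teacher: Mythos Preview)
Your proposal is correct and follows essentially the same route as the paper: compute the Laplace (generating) functional of the transformed points via Lemma~\ref{lemma:marks2}, perform the shift $\psi=\phi+\Delta(x)$ in the inner integral to produce the tilt $e^{m\Delta(x)}\nu(\de x)=C_\nu\,\nu_m(\de x)$, and then recognize the result as the generating functional of the claimed PPP. In fact you are more explicit than the paper about the test-function class needed to cope with the infinite mass of $e^{-m\phi}\de\phi$ near $-\infty$; the paper simply writes ``for any bounded continuous $g$'' and leaves that point implicit.
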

\begin{proof}
The proof proceeds again by an application of Lemma \ref{lemma:marks2}. We have, for any bounded continuous $g: \mathbb{R}^2 \to \mathbb{R}$, 
\begin{align}
\E\Big\{ \exp\Big[ \sum_{\alpha} g(\phi^{\alpha} + \Delta(x_{\alpha}), h(x_{\alpha})) \Big] \Big\}&
 = \exp\Big\{ \int \Big[ e^{g(u+\Delta(x) , h(x)) } -1 \Big] e^{-m u} \de u\, \nu(\de x) \Big\} \nonumber \\
&= \exp\Big\{ \int \exp\Big[ e^{g(u, h(x))} -1  \Big] e^{m \Delta(x) } e^{-m u} \de u \,\nu(\de x) \Big\} \nonumber \\
&= \exp\Big\{ C_{\nu} \int \Big[ e^{g(u, h(x))} -1 \Big] e^{-m u} \de u \, \nu_{m}(\de x) \Big\}. \nonumber 
\end{align}
This completes the proof.
\end{proof}

\subsection{The 1RSB cavity recursion} 

Armed with this machinery, we return to the study of the cavity method.
Recall that the cavity method adds the $(n+1)^{th}$ spin to a 
 system comprising of $n$ spins (in our earlier analysis, we deleted a node, 
 but this is basically the same analysis), computes the effective field on the 
 $(n+1)^{th}$ spin in terms of the effective fields in the system comprising $n$ spins, 
 and finally imposes the constraint that these fields should have the same law in 
 the large system limit. 
 The definition of cavity fields and effective fields was given in 
 Eq.~\ref{eq:cavityfields}, and in Eq.~\eqref{eq:CavityRec1} we derived the recursion 
\begin{align}
&h_{n+1} = \sum_{i=1}^{n} u(W_{n+1,i}, h_{i \to n+1})+o_n(1),\\
&u(W,h) = \frac{1}{\beta} {\rm{atanh}} \Big[ \tanh(\beta W) \tanh(\beta h) \Big]\, . 
\label{eq:rs_approx}
\end{align}
%

As discused above, at low temperature, the approximate independence that
was used to derive the last equation no longer holds. As already discussed in the previous
chapter, within a $1$RSB phase, we expect the Gibbs measure $\mu$ to decompose (approximately) 
into the convex combination of $O(1)$ `pure states' $\mu^{\alpha}$.
Namely, letting $\{ +1,-1 \}^n = \uplus_{\alpha=1}^M\Omega_{\alpha}\uplus\Omega_0$
a partition of the set of spin configurations, we have 
 \begin{align*}
 \mu(\bsigma) &= \sum_{\alpha=1}^Mw_{\alpha}\, \mu^{\alpha}(\bsigma)+\eps(\bsigma)\, ,\;\;\;\;
  \sum_{\bsigma}|\eps(\bsigma)|=o_n(1)\, ,\\
 \mu^{\alpha}(\bsigma) & \equiv \frac{1}{Z_{\alpha}}\, e^{\beta H(\bsigma)}\bfone_{\bsigma\in \Omega_{\alpha}}
\,,\;\;\;\;\; Z^{\alpha} \equiv\sum_{\bsigma\in \Omega_\alpha}e^{\beta H(\bsigma)}\, ,
\;\;\;Z\equiv\sum_{\bsigma\in\{+1,-1\}^n}e^{\beta H(\bsigma)}\, .
\end{align*}
(Recall that $H(\bsigma) = \langle \bW, \bsigma \bsigma^{\sT} \rangle$ is the Hamiltonian.) 
Each component $\mu^{\alpha}$ is referred to as a `pure state' under the $1$RSB heuristic. 
   Under this decomposition, we have, $Z = \sum_{\alpha=1}^{M} Z^{\alpha}+o_n(1)$. Each pure state is expected to have the same energy on the macroscopic scale, so that $\log Z^{\alpha} - \log Z = O(1)$ for all $\alpha \in \{1, \cdots, M\}$. We expect that as $n \to \infty$, the number of pure states $M$ also diverges; further, we expect that a suitably re-scaled version of the ``pure-state" free energies $\{ \phi^{\alpha}=  \log Z^{\alpha} - a_n \}$ converges to a PPP with intensity $\exp(-m x) \de x$, for some $ 0\leq m  < 1$. 
   The PPP interpretation of the pure state free energies implies that $w_{\alpha} \propto \exp[ \phi^{\alpha}]$ is distributed as Poisson-Dirichlet distribution with parameter $m$. 

Within a $1$RSB heuristic we expect each $\mu^{\alpha}$ to be approximately product form.
More precisey, we expect that average correlations between $\sigma_i$ 
and $\sigma_j$ for fixed $i\neq j$ to be small when  $\bsigma\sim \mu^{\alpha}$.
Notice that we can define effective fields for $\mu^{\alpha}$ exactly as we did for $\mu$.
Namely, letting $\mu^{\alpha}_{n+1}$ be the $(n+1)$-th marginal of $\mu^{\alpha}$,
we introduce the parametrization 
 \begin{align*}
 \mu^{\alpha}_{n+1}(\sigma_{n+1}) = \frac{e^{\beta h^{\alpha}_{n+1}\sigma_{n+1}}}{2\cosh(\beta h^{\alpha}_{n+1}
 \sigma_{n+1})}\, .
 \end{align*}
Analogously, we denote the cavity fields within state $\alpha$ 
by  $\{ h^{\alpha}_{i \to n+1}\}_{i\le n}$. 
 We expect that the number of pure states $M\to \infty$ as $n \to \infty$, and 
we would like to  to study the distribution of $h_{i\to n+1}^{\alpha}$ for 
 a ``random" pure state $\alpha$. 
 This is an important difference with respect to the replica
  symmetric cavity method at this point: even for fixed disorder variables 
  $\{W_{ij}: 1\leq i,j \leq n \}$, $h^{\alpha}_{i \to n+1}$ has a different
   value across  pure states $\alpha$ and 
   we will describe this variability by a probability distribution  on the real
    line which we denote by $\onu_i$. 
Note that the distribution $\onu_i$ will in general depend on $i$ and the realization of 
       $\{W_{ij} : 1 \leq i , j \leq n \}$ variables:
       therefore $\onu_1,\dots,\onu_n$ are random probability distributions.
       Our objective will be to derive a distributional recursion for 
       their law.


When the $(n+1)^{th}$ spin is added to a system with $n$
spins, the free energy $\phi^{\alpha}\equiv \log Z^{\alpha}$ of each state $\alpha$ changes, and thus changes their 
relative weights $w_{\alpha}$.
Eventually we would like to focus on the effective fields in states with large 
free energy, and therefore we should keep track of these changes as well. 

We already computed the effective field on $\sigma_{n+1}$ in Section
\ref{sec:replica_symmetric_cavity}:  
\begin{align}\label{eq:CavityFieldUpdate}
h^{\alpha}_{n+1} &= \sum_{i=1}^{n} u(W_{n+1,i}, h^{\alpha}_{i \to n+1})+o_n(1) \equiv {\sf h}(\{h^{\alpha}_{i \to n+1} \})+o_n(1)\, .
\end{align}
A similar computation (under the same assumptions) allows one to compute the change in free energy:
\begin{align}
\Delta^{\alpha}&=\log \Big[ \frac{Z_{n+1}^{\alpha}(\beta_{n+1})}{Z_{n}^{\alpha}(\beta)} \Big]
\nonumber \\
 &=  \Delta_0  + \log 2 \cosh\Big[ \beta \sum_{i=1}^{n} u(W_{n+1,i}, h_{i \to n+1}^{\alpha}) \Big] 
- \sum_{i=1}^{n} \log \cosh \Big[ \beta u(W_{n+1,i}, h_{i \to n+1}^{\alpha}) \Big] \nonumber \\
&= \Delta_0+\Delta(\{ h^{\alpha}_{i \to n+1} \} ) +o_n(1)\, , \label{eq:delta}
\end{align}
 where the constant $\Delta_0\equiv \sum_{i\le n}\log\cosh\beta W_{i,n+1}$ is independent of
 the state $\alpha$ and hence will be irrelevant in the cavity recursion.
 
In words, the addition of the new spin transforms the weights of the pure states 
by shifting their free energies $\{\phi^{\alpha} \}$ to $\{ \Delta^{\alpha}+ \phi^{\alpha} \}$ .
The shifts $\Delta^{\alpha}$ depend on the cavity fields.
If we assume that these shifts are i.i.d. across $\alpha$, and that the distribution of
$\{\phi^{\alpha}\}$ is independent of $n$ up to a global shift, then
it follows that $\{\phi^{\alpha}\}$ must be a PPP with intensity $C e^{-mx}\de x$
(as we saw in the last section). Since a global shift in these free energies is immaterial, we can set 
$C=1$.

We denote by $x_{\alpha} = ( h^{\alpha}_{i \to n+1}: 1\leq i \leq n)$ the collection of cavity
fields and think of this quantity as
 a ``mark" taking values in $\reals^{n}$. 
 Following the above discussion, we obtain that 
 $\{(\phi^{\alpha},x_{\alpha}): \alpha\in\naturals\}$ (approximately for large $n$) is a PPP:
 \begin{align}
\{(\phi^{\alpha},x_{\alpha}): \alpha\in\naturals\}\sim  
\textrm{PPP}\big(e^{-mx}\de x  \otimes \prod_{i=1}^n \onu_i(\de h_i)\big)\, .
 \end{align}
Lemma \ref{lemma:marks3} 
 suggests that  
\begin{align}
\{\phi^{\alpha}+ \Delta^{\alpha} , h^{\alpha}_{n+1} \} &\sim 
\textrm{PPP}\big( C_{\nu} e^{-mx } \de x \otimes \nu_{m} \circ {\sf h}^{-1}\big), \\
\nu_m(\de x) &\equiv\frac{1}{C_{\nu}} e^{m \Delta(x)} \prod_{i=1}^n  \onu_i(\de x_i)\, ,
\;\;\;C_{\nu} \equiv\int e^{m \Delta(x)} \prod_{i=1}^n  \onu_i(\de x_i)\, .
\end{align}
Hence, in the system with $n+1$ spins $\{(\phi_{n+1}^{\alpha},h^{\alpha}_{n+1})\}$
is approximately distributed as a PPP with intensity $C_{\nu}e^{-mx}\de x \otimes \onu_{n+1}$,
consistently with our assumption at size $n$. Further $\onu_{n+1}$ satisfies,
for any $x\in\reals$,
\begin{align} 
&\onu_{n+1}(h_{n+1}\le x) \approx_n \frac{1}{Z^{\nu}_{n+1}} \int \mathbf{1}\Big( \sum_{i=1}^{n} u(W_{n+1,i}, h_{i \to n+1}) \le x \Big) \, e^{m\Delta(\{ h_{i \to n+1} \})}\,
  \prod_{i=1}^{n} \onu_i(\de h_{i \to n+1 })\, \label{eq:dist_recursion}
\end{align}
(Here $Z^{\nu}_{n+1}$ is a normalization constant and
we neglected errors due to finite $n$).

Note that this procedure specifies $\onu_{n+1}$ as a (complicated) function of the 
probability measures $\{\onu_i : i \leq n\}$. In the next section we will see that
this recursion can be further simplified for large $n$, but it is instructive
to step back and have a look at the mathematical structure that emerges from the 
1RSB cavity method. In settings that are more complicated than the SK model
(e.g. models on sparse graphs), this structure survives, and further
simplifications are not possible.

Denoting by ${\sf Pr}(\Omega)$
the space of probability measures over a Polish space $\Omega$, the above recursion 
\eqref{eq:dist_recursion} defines a 
 map
$U_n : ({{\sf Pr }}(\mathbb{R}))^n \to {{\sf Pr }}(\mathbb{R})$.  Now, assume that when 
viewed as a function of $\{W_{ij}: 1\leq i <j \leq n+1\}$ variables, $\onu_i$ are i.i.d. random 
measures with distribution $\cQ$.
We expect therefore that the law of $U_n(\onu_1, \cdots, \onu_n)$ is approximately 
$\cQ$, when $\{ \onu_i : i \leq n \}$ are i.i.d. samples from $\cQ$. In other words:
\begin{align}
\onu_{n+1}\ed U_n(\onu_1, \cdots, \onu_n)+o_n(1)\, .
\end{align}
Here $o_n(1)$ means that a suitable distance\footnote{As our derivation is heuristic, we will not
make the distance notion precise, but the mathematically minded reader can think, for instance of bounded Lipschitz distance.}
  between $\onu_{n+1}$ and 
$U_n(\onu_1, \cdots, \onu_n)$ vanishes as $n\to\infty$. 

Hence the 1RSB cavity method leads, in general, to a distributional recursion in the space
of probability measures ${\sf Pr}(\reals)$. Solving these recursion amounts to finding 
a distributional fixed point $\cQ$.

\subsection{Simplifying the cavity recursion} 

We emphasize that our discussion of the 1RSB cavity method above is quite general,
and does not depend too heavily on the features of the model. For instance,
the same argument can be carried out for spin glasses on sparse random graphs, 
with very minor modifications. 
 For the SK model, it is possible to simplify the distributional recursion
 \eqref{eq:dist_recursion}, by using a central limit theorem (CLT) heuristic.
  Throughout this calculation, we will
  write $A_n \approx_n B_n$ to mean that ${\sf dist}(A_n,B_n)\to 0$ for a suitable notion of distance,
  which we will not specify.

We begin by simplifying the free energy shift  
$\Delta(\{ h_{i} \}_{i\le n} )$ of Eq.~\eqref{eq:delta}.
Using $W_{n+1,j} = O(1/\sqrt{n})$, and the fact that, for  $x$ small, 
$\log\cosh (x) = x^2/2+O(x^4)$, and that $u(W,h)= W\tanh(\beta h) +O(n^{-3/2})$, we get 
\begin{align}
\Delta(\{ h_{i} \}_{i\le n} )&=
\log 2 \cosh\Big[ \beta \sum_{i=1}^{n} u(W_{n+1,i}, h_{i }) \Big] -
\sum_{i=1}^{n} \log \cosh \Big[ \beta u(W_{n+1,i}, h_{i}) \Big] \nonumber\\
&= \log 2 \cosh\Big[ \beta \sum_{i=1}^{n} W_{n+1,i}\tanh(\beta h_{i }) \Big]-
\frac{\beta^2}{2} \sum_{i=1}^{n} W_{n+1,i}^2 \big( \tanh(\beta h_{i}) \big)^2+O(n^{-1})\nonumber\\
 & =  \log 2 \cosh\Big[ \beta \sum_{i=1}^{n} u(W_{n+1,i}, h_{i }) \Big] + \tilde{\Delta}_n\,.
 \label{eq:DeltaApprox}
\end{align}
The term $\tilde{\Delta}_n$ is approximated by the sum of $n$ independent sub-Gaussian quantities,
each of order $1/n$, and hence will tightly concentrate around a constant. We can therefore
neglect it in the cavity recursion (as it will be cancelled by the normalization of
the measure $\onu_{n+1}$).

We next approximate the cavity field update \eqref{eq:CavityFieldUpdate} to get
\begin{align}
{\sf h}(\{h_{i} \}) =\sum_{i=1}^{n} W_{n+1,i}\tanh(\beta h_{i})+O(n^{-1})\,.
\end{align}
Using the last equation together with Eq.~\eqref{eq:DeltaApprox}, 
 we can 
replace Eq.~\eqref{eq:dist_recursion} by the following  distributional 
recursion
\begin{align}
\onu_{n+1} (h_{n+1}\le x) &\approx_n
\frac{1}{\tilde{Z}^{\nu}_{n+1}}\int \mathbf{1}\Big( \sum_{i=1}^{n} \tu_i(h_i) \le x \Big)
\,  \Big( 2 \cosh\Big[\beta \sum_{i=1}^n \tu_i(h_i) \Big] \Big)^m \prod_{i=1}^{n} \onu_i(\de h_{i} ), \label{eq:dist_fixed}\\
& \tu_i(h_i) \equiv W_{n+1,i} \tanh(\beta h_i) \, .\nonumber
\end{align}

Next, we analyze the limit of this distributional recursion
(it is important to remind ourselves that the distributions $\onu_{n+1}$ are themselves random).
Notice that  Eq.~\eqref{eq:dist_fixed} takes the form of the composition of three steps:
$(i)$ a change of variables from $h_i$ to $\tu_i(h_i)$; $(ii)$~a convolution;
$(iii)$ a reweighting by the factor $(2\cosh(\cdots ))^m$. To emphasize this structure,
we write
\begin{align}
\onu_{n+1} (\de h) &\approx_n \frac{1}{\tilde{Z}^{\nu}_{n+1}} \big(2\cosh\beta h\big)^m
P_{n+1}(\de h)\, ,\\
P_{n+1} &= \mbox{Law}(Y_{n+1})\, ,\;\;\;  Y_{n+1}\equiv \sum_{i=1}^{n} W_{n+1,i} \tanh(\beta h_i)\, .
\end{align}
Conditional on $\bW$ (and therefore on the measures $\onu_1,\dots,\onu_n$), 
$Y_{n+1}$ is the sum of $n$ independent terms, each of order $1/\sqrt{n}$. We can therefore
hope to apply the CLT and conclude that
\begin{align}
P_{n+1}& \approx_n \normal(\oh_{n+1}, \sigma_{n+1}^2)\, ,\\
\oh_{n+1}  & = \sum_{i=1}^{n} W_{n+1,i} \E_{\onu_i}\big\{\tanh (\beta h_i)\big\}\, ,
\label{eq:RecOH}\\
\sigma_{n+1}^2 & = \frac{1}{n} \sum_{i=1}^{n} {{\rm Var }}_{\onu_i}\big\{\tanh(\beta h_i)\big\}\, .
\label{eq:RecSigma}
\end{align}
We therefore conclude that, for large $n$, the measures $\onu_i$, $i\le n+1$,
 are well approximated by a two parameter family
\begin{align}
\onu_i(\de h_i) \approx_n \frac{1}{\hat\cZ^{\nu}_i}\,
(2\cosh \beta h_i)\, \phi_G(h_i;\oh_i,\sigma^2_i)\, \de h_i\,  ,\label{eq:1RSBOnu}
\end{align}
where $\phi_G(x;a,v^2)$ denotes the Gaussian density with mean $a$ and variance $v^2$.
The parameters $\oh_i$ and $\sigma^2_i$ satisfy the recursion
of Eqs.~\eqref{eq:RecOH}, \eqref{eq:RecSigma}. 

The above calculation derives a complete description of the 
variability of cavity fields with respect to
the pure state, \emph{for a given vertex $i$ and a given realization of $\bW$}.
We next turn our attention to the distribution of the pair $(\oh_i,\sigma^2_i)$ with respect
to randomness $\bW$. 

We begin by considering Eq.~\eqref{eq:RecSigma}.
The measures $\{\onu_i: i \leq n\}$ are i.i.d. with common law  
$\cQ$. Equivalently, because of Eq.~\eqref{eq:1RSBOnu}, we  can think of
$\cQ$ as the common distribution of  $(\oh_i,\sigma_i^2)$.
 Thus using a law of large numbers, we expect as $n \to \infty$,
\begin{align}
\sigma_n^2 \to \sigma^2 : = \E_{\nu \sim  \cQ} \Big[  {{\rm Var }}_{h \sim \nu}[\tanh(\beta h)] \Big]. \nonumber 
\end{align}

Next consider Eq.~\eqref{eq:RecOH}.
Again, since the measures $\{\onu_i: i \leq n\}$ are i.i.d. with common law  
$\cQ$, and each term on right-hand side is of order $1/\sqrt{n}$ (because of the factor $W_{n+1,i}$),
we can apply the central limit theorem, and conclude that 
$\oh_{n+1}$ is approximately Gaussian with mean $0$ and
variance $\tau_n^2 = \frac{1}{n} \sum_{i=1}^{n} \Big( \E_{\onu_i} [ \tanh(\beta h_i)] \Big)^2$. 
We thus conclude that $\cQ$ (which we now think as a distribution over $(\oh_i,\sigma^2)$)
has a particularly simple form:
\begin{align}
\cQ(\de\oh_i,\de\sigma_i^2) \approx_n \phi_G(\oh_i;0,\tau^2)\de\oh_i\otimes \delta_{\sigma^2}\,,
\end{align}
where $\tau^2$, $\sigma^2$ are deterministic parameters that satisfy
the following equations:
\begin{align}
\sigma^2 & =  \E_{\oh\sim\normal(0,\tau^2)} 
\Big[  {{\rm Var }}_{h \sim \onu_{\oh}}[\tanh(\beta h)] \Big]\, ,\label{eq:FixedPt1RSB-A}\\
\tau^2 & = \E_{\oh\sim\normal(0,\tau^2)} \Big[ \Big( \E_{h \sim \onu}[\tanh(\beta h) ]\Big)^2 \Big]\, ,\label{eq:FixedPt1RSB-B}\\
\onu_{\oh}(\de h)&\equiv \frac{1}{\cZ(\oh)}\,
(2\cosh \beta h)\, \phi_G(h;\oh,\sigma^2)\, \de h\,  .
\end{align}
In other words, the distribution of the cavity field $h$ in 
these equations takes the form $h = \tau\,  G_0 + \sigma Z_1$, 
where  $G_0\sim \normal(0,1)$, and conditional on $G_0$,  $Z_1\sim \onu_{\tau G_0}$. 
We can write expectation with respect to $Z_1$ in terms of expectation
with respect to $G_1\sim\normal(0,1)$ independent of $G_0$, via a change of measure
\begin{align}
\E_{Z_1}[f(G_0,Z_1)] = \frac{\E_{G_1} [ f(G_0, G_1 ) ( 2 \cosh ( \beta [\tau G_0 + \sigma G_1] ) )^m ]  }{ \E_{G_1} [ (2 \cosh(\beta [ \tau G_0 + \sigma G_1 ]))^m]}. \nonumber 
\end{align}
Using this representation and  defining $q_0\equiv \tau^2$, $q_1\equiv \tau^2+\sigma^2$,
we can rewrite Eqs.~\eqref{eq:FixedPt1RSB-A}, \eqref{eq:FixedPt1RSB-B} as
\begin{align}
q_0 &= \E_{G_0} \Big[ \Big( \frac{\E_{G_1}[\tanh(\beta h) (2 \cosh (\beta h) )^m]}{ \E_{G_1}[ (2 \cosh (\beta h ))^m ] } \Big)^2 \Big], \\ 
q_1 &= \E_{G_0} \Big[ \frac{\E_{G_1}[ (2 \cosh( \beta h))^m (\tanh(\beta h))^2 ]}{ \E_{G_1} [ (2 \cosh(\beta h) )^m ]} \Big], \\
& h = \sqrt{q_0} G_0 + \sqrt{q_1 - q_0} G_1\, .
\end{align}
These coincide exactly with  the stationary point conditions for the $1$RSB free energy functional
derived in Section \ref{sec:kRSB-Replica}.

We note that a similar cavity analysis can be carried out for $k$-steps replica 
 symmetry breaking. The key idea is that now the decomposition in states has a 
 hierarchical structure with states clustered in ancestral states, and so on.
We will describe this hierarchical structure in section \ref{sec:SKrigorous}.

\subsection{Computing the free energy density}
\label{sec:1RSB_Cavity_Free}
The free energy density may also be computed using the cavity method,
under a $1$RSB scenario.
As in the  replica symmetric case, the starting point is to write the 
asymptotic free energy density as
\begin{align}
\phi(\beta) & = \lim_{n\to\infty}A_n\, ,\;\;\; A_n\equiv
\E \Big[ \log\frac{Z_{n+1}(\beta)}{Z_n(\beta)}  \Big]\, ,\\
A_n &= \E\Big[\log  \frac{Z_{n+1}(\beta_{n+1})}{Z_{n}(\beta)} \Big] -  
\E\Big[\log \frac{Z_{n+1}(\beta_{n+1})}{Z_{n+1}(\beta)} \Big] \label{eq:partition_intermediate} \\
& \equiv \E\{B_{n}^{(1)}\}- \E\{B_{n}^{(2)}\}\, . 
\end{align}
for $\beta_{n+1} = \beta\sqrt{n+1/n}$.  The first term captures the effect 
of adding a spin going from $n$ to $n+1$ spins, and the second is the effect of rescaling 
the temperature on the original $n$ spins. (Note that the change  is negligible for what concerns the 
couplings between $\sigma_{n+1}$ and $\sigma_i$, $i\le n$.)

Let us start from the first term, that is the effect of adding a spin.
We already saw in the previous section that this has the effect of shifting the free energy 
of pure states from $\phi^{\alpha}$ to $\phi^{\alpha}+\Delta^{\alpha}$, where, by 
Eq.~\eqref{eq:delta},
\begin{align}
\Delta^{\alpha}&= \Delta_0+\Delta(\{ h^{\alpha}_{i \to n+1} \} ) +o_n(1)
\end{align}
Further, by the Taylor expansion $\log\cosh(x) = x^2/2+O(x^4)$ and the law of large
numbers:
\begin{align}
\Delta_0\equiv \sum_{i\le n}\log\cosh\beta W_{i,n+1} =\frac{\beta^2}{2}+O(n^{-1})\,.
\end{align}
Since the total partition function is the sum of partition functions for each 
pure state, we have
\begin{align}
B^{(1)}_n = \log\Big[\sum_{\alpha}e^{\phi^{\alpha}+\Delta^{\alpha}}\Big]
- \log\Big[\sum_{\alpha}e^{\phi^{\alpha}}\Big] \, .
\end{align}
Under the 1RSB assumption, the shifts $\Delta^{\alpha}$ are independent of the free energies 
$\phi^{\alpha}$.
We saw in Lemma \ref{lemma:marks1}
that (asymptotically)  $\{\phi^{\alpha}+\Delta^{\alpha}\}_{\alpha\in\naturals}$ are distributed as 
$\{\tilde{\phi}^{\alpha}+x_0\}_{\alpha\in\naturals}$ where 
$\{\tilde{\phi}^{\alpha}\}$ is a copy of the process $\{\phi^{\alpha}\}$,
and $x_0\equiv m^{-1}\log \E_{s}[e^{m\Delta}]$, where $\E_{s}$ denotes expectation with 
respect to the pure states (at given disorder.)
 Therefore
\begin{align*}
B^{(1)}_n &\approx_n 
\log\Big[\sum_{\alpha}e^{\tilde\phi^{\alpha}}\Big] +\frac{1}{m}\log \E_s[e^{m\Delta}]- \log\Big[\sum_{\alpha}e^{\phi^{\alpha}}\Big] \\
& \approx_n \frac{\beta^2}{2}+\frac{1}{m}\log \E_{s}\exp\big\{m\Delta(\{ h_{i \to n+1} \})\big\}+
\tilde{F}-F\, .
\end{align*}
Here $\tilde F$ and $F$ are identically distributed random variables. Therefore
\begin{align*}
\E\{B^{(1)}_n\} &\approx_n
\frac{\beta^2}{2}+\frac{1}{m}\E \log \E_{s}\exp\big\{m\Delta(\{ h_{i \to n+1} \})\big\}, .
\end{align*}
Recall that by the approximations in Eq.~\eqref{eq:DeltaApprox}, we have
\begin{align*}
\Delta(\{ h_{i} \}_{i\le n} )&=
\log 2 \cosh\Big[ \beta \sum_{i=1}^{n} W_{n+1,i}\tanh(\beta h_{i }) \Big]-
\frac{\beta^2}{2} \sum_{i=1}^{n} W_{n+1,i}^2 \big( \tanh(\beta h_{i}) \big)^2+O(n^{-1})\\
&=\log 2 \cosh\Big[ \beta \sum_{i=1}^{n} W_{n+1,i}\tanh(\beta h_{i }) \Big]-
\frac{\beta^2}{2n} \sum_{i=1}^{n} \E_{h_i\sim \onu_i}\big\{\big( \tanh(\beta h_{i}) \big)^2
\big\}+o_n(1)
\end{align*}
Further, by the CLT  $\sum_{i=1}^{n} W_{n+1,i}\tanh(\beta h_{i })$ 
is approximately Gaussian, with variance $(q_1-q_0)$ and mean 
$\oh_{n+1}: =  \sum_{i=1}^{n} W_{n+1,i}\E_{\onu_i}\tanh(\beta h_{i })$.
We therefore conclude that
\begin{align*}
\E\{B^{(1)}_n\} \approx_n&
\frac{\beta^2}{2}- \frac{\beta^2}{2}\E_{\onu\sim \cQ}\E_{h\sim \onu}\{\big( \tanh(\beta h) \big)^2\}\\
&+\frac{1}{m}\E_{\oh_{n+1}}\log \E_{G_1\sim\normal(0,1)}
\Big\{\big(2\cosh\beta(\oh_{n+1}+\sqrt{q_1-q_0}G_1)\big)^m
\Big\}\, .
\end{align*}
Finally,using the 1RSB fixed point equations for the first term, and recalling that
$\oh_{n+1}$ is approximately $\normal(0,q_0)$,  we get
\begin{align}
\E\{B^{(1)}_n\} &\approx_n
\frac{\beta^2}{2}(1-q_1)+
\frac{1}{m}\E_{G_0}\log \E_{G_1\sim\normal(0,1)}
\Big\{\big(2\cosh\beta(\sqrt{q_0}+\sqrt{q_1-q_0}G_1)\big)^m
\Big\}\, .\label{eq:B1}
\end{align}
Here it is understood that $G_0,G_1\sim\normal(0,1)$ independent.

We next consider term $B^{(2)}_n$. By Eq.~\eqref{eq:T2} and calculations thereafter 
we have%
\begin{align}
B^{(2)}_n= \frac{\beta}{2n^{3/2}} \sum_{ i <j } \E\Big[ \frac{\partial}{\partial \tilde{W}_{ij}} \sum_{\sigma} \mu(\bsigma) \sigma_i \sigma_j \Big]
= \frac{\beta^2}{2n^2} \sum_{i<j} \E \Big[ 1 - \Big( \sum_{\bsigma} \mu(\bsigma) \sigma_i \sigma_j \Big)^2 \Big]. \nonumber 
\end{align}
Introducing replicas and recalling the notation $\<\,\cdot\,\>_{\mu}$
for expectation with respect to the Gibbs measure $\mu^{\otimes 2}$, we have
\begin{align*}
\frac{1}{n^2} \sum_{i,j} \Big( \<\sigma_i \sigma_j\>_{\mu} \big)^2& =
 \frac{1}{n^2} \sum_{i,j}  \<\sigma_i^1 \sigma_j^1 \sigma_i^2 \sigma_j^2 \>_{\mu}
 =\<Q_{12}^2\>_{\mu}, \nonumber 
\end{align*}
where, as usual $Q_{12}\equiv \<\bsigma^2,\bsigma^2\>/n$.
Therefore
\begin{align}
\E\{B^{(2)}_n\}&\approx_n \frac{\beta^2}{4} \Big( 1 - \E\{\<Q_{12}^2\>_{\mu}\} \Big)\nonumber\\
& \approx_n\frac{\beta^2}{4} \Big( 1 - \int q^2\,\rho(\de q)\Big)\label{eq:B2}\\
& \approx_n \frac{\beta^2}{4} \big( 1 - mq_0^2-(1-m)q_1^2\big)\, .\nonumber
\end{align}
Putting together Eqs.~\eqref{eq:B1} and \eqref{eq:B2}, we obtain
the 1RSB prediction $\phi=\Psi_{\sRSB{1}}(q_0,q_1,m)$, where
\begin{align*}
\Psi_{\sRSB{1}}(q_0,q_1,m)=&
\frac{\beta^2}{2}(1-q_1)+\frac{\beta^2}{4} \big( 1 - mq_0^2-(1-m)q_1^2\big)\\
&+
\frac{1}{m}\E_{G_0}\log \E_{G_1\sim\normal(0,1)}
\Big\{\big(2\cosh\beta(\sqrt{q_0}+\sqrt{q_1-q_0}G_1)\big)^m
\Big\}\, .
\end{align*}
This is easily seen to coincide with the formula derived in Section \ref{sec:kRSB-Replica}
using the replica method.

\section{Rigorous bounds via interpolation}
\label{sec:SKrigorous}

A complete proof of  the Parisi formula for the Sherrington-Kirkpatrick model
goes beyond the scope of these notes, and we refer to the textbook
by Panchenko \cite{panchenko2013sherrington} for an in-depth treatment. 
We will limit ourselves to stating and proving two key results on the path towards a complete proof:
the existence of the limit of the free energy density, and the fact that 
the $k$RSB free energy functional provides an upper bound on the actual 
free energy density. 
Both results are proved via a surprising interpolation argument,
introduced in \cite{guerra2002thermodynamic,guerra2003broken}.
Our presentation will differ from the original one, in that
it will be based on the construction of Ruelle Probability Cascades,
as first introduced in \cite{aizenman2003extended}. This allows for a more transparent argument.

We next state these results, and will devote the rest of this section to their proof.
\begin{theorem}[Thermodynamic limit \cite{guerra2002thermodynamic}]\label{thm:thermodynamic}
Let $Z_n(\beta)$ denote the partition function \eqref{eq:partition}, with $\lambda=0$. 
As $n \to \infty$, $n^{-1}\log Z_n(\beta)$ converges almost surely to a deterministic constant. 
\end{theorem}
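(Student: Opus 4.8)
The plan is to run the Guerra--Toninelli interpolation \cite{guerra2002thermodynamic}: first show that $n\mapsto\E\log Z_n(\beta)$ is superadditive, deduce via Fekete's lemma that $n^{-1}\E\log Z_n(\beta)$ converges to a finite limit $\phi(\beta)$, and finally upgrade this to almost sure convergence of $n^{-1}\log Z_n(\beta)$ using Gaussian concentration. Throughout we use that the SK Hamiltonian $H_n(\bsigma)=\tfrac{\beta}{2}\langle\bsigma,\bW\bsigma\rangle$, $\bW\sim\GOE(n)$, is a centered Gaussian process whose covariance is $C_n(\bsigma^1,\bsigma^2)=\tfrac{\beta^2 n}{2}R_{12}^2$, where $R_{12}=n^{-1}\langle\bsigma^1,\bsigma^2\rangle$ is the overlap; in particular $C_n(\bsigma,\bsigma)=\tfrac{\beta^2 n}{2}$ does not depend on $\bsigma$.

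\emph{Step 1 (superadditivity).} Fix $n=n_1+n_2$ and identify $\{\pm1\}^n$ with $\{\pm1\}^{n_1}\times\{\pm1\}^{n_2}$, writing $\bsigma=(\bsigma^{(1)},\bsigma^{(2)})$. Let $H^{(1)}_{n_1}$, $H^{(2)}_{n_2}$ be independent copies of the size-$n_1$, size-$n_2$ SK Hamiltonians at inverse temperature $\beta$ on the two blocks, independent of $H_n$, and for $t\in[0,1]$ set
\[
H_t(\bsigma)=\sqrt{t}\,H_n(\bsigma)+\sqrt{1-t}\,\bigl(H^{(1)}_{n_1}(\bsigma^{(1)})+H^{(2)}_{n_2}(\bsigma^{(2)})\bigr),\qquad\varphi(t)=\E\log\!\sum_{\bsigma}e^{H_t(\bsigma)} .
\]
Then $\varphi(1)=\E\log Z_n(\beta)$ and $\varphi(0)=\E\log Z_{n_1}(\beta)+\E\log Z_{n_2}(\beta)$. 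Differentiating and applying Gaussian integration by parts gives $\varphi'(t)=\tfrac12\E\langle\dot C_t(\bsigma,\bsigma)\rangle_t-\tfrac12\E\langle\dot C_t(\bsigma^1,\bsigma^2)\rangle_t$, where $C_t$ is the covariance of $H_t$, $\dot C_t=\partial_tC_t$, and $\langle\cdot\rangle_t$ is the average with respect to the two-replica Gibbs measure built from $H_t$. The variances of $H_n$ and of $H^{(1)}_{n_1}+H^{(2)}_{n_2}$ coincide (both equal $\tfrac{\beta^2 n}{2}$), so $\dot C_t(\bsigma,\bsigma)=0$ and the first term drops out. For the second, writing $R^{(\ell)}_{12}$ for the block overlaps and using $R_{12}=\tfrac{n_1}{n}R^{(1)}_{12}+\tfrac{n_2}{n}R^{(2)}_{12}$, one finds
\[
\dot C_t(\bsigma^1,\bsigma^2)=\tfrac{\beta^2 n}{2}\Bigl(R_{12}^2-\tfrac{n_1}{n}(R^{(1)}_{12})^2-\tfrac{n_2}{n}(R^{(2)}_{12})^2\Bigr)\le 0
\]
pointwise, by convexity of $x\mapsto x^2$ (Jensen with weights $n_1/n,n_2/n$). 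Hence $\varphi'(t)\ge0$, so $\E\log Z_n(\beta)=\varphi(1)\ge\varphi(0)=\E\log Z_{n_1}(\beta)+\E\log Z_{n_2}(\beta)$.

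\emph{Steps 2--3 (existence and almost sure convergence).} Since $a_n:=\E\log Z_n(\beta)$ is superadditive, Fekete's lemma gives $n^{-1}a_n\to\sup_m m^{-1}a_m=:\phi(\beta)$, and this limit is finite because $a_n\le\log\E Z_n(\beta)=n(\log2+\beta^2/4)$ by Jensen, while $a_n\ge\E H_n(\bfone)=0$. For the almost sure statement, the map $\bG\mapsto\log Z_n(\beta)$ (with $\bW=(\bG+\bG^{\sT})/\sqrt{2n}$) is $O(\sqrt n)$-Lipschitz, so Gaussian concentration (Theorem~\ref{thm:GaussianConcentration}) yields $\prob\bigl(|\log Z_n(\beta)-\E\log Z_n(\beta)|\ge t\bigr)\le 2e^{-t^2/(n\beta^2)}$; taking $t=\delta n$ makes the right-hand side summable in $n$ for every $\delta>0$, so Borel--Cantelli gives $n^{-1}\log Z_n(\beta)-n^{-1}\E\log Z_n(\beta)\to0$ almost surely (intersect the events over $\delta=1/k$, $k\in\naturals$, to remove the dependence on $\delta$). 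Combining with Step 2, $n^{-1}\log Z_n(\beta)\to\phi(\beta)$ almost surely, as claimed.

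\emph{Main obstacle.} The only delicate point is Step 1: carrying out the Gaussian integration by parts cleanly, verifying that the self-overlap ($R_{11}=1$) contributions cancel \emph{exactly} because the two variances match, and identifying the total overlap as the $n_\ell/n$-weighted average of block overlaps so that convexity of $x\mapsto x^2$ pins down the sign of $\varphi'$. Everything afterwards (Fekete, the annealed upper bound, Borel--Cantelli) is soft analysis.
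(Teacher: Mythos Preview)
Your proof is correct and follows exactly the approach outlined in the paper: Gaussian concentration plus Borel--Cantelli reduces the problem to convergence of $n^{-1}\E\log Z_n(\beta)$, which is then obtained from Fekete's lemma after proving superadditivity via the Guerra--Toninelli interpolation between the $(n_1+n_2)$-system and two independent blocks. Your Step~1 fills in precisely the details the paper leaves implicit---the variance matching that kills the diagonal term and the Jensen/convexity argument for the sign of $\varphi'(t)$---and the remaining steps are identical to the paper's outline.
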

\begin{theorem}[Replica symmetry breaking upper bound \cite{guerra2003broken}]
\label{thm:guerra_upper}
With the same notations as in the previous theorem
\begin{align}
\phi := \lim_{n \to \infty} \frac{1}{n} \log Z_n(\beta) \leq \inf_{k,\mathbf{q}, \mathbf{m}} \Psi_{k\rm{RSB}}(\mathbf{q}, \mathbf{m}), \nonumber
\end{align}
where $\Psi_{k\rm{RSB}}(\cdot, \cdot)$ is the $k$RSB free energy functional \eqref{eq:k-RSBfreeenergyfunctional}.
\end{theorem}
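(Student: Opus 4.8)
The plan is to prove Theorem \ref{thm:guerra_upper} via the Guerra interpolation bound, reorganized through Ruelle Probability Cascades (RPCs) following \cite{aizenman2003extended}. First I would fix a $k$RSB order parameter $(\bq,\bm)$ with $0 = m_{-1}\le m_0\le \cdots\le m_{k-1}\le m_k = 1$ and $0\le q_0\le\cdots\le q_k\le 1$, and construct the associated RPC: a random probability measure $(w_{\alpha})_{\alpha\in\naturals^k}$ on the leaves of an infinite $k$-level tree, where the weights are obtained from independent PPPs with intensities $x^{-1-m_j}\,\de x$ at level $j$, normalized hierarchically. Attached to each node of the tree I would place independent centered Gaussians whose variances are the increments $q_{j+1}-q_j$, so that to each leaf $\alpha$ there corresponds a Gaussian field with covariance structure $\E[g_\alpha g_{\alpha'}] = q_{\alpha\wedge\alpha'}$ encoding the ultrametric overlap. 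The key algebraic fact I would record (this is standard for RPCs, e.g. \cite{panchenko2013sherrington}) is that averaging $\log$ of RPC-weighted exponentials of such fields reproduces exactly the recursive functional $F(\bq,\bm)$ of Eq.~\eqref{eq:RecursiveFirst}, and more generally the Parisi-type recursion through the layers.

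Next I would set up the interpolation. For $t\in[0,1]$ define an interpolating Hamiltonian on $\bsigma\in\{+1,-1\}^n$ coupling the original SK field $H(\bsigma) = \frac{\beta}{2}\sum_{ij}W_{ij}\sigma_i\sigma_j$ (scaled by $\sqrt{t}$) with an independent RPC-driven field $\sqrt{1-t}\,\sum_i g_{\alpha,i}\sigma_i$ plus a correcting $\sqrt{1-t}$ scalar term, where $g_{\alpha,i}$ are i.i.d.\ copies (over $i$) of the RPC Gaussian field above, with suitably chosen variances built from $\bq$. Let
\begin{align}
\varphi(t) = \frac{1}{n}\,\E\log \sum_{\bsigma}\int w_{\alpha}\, \exp\Big\{\sqrt{t}\,\beta\!\sum_{i<j}\tilde W_{ij}\sigma_i\sigma_j/\sqrt{n} + \sqrt{1-t}\,\sum_i g_{\alpha,i}\sigma_i + \text{(correction)}\Big\}\,.\nonumber
\end{align}
Then $\varphi(1)$ is (up to lower-order terms) the SK free energy $\Phi_n(\beta)$, while $\varphi(0)$, by the RPC computation above, equals $\Psi_{k\rm{RSB}}(\bq,\bm)$ exactly. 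The heart of the argument is differentiating $\varphi$ in $t$: using Gaussian integration by parts on both the $\tilde W$ and the $g_{\alpha,i}$ pieces, one gets $\varphi'(t)$ expressed in terms of the Gibbs average (over two replicas $\alpha,\alpha'$ on the interpolating measure) of a quantity of the form $(\langle\bsigma^1,\bsigma^2\rangle/n - q_{\alpha^1\wedge\alpha^2})^2$ multiplied by a function that, by the monotonicity $q_0\le\cdots\le q_k$ and the ultrametric structure of the RPC overlaps, has a definite sign. This yields $\varphi'(t)\le 0$, hence $\varphi(1)\le\varphi(0)$, i.e.\ $\Phi_n(\beta)\le \Psi_{k\rm{RSB}}(\bq,\bm) + o_n(1)$. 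Taking $n\to\infty$ and then the infimum over $k,\bq,\bm$ gives the theorem. The existence of the limit (Theorem \ref{thm:thermodynamic}) would follow from a companion superadditivity argument, interpolating between a system of $n_1+n_2$ spins and two independent subsystems.

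The main obstacle I expect is the bookkeeping in the derivative computation: one must verify that after the two Gaussian integrations by parts, all terms not of the manifestly-signed `$(\text{overlap}-q)^2$' type either cancel or combine, and that the sign is controlled by the ordering of the $q_j$ together with the fact that RPC overlaps $q_{\alpha\wedge\alpha'}$ take values precisely in $\{q_0,\dots,q_k\}$ with the hierarchical probabilities $m_{j}-m_{j-1}$. Concretely one needs the identity $\E[\sum_\alpha w_\alpha f(g_\alpha)] $ and its two-replica analogue $\E\langle f(q_{\alpha^1\wedge\alpha^2})\rangle$ expressed via the RPC; these are exactly the `cavity' invariance properties established through Lemmas \ref{lemma:marks1}--\ref{lemma:marks3}. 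A secondary technical point is handling the diagonal terms ($i=j$) and the difference between $\sum_{i<j}$ and $\frac12\sum_{i,j}$, which contributes only $O(1)$ to $\log Z_n$ and hence vanishes after dividing by $n$; I would dispatch this at the outset so it does not clutter the interpolation. Everything else — the PPP constructions, the Cole–Hopf/recursive evaluation of $\varphi(0)$, and the convexity/continuity facts about $\Psi_{k\rm{RSB}}$ — is available from the material already developed in Sections \ref{sec:kRSB-Replica} and \ref{sec:Poisson}.
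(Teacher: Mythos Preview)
Your proposal is correct and follows essentially the same route as the paper: RPC weights $\nu_\alpha$, an interpolating Hamiltonian $H_t(\bsigma,\alpha)=\sqrt{t}\,H(\bsigma)+\sqrt{t}\sum_i s_{\alpha,i}+\sqrt{1-t}\sum_i h_{\alpha,i}\sigma_i$, Gaussian integration by parts (Lemma~\ref{lemma:integrationbyparts}), and the identification of the endpoints with $\Psi_{\sRSB{k}}$ via Lemma~\ref{lemma:representation_rpc2}. One small correction: the auxiliary field $s_\alpha$ (covariance $\tfrac{\beta^2}{2}q_{|\alpha^1\wedge\alpha^2|}^2$) is scaled by $\sqrt{t}$, not $\sqrt{1-t}$, and the sign of $\varphi'(t)=-\tfrac{\beta^2}{4}\E\,\mu_t^{\otimes 2}\big[(Q_{12}-q_{|\alpha^1\wedge\alpha^2|})^2\big]$ comes directly from the perfect square (using $q_k=1$ so the self-term vanishes) --- no appeal to monotonicity of the $q_j$ is needed for the sign.
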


In the next subsection we provide an outline of the proof of Theorem
\ref{thm:thermodynamic}. The rest of this section is devoted to the proof of Theorem 
\ref{thm:guerra_upper}.

\subsection{Thermodynamic limit: proof outline}

First recalling $\bW= (\bG+ \bG^{\sT})/\sqrt{2n}$, for $(G_{ij})_{i,j\le n}\sim^{iid}\normal(0,1)$,
we have (omitting for simplicity the dependence on $\beta$)
\begin{align}
Z_n =\sum_{\bsigma\in\{+1,-1\}^n}\exp\Big\{\frac{\beta}{\sqrt{2n}}\<\bsigma,\bG\bsigma\>\Big\}\,.
\end{align}
It is then easy to show that $\bG\mapsto n^{-1}\log Z_n(\beta)$ is a Lipschitz function,
with Lipschitz constant $c_0\beta/\sqrt{n}$, and therefore by Gaussian
concentration (Theorem \ref{thm:GaussianConcentration}), we have
\begin{align}
\prob\Big\{\Big|\frac{1}{n}\log Z_n-\frac{1}{n}\E\log Z_n\Big|\ge t\Big\}
\le 2\, e^{-Cnt^2/\beta}\, .
\end{align}
It is therefore sufficient to prove that $n^{-1} \E\log Z_n$ has a limit
(by Borel-Cantelli).

The existence of a limit follows from sub-additivity. Recall the following
 basic analysis lemma \cite{fekete1923verteilung}. 
\begin{lemma}
Let $(a_n)_{n\in\naturals}$ be a sequence of real numbers, satisfying, for all $n_1$, 
$n_2$:
\begin{align}
a_{n_1+n_2}\ge a_{n_1}+a_{n_2}\, .
\end{align}
Then the limit $\lim_{n\to\infty}a_n/n$ exists (eventually equal to $\sup_{n} a_n/n$, which could be $\infty$).
\end{lemma}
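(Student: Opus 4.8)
The statement is the classical Fekete lemma, here phrased for superadditive sequences (the inequality $a_{n_1+n_2}\ge a_{n_1}+a_{n_2}$), so the plan is to prove $\lim_{n\to\infty} a_n/n$ exists and equals $\sup_{n\ge 1} a_n/n$, allowing the value $+\infty$. Set $L:=\sup_{n\ge 1} a_n/n\in(-\infty,+\infty]$. The key is the standard two-sided squeeze: $\limsup_n a_n/n \le L$ holds trivially by definition of the supremum, so the entire content is the reverse inequality $\liminf_n a_n/n \ge L$.

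First I would treat the case $L<\infty$; the case $L=\infty$ is handled by the same argument applied to each finite level below $L$. Fix $\eps>0$ and choose $m\ge 1$ with $a_m/m > L-\eps$. For an arbitrary $n\ge m$, write $n = qm + s$ with $q\ge 1$ an integer and $0\le s < m$ (Euclidean division). Iterating superadditivity gives $a_n = a_{qm+s}\ge a_{qm}+a_s \ge q\,a_m + a_s$, where the second step again uses superadditivity $q-1$ times on the block $qm = m+m+\cdots+m$. Dividing by $n$,
\begin{align}
\frac{a_n}{n} \ge \frac{q\,a_m}{n} + \frac{a_s}{n} = \frac{qm}{n}\cdot\frac{a_m}{m} + \frac{a_s}{n}.
\end{align}
As $n\to\infty$ with $m$ fixed, $q\to\infty$ and $qm/n\to 1$ (since $0\le n-qm<m$), while $a_s/n\to 0$ because $s$ ranges over the finite set $\{0,1,\dots,m-1\}$ so $a_s$ is bounded. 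Hence $\liminf_{n\to\infty} a_n/n \ge a_m/m > L-\eps$. Since $\eps>0$ was arbitrary, $\liminf_n a_n/n \ge L$, and combined with $\limsup_n a_n/n\le L$ we get $\lim_n a_n/n = L = \sup_n a_n/n$.

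For the case $L=+\infty$: given any $K\in\reals$, pick $m$ with $a_m/m > K$ and run the identical block-decomposition estimate to conclude $\liminf_n a_n/n \ge a_m/m > K$; letting $K\to\infty$ shows $a_n/n\to\infty = L$. I do not expect any real obstacle here — the only points needing a line of care are (i) that one may need $a_s$ defined for $s=0$, which is handled by either adopting the convention $a_0=0$ or simply restricting to $n$ with $s\ge 1$ and noting the finitely many excluded residues do not affect the $\liminf$, and (ii) making sure the iteration of superadditivity is stated cleanly (a trivial induction on the number of summands). Everything else is the routine squeeze above.
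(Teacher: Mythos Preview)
Your proof is correct and is the standard argument for Fekete's lemma. The paper does not actually prove this statement: it simply recalls it as a ``basic analysis lemma'' with a citation to Fekete, so there is no paper proof to compare against.
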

The key step is then to prove the following inequality
%
%
\begin{align}
\E\log Z_{n_1+n_2}\ge \E\log Z_{n_1}+\E\log Z_{n_2}\, .
\end{align}
Physically, this means that a system of $n_1+n_2$ spins has (expected) free energy that is 
smaller or equal than the sums of free energies of a system of $n_1$ 
spins and a system of $n_2$ spins.

The proof of this inequality is obtained by defining a Hamiltonian 
$H_t(\bsigma)$, $\bsigma\in\{+1,-1\}^n$ that continuously interpolates between
the two cases (two separate systems and a single one), and showing that the free energy 
is monotone along this path.

\subsection{Proof of the replica symmetric upper bound}
\label{sec:RS-Upper-Bound}

In this subsection, we prove the Guerra upper bound, Theorem \ref{thm:guerra_upper},
in the $k=0$ case (replica symmetric free energy formula).

 Recall that in the replica symmetric phase, we expect the coordinates 
 of $\bsigma\sim \mu$ to be  approximately 
 independent, with effective fields which are Gaussian with mean $0$ and variance $q$. 
 Replica symmetric interpolation explicitly builds this intuition into the proof.
 
 Let $\bW= (\bG+ \bG^{\sT})/\sqrt{2n}$, for $(G_{ij})_{i,j\le n}\sim^{iid}\normal(0,1)$,
 and $\bg \sim \normal(0, \id_n)$ an independent standard Gaussian vector.
  We introduce the interpolating partition function (we suppress as before the dependence on $\beta$)
\begin{align}
Z_n(t) = \sum_{\bsigma\in\{+1,-1\}^n} \exp\Big\{ \frac{\beta}{\sqrt{2n}} \sqrt{t} \langle \bsigma, \bG \bsigma \rangle + \beta \sqrt{q(1-t)} \langle \bg , \bsigma \rangle \Big\}, \nonumber 
\end{align}
and define the interpolating free energy density $\phi_n(t) \equiv n^{-1} \E\log Z_n(t)$, where 
$\E$ denotes the expectation with respect to both $\bG$ and $\bg$ variables. 

We note that $\phi_n(1) = \phi_n$ is the free energy density of the SK model. 
For $t=0$, the system de-couples, and the free energy can be computed directly. We have, 
\begin{align}
\phi_n(0) &= \frac{1}{n} \E\Big\{ \log \sum_\bsigma \exp[\beta \sqrt{q} \sum_i g_i \sigma_i] \Big\} 
\nonumber\\
&= \E\log 2 \cosh ( \beta \sqrt{q} g) , \nonumber
\end{align} 
for $g\sim \normal(0,1)$.

 Now, we have, 
$\phi_n = \phi_n(1) = \phi_n(0) + \int_{0}^{1} \phi_n'(t) \de t  $. 
Straightforward differentiation yields
\begin{align}
\frac{\de \phi_n(t)}{\de t} =  \E\Big[ \mu_t \Big( \frac{\beta \langle \bsigma , \bG \bsigma \rangle}{\sqrt{8tn^3}}  \Big)  - \mu_t\Big( \frac{\beta \sqrt{q} }{ 2 n\sqrt{1-t} } \langle \bg, \bsigma \rangle \Big) \Big] := T_1 + T_2, \nonumber
\end{align}
where $\mu_t(\cdot)$ denotes the average with respect to the Gibbs measure corresponding to 
the interpolated system. Namely
\begin{align}
\mu_t(\bsigma) \equiv \frac{1}{Z_n(t)}\,\exp\Big\{ \frac{\beta}{\sqrt{2n}} \sqrt{t} \langle \bsigma, \bG \bsigma \rangle + \beta \sqrt{q(1-t)} \langle \bg , \bsigma \rangle \Big\}\, .
\end{align}
Using Gaussian integration by parts, we have, 
\begin{align}
T_1 = \frac{\beta^2}{4n^2} \sum_{i,j =1}^{n} \E\Big[ 1 - \Big( \mu_t(\sigma_i \sigma_j ) \Big)^2 \Big].  \nonumber 
\end{align}
Introducing the replicated Gibbs measure, we have, 
\begin{align}
T_1 = \frac{\beta^2}{4n^2} \sum_{i,j =1}^{n} \E\Big[1 - \mu_t^{\otimes 2} (\sigma^1_i \sigma^1_j \sigma^2_i \sigma^2_j) \Big] = \frac{\beta^2}{4} \Big( 1 - \E[ \mu_t^{\otimes 2} (Q_{12}^2)] \Big), \nonumber 
\end{align}
where $\bsigma^1, \bsigma^2$ are two i.i.d. replicas drawn from $\mu$ and $Q_{12} = \frac{1}{n} \langle \bsigma^1, \bsigma^2 \rangle$ is the ``overlap" between the replicas. A similar computation yields
\begin{align}
T_2 = \frac{\beta^2 q}{2} \Big( 1 - \E[ \mu_t^{\otimes 2}(Q_{12})] \Big). \nonumber 
\end{align}
Combining, we obtain, 
\begin{align}
\frac{\de \phi_n(t)}{\de t} = T_1 + T_2 = \frac{\beta^2}{4} (1- q)^2 - \frac{\beta^2}{4} \E\Big[ \mu_t^{\otimes 2} \Big( (Q_{12} -q)^2 \Big) \Big] \leq \frac{\beta^2}{4} (1- q)^2. \nonumber 
\end{align}
Thus we obtain
\begin{align*}
\phi_n &= \phi_n(0) + \int_{0}^{1} \frac{\de \phi_n(t)}{\de t} \, \de t\\
&  = \E \log 2 \cosh ( \beta \sqrt{q} g)  + \frac{\beta^2}{4} (1 -q )^2
- \frac{\beta^2}{4} \E\Big[ \mu_t^{\otimes 2} \Big( (Q_{12} -q)^2 \Big) \Big]\\
 &= \Psi_{\sRS}(q)- \frac{\beta^2}{4} \E\Big[ \mu_t^{\otimes 2} \Big( (Q_{12} -q)^2 \Big) \Big]\,.
\end{align*}
where the  expression $\Psi_{\sRS}(q)$ coincides with the replica symmetric formula already derived in Section
\ref{sec:RS-Z2-a}. 
Note that the above sequence of equalities have two interesting consequences:
\begin{enumerate}
\item $\phi_n\le \inf_{q\ge 0} \Psi_{\sRS}(q)$ for any $n$. The replica symmetric formula
provides an entirely non-asymptotic upper bound.
\item The RS asymptotics is correct (i.e. $\lim_{n\to\infty}\phi_n=\inf_q\Psi_{\sRS}(q)$)
if and only if $\E\Big[ \mu_t^{\otimes 2} \Big( (Q_{12} -q)^2 \Big) \Big]\to 0$
(in probability over $t$), i.e. if and only if the overlap concentrates around a $t$-independent value.
\end{enumerate}

\subsection{Ruelle Probability Cascades}
To establish Guerra's replica symmetry breaking upper bound, a family of random measures 
called the Ruelle Probability Cascades (henceforth denoted as RPCs) will prove to be extremely useful. 
We introduce these random measures in this section, and study some of their properties to familiarize
 ourselves with them.
 
  Our starting point is the PPP $\{ x_{\alpha} : \alpha \in \mathbb{N} \}$ 
 with intensity function $\Lambda(x) = m \exp(-mx)$, which we
 already studied in Section \ref{sec:Poisson} (with a slightly different normalization). 
  We first examine some integrability properties of these point processes. 
\begin{lemma}
\label{lemma:integrality1}
For $0 < a < m$, $\E\Big[ \Big( \sum_{\alpha} e^{x_{\alpha} } \Big)^a  \Big] < \infty$. 
\end{lemma}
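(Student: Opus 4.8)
The plan is to split $\sum_\alpha e^{x_\alpha}$ according to whether a point falls in $[0,\infty)$ or in $(-\infty,0)$, and to control the $a$-th moment of each piece by a different feature of the intensity $\Lambda(x)=m e^{-mx}$: its rapid decay as $x\to+\infty$ takes care of the non-negative points as soon as $a<m$, while the trivial bound $e^{x}\le 1$ for $x\le 0$ (together with $m<1$, which is the relevant regime here, cf.\ Lemma~\ref{lemma:PPP-Basic}) takes care of the negative points already at the level of first moments.

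First I would record the elementary subadditivity inequality $\big(\sum_i y_i\big)^a\le \sum_i y_i^a$, valid for any $y_i\ge 0$ and $0<a\le 1$ (for two summands it follows from concavity of $t\mapsto t^a$, and it extends to countable sums by monotone convergence); since $a<m\le 1$ this applies. Writing $S_{\ge 0}=\sum_\alpha e^{x_\alpha}\mathbf{1}\{x_\alpha\ge 0\}$ and $S_{<0}=\sum_\alpha e^{x_\alpha}\mathbf{1}\{x_\alpha<0\}$, this gives $\big(\sum_\alpha e^{x_\alpha}\big)^a\le S_{\ge 0}^a+S_{<0}^a$, so it suffices to show $\E[S_{\ge 0}^a]<\infty$ and $\E[S_{<0}^a]<\infty$.

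For the non-negative part, apply the same inequality once more, $S_{\ge 0}^a\le \sum_\alpha e^{ax_\alpha}\mathbf{1}\{x_\alpha\ge 0\}$, and take expectations via the Campbell formula \eqref{eq:ExpectationPPP} (which extends from bounded integrands to arbitrary non-negative measurable ones by monotone convergence on truncations):
\[
\E[S_{\ge 0}^a]\;\le\;\int_0^\infty e^{ax}\,m e^{-mx}\,\de x\;=\;\frac{m}{m-a}\;<\;\infty ,
\]
where finiteness is exactly the place the hypothesis $a<m$ enters. For the negative part I would \emph{not} use subadditivity, since that produces the divergent integral $\int_{-\infty}^0 e^{(a-m)x}\,\de x=\infty$; instead, note that $g(x)=e^{x}\mathbf{1}\{x<0\}$ is bounded by $1$, so \eqref{eq:ExpectationPPP} applies directly and
\[
\E[S_{<0}]\;=\;\int_{-\infty}^0 e^{x}\,m e^{-mx}\,\de x\;=\;\frac{m}{1-m}\;<\;\infty
\]
(here $m<1$ is used); then Jensen's inequality for the concave map $t\mapsto t^a$ gives $\E[S_{<0}^a]\le \big(\E[S_{<0}]\big)^a<\infty$. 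Combining the two bounds yields the claim.

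The computation is routine; the only point that needs care is recognizing that the two halves of the sum must be estimated by genuinely different inequalities — subadditivity is sharp enough on the light ($x\to+\infty$) side but useless on the heavy side, where one has instead to exploit that each summand is at most $1$ and route through the first moment and Jensen. (As a by-product this also re-derives the a.s.\ finiteness of $\sum_\alpha e^{x_\alpha}$ already noted in Lemma~\ref{lemma:PPP-Basic}.)
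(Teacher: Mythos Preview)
Your proof is correct and follows essentially the same approach as the paper: split at zero, use subadditivity of $t\mapsto t^a$ on the non-negative part to reduce to the integral $\int_0^\infty m e^{-(m-a)x}\,\de x$, and control the negative part via its first moment. The only cosmetic difference is that for the negative part the paper uses the crude bound $z^a\le 1+z$ whereas you invoke Jensen's inequality $\E[S_{<0}^a]\le(\E[S_{<0}])^a$; both are valid and equivalent in spirit.
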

\begin{proof}
We note that for $x,y >0$ and $ a \leq 1$, $(x+ y)^a \leq x^a  + y^a$ and therefore, 
\begin{align}
\E\Big[ \Big( \sum_{\alpha} e^{x_{\alpha} }  \Big)^a \Big] \leq \E\Big[ \Big( \sum_{\alpha}
 e^{x_{\alpha} } \mathbf{1}_{\{ x_{\alpha} >0   \}} \Big)^a \Big] + \E\Big[ \Big( \sum_{\alpha} 
 e^{x_{\alpha} } \mathbf{1}_{\{ x_{\alpha}  < 0   \}} \Big)^a \Big].  \nonumber
\end{align}
To control the first term, we use the same observation as above to obtain 
\begin{align}
\E\Big[ \Big( \sum_{\alpha} e^{x_{\alpha}} ] \mathbf{1}_{\{ x_{\alpha} >0   \}} \Big)^a \Big]  &\leq \E \Big[\sum_{\alpha} e^{a x_{\alpha}} \mathbf{1}( x_{\alpha} >0 ) \Big] \nonumber\\
&= \int_{0}^{\infty} m  e^{- (m-a)x}\, \de x = \frac{m}{m-a} < \infty. \nonumber
\end{align}
We note that for $z>0$ and $a<1$, $z^a \leq 1 + z$ and thus, for the second term, we have, 
\begin{align}
\E\Big[ \Big( \sum_{\alpha} e^{x_{\alpha} } \mathbf{1}_{\{ x_{\alpha}  < 0   \}} \Big)^a \Big]& \leq 1 + \E\Big[ \sum_{\alpha} e^{x_{\alpha}} \mathbf{1}_{x_{\alpha} <0 } \Big] \nonumber \\
&= 1 + \int_{- \infty}^{0} m e^{- (1- m) x }\, \de x = 1 + \frac{m}{1 - m}. \nonumber 
\end{align}
This completes the proof. 
\end{proof}
Lemma \ref{lemma:integrality1} allows us to establish the following property of these Poisson processes. 

\begin{lemma} 
\label{lemma:integrability2}
For $\{x_{\alpha} : \alpha \in \mathbb{N} \}$ a {\rm PPP} with intensity $\Lambda(x) = m \exp(- mx )\de x$.
Then $\log \Big( \sum_{\alpha} e^{x_{\alpha}} \Big)$ is integrable. 
\end{lemma}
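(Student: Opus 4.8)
Write $S \equiv \sum_{\alpha} e^{x_{\alpha}}$. The first thing to record is that $S$ is finite and strictly positive almost surely: positivity is immediate, and finiteness follows exactly as in point 3 of Lemma \ref{lemma:PPP-Basic} (the argument there, separating points with $x_\alpha\ge 0$ from those with $x_\alpha<0$, goes through verbatim for the intensity $\Lambda(x)=m e^{-mx}\,\de x$ since $\E N(\reals_{\ge 0})=\int_0^\infty m e^{-mx}\de x=1<\infty$ and $\E\{S_{<0}\}=\int_{-\infty}^0 e^x\, m e^{-mx}\de x = m/(1-m)<\infty$, using crucially $0<m<1$). Hence $\log S$ is a well-defined finite random variable a.s., and it suffices to bound $\E\big[(\log S)_+\big]$ and $\E\big[(\log S)_-\big]$ separately.

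For the positive part, fix any $a$ with $0<a<m$. Since $\log x\le a^{-1}\log(x^a)\le a^{-1}x^a$ for $x>0$, we get $(\log S)_+\le a^{-1}S^{a}$, and therefore $\E\big[(\log S)_+\big]\le a^{-1}\,\E[S^a]<\infty$ by Lemma \ref{lemma:integrality1}. For the negative part, the plan is to bound $\log S$ from below by the largest point of the process. By point 1 of Lemma \ref{lemma:PPP-Basic} the maximum $x_{(1)}\equiv\max_\alpha x_\alpha$ exists almost surely, and clearly $S\ge e^{x_{(1)}}$, so $\log S\ge x_{(1)}$ and hence $(\log S)_-\le (x_{(1)})_-$. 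It remains to check $\E\big[(x_{(1)})_-\big]<\infty$. For this I would compute the law of $x_{(1)}$ directly from the defining properties of the PPP: the number of points exceeding level $t$ is $\Poisson\big(\int_t^\infty m e^{-mx}\de x\big)=\Poisson(e^{-mt})$, so $\prob(x_{(1)}\le t)=\exp(-e^{-mt})$ (a Gumbel distribution). Then
\begin{align}
\E\big[(x_{(1)})_-\big]=\int_0^\infty \prob(x_{(1)}\le -s)\,\de s=\int_0^\infty \exp(-e^{ms})\,\de s\le \int_0^\infty e^{-1-ms}\,\de s=\frac{1}{me}<\infty,\nonumber
\end{align}
using $e^{ms}\ge 1+ms$. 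Combining the two bounds gives $\E|\log S|=\E\big[(\log S)_+\big]+\E\big[(\log S)_-\big]<\infty$, which is the claim.

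There is no real obstacle here: the only points requiring a little care are (i) making sure $S<\infty$ a.s.\ so that $\log S$ is genuinely defined (handled by the adaptation of Lemma \ref{lemma:PPP-Basic}), and (ii) choosing to control the lower tail through the maximal point rather than trying to lower-bound $S$ by its whole sum. Everything else is the elementary inequality $\log x\le a^{-1}x^a$ plus the explicit Gumbel tail of $x_{(1)}$, which decays doubly exponentially and is therefore trivially integrable.
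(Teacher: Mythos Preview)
Your proof is correct and follows essentially the same approach as the paper: split $\log S$ into positive and negative parts, control $(\log S)_+$ via the inequality $\log x\le a^{-1}x^a$ together with Lemma~\ref{lemma:integrality1}, and control $(\log S)_-$ by bounding $S$ below by $e^{x_{(1)}}$ and using the Gumbel law of the maximum. If anything, your version is slightly tidier---you apply Lemma~\ref{lemma:integrality1} directly to $S^a$ rather than to the truncated sum, and you carry the factor $m$ through the Gumbel computation explicitly.
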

\begin{proof}
It suffices to establish that $ \Big[ \log \Big( \sum_{\alpha} e^{x_{\alpha}} \Big) \Big]_{+}$ and
 $\Big[ \log \Big( \sum_{\alpha} e^{x_{\alpha} }\Big) \Big]_{-}$ are integrable. 
 To this end, we note that for $ z \geq 1$, $ \log z \leq z^a / a$ for some $a$ small 
 enough, and thus 
\begin{align}
 \Big[ \log \Big( \sum_{\alpha} e^{x_{\alpha}} \Big) \Big]_{+} \leq \frac{1}{a}  \Big( \sum_{\alpha} e^{x_{\alpha} } \mathbf{1}_{\{ x_{\alpha} >0   \}} \Big)^a. \nonumber 
\end{align}
Lemma \ref{lemma:integrality1} immediately establishes the integrability of the positive part. 
The control of the negative part is more direct. We have $\sum_{\alpha} \exp[x_{\alpha}] \geq \exp[x^{(1)}]$, 
where $x^{(1)}$ denotes the largest point in the process. Thus the proof is complete if we can prove that 
$\E[(x^{(1)})_{-}] < \infty$. This follows from the classical observation that $x^{(1)}$  has the Gumbel distribution 
and that a Gumbel distribution is integrable. Indeed, 
\begin{align}
\P[ x^{(1)} \leq t ] = \P \Big[ { \rm {Poisson}} \Big( \int_{t}^{\infty} m \, e^{-mx}\,
 \de x \Big) = 0  \Big] = \exp\big( - e^{-t}\big). \nonumber 
\end{align}
This completes the proof. 
\end{proof}

Let $\{ \Delta_{\alpha}: \alpha \in \mathbb{N} \}$ be a set of i.i.d. marks such that 
 $\E[ \exp ( m \Delta_1 ) ] < \infty$. Lemma \ref{lemma:marks1} establishes that 
 $\{ x_{\alpha} + \Delta_{\alpha} : \alpha \in \mathbb{N} \}$ is another PPP with intensity 
 $C_m \Lambda(x)\de x$, with $C_m = \E[ \exp(m \Delta)]$.
 
This invariance property, together with Lemma \ref{lemma:integrability2}, allows us to establish the following 
fact, which we already used informally in the context of the 1RSB cavity method
in Section \ref{sec:1RSB_Cavity_Free}. 
\begin{lemma}
\label{lemma:invariance1}
Let $\{x_{\alpha}: \alpha \in \mathbb{N} \}$ be a PPP with intensity $\Lambda (x) = m \exp[- mx] \de x$ and let 
$\{ \Delta_{\alpha}: \alpha \in \mathbb{N} \}$ be i.i.d. marks with $C_m = \E[ \exp[ m \Delta_{1}] ] < \infty$. Then we have, 
\begin{align}
\E\Big[ \log \Big( \sum_{\alpha}  \exp[ x_{\alpha} + \Delta_{\alpha} ] \Big) \Big] = \E\Big[ \log \Big( \sum_{\alpha} \exp[\alpha] \Big) \Big] + \frac{1}{m} \log \E[ \exp[ m \Delta_1]]. \nonumber 
\end{align}
\end{lemma}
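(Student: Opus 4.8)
The plan is to prove Lemma~\ref{lemma:invariance1} by combining the invariance property of the PPP under i.i.d.\ marks (Lemma~\ref{lemma:marks1}) with the integrability facts (Lemmas~\ref{lemma:integrality1} and~\ref{lemma:integrability2}). First I would observe that the left-hand side is, by definition, a functional $\Phi$ of the point process $\{x_\alpha+\Delta_\alpha\}$, namely $\Phi(\{y_\alpha\}) = \log\big(\sum_\alpha e^{y_\alpha}\big)$. By Lemma~\ref{lemma:marks1}, the process $\{x_\alpha+\Delta_\alpha:\alpha\in\naturals\}$ has the same law as a PPP with intensity $C_m\Lambda(x)\de x = C_m\, m e^{-mx}\de x$, where $C_m = \E[e^{m\Delta_1}]$.

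The key step is then to identify how the functional $\E[\Phi]$ transforms when the intensity is multiplied by a constant $C_m$. The point is that multiplying the intensity $m e^{-mx}\de x$ by $C_m$ is the same as translating: $C_m\, m e^{-mx}\de x = m e^{-m(x - x_0)}\de x$ with $x_0 = m^{-1}\log C_m$. Concretely, if $\{x'_\alpha\}$ is a PPP with intensity $\Lambda$ and $\{\tilde x_\alpha\}$ a PPP with intensity $C_m\Lambda$, then $\{\tilde x_\alpha\}\ed\{x'_\alpha + x_0\}$ (both are the unique PPP with the stated intensity, using the characterization in Lemma~\ref{lemma:marks2} or the translation-invariance structure of the Poisson process). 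Therefore
\begin{align}
\E\Big[\log\Big(\sum_\alpha e^{x_\alpha+\Delta_\alpha}\Big)\Big]
&= \E\Big[\log\Big(\sum_\alpha e^{\tilde x_\alpha}\Big)\Big]
= \E\Big[\log\Big(\sum_\alpha e^{x'_\alpha + x_0}\Big)\Big] \nonumber\\
&= \E\Big[\log\Big(e^{x_0}\sum_\alpha e^{x'_\alpha}\Big)\Big]
= x_0 + \E\Big[\log\Big(\sum_\alpha e^{x'_\alpha}\Big)\Big]\, ,\nonumber
\end{align}
and substituting $x_0 = m^{-1}\log\E[e^{m\Delta_1}]$ gives exactly the claimed identity (note $\{x'_\alpha\}$ has the same law as the original $\{x_\alpha\}$).

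The main obstacle, and the reason the integrability lemmas are invoked, is justifying that all the expectations appearing above are finite and that the manipulations are legitimate: one must know that $\log(\sum_\alpha e^{x_\alpha})$ is integrable (Lemma~\ref{lemma:integrability2}) so that subtracting the constant $x_0$ from inside the expectation is valid and one is not in a $\infty-\infty$ situation; one must also check that $\sum_\alpha e^{x_\alpha+\Delta_\alpha}<\infty$ almost surely, which follows either from Lemma~\ref{lemma:integrality1} applied after the translation, or directly by noting $\sum_\alpha e^{x_\alpha+\Delta_\alpha}$ has the law of $\sum_\alpha e^{\tilde x_\alpha}$ which is a.s.\ finite by the $0<m$ analogue of Lemma~\ref{lemma:PPP-Basic}. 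I would also spell out carefully the step identifying $\{\tilde x_\alpha\}\ed\{x'_\alpha+x_0\}$: the cleanest route is to apply Lemma~\ref{lemma:marks2} with deterministic marks $\Delta_\alpha\equiv x_0$, since for constant marks $C_\nu = e^{m x_0}=C_m$, recovering the intensity-scaling statement as a special case of Lemma~\ref{lemma:marks1} itself. With these integrability points dispatched, the proof is a two-line reduction.
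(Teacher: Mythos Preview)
Your proposal is correct and follows essentially the same approach as the paper: apply Lemma~\ref{lemma:marks1} to identify the shifted process as a PPP with intensity $C_m\Lambda$, recognize this as a deterministic translation by $x_0=m^{-1}\log C_m$, and then take expectations using the integrability from Lemma~\ref{lemma:integrability2}. The paper's proof is even more terse (two lines), but your additional care in justifying the translation step and the integrability is well placed.
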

\begin{proof}
Using Lemma \eqref{lemma:marks1}, we immediately have, 
\begin{align}
\log \Big( \exp\Big[ \sum_{\alpha} e^{x_{\alpha} + \Delta_{\alpha}} \Big]  \Big) \ed 
\log \Big( \sum_{\alpha} e^{x_{\alpha}} \Big) + \frac{1}{m} \log C_m. \nonumber 
\end{align}
Both sides are integrable using Lemma \ref{lemma:integrability2}, and taking an expectation completes the proof. 
\end{proof}
We will see that similar formulae will play a crucial role in the context of
 RPCs and the proof of the Guerra replica symmetry breaking upper bound. 

Note that the last statement can be rewritten  
in terms of the probability measure $\nu_{\alpha} \propto \exp(x_{\alpha} )$:
\begin{align}
\E\Big[ \log \Big( \sum_{\alpha} \nu_{\alpha} e^{\Delta_{\alpha}}\Big) \Big] = 
\frac{1}{m} \log \E[ e^{m \Delta_1} ]\, .
\end{align}
We already saw in Section  \ref{sec:1RSB_Cavity_Free} that the right hand side 
gives rise to a term in the $1$RSB free energy functional.

To define the RPC, we fix $k \geq 1$ and a set of values $0< m_1 < \cdots < m_{k} < 1$. 
The measure constructed will be indexed by $\mathbb{N}^k$. It is very helpful to think of 
$\mathbb{N}^{k}$ as the leaves of a rooted infinite tree $T = (V,E)$, with vertex set 
$V = \mathbb{N}^{0} \cup \mathbb{N}^1 \cup \cdots \mathbb{N}^k$.
The root of the tree will be denoted by $\root$. The vertices $\{1, 2, \cdots \} $ will be 
 the children of the root, and joined to the root via edges in the tree 
$T$. The vertices $\{(1,n) : n \geq 1 \}$ will similarly be children of the vertex $1$.
 In general, for any $1 \leq l \leq k-1$, consider the vertex
  $ \alpha = (n_1, \cdots , n_l)$. Then all vertices 
  $\alpha n = ( n_1, \cdots , n_l ,n)$ will be children of $\alpha$ and joined to
   $\alpha$ via edges. We denote by $|\alpha|$ the level of vertex $\alpha \in V$. 
   Further, for $\alpha = (n_1, \cdots, n_l)$, we denote by
    $\mathcal{P}(\alpha) = \{ \root, n_1, n_1 n_2, \cdots, n_1 n_2 \cdots n_l \}$ 
    the unique path in the tree joining the root $\root$ to the vertex $\alpha$. 
    
For each $\alpha \in V$, $| \alpha | = l \leq k$, let $\Pi_{\alpha} $ be a PPP with 
intensity measure $\Lambda_{l+1}(x)\de x = m_{l+1} \exp(- m_{l+1} x)\de x$. The point processes 
$\{ \Pi_{\alpha}: \alpha \in V \}$ are mutually independent. We note that each $\Pi_{\alpha}$ 
is almost surely countable, and therefore, for $\alpha \in V$ with $| \alpha | \leq k-1$, 
the elements of $\Pi_{\alpha}$ may be naturally associated to the children of $\alpha$. 
Formally, for $\Pi_{\alpha} = \{ x_{\alpha1}, x_{\alpha 2}, \cdots \}$, we associate 
$x_{\alpha n}$ with the child $\alpha n$ of the vertex $\alpha$.  For $|\alpha| \leq k$, 
we define $z_{\alpha} = \sum_{\beta \in \mathcal{P}(\alpha) } x_{\beta}$. 

Finally, we define a (random) probability measure  indexed by $\alpha\in \mathbb{N}^k$:
\begin{align}
\nu_{\alpha} = \frac{1}{Z^{\nu}}\, e^{z_{\alpha}}\, ,\;\;
\;\;\;\; \alpha \in \mathbb{N}^{k} \, ,\;\;\;\;\; Z^{\nu}\equiv \sum_{\beta\in\naturals^k}e^{z_{\beta}}. 
\end{align}
This is the RPC corresponding to the parameters $\bm = ( m_1, \cdots, m_{k})$. 
The next lemma establishes that the RPC is well defined, in that the weights 
can be normalized almost surely to yield a valid probability measure. 
\begin{lemma}
For $k \geq 1$ and $\bm = (m_1, \cdots , m_{k})$,
 $\sum_{\alpha \in \mathbb{N}^{k}} \exp(z_{\alpha}) < \infty$ almost surely. 
\end{lemma}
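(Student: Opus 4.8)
The statement is that the RPC weights are almost surely summable, i.e. $\sum_{\alpha\in\naturals^k}e^{z_\alpha}<\infty$ a.s. The plan is to proceed by induction on $k$. The base case $k=1$ is exactly point 3 of Lemma \ref{lemma:PPP-Basic}: there $z_\alpha = x_\alpha$ for a PPP with intensity $m_1 e^{-m_1 x}\de x$, and since $0<m_1<1$, the sum $\sum_\alpha e^{x_\alpha}$ is finite almost surely. (The normalization of the intensity by the constant $m_1$ is immaterial: multiplying the intensity by a constant only shifts the process by a deterministic amount, as noted after Lemma \ref{lemma:marks1}, and in any case the computation $\E\{S_{<0}\}=\int_{-\infty}^0 e^x\, m_1 e^{-m_1 x}\de x = m_1/(1-m_1)<\infty$ goes through verbatim.)

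For the inductive step, suppose the claim holds for trees of depth $k-1$ with any admissible parameter vector. Fix the depth-$k$ tree and the parameters $0<m_1<\cdots<m_k<1$. Condition on all the point processes $\Pi_\beta$ with $|\beta|\le k-2$; this fixes the values $z_\beta$ for $|\beta|\le k-1$. For each vertex $\alpha$ with $|\alpha|=k-1$, we have $z_{\alpha n} = z_\alpha + x_{\alpha n}$ where $\{x_{\alpha n}\}_n = \Pi_\alpha$ is a PPP with intensity $m_k e^{-m_k x}\de x$, independent across $\alpha$. Hence
\begin{align}
\sum_{\alpha\in\naturals^k}e^{z_\alpha} = \sum_{|\alpha|=k-1} e^{z_\alpha}\sum_{n}e^{x_{\alpha n}}\, .\nonumber
\end{align}
By the base-case argument applied to $\Pi_\alpha$ (using $m_k<1$), each inner sum $S_\alpha:=\sum_n e^{x_{\alpha n}}$ is finite almost surely; moreover, by Lemma \ref{lemma:integrality1} (with $a$ chosen so that $0<a<m_k$, keeping in mind the $m$ there is our $m_k$), we have $\E[S_\alpha^a]<\infty$, and the $S_\alpha$ are i.i.d. across the depth-$(k-1)$ vertices $\alpha$ and independent of the $z_\alpha$.

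The key reduction is now the following: since $m_{k-1}<m_k$, we may pick $a$ with $m_{k-1}<a<m_k$. Then $\E[S_\alpha^a]=:C<\infty$, and we want to compare $\sum_{|\alpha|=k-1}e^{z_\alpha}S_\alpha$ with $\sum_{|\alpha|=k-1}e^{z_\alpha}$ — but the latter need not be finite, so instead I would exploit the invariance/marking structure directly. Writing $\Delta_\alpha := \frac1a\log S_\alpha$ (so $S_\alpha = e^{a\Delta_\alpha}$ and $\E[e^{a\Delta_\alpha}]=C<\infty$), the sum becomes $\sum_{|\alpha|=k-1}e^{z_\alpha + a\Delta_\alpha}$. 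Now $z_\alpha$ ranges over the leaves of a depth-$(k-1)$ RPC-type construction; more precisely, conditionally on the top $k-2$ levels, $\{z_{\alpha n}\}$ for a fixed depth-$(k-2)$ vertex is a shifted PPP of intensity $\propto m_{k-1}e^{-m_{k-1}x}$, and the $\Delta_\alpha$ are i.i.d. marks with $\E[e^{a\Delta}]<\infty$ and $a>m_{k-1}$. By Lemma \ref{lemma:marks1} (marking a PPP of exponent $m_{k-1}$ by i.i.d. shifts $\Delta_\alpha$, whose moment-generating function at $m_{k-1}$ is finite since $m_{k-1}<a$), the marked process $\{z_{\alpha n}+\Delta_{\alpha n}\}$ is again a PPP with intensity $\propto m_{k-1}e^{-m_{k-1}x}$. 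Feeding this back into the depth-$(k-1)$ induction hypothesis — whose content is exactly that summing $e^{(\cdot)}$ over the leaves of such a structure gives a finite total — yields $\sum_{|\alpha|=k-1}e^{z_\alpha}S_\alpha<\infty$ almost surely, which is the desired conclusion.

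\textbf{Main obstacle.} The delicate point is the bookkeeping in the last step: one must argue carefully that attaching the random, $\Pi_\alpha$-dependent factors $S_\alpha$ to the depth-$(k-1)$ leaves is, in distribution, the same as passing to a genuine depth-$(k-1)$ RPC with a possibly different (but still admissible, since the relevant exponent $m_{k-1}$ is unchanged) top-level structure — i.e. that the marking lemma can legitimately be applied at \emph{every} branching level simultaneously, not just once. The cleanest way to organize this is to prove, by downward induction on the level $l$ from $l=k$ to $l=0$, the statement ``for every vertex $\alpha$ with $|\alpha|=l$, the partial sum $W_\alpha := \sum_{\beta:\;\alpha\preceq\beta,\,|\beta|=k}e^{z_\beta - z_\alpha}$ is finite a.s. and satisfies $\E[W_\alpha^{a}]<\infty$ for all $a<m_{l+1}$ (with the convention $m_{k+1}=1$)'', using Lemma \ref{lemma:integrality1} and Lemma \ref{lemma:marks1} at each step; the claim of the lemma is then the case $\alpha=\root$. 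I do not expect any step to require a genuinely new idea — it is all consequences of the PPP invariance already established — but the inductive hypothesis has to be formulated with the right exponent conditions ($a$ strictly between consecutive $m_l$'s) for the marking lemma to apply at each level, and that is where care is needed.
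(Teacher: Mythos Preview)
Your approach is the same as the paper's --- induction on $k$, with the inductive step reducing the depth-$k$ sum to a depth-$(k-1)$ sum via the marking invariance (Lemma~\ref{lemma:marks1}) applied at the bottom level --- and it is correct in outline. A couple of remarks on execution.

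First, the detour through $\Delta_\alpha := \tfrac{1}{a}\log S_\alpha$ is unnecessary and introduces confusion: the shift you actually apply to the PPP at level $k-1$ is $\log S_\alpha$, not $\Delta_\alpha$, so the moment condition needed for Lemma~\ref{lemma:marks1} is simply $\E[S_\alpha^{m_{k-1}}]<\infty$, which is exactly Lemma~\ref{lemma:integrality1} with exponent $m_{k-1}<m_k$. There is no need to pick an auxiliary $a$ between $m_{k-1}$ and $m_k$.

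Second, the paper closes the loop more directly than your ``main obstacle'' paragraph suggests. Rather than formulating a downward induction on levels with moment bounds at each stage, the paper observes that conditionally on the top $k-2$ levels, for each $|\alpha|=k-2$ the marking lemma gives the distributional identity $\sum_n e^{x_{\alpha n}}S_{\alpha n}\ed C\sum_n e^{x_{\alpha n}}$ with a deterministic constant $C$ independent of $\alpha$. Summing over all such $\alpha$ yields $\sum_{|\beta|=k}e^{z_\beta}\ed C\sum_{|\beta|=k-1}e^{z_\beta}$ outright, and the right-hand side is finite a.s.\ by the induction hypothesis. This single distributional equality replaces the more elaborate level-by-level bookkeeping you were anticipating.
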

\begin{proof}
The proof proceeds by induction on $k$. For $k =1$, $\{ z_{\alpha}: \alpha \in \mathbb{N} \}$ is 
the PPP with intensity $\Lambda_1(x)\de x = m_1 \exp(- m_1 x )\de x$. 
This case was already established in Lemma \ref{lemma:PPP-Basic}, point 3.

Now we assume that the result has been verified for RPC's with $k-1$ levels. 
Then we have, using the definition of RPC
\begin{align}
\sum_{\alpha: | \alpha| = k } e^{z_{\alpha}} = \sum_{\alpha : |\alpha| = k-1} e^{z_{\alpha}} \Big( \sum_{n} e^{x_{\alpha n}} \Big)= \sum_{\alpha: |\alpha| = k-1} \exp[z_{\alpha}]  U_{\alpha}, \nonumber 
\end{align}
where we define $U_{\alpha} = \sum_{n} \exp(x_{\alpha n})$ for $\alpha \in V$ with $|\alpha| \leq k-1$.
 Note that for any $\alpha$ with $|\alpha | = k-2$, $\{x_{\alpha n} : n \in \mathbb{N} \}$ is a PPP with intensity $ m_{k-1} \exp(- m_{k-1}x ) \de x $. 
 Further $\{ U_{\alpha n} : n \in \mathbb{N} \}$ are i.i.d. marks, independent of
  $\{ x_{\alpha n} : n \in \mathbb{N} \}$. Finally, using the definition of
   $U_{\alpha n}$ and Lemma \ref{lemma:integrality1}, we have, for $|\alpha|=k-2$,
\begin{align}
\E[ U_{\alpha 1}^{m_{k-1}} ] = \E\Big[ \Big( \sum_{n} \exp[x_{\alpha 1 n} ] \Big)^{m_{k-1}} \Big] < \infty, \nonumber 
\end{align}
since $\{ x_{ \alpha 1 n} : n \in \mathbb{N} \}$ is a PPP with intensity 
$m_{k} \exp( - m_{k} x )\de x$ and $m_{k-1} < m_{k}$. The proof of 
Lemma \ref{lemma:invariance1} establishes that, always for $|\alpha|=k-2$,
\begin{align}
\sum_{n} e^{x _{ \alpha n}}  U_{\alpha n} \ed C_{k-2} \sum_{n} e^{x_{\alpha n}}, \nonumber 
\end{align}
for $C_{k-2} = \Big( \E[ U_{\alpha 1}^{m_{k-1}} ]  \Big)^{1/m_{k-1}}$. Thus we have, 
\begin{align}
\sum_{\alpha: | \alpha| = k } e^{z_{\alpha}} \ed C_{k-2} \sum_{\alpha: | \alpha| = k-2} e^{z_{\alpha}} \sum_{n} e^{x_{\alpha n}} = C_{k-2} \sum_{\alpha: |\alpha| = k-1} e^{z_{\alpha}} < \infty \,\, {\rm{a.s.}}, \nonumber
\end{align}
where the last assertion follows from the induction hypothesis. This completes the proof. 
\end{proof}

\subsection{The RSB upper bound: Proof of Theorem \ref{thm:guerra_upper}}
In this section, we establish Guerra's replica symmetry breaking upper bound.

We start by reminding the form of the $k$RSB free energy functional
of Section \ref{sec:kRSB-Replica-All}, albeit in a slightly different form.
 For $k \geq 1$, fix $0=m_0 < m_1 \leq \cdots \leq m_{k-1} \leq m_k =1$ and a function 
 $\psi_k: \mathbb{R}^{k} \to \mathbb{R}$. For $g_1, \cdots, g_k \sim^{iid} \normal(0,1)$, 
 we define the following sequence of random variables
\begin{align}
\psi_k &= \psi_k(g_1, \cdots, g_k), \nonumber\\
&\vdots \nonumber \\
\psi_l &= \psi_l(g_1, \cdots, g_l) = \frac{1}{m_{l+1}} \log 
\E\Big[ \exp(m_{l+1} \psi_{l+1} (g_1, \cdots, g_{l+1}) )\Big|g_1,\dots, g_l \Big], \nonumber \\
&\vdots \nonumber  \\
\psi_0 &= \frac{1}{m_1} \log \E\Big[ \exp[m_1 \psi_1(g_1) ] \Big]. \label{eq:recursive_parisi}
\end{align}
Note that, as implied by the notation $\psi_{l}$ is a measurable function of $(g_1,\dots,g_l)$.
 Jensen's inequality implies that the construction above is well
 defined as soon as $\E\exp(m_{k} \psi_k(g_1, \cdots, g_k)) < \infty$. 
 We recall that the $k$RSB free energy is recovered as follows:
 \begin{align}
 \psi_k(x_1, \cdots, x_k) &= \log 2\cosh \Big(\beta \sum_{i=1}^{k} \sqrt{q_{i+1} - q_{i}} \, x_i \Big)\, ,
 \nonumber\\
 \Psi_{\sRSB{k}}(\bq,\bm) &= -\frac{1}{4}\beta^2+
 \frac{1}{4}\beta^2\sum_{i=0}^k(m_{i+1}-m_i)q_i^2+\psi_0\, , \label{eq:kRSB-Last}
 \end{align}
 where $0=q_0\le q_1\le \dots\le q_k\le q_{k+1}=1$ and $m_{k+1}=1$.
 
The next lemma establishes a connection between the recursive construction 
of the random variable $\psi_k$, and the RPC's introduced in the last section. 
\begin{lemma}
\label{lemma:representation_rpc1}
Consider the RPC $\{\nu_{\alpha} : |\alpha| = k\}$ corresponding to the parameters 
$\bm = (m_0 , \cdots, m_{k-1},m_k)$. Let $\{ g_{\alpha} : \alpha \in \mathbb{N} \cup \cdots \cup \mathbb{N}^k\}$ be a
family of independent standard Gaussian variables independent of the RPC. 
For any $\alpha = (n_1, n_1 n_2 , \cdots, n_1 n_2 \cdots n_k)$, we define 
$G_{\alpha} = (g_{n_1}, g_{n_1n_2},  \cdots, g_{n_1 n_2 \cdots n_k} )$. 

Then, for any function  $\psi_k:\reals^k\to\reals$ such that
$\E\exp( \psi_k(g_1, \cdots, g_k)) < \infty$, we have 
\begin{align}
\psi_0 = \E\Big[ \log \Big( \sum_{\alpha} \nu_{\alpha} \exp[ \psi_k (G_{\alpha}) ] \Big) \Big]. \nonumber 
\end{align}
\end{lemma}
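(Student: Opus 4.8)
The plan is to prove the identity by induction on the number of levels $k$, using the invariance property of the relevant PPP (Lemma~\ref{lemma:marks1}) exactly in the form already packaged as Lemma~\ref{lemma:invariance1}, which rewrites $\E[\log(\sum_\alpha \nu_\alpha e^{\Delta_\alpha})] = \tfrac1m \log \E[e^{m\Delta_1}]$ when $\{\nu_\alpha\}$ are the normalized weights of a PPP with intensity $m e^{-mx}\de x$ and $\{\Delta_\alpha\}$ are i.i.d.\ marks. The key structural observation is that an RPC of depth $k$ decomposes as a depth-one PPP at the root level whose marks are themselves (independent copies of) depth-$(k-1)$ RPC quantities. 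Concretely, writing $\nu_\alpha \propto e^{z_\alpha}$ with $z_\alpha = x_{n_1} + x_{n_1n_2} + \cdots + x_{n_1\cdots n_k}$, one groups the sum $\sum_\alpha \nu_\alpha e^{\psi_k(G_\alpha)}$ according to the first coordinate $n_1$, so that the inner sum over $(n_2,\dots,n_k)$ is, conditionally on $x_{n_1}$ and $g_{n_1}$, an independent copy of the depth-$(k-1)$ object attached to the node $n_1$.

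First I would set up the induction. The base case $k=1$ is precisely the content of Lemma~\ref{lemma:invariance1} (equivalently its probability-measure restatement $\E[\log(\sum_\alpha \nu_\alpha e^{\Delta_\alpha})] = \tfrac1{m_1}\log\E[e^{m_1\Delta_1}]$), applied with $\Delta_\alpha = \psi_1(g_\alpha)$: indeed $\psi_0 = \tfrac1{m_1}\log\E[e^{m_1\psi_1(g_1)}] = \E[\log(\sum_\alpha \nu_\alpha e^{\psi_1(G_\alpha)})]$, noting $G_\alpha = g_{n_1}$ at depth one. For the inductive step, assume the statement for depth $k-1$ and all admissible functions. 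Given the depth-$k$ RPC with parameters $(m_1,\dots,m_k)$ and the function $\psi_k$, define for each root-child $n$ the conditional quantity obtained by applying the induction hypothesis to the subtree rooted at $n$, which carries parameters $(m_2,\dots,m_k)$; this identifies, for each fixed $n$, the subtree contribution with $\exp(m_1\,\tilde\psi_1(g_n))$-type marks where $\tilde\psi_1$ is built from $\psi_k$ by the recursion \eqref{eq:recursive_parisi} run from level $k$ down to level $1$. Then the outer sum over $n$ is exactly a depth-one invariance computation, and applying Lemma~\ref{lemma:invariance1} once more at the root collapses it to $\tfrac1{m_1}\log\E[e^{m_1\psi_1(g_1)}] = \psi_0$.

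The main technical care is in the bookkeeping of which marks are i.i.d.\ and which expectations are conditional, and in checking integrability so that each application of Lemma~\ref{lemma:invariance1} is licit; this is where I expect the only real friction. Specifically, at each level $\ell$ one must verify $\E[\exp(m_\ell \psi_\ell)] < \infty$ so that the marks $e^{\psi_\ell}$ satisfy the hypothesis of Lemma~\ref{lemma:marks1}; this follows by downward induction from the assumed condition $\E\exp(\psi_k(g_1,\dots,g_k)) < \infty$ together with Jensen's inequality (since $m_\ell \le 1$, $\psi_{\ell-1}$ is a $\tfrac1{m_\ell}\log\E\exp(m_\ell\,\cdot\,)$-average, which is monotone and contracts moments in the required direction), exactly as noted after \eqref{eq:recursive_parisi}. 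One also needs the independence structure: the PPPs $\{\Pi_\alpha\}$ are mutually independent and the Gaussians $\{g_\alpha\}$ are independent of everything, so conditioning on the root-level data leaves the subtrees as genuinely independent copies — this is what makes the marks i.i.d.\ at the root. Assembling these pieces, the identity $\psi_0 = \E[\log(\sum_\alpha \nu_\alpha e^{\psi_k(G_\alpha)})]$ follows, and I would close the proof there.
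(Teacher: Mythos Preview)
Your overall strategy (induction on $k$ using Lemma~\ref{lemma:invariance1}) is the right one, but the particular decomposition you chose --- grouping by the \emph{root} coordinate $n_1$ and applying the induction hypothesis to each subtree --- does not close. The induction hypothesis, as stated in the Lemma, is an identity for $\E[\log(\sum_{\alpha'}\nu'_{\alpha'}e^{\psi})]$. After your decomposition, the outer sum is $\sum_{n_1} e^{x_{n_1}} S_{n_1}$ with marks $S_{n_1}=\sum_{\alpha'}e^{z'_{\alpha'}+\psi_k(g_{n_1},G'_{\alpha'})}$, and to apply Lemma~\ref{lemma:invariance1} at the root you must evaluate $\tfrac{1}{m_1}\log\E[S_1^{m_1}]$, i.e.\ a \emph{moment} of the subtree quantity, not its expected logarithm. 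The induction hypothesis gives you $\E[\log(S_1/Z'^\nu)\mid g_1]=\psi_1(g_1)$, which is a different object: it does not let you replace $S_{n_1}$ by $e^{\psi_1(g_{n_1})}$ either pointwise or in distribution, and Jensen's inequality goes the wrong way to bridge the two. So the step ``this identifies, for each fixed $n$, the subtree contribution with $\exp(m_1\tilde\psi_1(g_n))$-type marks'' is precisely the missing link.

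The paper fixes this by peeling off the \emph{last} level instead of the first. Conditionally on $\mathscr{F}_{k-1}=\sigma(x_\gamma,g_\gamma:|\gamma|\le k-1)$, each leaf-level sum $\sum_n e^{x_{\alpha n}+\Delta_{\alpha n}}$ is a PPP with i.i.d.\ marks, and here Lemma~\ref{lemma:marks1} gives a genuine \emph{distributional} equality $\sum_n e^{x_{\alpha n}+\Delta_{\alpha n}}\stackrel{d}{=} c\,e^{\psi_{k-1}(G_\alpha)}\sum_n e^{x_{\alpha n}}$, which reduces the $k$-level problem to a $(k-1)$-level one. The price is that extra i.i.d.\ marks $u_\alpha=\log\sum_n e^{x_{\alpha n}}$ appear, so the paper carries a slightly \emph{stronger} induction hypothesis (Eq.~\eqref{eq:induction_hyp1}), allowing arbitrary i.i.d.\ shifts $h_\alpha$ with $\E[e^{m_k h_1}]<\infty$. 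If you rework your argument from the leaves inward and strengthen the hypothesis accordingly, everything goes through; as written, the root-first version does not.
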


\begin{proof}
Recall that $\nu_{\alpha} \propto \exp(z_{\alpha})$, $z_{\alpha} = \sum_{\beta \in \mathcal{P}(\alpha)} x_{\beta}$, 
where $\{x_{\beta} : \beta \in V \backslash \root \}$ is constructed using i.i.d. PPP's at each level 
in the $k$-level infinite tree. Thus it suffices to prove
\begin{align}
\E\Big[ \log \Big( \sum_{\alpha} \exp(z_{\alpha } + \psi_k(G_{\alpha})) \Big) \Big] = \E\Big[ \log \Big( \sum_{\alpha} \exp(z_{\alpha})\Big) \Big] + \psi_0. \nonumber 
\end{align}
We use induction on $k$. In fact, we prove the stronger statement: for any sequence $\{ h_{\alpha}: \alpha \in \mathbb{N}^{k} \}$ 
satisfying $\E[ \exp(  m_kh_1 )] < \infty$ and independent of $\{\nu_{\alpha} : \alpha \in \mathbb{N}^{k} \}$ and 
$\{ g_{\alpha} : \alpha \in V \backslash \phi \}$, we have, 
\begin{align}
\E\Big[ \log \Big( \sum_{\alpha} \exp\big( z_{\alpha } + \psi_k(G_{\alpha}) + h_{\alpha}\big) \Big) \Big] = \E\Big[ \log \Big( \sum_{\alpha} \exp\big(z_{\alpha} + h_{\alpha} \big)\Big) \Big] + \psi_0. \label{eq:induction_hyp1}
\end{align}
We note that the Lemma follows upon setting $h_{\alpha } = 0$. 

The case $k=1$ corresponds to a PPP with intensity $m_1\exp(-m_1 x ) \de x $ and follows 
immediately using Lemma \ref{lemma:invariance1}. Indeed,
\begin{align}
\E\Big[ \log \Big( \sum_{\alpha} \exp\big( z_{\alpha } + \psi_k(G_{\alpha}) + h_{\alpha}\big)  \Big)
 \Big] &= \E\Big[ \log \Big( \sum_{\alpha} \exp\big( z_{\alpha} \big) \Big) \Big] + 
 \frac{1}{m_1} \log \E[ e^{m_1 h_1} ] + \psi_0 \nonumber \\
&= \E\Big[ \log \Big( \sum_{\alpha} \exp\big( z_{\alpha} + h_{\alpha} \big) \Big) \Big] + \psi_0. \nonumber 
\end{align}

Next, we assume that \eqref{eq:induction_hyp1} holds up to $k-1$ levels. Then we have, 
\begin{align}
\sum_{\alpha \in \mathbb{N}^{k}} \exp[ z_{\alpha} + \psi_k(G_{\alpha}) + h_{\alpha} ] = 
\sum_{\alpha \in \mathbb{N}^{k-1}} e^{z_{\alpha}} \sum_{n} \exp[ x_{\alpha n} + \psi_k (G_{\alpha}, g_{\alpha n}) + h_{\alpha n}]. \nonumber 
\end{align}
We set $Q_{\alpha} = \sum_{n } \exp( x_{\alpha n}  + \Delta_{\alpha n})$, with 
$\Delta_{\alpha n} =  \psi_k (G_{\alpha}, g_{\alpha n}) + h_{\alpha n}$. Define the sigma-field
$\mathscr{F}_{k-1} = \sigma( x_{\alpha}, g_{\alpha}: | \alpha | \leq k-1)$. Note that 
$\{ x_{\alpha n} : n \in \mathbb{N} \}$ is a PPP with intensity $m_{k}\exp(- m_{k} x ) \de x$ 
and conditional on $\mathscr{F}_{k-1}$, $\Delta_{\alpha n}$ are i.i.d. marks, independent of
 $\{x_{\alpha n} : n \in \mathbb{N} \}$ for each $\alpha \in \mathbb{N}^{k-1}$. Further, we have, 
\begin{align}
\Big( \E\Big[ \exp[ m_{k} \Delta_{\alpha n}  ] | \mathscr{F}_{k-1}  \Big] \Big)^{\frac{1}{m_{k} }} = \Big( \E\Big[ \exp[m_{k} h] \Big] \Big)^{\frac{1}{m_{k}}}\Big( \E_{g_{\alpha n}} \Big[ \exp[m_{k} \psi_k (G_{\alpha n} , g_{\alpha n} ) ] \Big] \Big)^{\frac{1}{m_{k}}}. \nonumber 
\end{align}
We set  $c^{m_{k}} = \E[ \exp(m_{k} h) ]$ and define 
$\sum_n \exp(x_{\alpha n}) = \exp(u_{\alpha})$. Using the recursive construction of the 
functions $\psi_l$, we note that 
\begin{align}
Q_{\alpha} \big|_{\mathscr{F}_{k-1}}\ed c e^{\psi_{k-1}(G_{\alpha}) } \sum_n e^{x_{\alpha n}}\,.
\end{align}
(The two random variables have the same distribution conditional on $\mathscr{F}_{k-1}$.)

 Note that $\{ x_{\alpha n} : n \in \mathbb{N} \}$ is independent of $\{ G_{\alpha} : | \alpha| \leq k-1 \}$ and thus the equality in distribution is true unconditionally. Thus we have established that 
\begin{align}
 \sum_{ {\alpha} \in \mathbb{N}^{k} }\exp[ z_{\alpha } + \psi_k(G_{\alpha}) + h_{\alpha}  ] 
 &\ed  c \sum_{\alpha \in \mathbb{N}^{k-1}} \exp[z_{\alpha} + \psi_{k-1}(G_{\alpha}) + u_{\alpha}], \nonumber \\
 \sum_{ {\alpha} \in \mathbb{N}^{k} }e^{z_{\alpha } + h_{\alpha}  }& \ed
  c \sum_{\alpha \in \mathbb{N}^{k-1}} e^{z_{\alpha} + u_{\alpha}}, \nonumber
\end{align}
where the last relation follows upon setting $\psi_k =0 $. Finally, Lemma \ref{lemma:integrality1} 
implies that $\E[\exp[m_{k-1} u_{\alpha}] ] < \infty$ and thus 
\begin{align}
\E\Big[ \log \Big( \sum_{\alpha \in \mathbb{N}^k } \exp\big( z_{\alpha } + \psi_k(G_{\alpha}) + h_{\alpha} \big) \Big) \Big] &= \log c + \E\Big[ \log \Big( \sum_{\alpha \in \mathbb{N}^{k-1}} \exp\big(z_{\alpha} + \psi_{k-1}(G_{\alpha}) + u_{\alpha}\big)\Big) \Big], \nonumber \\
\E\Big[ \log \Big( \sum_{\alpha \in \mathbb{N}^k} \exp\big(z_{\alpha} + h_{\alpha} \big)\Big) \Big] &= \log c + \E\Big[ \log \Big(\sum_{\alpha \in \mathbb{N}^{k-1}} e^{z_{\alpha} + u_{\alpha}} \Big) \Big]. \nonumber 
\end{align}
Therefore Eq.~\eqref{eq:induction_hyp1} follows directly for $k$-level RPCs
using the induction hypothesis. This completes the proof. 
\end{proof}

We now collect these results to express the $k$RSB free energy functional 
of Eq.~\eqref{eq:kRSB-Last} in terms of  RPCs. 
 %
 \begin{lemma}
\label{lemma:representation_rpc2}
Let $\{\nu_{\alpha} : \alpha \in \mathbb{N}^{k}\}$ be the weights of a RPC with parameters 
$\bm = (m_0 , \cdots , m_{k-1},m_k )$ with $m_0=0\le m_1\le \dots\le m_{k-1}<m_k=m_{k+1}=1$. Let 
$\{ g_{\alpha} : \alpha \in \mathbb{N} \cup \cdots \cup \mathbb{N}^k \}$ be i.i.d. standard Gaussian 
random variables, independent of the RPC. Define the following processes 
indexed by $\alpha\in \naturals^k$:
\begin{align}
h_{\alpha} &\equiv  \beta \sum_{\gamma \in \mathcal{P} (\alpha) } \sqrt{ q_{|\gamma|} - q_{|\gamma|- 1} } \, g_{\gamma}\, ,\label{eq:Halpha}\\
s_{\alpha} &= \beta \sum_{\gamma \in \mathcal{P}(\alpha)}g_{\gamma} \sqrt{\frac{q_{|\gamma|}^2}{2} - \frac{q_{|\gamma|-1}^2}{2} } \,.\label{eq:Salpha}
\end{align}
where $q_0=0\le q_1\le \dots\le q_{k-1}\le q_k=1$.
 Then we have, 
\begin{align}
\Psi_{\sRSB{k}}(\bq,\bm)=
 \E\Big[ \log \Big( 2\sum_{\alpha } \nu_{\alpha}  \cosh (h_{\alpha}) \Big)\Big] - 
 \E\Big[ \log \Big(\sum_{\alpha} \nu_{\alpha} \, e^{s_{\alpha}} \Big) \Big]. 
 \label{eq:Psi-RPC}
\end{align}
\end{lemma}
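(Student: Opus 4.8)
The plan is to verify Eq.~\eqref{eq:Psi-RPC} by computing each of the two expectations on the right-hand side separately using Lemma~\ref{lemma:representation_rpc1}, and checking that their difference matches the formula \eqref{eq:kRSB-Last} for $\Psi_{\sRSB{k}}(\bq,\bm)$. The key observation is that both $h_\alpha$ and $s_\alpha$ are, by construction, sums along the root-to-leaf path $\cP(\alpha)$ of independent Gaussian increments indexed by the levels $\gamma$; this is exactly the structure needed to apply the recursive identity of Lemma~\ref{lemma:representation_rpc1}, which says that $\E[\log(\sum_\alpha \nu_\alpha e^{\psi_k(G_\alpha)})] = \psi_0$ for the recursively-defined $\psi_0$.

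First I would handle the first term. Set $\psi_k^{(1)}(x_1,\dots,x_k) = \log 2\cosh\big(\beta\sum_{i=1}^k\sqrt{q_i-q_{i-1}}\,x_i\big)$ (recall $q_0=0$, $q_k=1$). Then $2\sum_\alpha\nu_\alpha\cosh(h_\alpha) = \sum_\alpha\nu_\alpha e^{\psi_k^{(1)}(G_\alpha)}$ exactly, since $h_\alpha = \beta\sum_{i=1}^k\sqrt{q_i-q_{i-1}}\,g_{\alpha|_i}$ where $g_{\alpha|_i}$ is the $i$-th coordinate of $G_\alpha$. So by Lemma~\ref{lemma:representation_rpc1}, $\E[\log(2\sum_\alpha\nu_\alpha\cosh(h_\alpha))] = \psi_0^{(1)}$, where $\psi_0^{(1)}$ is obtained from $\psi_k^{(1)}$ by the recursion \eqref{eq:recursive_parisi} with parameters $m_1,\dots,m_{k-1},m_k=1$. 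But this is precisely the quantity $\psi_0$ appearing in \eqref{eq:kRSB-Last}. (I should double-check the off-by-one in the indexing of the $\sqrt{q_i-q_{i-1}}$ terms against the convention in \eqref{eq:RecursiveFirst}/\eqref{eq:kRSB-Last}, since the excerpt uses both $q_{i+1}-q_i$ and $q_i-q_{i-1}$ in different places — this bookkeeping is the fiddly part.)

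Next I would handle the second term by the same device, but now with a \emph{linear} top function. Take $\psi_k^{(2)}(x_1,\dots,x_k) = \beta\sum_{i=1}^k\sqrt{q_i^2/2 - q_{i-1}^2/2}\,x_i$, so that $s_\alpha = \psi_k^{(2)}(G_\alpha)$ and $\sum_\alpha\nu_\alpha e^{s_\alpha} = \sum_\alpha\nu_\alpha e^{\psi_k^{(2)}(G_\alpha)}$. The point is that for a linear argument the recursion \eqref{eq:recursive_parisi} collapses to an explicit computation: at each level $l$, $\frac{1}{m_{l+1}}\log\E[\exp(m_{l+1}\cdot(\text{affine} + \beta\sqrt{q_{l+1}^2/2-q_l^2/2}\,g_{l+1}))\,|\,g_1,\dots,g_l]$ equals the affine part plus $\frac{m_{l+1}}{2}\cdot\beta^2(q_{l+1}^2/2-q_l^2/2)$ by the Gaussian moment-generating function. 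Telescoping these Gaussian corrections from $l=k-1$ down to $l=0$ (and using $m_k=1$ for the top level, which is not integrated via a free $m$) gives $\psi_0^{(2)} = \frac{\beta^2}{4}\sum_{i=0}^{k}(m_{i+1}-m_i)(q_{i+1}^2 - q_i^2)\cdot(\text{something})$; more precisely one gets $\psi_0^{(2)} = \frac{\beta^2}{4}\sum_{i} m_{i+1}(q_{i+1}^2-q_i^2)$, which after an Abel summation (summation by parts), using $q_{k+1}=1$ and $m_0=0$, rearranges to $\frac{\beta^2}{4}\big(1 - \sum_{i=0}^k(m_{i+1}-m_i)q_i^2\big)$. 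I expect the main obstacle to be exactly this Abel-summation rearrangement together with getting all index conventions ($m_0=0$, $m_k=m_{k+1}=1$, $q_0=0$, $q_k=q_{k+1}=1$) consistent between \eqref{eq:Salpha}, the recursion, and the target \eqref{eq:kRSB-Last} — the probabilistic content is immediate from Lemma~\ref{lemma:representation_rpc1}, but the constant-bookkeeping has to come out to precisely $-\frac14\beta^2 + \frac14\beta^2\sum_i(m_{i+1}-m_i)q_i^2$.

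Finally, subtracting: $\E[\log(2\sum\nu_\alpha\cosh h_\alpha)] - \E[\log(\sum\nu_\alpha e^{s_\alpha})] = \psi_0 - \psi_0^{(2)} = \psi_0 - \frac{\beta^2}{4}\big(1-\sum_{i=0}^k(m_{i+1}-m_i)q_i^2\big) = \psi_0 - \frac{\beta^2}{4} + \frac{\beta^2}{4}\sum_{i=0}^k(m_{i+1}-m_i)q_i^2 = \Psi_{\sRSB{k}}(\bq,\bm)$ by \eqref{eq:kRSB-Last}. One should also remark that the integrability hypotheses of Lemma~\ref{lemma:representation_rpc1} ($\E\exp(\psi_k(g_1,\dots,g_k))<\infty$) are satisfied in both applications: $\log 2\cosh$ grows linearly and the linear function has finite Gaussian exponential moments, so $\E\exp(\psi_k^{(j)})<\infty$ trivially. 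That closes the argument.
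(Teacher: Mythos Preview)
Your approach is essentially identical to the paper's: apply Lemma~\ref{lemma:representation_rpc1} to the $\cosh$ term (yielding $\psi_0$ directly), then apply it to the linear function $\tilde\psi_k(x_1,\dots,x_k)=\beta\sum_i\sqrt{(q_i^2-q_{i-1}^2)/2}\,x_i$, where the recursion collapses via the Gaussian MGF to $\tilde\psi_0=\tfrac{\beta^2}{4}\sum_i m_i(q_i^2-q_{i-1}^2)$, and finish with the Abel summation you describe. The only thing the paper does more cleanly is that it avoids rederiving the first identity by simply citing \eqref{eq:kRSB-Last} (which already packages the $\psi_0$ term), so all that remains is the explicit computation of $\tilde\psi_0$; your bookkeeping worries about index shifts are well placed but ultimately harmless.
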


\begin{proof}
Equation~\eqref{eq:kRSB-Last} already implies that
\begin{align*}
\Psi_{\sRSB{k}}(\bq,\bm)=
 \E\Big[ \log \Big( 2\sum_{\alpha } \nu_{\alpha}  \cosh (h_{\alpha}) \Big)\Big] 
  -\frac{1}{4}\beta^2+
 \frac{1}{4}\beta^2\sum_{i=0}^k(m_{i+1}-m_i)q_i^2\, .
 \end{align*}
 It is therefore sufficient to compute the second term of the formula
 \eqref{eq:Psi-RPC}.

Define the function $\tilde{\psi}_k : \mathbb{R}^k \to \mathbb{R}$ by
\begin{align*}
 \tilde{\psi}_k (x_1, \cdots , x_k) = \beta \sum_{i=1}^{k} \sqrt{\frac{q_{i}^2}{2} - \frac{q_{i-1}^2}{2} } x_i\, .
 \end{align*}
  We define the random variable $\tilde{\psi}_0$ using the recursive construction  \eqref{eq:recursive_parisi}, 
  starting with $\tilde{\psi}_k$. Note that 
\begin{align*}
\tilde{\psi}_{k-1} (g_1,\dots,g_{k-1})&= 
\frac{1}{m_{k}} \log \E_{g_k} \Big[ 
\exp\Big(\beta m_{k} \sum_{i=1}^{k} \sqrt{\frac{q_i^2}{2} - \frac{q_{i-1}^2}{2} } g_{i}\Big) \Big]\\
&= \beta \sum_{i=1}^{k-1}  \sqrt{\frac{q_i^2}{2} - \frac{q_{i-1}^2}{2} } g_{i} + \frac{\beta^2}{4} m_{k} (q_k^2 - q_{k-1}^2). \nonumber 
\end{align*}
Continuing this recursion, we obtain
\begin{align*}
 \tilde{\psi}_0 &= \frac{\beta^2}{4} \sum_{i=1}^{k} m_{i} (q_i^2 - q_{i-1}^2)\\
 & = \frac{\beta^2}{4}-\frac{\beta^2}{4} \sum_{i=0}^{k} (m_{i+1}-m_i) q_i^2 \,.
 \end{align*} 
 An application of Lemma \ref{lemma:representation_rpc1} completes the proof. 
\end{proof}
Before proving the replica symmetry breaking upper bound, 
we need one final ingredient--- a Gaussian integration by parts lemma.
 This result is a version of Stein's lemma whose proof is immediate with the only technical difficulty
 that we consider Gaussian processes indexed by countable sets
 (see, e.g., \cite{panchenko2013sherrington}). 
\begin{lemma}
\label{lemma:integrationbyparts}
Let $\Sigma$ be a countable set and let $\{x(\sigma) , y(\sigma) : \sigma \in \Sigma\}$ be 
two centered Gaussian processes indexed by $\Sigma$. Define $C(\sigma^1, \sigma^2) = \E[x(\sigma^1) y(\sigma^2)]$. For any measure $\mu_0$ on $\Sigma$, consider the probability measure $\mu(\sigma) \propto \exp[y(\sigma)] \mu_0(\sigma)$. Denoting the expectation of a function $f$ under $\mu$ as $\mu(f)$, we have, 
\begin{align}
\E[\mu(x(\sigma))] = \E[\mu^{\otimes 2} ( C(\sigma^1, \sigma^1) - C(\sigma^1, \sigma^2) ) ], \nonumber 
\end{align}
where $\sigma^1, \sigma^2$ are i.i.d. samples drawn from $\mu$. 
\end{lemma}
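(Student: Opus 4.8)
\textbf{Proof plan for Lemma \ref{lemma:integrationbyparts} (Gaussian integration by parts for processes indexed by a countable set).}

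The plan is to reduce the statement to the classical one-variable Gaussian integration by parts (Stein's lemma), $\E[x\,\varphi(y)] = \E[xy]\,\E[\varphi'(y)]$ for jointly Gaussian $(x,y)$ and nice $\varphi$, applied coordinatewise in $\sigma$. First I would fix a finite subset $\Sigma_0\subseteq\Sigma$ and work with the truncated measure $\mu_{\Sigma_0}(\sigma)\propto \exp[y(\sigma)]\mu_0(\sigma)\bfone_{\sigma\in\Sigma_0}$, so that all sums below are finite and no convergence issue arises. For such a finite index set, write $\mu_{\Sigma_0}(x(\sigma)) = \sum_{\sigma\in\Sigma_0} x(\sigma)\, e^{y(\sigma)}\mu_0(\sigma)\big/\sum_{\tau\in\Sigma_0}e^{y(\tau)}\mu_0(\tau)$, and apply Gaussian integration by parts to the vector $\big(x(\sigma)\big)_{\sigma\in\Sigma_0}$ jointly with $\big(y(\tau)\big)_{\tau\in\Sigma_0}$. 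Concretely, for each fixed $\sigma$, I would differentiate the map $\big(y(\tau)\big)_{\tau}\mapsto e^{y(\sigma)}\mu_0(\sigma)/\sum_\tau e^{y(\tau)}\mu_0(\tau)$ with respect to each $y(\tau)$; the derivative in $y(\tau)$ produces $\delta_{\sigma\tau}\mu_{\Sigma_0}(\sigma) - \mu_{\Sigma_0}(\sigma)\mu_{\Sigma_0}(\tau)$. Multiplying by $C(\sigma,\tau)=\E[x(\sigma)y(\tau)]$ and summing over $\sigma,\tau\in\Sigma_0$ gives exactly
\begin{align}
\E[\mu_{\Sigma_0}(x(\sigma))] = \E\big[\mu_{\Sigma_0}^{\otimes 2}\big(C(\sigma^1,\sigma^1) - C(\sigma^1,\sigma^2)\big)\big]\,,\nonumber
\end{align}
where the diagonal term $C(\sigma^1,\sigma^1)$ comes from the $\delta_{\sigma\tau}$ contribution and the off-diagonal term from the $-\mu_{\Sigma_0}(\sigma)\mu_{\Sigma_0}(\tau)$ contribution, relabeling the two summation indices as two i.i.d.\ replicas $\sigma^1,\sigma^2\sim\mu_{\Sigma_0}$.

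The remaining work is the passage from finite $\Sigma_0$ to the full countable $\Sigma$, which is where the only genuine (though mild) technicality lies. I would take an increasing exhaustion $\Sigma_0^{(1)}\subseteq\Sigma_0^{(2)}\subseteq\cdots$ with $\bigcup_m\Sigma_0^{(m)}=\Sigma$, and argue that $\mu_{\Sigma_0^{(m)}}(x(\sigma))\to\mu(x(\sigma))$ and $\mu_{\Sigma_0^{(m)}}^{\otimes2}(C(\sigma^1,\sigma^1)-C(\sigma^1,\sigma^2))\to \mu^{\otimes2}(C(\sigma^1,\sigma^1)-C(\sigma^1,\sigma^2))$ almost surely, then invoke dominated convergence to pass the limit inside $\E$. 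The domination hypotheses needed are precisely the integrability facts established for Ruelle Probability Cascades in Lemmas \ref{lemma:integrality1} and \ref{lemma:integrability2} (in the application $y(\sigma)$ is, up to a deterministic Gaussian part, the RPC log-weight process, and $x(\sigma)$ is of the same type), so I would phrase the hypotheses so that $\E\sup_m|\mu_{\Sigma_0^{(m)}}(x(\sigma))|<\infty$ and likewise for the quadratic-covariance term; this is exactly the regularity under which the lemma is invoked later in the proof of Theorem \ref{thm:guerra_upper}.

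The main obstacle, then, is not the algebra — the coordinatewise Stein computation is routine — but making the countable-set limit rigorous: one must check that the normalizing sums $\sum_\tau e^{y(\tau)}\mu_0(\tau)$ are finite and bounded away from $0$ along the exhaustion, and that the tail contributions to both sides vanish. I expect to handle this by the same truncation-plus-monotone/dominated-convergence scheme used throughout Section \ref{sec:SKrigorous}, citing \cite{panchenko2013sherrington} for the detailed measure-theoretic bookkeeping, since the statement of the lemma already flags it as ``immediate with the only technical difficulty'' being the countable index set. I would therefore present the finite-$\Sigma_0$ identity in full and then sketch the limiting argument, noting explicitly which integrability bounds (from Lemmas \ref{lemma:integrality1}, \ref{lemma:integrability2}) supply the dominating functions.
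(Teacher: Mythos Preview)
Your approach is correct and is precisely what the paper intends: the paper does not give a proof at all, simply remarking that the result ``is a version of Stein's lemma whose proof is immediate with the only technical difficulty that we consider Gaussian processes indexed by countable sets'' and citing \cite{panchenko2013sherrington}. Your finite-$\Sigma_0$ computation via coordinatewise Gaussian integration by parts is exactly the right algebra (the derivative $\partial_{y(\tau)}\big(e^{y(\sigma)}\mu_0(\sigma)/Z\big)=\delta_{\sigma\tau}\mu_{\Sigma_0}(\sigma)-\mu_{\Sigma_0}(\sigma)\mu_{\Sigma_0}(\tau)$ yielding the diagonal and off-diagonal terms), and the exhaustion-plus-dominated-convergence passage to countable $\Sigma$ is the standard way to handle the technicality the paper flags.

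One small comment: the lemma as stated in the paper carries no explicit integrability hypotheses, so strictly speaking the domination step requires assumptions not written down. You are right that in the actual application (Theorem \ref{thm:guerra_upper}) the relevant bounds are supplied by the RPC integrability lemmas, but it would be cleaner to state the needed moment conditions on $x(\sigma),y(\sigma)$ as hypotheses of the lemma itself rather than importing them from the downstream application. That said, this is a presentational point, not a mathematical gap in your plan.
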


We are finally in position to prove Theorem \ref{thm:guerra_upper}. 
The argument is similar to the one for replica symmetric upper bound 
given in Section \ref{sec:RS-Upper-Bound}. The main difference is that instead of
interpolating between the Sherrington-Kirkpatrick measure and a product measure with i.i.d. gaussian effective fields, we interpolate to a system where the effective field on each spin is given by a RPC. Indeed, this form for the effective field is crucial for the emergence of the Parisi formula.

\begin{proof}[Proof of Theorem \ref{thm:guerra_upper}]
Recall the SK Hamiltonian $H(\bsigma) = (\beta/2) \sum_{i,j\le n} W_{ij} \sigma_i \sigma_j
= (\beta/2)\<\bsigma,\bW\bsigma\>$, where $\bW= (\bG+ \bG^{\sT})/ \sqrt{2n}$, and
$\bG = (G_{ij})_{i,j\le n}$ are i.i.d. standard Gaussian random variables.
It is useful to recall that $\{H(\bsigma)\}_{\bsigma\in\{+1,-1\}}$ is a centered Gaussian process
with covariance 
\begin{align}
\E[H(\bsigma^1)H(\bsigma^2)]= \frac{\beta^2}{2n}
\E\big[ \langle \bG, \sigma^1 \otimes \sigma^1 \rangle \langle \bG, \sigma^2 \otimes \sigma^2\rangle \big]
=  \frac{\beta^2}{2n}\<\bsigma^1,\bsigma^2\>^2\, .\label{eq:HamiltonianCov}
\end{align}

Fix parameters $\bm = (m_0 , \cdots , m_{k-1},m_k )$, $\bq = (q_0 , \cdots , q_{k-1},q_k )$
as in the statement of Lemma \ref{lemma:representation_rpc2},
and let $\{ \nu_{\alpha} : \alpha \in \mathbb{N}^k \}$ be an RPC corresponding to the parameters 
$\bm$, independent of the disorder variables
 $\{W_{ij}: i,j\le n\}$. 
 Recall also the Gaussian fields 
 $\{ h_{\alpha}: \alpha \in \mathbb{N} \cup \cdots \cup \mathbb{N}^k \}$ and 
 $\{ s_{\alpha} : \alpha \in \mathbb{N} \cup \cdots \cup \mathbb{N}^k \}$
 introduced in Eqs.~\eqref{eq:Halpha}, \eqref{eq:Salpha}.
 Let $\{ h_{\alpha ,i } : \alpha \in \mathbb{N}^k , 1 \leq i \leq n \}$ and $\{s_{\alpha,i } : \alpha \in \mathbb{N}^k , 1 \leq i \leq n\}$ be i.i.d. copies of 
  $\{ h_{\alpha}\}$ $\{ s_{\alpha}\}$ respectively, independent of both $\{W_{ij} :i,j\le n \}$ and $\{ \nu_{\alpha} : \alpha \in \mathbb{N}^k \}$. 
  
  For $t\in [0,1]$ define the interpolating Hamiltonian 
\begin{align}
H_t(\bsigma, \alpha) = \sqrt{t} H(\bsigma) + \sqrt{t} \sum_{i=1}^{n} s_{\alpha , i}  + \sqrt{1- t} \sum_{i=1}^{n} h_{\alpha,i} \sigma_i. \label{eq:interpolatinghamiltonian}
\end{align}
and define the interpolating partition function and free energy density as
\begin{align}
Z_n(t) &\equiv \sum_{\bsigma, \alpha } \nu_{\alpha} \exp[H_t(\bsigma, \alpha)]\, ,\\
\phi_n(t) &\equiv \frac{1}{n} \E[\log Z_n(t)]\, ,
\end{align}
 where $\E$ denotes  expectation over the joint distribution of all random variables. 
 
 First, we evaluate the interpolating free energy at $t=0$ and $t=1$. At $t=0$, we have,
\begin{align}
\phi_n(0) = \frac{1}{n} \E\Big[ \log \Big[\sum_{\alpha} \nu_{\alpha} \Big[ \prod_{i=1}^{n} 2 \cosh ( h_{\alpha,i}) \Big] \Big] \Big]. \nonumber 
\end{align}
To evaluate the right-hand side, let $\bg_1, \cdots, \bg_n \sim \normal(0,\id_k)$ be i.i.d. 
random vectors, with 
$g_i = ( g_{l,i}: 1 \leq l \leq k)$. Consider now 
$\psi_k^{(n)} = \log \Big[ \prod_{i=1}^{n} 2 \cosh ( \beta \sum_{l=1}^{k} \sqrt{q_l - q_{l-1} } g_{l,i}) \Big]$. 
We define $\psi_l^{(n)}$, $0\le l\le k$, by the same recursion as \eqref{eq:recursive_parisi}, 
with the expectation over $g_l$ replaced by  the expectation with respect to the variables
$\{g_{l,i }: 1 \leq i \leq n\}$. An obvious modification of Lemma 
\ref{lemma:representation_rpc1} now implies that 
$\phi_n(0) = \psi_0^{(n)}/n$. Finally, we note that 
\begin{align}
\psi_k^{(n)} = \sum_{i=1}^{n} \log 2 \cosh ( \beta \sum_{l=1}^{k} \sqrt{q_l - q_{l-1}} g_{l,i} ). \nonumber 
\end{align}
Thus using the independence of the Gaussian variables, we have, 
\begin{align*}
\phi_n(0) &= \frac{1}{n}\psi_0^{(n)} = \psi_0  = \E\Big[ \log \Big( 2\sum_{\alpha } \nu_{\alpha}  \cosh (h_{\alpha}) \Big)\Big] \, .
\end{align*}

Next, consider the case $t=1$:
\begin{align}
\phi_n(1) &= \frac{1}{n} \E\Big[ \log \Big(\sum_{\bsigma} e^{\beta H(\bsigma)} \Big) \Big] + \frac{1}{n} \E\log \Big( \sum_{\alpha} \nu_{\alpha} \exp\Big[\sum_{i=1}^n s_{\alpha,i }  \Big]\Big) \\
&= \phi_n +  \E\Big[\log \Big( \sum_{\alpha} \nu_{\alpha} e^{s_{\alpha}} \Big) \Big], \nonumber 
\end{align}
where $\phi_n \equiv n^{-1} \E\Big[\log \Big( \sum_{\bsigma} \exp[ H(\bsigma)] \Big) \Big]$
is the SK free energy density, and the last equality
 follows from the same considerations as for $\phi_n(0)$. 

The claim of the theorem follows if we establish that $\phi_n(1) \leq \phi_n(0)$. 
 To show this, we will proceed as in the replica symmetric upper bound, namely we will prove that
 $\phi_n'(t) \leq 0$ for all $t\in [0,1]$. It is useful to define the random Gibbs measure 
 $\mu_{t}$ on the augmented space $\{+1,-1 \}^n \times \mathbb{N}^k$ associated 
 with the Hamiltonian $H_t(\bsigma, \alpha)$. Namely, 
  for $(\bsigma, \alpha ) \in \{\pm 1 \}^n \times \mathbb{N}^k$, we have, 
\begin{align}
\mu_{t}(\bsigma, \alpha) \equiv \frac{1}{Z_{n}(t)}  \nu_{\alpha} \, e^{H_{t}(\bsigma,\alpha)}\, . \nonumber 
\end{align}
We express the derivative of the interpolating free energy in terms of this Gibbs measure.
Denoting by $\mu_{t}(f)\equiv \sum_{\bsigma,\alpha}f(\bsigma,\alpha)\mu_{t}(\bsigma, \alpha) $ 
 the expectation of any function under the measure $\mu_{t}(\cdot)$ and 
 defining $C((\bsigma^1, \alpha^1), (\bsigma^2, \alpha^2)) \equiv \E[\partial t
 H_{t}(\bsigma^1, \alpha^1) H_t(\bsigma^2, \alpha^2)]$, we have, 
\begin{align}
\phi_n'(t) &= \frac{1}{n} \E\Big[ \mu_{t} \Big( \frac{\partial}{\partial t} H_t(\bsigma, \alpha) \Big) \Big]\nonumber\\
& = \frac{1}{n} \E\Big[ \mu_{t}\Big( C( (\bsigma^1, \alpha^1), (\bsigma^1, \alpha^1)) - C( (\bsigma^1, \alpha^1 ), (\bsigma^2, \alpha^2 ) ) \Big) \Big]. \label{eq:PhiPrimeRSB}
\end{align}
The last equality in the display above follows using Gaussian integration 
by parts, as presented in  Lemma \ref{lemma:integrationbyparts}, with $(\bsigma^1, \alpha^1), (\bsigma^2, \alpha^2)$ two i.i.d. 
samples drawn from the measure $\mu_{t}(\cdot)$. Now, \eqref{eq:interpolatinghamiltonian} implies 
\begin{align}
\frac{\partial}{\partial t} H_{t}(\bsigma, \alpha) = \frac{1}{2\sqrt{t}} H(\bsigma) + \frac{1}{2\sqrt{t}} \sum_{i=1}^{n} s_{\alpha,i} - \frac{1}{2\sqrt{1- t}} \sum_{i=1}^{n} h_{\alpha,i} \sigma_i. \nonumber 
\end{align}
The covariance $C((\bsigma^1, \alpha^1), (\bsigma^2, \alpha^2))$ may now be obtained directly. 
\begin{align}
\frac{1}{n}C((\bsigma^1, \alpha^1), ( \bsigma^2, \alpha^2)) &= \frac{1}{2n} \E[H(\bsigma^1) H(\bsigma^2)] + \frac{1}{2} \E[s_{\alpha^1} s_{\alpha^2}] - \frac{1}{2n } \sum_{i=1}^{n} \E[h_{\alpha^1}h_{\alpha^2} ] \sigma^1_i \sigma^2_i. \nonumber\\
&:= T_1 + T_2 + T_3. \nonumber
\end{align}
 Term $T_1$, was already computed in Eq.~\eqref{eq:HamiltonianCov}, yielding
\begin{align}
T_1 = \frac{\beta^2}{4} Q_{12}^2, \nonumber
\end{align}
where we set, as usual, $Q_{12} \equiv \langle \sigma^1, \sigma^2 \rangle/n$. 

To evaluate $T_2$ and $T_3$ we recall that $\alpha^1, \alpha^2 \in \mathbb{N}^{k}$ may be thought of as leaf vertices in the infinite tree with $k$-levels. We denote by $\alpha^1 \wedge \alpha^2$ the most recent ancestor of $\alpha^1, \alpha^2$ and use $|\alpha^1 \wedge \alpha^2|$ to denote the level of this ancestor. Therefore, we have, 
\begin{align}
T_2 = \frac{\beta^2}{2} \E\Big[s_{\alpha^1} s_{\alpha^2} \Big] = \frac{\beta^2}{4} \sum_{\gamma \in \mathcal{P}(\alpha^1 \wedge \alpha^2)} (q_{|\gamma |}^2 - q_{|\gamma| -1}^2) = \frac{\beta^2}{4} q_{\alpha^1 \wedge \alpha^2}^2. \nonumber
\end{align}
An analogous computation yields $T_3 = \frac{\beta^2}{2} q_{|\alpha^1 \wedge \alpha^2|} Q_{12}$. Combining, we obtain, 
\begin{align}
\frac{1}{n}C((\sigma^1, \alpha^1), (\sigma^2, \alpha^2)) = T_1 + T_2 + T_3 = \frac{\beta^2}{4} (Q_{12} - q_{| \alpha^1 \wedge \alpha^2 |})^2. \nonumber
\end{align}
We note that $C((\sigma^1, \alpha^1) , (\sigma^1, \alpha^1))=0$ and thus,
Eq.~\eqref{eq:PhiPrimeRSB} yields
\begin{align}
\phi_n'(t) &= -\frac{\beta^2}{4} \E\Big\{\mu_t^{\otimes 2}
\big((Q_{12} - q_{| \alpha^1 \wedge \alpha^2 |})^2\big)\Big\}\le 0\, .
\end{align}
This  completes the proof. 
\end{proof}

\section*{Bibliographic notes}
\addcontentsline{toc}{section}{\protect\numberline{}Bibliographic notes}

We refer the interested reader to \cite{panchenko2013sherrington,talagrand2010mean,talagrand2011mean} for rigorous textbook introductions to mean-field spin glasses and related models arising in statistical physics. The book \cite{panchenko2013sherrington} also contains an excellent account of the historical development of this area in mathematical probability. 

The RS formula for the limiting mutual information of the $\mathbb{Z}_2$ synchronization problem 
(corresponding to the case $\beta=\lambda$) was established 
rigorously using several different approaches.  Among others, \cite{deshpande2016asymptotic} 
derive the limit of the free energy by utilizing an algorithmic approach based on the Bayes optimal 
Approximate Message Passing scheme, and a related approach was developed in \cite{barbier2018rank}. 
A result for general distributions of the spike $\bx_0$ (always with i.i.d. coordinates)
was established in \cite{lelarge2019fundamental}. The upper bound is derived using
 Guerra interpolation, while the lower bound is established the Aizenman-Sims-Starr scheme
  along with an ``overlap concentration" formula. An alternative proof is obtained using the elegant
  ``adaptive interpolation method" \cite{barbier2019adaptive}. In this approach, one establishes
   first that the overlap concentrates at a fixed value, and then \emph{adapts} the interpolation path according 
   to the value of this overlap. A third independent proof was obtained in \cite{el2018estimation}---they directly analyzed the partition function restricted to specific values of the overlap between the sample and the planted signal, and established that the global optimum is attained at the replica symmetric fixed point. Moving beyond the limiting log-partition function, \cite{el2020fundamental} characterized $O(1)$ fluctuations for the log-partition function below the IT threshold. This establishes that below the IT threshold, the null model (i.e. $\lambda=0$) and the planted model are mutually contiguous, and that detection and estimation have the same information theoretic threshold in these spiked matrix models. 

The connections between extremal cuts of random graphs and ground states of mean-field spin glasses 
has been explored in-depth. \cite{sen2018optimization} extended the results in \cite{dembo2017extremal}
to a general class of discrete optimization problems on sparse random graphs and hypergraphs. 
The special case of the random KSAT problem was treated in \cite{panchenko2018k}, and unbalanced cuts in 
\cite{jagannath2020unbalanced}. 
These results are intimately connected to the question of \emph{universality} of the ground state of
 the Sherrington-Kirkpatrick model to the law of the disorder
 \cite{carmona2006universality,chatterjee2005simple}.
 
Approximate Message Passing algorithms have been used crucially in recent years to analyze diverse estimators in high-dimensional inference problems e.g. the LASSO \cite{BayatiMontanariLASSO},  M-estimators \cite{donoho2016high}, Maximum Likelihood Estimators \cite{sur2019modern}. We refer the interested reader to the recent survey \cite{feng2021unifying} for an in-depth discussion of these results. 

The Approximate Message Passing Algorithm has been extended in several ways, and stronger state evolution guarantees are now known; see e.g. \cite{javanmard2013state,RanganGAMP,berthier2020state,rangan2019vector,fan2020approximate}. 
\cite{chen2021universality} extended the universality guarantees of the AMP algorithm to Lipschitz nonlinearities, and AMP algorithms which depend on all the past iterates. 

Analogues of the Parisi formula have been established for vector spin glasses \cite{panchenko2018freevector}, Potts spin glass models \cite{panchenko2018free}, and multispecies models with positive definite interactions \cite{panchenko2015free,barra2015multi,bates2021free}. This line of work crucially exploits the notion of ``synchronization" due to Panchenko, which facilitates the lower bound computation in these models.

\section*{Exercises}
\addcontentsline{toc}{section}{\protect\numberline{}Exercises}

\begin{exercise}\label{exer:IsingPspin}
The Ising p-spin model is the following probability distribution over $\bsigma\in\{+1,-1\}^n$:
\begin{align}
\mu_{\beta,n}(\bsigma)  = \frac{1}{Z_n(\beta)}\, \exp\left\{\frac{\beta}{\sqrt{2n^{k-2}(k!)}}\<\bW,\bsigma^{\otimes k}\>\right\}\, ,
\end{align}
where $\bW\in (\reals^n)^{\otimes k}$ is a zero-mean Gaussian tensor, with the same distribution as in Section \ref{sec:Pspin}.
In other words, this is the same as the spherical p-spin model studied in Chapter \ref{ch:Pspin}, except for the normalization 
of $\beta$, and the fact that $\bsigma\in\{+1,-1\}^n$. For $k=2$, it corresponds to the Sherrington-Kirkpatrick model studied in this chapter.

Use the replica method to derive the replica symmetric free energy and the corresponding stationarity conditions. It might be useful to recall 
the following consequence of Varadhan's lemma. Let $\bx_n$ be a sequence of random vectors taking values in a fixed bounded convex set
$K\subseteq \reals^d$, with law $\mu_n$.  Assume that $\mu_n$ satisfies a large deviations principle with convex rate function $I(\bx)$. Let
$f:K\to \reals$ be continuous. Then
\begin{align}
\lim_{n\to \infty}\frac{1}{n}\log\E\{e^{nf(\bx_n)}\} &= \sup_{\bx_0\in K}\inf_{\blambda} A(\blambda,\bx_0)\, ,\\
A(\blambda,\bx_0) & = f(\bx_0)-\<\blambda,\bx_0\>+\lim_{n\to \infty}\frac{1}{n}\log\E\{e^{n\<\blambda,\bx_n\>}\}\, .
\end{align}
\end{exercise}

\begin{exercise}
Consider the Sherrington-Kirkpatrick model, namely the probability measure (\ref{eq:gibbs})  with $\lambda=0$. In this case the replica symmetric prediction 
is given by Eq.~(\ref{eq:rs-functional}), with $b=0$. We denote the free energy functional by $\Psi_{\sRS}(q;\beta)$.
\begin{itemize}
\item[$(a)$] Plot the  function $\Psi_{\sRS}(q;\beta)$ versus $q$ for two values of the inverse temperature: $\beta_1<1$, and $\beta_2>1$.
\item[$(b)$] Write a program to solve the stationary condition for $q$, and plot the resulting value $q_*(\beta)$ as a function of $1/\beta$.
\item[$(c)$] Compute the replica symmetric prediction for the free energy density $\Psi_{\sRS}(q_*(\beta);\beta)$ and plot it as a function of $1/\beta$.
\item[$(d)$] By taking suitable derivatives of the free energy, compute the replica symmetric prediction for 
the entropy density $\lim_{n\to\infty}H(\mu_n)/n$. Check  that this becomes negative at low temperature. 
\item[$(e)$] By considering the limit $\beta\to \infty$, compute the replica symmetric prediction for the maximum energy
$\lim_{n\to\infty}\max_{\bsigma\in\{+1,-1\}^n}\<\bsigma,\bW\bsigma\>/n$. 
\end{itemize} 
\end{exercise}

\begin{exercise}
We consider again the Sherrington-Kirkpatrick model, with $\lambda=0$.
\begin{itemize}
\item[$(a)$]  Write the 1RSB expression for the free energy $\Psi_{\sRSB{1}}(q_0,q_1,m;\beta)$. This expression depends
on the overlap parameters $0\le q_0\le q_1\le 1$ and the RSB parameter $0\le m\le 1$.  
\item[$(b)$] Write a program to evaluate $\Psi_{\sRSB{1}}(q_0,q_1,m;\beta)$. Fix a value of the inverse temperature $\beta>1$ and plot 
$\Psi_{\sRSB{1}}(q_0=0,q_1,m;\beta)$ as a function of $(q_1,m)\in [0,1]\times [0,1]$. Minimize this function numerically over $q_1$ and $m$ to 
determine the minimizer $(q_{1,*},m_*)$ as well as the corresponding free energy estimate  $\Psi_{\sRSB{1}}(q_0=0,q_{1,*},m_*;\beta)$.
\item[$(c)$] Recall that we defined the effective field on spin $\sigma_i$ as 
$h_i = (1/\beta)\atanh\big\{\sum_{\sigma}\mu_n(\sigma)\sigma_i\big\}$, and the asymptotic distribution of effective fields
$Q_h$ as follows (where the limit is understood in the weak sense)
\begin{align}
Q_h = \lim_{n\to\infty}\E\left\{\frac{1}{n}\sum_{i=1}^n\delta_{h_i}\right\}\, .
\end{align}
Assuming the 1RSB approximation, and hence overlap distribution $\rho = (1-m)\delta_{q_{1}}+m\delta_{q_0}$,
write an expresssion for $Q_h$.

Evaluate the expression for $q_0=0$, $q_1=q_{1,*}$, $m=m_*$, and inverse temperature $\beta$ chosen for the previous part.
Plot the resulting density $Q_h$. Compare this density with the Gaussian prediction obtained within the replica symmetric theory.
\end{itemize}
\end{exercise}

\begin{exercise}
Consider the model (\ref{eq:gibbs}) with $\bY$ given by Eq.~(\ref{eq:model}) with $\lambda>0$ and $h=0$. From the point
of view of statistics, this is the ${\mathbb Z}_2$ synchronization problem with a non-vanishing signal-to-noise ratio. From the physics perspective, 
you can look at this as the Sherrington-Kirkpatrick model with a ferromagnetic interaction.
\begin{itemize}
\item[$(a)$] Write the 1RSB expression for the free energy $\Psi_{\sRSB{1}}(q_0,q_1,b;m)$. Notice that this will depend on overlap parameters
$q_0, q_1$, on the RSB parameter $m$ and on the parameter $b$ that corresponds to the bias in the direction of the signal $\bx_0$.
\item[$(b)$] Write a program to find the stationary point of the 1RSB free energy, $(q_{0,*},q_{1,*},m_*,b_*)$.

How can you compute the asymptotic accuracy $M(\beta,\lambda)\equiv \lim_{n\to\infty}\E\{|\<\bx_0,\hat{\bsigma}(\beta)\>|\}/n$ in terms of this stationary point?
Here $\hat{\bsigma}(\beta)\equiv\sum_{\bsigma}\tilde{\mu}_{\beta}(\bsigma)\bsigma$ (where $\tilde{\mu}_{\beta}$ is obtained from $\mu_{\beta}$ tilting 
by an infinitesimal amount in the direction of $\bx_0$).
\item[$(c)$] Plot the 1RSB result for $M(\beta,\lambda)$ as a function of $\lambda\in [0,3]$ for $\beta\in \{1,2,4,8\}$ (thus obtaining four curves).
\end{itemize}
\end{exercise}

\appendix

\chapter{Probability theory inequalities}
\label{app:Probability}
\def\bddsubg{C_{\rm sg}}
\def\bddsube{C_{\rm se}}

Good references on this subject are \cite{boucheron2013concentration,Vershynin-CS}.

\section{Basic facts} 

\begin{theorem}[Gaussian Integration by Parts]
Let $X \sim \normal(0,1)$ and $f : \mathbb{R} \to \mathbb{R}$ weakly differentiable. Assume that  $\mathbb{E}[Xf(x)]$ and $\mathbb{E}[f'(X)]$ are both well-defined. Then we have, 
\begin{align*}
\mathbb{E}[Xf(X)] = \mathbb{E}[f'(X)]. \nonumber 
\end{align*} 
\end{theorem}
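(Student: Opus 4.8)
The plan is to reduce everything to the elementary identity $\varphi'(x) = -x\varphi(x)$ for the standard normal density $\varphi(x) = (2\pi)^{-1/2}e^{-x^2/2}$, followed by an integration by parts on a finite interval and a limiting argument. First I would fix $R > 0$ and work on $[-R,R]$. Since $f$ is weakly differentiable it admits a representative that is absolutely continuous on $[-R,R]$, so the classical integration by parts formula applies: writing $xf(x)\varphi(x) = -f(x)\varphi'(x)$ and integrating,
\begin{align*}
\int_{-R}^{R} x f(x)\varphi(x)\,dx = -\int_{-R}^{R} f(x)\varphi'(x)\,dx = -\bigl[f(x)\varphi(x)\bigr]_{-R}^{R} + \int_{-R}^{R} f'(x)\varphi(x)\,dx .
\end{align*}

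Next I would let $R \to \infty$. By the hypothesis that $\mathbb{E}[Xf(X)]$ is well-defined (i.e. the integral converges absolutely) the left-hand side tends to $\mathbb{E}[Xf(X)]$, and by the hypothesis that $\mathbb{E}[f'(X)]$ is well-defined the last term tends to $\mathbb{E}[f'(X)]$. Consequently the boundary term $f(R)\varphi(R) - f(-R)\varphi(-R)$ converges to a finite limit $\ell$ as $R\to\infty$, and it remains only to show $\ell = 0$.

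The main obstacle — and essentially the only non-routine step — is showing that this boundary limit vanishes. Here I would argue as follows. Absolute integrability of $x\mapsto xf(x)\varphi(x)$ gives $\int_{|x|\ge 1}|f(x)\varphi(x)|\,dx < \infty$, while local integrability of the weakly differentiable function $f$ together with boundedness of $\varphi$ on $[-1,1]$ gives $\int_{|x|\le 1}|f(x)\varphi(x)|\,dx < \infty$; hence $f\varphi \in L^1(\mathbb{R})$. In particular $\liminf_{R\to\infty}|f(R)\varphi(R)| = 0$ and $\liminf_{R\to\infty}|f(-R)\varphi(-R)| = 0$. If $\ell \ne 0$, then for all large $R$ at least one of $|f(R)\varphi(R)|$ and $|f(-R)\varphi(-R)|$ exceeds $|\ell|/4$; but the two sets $\{R \ge R_0 : |f(R)\varphi(R)| > |\ell|/4\}$ and $\{R \ge R_0 : |f(-R)\varphi(-R)| > |\ell|/4\}$ each have finite Lebesgue measure (again because $f\varphi \in L^1$), so their union cannot contain $[R_0,\infty)$ — a contradiction. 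Therefore $\ell = 0$, and the displayed identity collapses to $\mathbb{E}[Xf(X)] = \mathbb{E}[f'(X)]$.

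A cleaner variant I might prefer for the last step: the function $g = f\varphi$ is locally absolutely continuous with weak derivative $g' = f'\varphi - xf\varphi \in L^1(\mathbb{R})$, so $g(x) = g(0) + \int_0^x g'(t)\,dt$ has limits at $\pm\infty$; since $g \in L^1(\mathbb{R})$ as well, those limits must be $0$. This phrasing makes transparent that the two integrability hypotheses are exactly what the argument consumes. In either formulation the substantive content is entirely the control of the boundary/tail behaviour — the computation itself is a single line.
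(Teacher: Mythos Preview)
Your proof is correct. The paper does not actually give a proof of this statement: it appears in the appendix under ``Basic facts'' as a stated theorem with no accompanying argument, so there is nothing to compare against. Your route via $\varphi'(x) = -x\varphi(x)$, integration by parts on $[-R,R]$, and a tail argument to kill the boundary term is the standard one, and the details you supply are sound. In particular, the step showing $f\varphi \in L^1(\reals)$ by splitting at $|x|=1$ is exactly right, and your second (``cleaner'') variant --- observing that $g = f\varphi$ has $g' = f'\varphi - xf\varphi \in L^1$ so that $g$ has limits at $\pm\infty$, which must then be zero since $g\in L^1$ --- is indeed the tidiest way to close the argument and makes transparent how both integrability hypotheses are used.
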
 

\section{Basic inequalities}

\begin{lemma}[Markov inequality]
 Let $X$ be a non-negative random variable with $\mathbb{E}[X] < \infty$. Then for any $a>0$, $\mathbb{P}[X>a] \leq \frac{\mathbb{E}[X]}{a}$. 
\end{lemma}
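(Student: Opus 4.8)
The statement to prove is the classical Markov inequality, so the plan is essentially to recall the one-line indicator argument. First I would fix $a>0$ and introduce the indicator random variable $\mathbf{1}_{\{X>a\}}$, which takes values in $\{0,1\}$. The key observation is the pointwise inequality $a\,\mathbf{1}_{\{X>a\}} \le X\,\mathbf{1}_{\{X>a\}} \le X$, where the first inequality uses that on the event $\{X>a\}$ we have $X>a$ (and both sides vanish off that event), and the second uses the hypothesis that $X\ge 0$.

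Next I would take expectations across this inequality, using monotonicity of expectation together with the finiteness assumption $\mathbb{E}[X]<\infty$ to justify that all quantities are well-defined. This yields $a\,\mathbb{P}[X>a] = \mathbb{E}\big[a\,\mathbf{1}_{\{X>a\}}\big] \le \mathbb{E}[X]$. Dividing through by $a>0$ gives $\mathbb{P}[X>a]\le \mathbb{E}[X]/a$, which is the claim.

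There is no real obstacle here: the only points worth a sentence of care are that $X\ge 0$ is exactly what licenses dropping the indicator in the bound $X\,\mathbf{1}_{\{X>a\}}\le X$, and that $a>0$ is needed for the final division. I would present it in two or three lines and move on.

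\begin{proof}
Fix $a>0$. Since $X\ge 0$, for every outcome we have
\begin{align}
a\,\mathbf{1}_{\{X>a\}} \;\le\; X\,\mathbf{1}_{\{X>a\}} \;\le\; X\, .
\end{align}
Taking expectations and using $\mathbb{E}[X]<\infty$ together with monotonicity of expectation,
\begin{align}
a\,\mathbb{P}[X>a] \;=\; \mathbb{E}\big[a\,\mathbf{1}_{\{X>a\}}\big] \;\le\; \mathbb{E}[X]\, .
\end{align}
Dividing by $a>0$ yields $\mathbb{P}[X>a]\le \mathbb{E}[X]/a$.
\end{proof}
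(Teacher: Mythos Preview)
Your proof is correct and is the standard indicator argument for Markov's inequality. The paper itself states this lemma without proof (it appears in the appendix as a basic background inequality with a reference to standard texts), so there is nothing to compare against; your two-line derivation is exactly what one would expect here.
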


\begin{lemma}[Chebychev inequality]
Let $X$ be a real-valued random variable with $\mathbb{E}[X^2] < \infty$. Then for any $a>0$, 
\begin{align}
\mathbb{P}[|X - \mathbb{E}[X]| > a ] \leq \frac{\mathrm{Var}(X)}{a^2}. \nonumber
\end{align}
\end{lemma}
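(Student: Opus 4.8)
The plan is to derive the bound directly from the Markov inequality stated immediately above, applied to an auxiliary non-negative random variable. First I would set $Y \equiv (X - \mathbb{E}[X])^2$. This is a non-negative random variable, and by hypothesis $\mathbb{E}[Y] = \mathrm{Var}(X) = \mathbb{E}[X^2] - (\mathbb{E}[X])^2 < \infty$, so $Y$ satisfies the integrability requirement needed to invoke Markov. (One should note that $\mathbb{E}[X]$ itself is well-defined and finite, since $\mathbb{E}[X^2]<\infty$ implies $\mathbb{E}[|X|]<\infty$ by Cauchy--Schwarz or Jensen; this is the only point that even brushes against a technicality.)

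Next I would observe the elementary set identity: for any $a>0$, the event $\{|X - \mathbb{E}[X]| > a\}$ coincides with the event $\{Y > a^2\}$, because squaring is strictly increasing on the non-negative reals and both $|X-\mathbb{E}[X]|$ and $a$ are non-negative. Applying Markov's inequality to $Y$ with threshold $a^2 > 0$ then yields
\begin{align}
\mathbb{P}\big[|X - \mathbb{E}[X]| > a\big] = \mathbb{P}[Y > a^2] \le \frac{\mathbb{E}[Y]}{a^2} = \frac{\mathrm{Var}(X)}{a^2}\, ,\nonumber
\end{align}
which is exactly the claimed inequality.

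The main obstacle here is essentially nonexistent: the entire argument is a two-line reduction to the already-established Markov inequality, and no genuine estimate or construction is required. If anything merits a sentence of care, it is only the bookkeeping around finiteness of $\mathbb{E}[X]$ and the translation of the deviation event into an event about the squared deviation; everything else is immediate.
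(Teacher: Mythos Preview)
Your proof is correct and is the standard derivation of Chebychev from Markov. The paper itself states this lemma without proof, treating it as a basic fact, so there is nothing to compare against; your argument is exactly what any textbook would supply.
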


\begin{theorem}[Paley-Ziegmund inequality]
Let $X$ be a random variable with finite second moment and $\E(X)>0$. Then, for any $t>0$, we have
\begin{align}
\prob\big(X\ge t\E X\big) \ge (1-t)^3 \frac{(\E X)^2}{\E(X^2)} \, .
\end{align} 
\end{theorem}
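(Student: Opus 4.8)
The plan is to follow the classical short argument, handling the sign of $X$ with a little care and absorbing the (harmless) fact that the exponent $3$ appearing in the statement is weaker than what the argument naturally produces.

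First I would dispose of the trivial range: if $t\ge 1$ then $(1-t)^3\le 0$ while the left-hand side is a probability, so the inequality holds automatically. Hence assume $t\in(0,1)$ from now on; note also that $\E(X^2)\ge(\E X)^2>0$, so the right-hand side is well defined. The key step is to split the mean of $X$ according to whether $X$ falls below the threshold $t\E X$ or not:
\[
\E X = \E\big[X\,\mathbf{1}_{\{X< t\E X\}}\big] + \E\big[X\,\mathbf{1}_{\{X\ge t\E X\}}\big].
\]
For the first term, observe that pointwise $X\,\mathbf{1}_{\{X<t\E X\}}\le t\E X\,\mathbf{1}_{\{X<t\E X\}}$ (on the event this reads $X< t\E X$, and off the event both sides vanish because $t\E X>0$), hence $\E[X\,\mathbf{1}_{\{X<t\E X\}}]\le t\E X\,\prob(X<t\E X)\le t\E X$. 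This yields
\[
(1-t)\,\E X\le \E\big[X\,\mathbf{1}_{\{X\ge t\E X\}}\big].
\]

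Next I would apply the Cauchy–Schwarz inequality to bound the right-hand side by $\sqrt{\E(X^2)}\,\sqrt{\prob(X\ge t\E X)}$. Squaring the resulting chain of inequalities (legitimate since $(1-t)\E X>0$) gives $(1-t)^2(\E X)^2\le \E(X^2)\,\prob(X\ge t\E X)$, that is,
\[
\prob\big(X\ge t\E X\big)\ge (1-t)^2\,\frac{(\E X)^2}{\E(X^2)}.
\]
Finally, since $0<1-t<1$ we have $(1-t)^3\le(1-t)^2$, which downgrades this to the stated bound.

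There is no genuine obstacle; the only two points that require attention are $(i)$ that $X$ is not assumed nonnegative, which is why the pointwise estimate for $X\,\mathbf{1}_{\{X<t\E X\}}$ is phrased via $\mathbf{1}_{\{X<t\E X\}}$ rather than by a monotonicity argument, and $(ii)$ the observation that the exponent $3$ in the statement is weaker than the exponent $2$ the proof actually delivers.
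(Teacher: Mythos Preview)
Your argument is correct. The paper does not actually supply a proof of this statement: it is listed in the appendix of basic probability facts with no accompanying argument. What you wrote is the standard Paley--Zygmund proof (split $\E X$ according to the event $\{X\ge t\E X\}$, bound the below-threshold part trivially, and Cauchy--Schwarz the above-threshold part), and you correctly handle the two small subtleties: that $X$ need not be nonnegative, and that the classical argument gives $(1-t)^2$ rather than the weaker $(1-t)^3$ printed in the statement. The exponent $3$ is almost certainly a typo in the paper; your observation that $(1-t)^3\le(1-t)^2$ on $(0,1)$ recovers the stated form, but the stronger $(1-t)^2$ bound is what is actually used in applications such as Exercise~\ref{exer:SecondMoment}.
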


\section{Concentration inequalities}

\begin{theorem}[McDiarmid inequality]
Let $F:\cX^n\to \reals$ be any function satisfying 
\begin{align}
\big|F(x_1,\dots,x_{i-1},x_i,x_{i+1},\dots, x_n)-F(x_1,\dots,x_{i-1},x_i',x_{i+1},\dots, x_n)\big|\le L_i
\end{align}
for all $\bx = (\bx_1,\dots,\bx_n)\in\cX^n$, and $x'_i\in\cX$. If $\bX=(X_1,\dots,X_n)$ is a random vector taking values in 
$\cX^n$, with independent coordinates, then, for any $t\ge 0$
\begin{align}
\prob\Big(F(\bX)\ge \E\, F(\bX)+t\Big)\le \exp\Big\{-\frac{t^2}{2\sum_{i=1}^nL_i^2}\Big\}\, .
\end{align}
\end{theorem}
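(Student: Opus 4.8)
The plan is to prove this via the standard Doob martingale decomposition combined with the Chernoff (Azuma--Hoeffding) argument. Set $\mathcal{F}_i=\sigma(X_1,\dots,X_i)$ for $0\le i\le n$ and define the Doob martingale $Y_i=\E[F(\bX)\mid\mathcal{F}_i]$, so that $Y_0=\E F(\bX)$ and $Y_n=F(\bX)$; write $D_i=Y_i-Y_{i-1}$ for the martingale increments, which satisfy $\E[D_i\mid\mathcal{F}_{i-1}]=0$. Because the coordinates are independent, $Y_i=\phi_i(X_1,\dots,X_i)$ where $\phi_i(x_{1:i})\equiv\E\big[F(x_{1:i},X_{i+1},\dots,X_n)\big]$, the expectation being over $X_{i+1},\dots,X_n$ only.

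The key step — and the one I expect to be the main obstacle, since it is the only place independence of the coordinates is used — is to show that, conditionally on $\mathcal{F}_{i-1}$, the increment $D_i$ is confined to an interval of width at most $L_i$. To this end introduce the $\mathcal{F}_{i-1}$-measurable quantities
\begin{align}
g_i^-(x_{1:i-1})=\inf_{x\in\cX}\phi_i(x_{1:i-1},x)\,,\qquad
g_i^+(x_{1:i-1})=\sup_{x\in\cX}\phi_i(x_{1:i-1},x)\,.
\end{align}
Since $Y_i=\phi_i(X_{1:i-1},X_i)$, we have $D_i=Y_i-Y_{i-1}\in\big[g_i^-(X_{1:i-1})-Y_{i-1},\,g_i^+(X_{1:i-1})-Y_{i-1}\big]$, an interval of width $g_i^+-g_i^-$. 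The bounded-differences hypothesis, applied inside the expectation over $X_{i+1},\dots,X_n$, gives $|\phi_i(x_{1:i-1},x)-\phi_i(x_{1:i-1},x')|\le L_i$ for all $x,x'$, hence $g_i^+-g_i^-\le L_i$. As $D_i$ is also conditionally mean zero, it lies in an interval of length $L_i$ containing the origin, so $|D_i|\le L_i$ almost surely.

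Given this, the remainder is routine. For any random variable $Z$ with $\E[Z]=0$ and $|Z|\le c$, convexity of $z\mapsto e^{sz}$ on $[-c,c]$ yields $e^{sZ}\le\cosh(sc)+(Z/c)\sinh(sc)$, whence $\E[e^{sZ}]\le\cosh(sc)\le e^{s^2c^2/2}$; applied conditionally on $\mathcal{F}_{i-1}$ this gives $\E[e^{sD_i}\mid\mathcal{F}_{i-1}]\le e^{s^2L_i^2/2}$ for every $s>0$. Peeling off the increments one at a time using the tower property,
\begin{align}
\E\big[e^{s(F(\bX)-\E F(\bX))}\big]=\E\Big[e^{s\sum_{i=1}^n D_i}\Big]
=\E\Big[e^{s\sum_{i=1}^{n-1}D_i}\,\E[e^{sD_n}\mid\mathcal{F}_{n-1}]\Big]
\le\exp\Big\{\tfrac{s^2}{2}\sum_{i=1}^n L_i^2\Big\}\,.
\end{align}

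Finally, Markov's inequality (the Chernoff bound) gives, for every $s>0$,
\begin{align}
\prob\big(F(\bX)\ge\E F(\bX)+t\big)\le e^{-st}\,\E\big[e^{s(F(\bX)-\E F(\bX))}\big]
\le\exp\Big\{-st+\tfrac{s^2}{2}\sum_{i=1}^n L_i^2\Big\}\,,
\end{align}
and choosing $s=t/\sum_{i=1}^n L_i^2$ produces the claimed bound $\exp\{-t^2/(2\sum_{i=1}^n L_i^2)\}$. (Replacing the elementary estimate $\E[e^{sZ}]\le e^{s^2c^2/2}$ by the sharp Hoeffding lemma, $\E[e^{sZ}]\le e^{s^2L_i^2/8}$ for $Z$ mean zero in an interval of length $L_i$, would yield the stronger constant $\exp\{-2t^2/\sum_i L_i^2\}$, from which the stated inequality follows a fortiori.)
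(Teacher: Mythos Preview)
Your proof is correct. The paper does not actually supply a proof of this theorem; it is stated in the appendix as a standard result with a pointer to \cite{boucheron2013concentration,Vershynin-CS}, so there is no argument to compare against. Your approach (Doob martingale plus Azuma--Hoeffding) is the standard one and yields exactly the constant in the statement; as you note, using the sharp Hoeffding lemma on an interval of length $L_i$ rather than the cruder $|D_i|\le L_i$ bound would give the stronger $\exp\{-2t^2/\sum_i L_i^2\}$.
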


\begin{theorem}[Gaussian concentration]\label{thm:GaussianConcentration}
Let $F:\reals^{d}\to \reals$ be an $L$-Lipschitz function, i.e. a function such that, for all $\bx,\by\in\reals^d$,
\begin{align}
|F(\bx)-F(\by)|\le L\, \|\bx-\by\|_2\, .
\end{align}
If $\bX\sim\normal(\bfzero,\id_d)$ is a standard normal vector, then, for any $t\ge 0$
\begin{align}
\prob\big\{F(\bX)\ge \E F(\bX) +t\big\}\le e^{-\frac{t^2}{2L^2}}\, .
\end{align}
\end{theorem}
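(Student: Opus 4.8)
The plan is to prove this by Herbst's exponential-moment method, with the Gaussian logarithmic Sobolev inequality as the only nontrivial input.

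\textbf{Step 1 (reduction to smooth $F$).} First I would reduce to the case $F\in C^\infty(\reals^d)$ with $\|\nabla F(\bx)\|_2\le L$ for all $\bx$. Given a general $L$-Lipschitz $F$, put $F_\eps(\bx)=\E[F(\bx+\sqrt{\eps}\,\bZ)]$ with $\bZ\sim\normal(\bfzero,\id_d)$: then $F_\eps$ is smooth, still $L$-Lipschitz (so $\|\nabla F_\eps\|_2\le L$), and $\sup_{\bx}|F_\eps(\bx)-F(\bx)|\le L\sqrt{\eps}\,\E\|\bZ\|_2\to 0$, whence also $\E F_\eps(\bX)\to\E F(\bX)$. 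A routine limiting argument then transfers the tail bound for each $F_\eps$ to $F$. In this reduced setting one also notes $|F(\bX)-\E F(\bX)|\le L\|\bX\|_2+L\,\E\|\bX\|_2$, so $\E[e^{\lambda(F(\bX)-\E F(\bX))}]<\infty$ for every $\lambda\in\reals$, which is needed to differentiate below.

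\textbf{Step 2 (log-Sobolev inequality).} The key ingredient is the Gaussian LSI: for $\bX\sim\normal(\bfzero,\id_d)$ and smooth $g$,
\[
\E\!\left[g(\bX)^2\log g(\bX)^2\right]-\E[g(\bX)^2]\log\E[g(\bX)^2]\;\le\;2\,\E\!\left[\|\nabla g(\bX)\|_2^2\right].
\]
I would either cite this as standard (see the references of this appendix) or include the short semigroup proof: for the Ornstein--Uhlenbeck semigroup $P_th(\bx)=\E[h(e^{-t}\bx+\sqrt{1-e^{-2t}}\,\bZ)]$, write the left-hand side as $\int_0^\infty\big(-\tfrac{\de}{\de t}\,\E[(P_tg^2)\log P_tg^2]\big)\,\de t$; integrating by parts against the Gaussian measure identifies the integrand with $\E[\|\nabla P_tg^2\|_2^2/(P_tg^2)]$, and then the identity $\nabla P_t=e^{-t}P_t\nabla$ together with Cauchy--Schwarz bounds this by $4e^{-2t}\,\E[\|\nabla g\|_2^2]$; integrating in $t$ produces the constant $2$.

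\textbf{Step 3 (Herbst + Markov).} Let $G=F(\bX)-\E F(\bX)$ and $H(\lambda)=\E[e^{\lambda G}]$. Applying the LSI with $g=e^{\lambda G/2}$ and using $\|\nabla g\|_2^2=\tfrac{\lambda^2}{4}\|\nabla F\|_2^2\,e^{\lambda G}\le\tfrac{\lambda^2 L^2}{4}e^{\lambda G}$ gives $\lambda H'(\lambda)-H(\lambda)\log H(\lambda)\le\tfrac{\lambda^2 L^2}{2}H(\lambda)$. Dividing by $\lambda^2H(\lambda)$, the left-hand side equals $\tfrac{\de}{\de\lambda}\!\big(\tfrac1\lambda\log H(\lambda)\big)$; since $\tfrac1\lambda\log H(\lambda)\to 0$ as $\lambda\downarrow0$ (because $H(0)=1$ and $H'(0)=\E G=0$), integration yields $\log H(\lambda)\le\tfrac{L^2\lambda^2}{2}$ for all $\lambda>0$. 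Finally Markov's inequality gives, for every $\lambda>0$, $\prob\{F(\bX)-\E F(\bX)\ge t\}\le e^{-\lambda t}H(\lambda)\le e^{-\lambda t+L^2\lambda^2/2}$, and optimizing at $\lambda=t/L^2$ produces the claimed bound $e^{-t^2/(2L^2)}$.

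\textbf{Main obstacle.} Everything except the Gaussian LSI is soft; the LSI is the real content, and a fully self-contained proof requires either the semigroup computation of Step 2 (with some care about differentiation under the integral and integrability of $P_tg^2$ and its gradient) or the Borell--Sudakov--Tsirelson Gaussian isoperimetric inequality. A technically lighter but constant-suboptimal alternative is the rotation interpolation $\bX_\theta=\bX\sin\theta+\bY\cos\theta$ with $\bY$ an independent copy of $\bX$, which by Jensen along the path yields $\E e^{\lambda G}\le e^{\pi^2L^2\lambda^2/8}$ and hence only the weaker tail $e^{-2t^2/(\pi^2L^2)}$; since the statement asks for the sharp constant $1/2$, I would keep the log-Sobolev route.
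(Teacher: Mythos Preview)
Your argument is correct: the reduction to smooth $F$, the Gaussian log-Sobolev inequality, and the Herbst differential inequality together yield exactly the stated bound with the sharp constant $1/(2L^2)$. The computation in Step~3 is right, and the limiting argument in Step~1 is routine.

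However, there is nothing to compare against: the paper does not prove this theorem. It is stated in Appendix~\ref{app:Probability} as a background fact, with a pointer to the textbooks \cite{boucheron2013concentration,Vershynin-CS} for proofs. Your Herbst/LSI route is precisely one of the standard proofs given in those references (in particular, it is the argument in \cite{boucheron2013concentration}), so in that sense you have supplied what the paper deliberately outsourced.
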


\chapter{Summary of notations}

\section{Matrix Norms} 
For any matrix $\bA \in \mathbb{R}^{d \times d}$, define the operator norm $\|\bA \|_2 = \sup_{\| \bx\|_2 =1} \| \bA \bx \|_2$. On the other hand, we define the Frobenius norm $\|\bA\|_F = \Big(\sum_{i,j} A_{ij}^2 \Big)^{1/2}$.

\section{Asymptotics}

Given two (strictly positive) functions $f(n)$, $g(n)$, we write $f(n)\doteq g(n)$ if they are equivalent to
leading exponential order, i.e. if $\lim_{n\to\infty} n^{-1} \log[f(n)/g(n)] = 0$. For any two sequences $a_n$, $b_n$, we say $a_n = O_n(b_n)$ if there exists a constant $C>0$ 
such that $a_n/b_n \leq C$ for all $n \geq 1$. Similarly, we say $a_n = o_n(b_n)$ if $a_n/b_n \to 0$ as $n \to \infty$.

\section{Probability} 

\begin{itemize}
\item[$(i)$] Convergence in Probability---For a sequence of random variables $\{X_n : n \geq 1\}$ and $X$ defined on the same probability space $(\Omega, \mathcal{F}, \mathbb{P})$, we say that $X_n \stackrel{P}{\to} X$ if for every $\varepsilon>0$, $\mathbb{P}[|X_n - X|> \varepsilon] \to 0$ as $n \to \infty$. Given any positive sequence of real numbers $\{a_n : n \geq 1\}$, we say that $X_n = o_n(a_n)$ if $X_n/a_n \stackrel{P}{\to} 0$. 

\item[$(ii)$] Convergence almost surely---For a sequence of random variables $\{X_n : n \geq 1\}$ and $X$ defined on the same probability space $(\Omega, \mathcal{F}, \mathbb{P})$, we say that $X_n \stackrel{a.s.}{\to}X$ if $\mathbb{P}[\{\omega: X_n(\omega) \to X(\omega)\}] =1$. 

\item[$(iii)$] Convergence in distribution---Let $\mathcal{X}$ be a complete, separable metric space. Let $\{X_n : n \geq 1\}$, $X$ be random variables taking values in $\mathcal{X}$. We say that $X_n \stackrel{w}{\Rightarrow} X$ if for all $f: \mathcal{X} \to \mathbb{R}$ bounded continuous, $\mathbb{E}[f(X_n)] \to \mathbb{E}[f(X)]$. Equivalently, for any sequence of probability measures $\{\mu_n : n \geq 1\}$ and $\mu$ on $\mathcal{X}$, we say $\mu_n \stackrel{w}{\Rightarrow} \mu$ if $\int f \d \mu_n \to \int f \d \mu$ for all $f : \mathcal{X} \to \mathbb{R}$ bounded continuous. 

\item[$(iv)$] For any two real-valued random variables $X,Y$, we say $X \stackrel{d}{=} Y$ if for all $x \in \mathbb{R}$, $\mathbb{P}[X\leq x] = \mathbb{P}[Y\leq x]$. 
\end{itemize}

\section{Probability distributions}

\begin{itemize}
\item[$(i)$] A real-valued random variable $X \sim \normal(0,1)$ if $X$ has a probability density function $f_X(x) = \frac{1}{\sqrt{2\pi}} \exp(- \frac{x^2}{2})$ on $\mathbb{R}$. For any $\mu \in \mathbb{R}$ and $\sigma >0$, we say $Y \sim \normal(\mu,\sigma^2)$ if $Y \stackrel{d}{=} \mu + \sigma X$. For $\sigma>0$, $Y$ has probability density $f_Y(x) = \frac{1}{\sqrt{2\pi} \sigma} \exp(- \frac{(x-\mu)^2}{2\sigma^2})$. 

For any $d \geq 1$, an $\mathbb{R}^d$ valued random vector $\mathbf{g} \sim \normal(0,\bI_d)$ if $\boldsymbol{g} = (g_1, \cdots, g_d)$, $g_1, \cdots, g_d \sim \normal(0,1)$ are i.i.d random variables.  For any $\boldsymbol{\mu} \in \mathbb{R}^d$ and $\boldsymbol{\Sigma} \in \mathbb{R}^{d \times d}$ non-negative definite, we say $\bh \sim \normal(\boldsymbol{\mu}, \boldsymbol{\Sigma})$ if $\bh \stackrel{d}{=} \boldsymbol{\mu} + \boldsymbol{\Sigma}^{1/2} \bg$, where $\mathbf{\Sigma}^{1/2}$ is the matrix square-root of $\mathbf{\Sigma}$. If $\mathbf{\Sigma}$ is positive definite, $\bh$ has density 
\begin{align}
f_{\bh}(\bx) = \frac{1}{ ({2\pi})^{d/2} \sqrt{\mathrm{det}(\mathbf{\Sigma})}} \exp\Big(- \frac{1}{2} (\bx - \boldsymbol{\mu})^{\top} \mathbf{\Sigma}^{-\frac{1}{2}} (\bx - \boldsymbol{\mu}) \Big), \,\, \mathbf{x} \in \mathbb{R}^d.  \nonumber 
\end{align} 

\item[$(ii)$] We say $X \sim \Poisson(\lambda)$ if $\mathbb{P}[X=k] = \exp(-\lambda) \frac{\lambda^{k}}{k!}$, $k \in \mathbb{N}:=\{0,1,2,\cdots\}$. 

\item[$(iii)$] Let $\mathcal{X}$ be a closed subset of $\mathbb{R}^d$. We define a Poisson Point Process (PPP) $\mathcal{N}$ with intensity $m:\mathcal{X} \to \mathbb{R}_{+}$ as follows:
\begin{itemize}
\item For any $k \geq 1$ and $\mathcal{B}_1, \cdots, \mathcal{B}_k$ measurable disjoint subsets of $\mathcal{X}$, $\mathcal{N}(\mathcal{B}_1), \cdots \mathcal{N}(\mathcal{B}_k)$ are independent random variables taking values in $\mathbb{N}$. 
\item For $1\leq i \leq k$, $\mathcal{N}(\mathcal{B}_i) \sim \Poisson(\int_{\mathcal{B}_i} m(x) \d x)$. 
\end{itemize}

\item[$(iv)$] Let $\bG = (G_{ij}) \in \mathbb{R}^{n \times n}$ be a random matrix with i.i.d. $\normal(0,1)$ entries. Set $\bW = (\bG + \bG^{\top} )/\sqrt{2n}$. $\bW$ is a symmetric random matrix, with $\{W_{ij}: i \leq j\}$ independent, mean-zero normal random variables. Further, $\mathrm{Var}(W_{ij}) = (1 + \mathbf{1}_{i=j})/n$. This distribution is referred to as the ``Gaussian Orthogonal Ensemble" (GOE) in the literature. For notational compactness, we use $\bW_n\sim\GOE(n)$ in these notes. 
\end{itemize}

\backmatter

\addcontentsline{toc}{section}{References}


\bibliographystyle{amsalpha}

\bibliography{all-bibliography}
\end{document}